\newcommand{\e}{\ensuremath{\epsilon}}
\newcommand{\nn}{\ensuremath{\textbf{n}}}
\newcommand{\NN}{\ensuremath{\textbf{N}}}
\newcommand{\rto}{\ensuremath{\rightarrow}}
\newcommand{\lem}{\ensuremath{\lesssim}}
\newtheorem{theorem}{Theorem}[section]
\newtheorem{lemma}{Lemma}[section]
\newtheorem{proposition}{Proposition}[section]
\newtheorem{remark}{Remark}[section]
\theoremstyle{definition} \theoremstyle{remark}
\numberwithin{equation}{section}
\date{}
\begin{document}


\date{}

\title{\bf Inviscid Limit for the Free-Boundary problems of MHD
Equations with or without Surface Tension
\thanks{Ding is partially supported by the NSF of China (Grant No.11571117, 11371152, 11871005 and 11771155)
 and NSF of Guangdong Province (Grant No.2017A030313003).}\\
}

\author{Pengfei Chen
and ~Shijin Ding
\thanks{Corresponding author: dingsj@scnu.edu.cn(S. Ding), cpfmath@163.com(P. Chen).}
\\[1.8mm]
\footnotesize  {South China Research Center for Applied Mathematics and Interdisciplinary Studies}\\
\footnotesize  {South China Normal University, Guangzhou 510631 ,China }\\
}

\baselineskip 0.23in

\maketitle

\begin{abstract}
In this paper,
we investigate the convergence rates of inviscid limits for the free-boundary problems
of the incompressible magnetohydrodynamics (MHD) with or without surface tension in $\mathbb{R}^3$,
where the magnetic field is identically constant on the surface and outside of the domain.
First, we establish the vorticity,
the normal derivatives and the regularity structure of the solutions,
and develop a priori co-norm estimates including time derivatives by the vorticity system.
Second, we obtain two independent sufficient conditions for the existence of strong vorticity layers:
(I) the limit of the difference between the initial MHD vorticity of velocity or magnetic field
and that of the ideal MHD equations is nonzero.
(II) The cross product of tangential projection on the free surface of
the ideal MHD strain tensor of velocity or magnetic field with
the normal vector of the free surface is nonzero.
Otherwise, the vorticity layer is weak.
Third, we prove high order convergence rates of tangential derivatives and
the first order normal derivative in standard Sobolev space,
where the convergence rates depend on the ideal MHD boundary value.
\\[2mm]
{\bf Keywords: MHD equations, free boundary,
inviscid limit, strong initial layer, strong vorticity layer,
weak vorticity layer, convergence rates, regularity structure}
\\[1.2mm]
{\bf 2010 MSC:} 35Q30, 76D05.
\end{abstract}

\tableofcontents

\baselineskip 0.234in
\section{Introduction}
In this paper,
we study the inviscid limit
for the free boundary problems of the incompressible
Magnetohydrodynamics (MHD) equations with or without surface tension in $\mathbb{R}^3$
\begin{equation}\label{MHD1}
\left\{\begin{aligned}
&\partial_{t}u-\epsilon\Delta u+u\cdot\nabla u+\nabla P=H\cdot\nabla H-\frac{1}{2}\nabla|H|^2,\
&{\rm in}\ \Omega(t),\\
&\partial_{t}H-\lambda\Delta H+u\cdot\nabla  H=H\cdot\nabla u,\
&{\rm in}\ \Omega(t),\\
&\nabla\cdot u=0,  \nabla\cdot H=0,\
&{\rm in}\ \Omega(t),\\
&Pn-2\epsilon Su\nn=ghn+(H\otimes H-\frac{1}{2}I |H|^2)\nn-\sigma \mathcal{M}\nn,\
&{\rm on}\ S_F(t),\\
&\partial_th=u\cdot \NN,\
&{\rm on}\ S_F(t),\\
&H=0,\
&{\rm on}\ S_F(t),\\
&(u,H,h)|_{t=0} = (u_0^{\e},H_0^{\e},h_0^{\e}),
&{\rm in}\ \Omega(t),
\end{aligned}\right.
\end{equation}
which describe the motion of conducting fluids in an electromagnetic field,
where $\epsilon>0$, $\lambda>0$, $\sigma\geq0$ be the kinematic viscosity of the MHD equation,
the magnetic diffusivity of Faraday's law, and the surface tension coefficient in the dynamical
boundary condition, respectively.
The surface tension in the dynamical boundary condition,
the fourth equation in \eqref{MHD1},
namely $\mathcal{M}=\nabla\cdot\frac{\nabla h}{\sqrt{1+|\nabla h|^2}}$,
is twice the mean curvature of the free surface $S_t$,
$g>0$ is a gravitational constant.
The kinematic boundary condition,
the fifth equation in \eqref{MHD1},
implies that the free surface
is adverted with the fluid.
Denote $x=(y, z)$, y is the horizontal variable, z is the vertical variable,
the initial data satisfies the
compatibility condition $\Pi Su^\epsilon_0\nn=0$ and $\Pi SH^\epsilon_0\nn=0$ on $S_F(t)$,
where the projection on the tangent space of the boundary denoted by
$\Pi=I d-\mathbf{n} \otimes \mathbf{n}$.

We neglect the Coriolis effect generated by the planetary rotation, then
there is no Ekman layer near the free surface even if Rossby number is small.
We shall focus on the three dimensional equation in the domain $-\infty<z<h(t,y)$,
which are defined as follows:
\begin{equation}
\begin{array}{ll}
\Omega_t =\{x\in\mathbb{R}^3|\, -\infty< z<h(t,y)\},
\\[8pt]
\Sigma_t = \{x\in\mathbb{R}^3|\, z =h(t,y)\},
\\[8pt]
\NN=(-\nabla h, 1)^{\top},\quad  \nn=\frac{\NN}{|\NN|},
\\[8pt]
\mathcal{S}u =\frac{1}{2}(\nabla u +(\nabla u)^{\top}),
\end{array}
\end{equation}
where the symbol $\top$ means the transposition of matrices or vectors.
We suppose $h(t,y)\rightarrow0$ as $|y|\rightarrow+\infty$ for any $t\geq0$.

Let us introduce the boundary conditions of the magnetic fields on the free boundary
or outside the conducting fluid.
First, it is important to point out that for the classical plasma-vacuum interface problem in \cite{Goedbloed14},
where in the vacuum, the magnetic field
satisfies the div-curl system
\begin{equation}\label{pvi}
\nabla\times \mathcal{H}=0, \nabla\cdot \mathcal{H}=0,
\end{equation}
which is a special from the pre-Maxwell dynamics
by neglecting the displacement current $(1/c)\partial_tE$.
Meanwhile,
we easily get normal continuity by the divergence-free condition
and divergence theorem
\[H\cdot n=\mathcal{H}\cdot n,\ {\rm on}\ S_F(t).
\]
Second, Lee \cite{Lee17} considered the uniform estimate of free boundary problem
with the constant magnetic value on the free boundary and in the vacuum region,
where the constant magnetic can be treated as special case of the plasma-vacuum interface problem.
The local well-posedness theorem can be seem in Lee \cite{Lee18}.

In system \eqref{MHD1},
it is convenient to define the total pressure $p$ as the sum of the fluid pressure and
magnetic pressure,
\begin{align}
p:= P+\frac{1}{2}|H|^2.
\end{align}
Then, the free boundary problems for MHD equations can be rewritten as
\begin{equation}\label{MHD2}
\left\{
\begin{aligned}
&\partial_{t}u-\epsilon\Delta u+u\cdot\nabla u+\nabla p=H\cdot\nabla H,
&{\rm in}\ \Omega(t),\\
&\partial_{t}H-\lambda\Delta H+u\cdot\nabla  H=H\cdot\nabla u,
&{\rm in}\ \Omega(t),\\
&\nabla\cdot u=0,  \nabla\cdot H=0,
&{\rm in}\ \Omega(t),\\
&p\nn-2\epsilon Su\nn=gh\nn-\sigma \mathcal{M}\nn,
&{\rm on}\ S_F(t),\\
&\partial_th=u\cdot \NN,
&{\rm on}\ S_F(t),\\
&H=0,
&{\rm on}\ S_F(t),\\
&(u,H,h)|_{t=0} = (u_0^{\e},H_0^{\e},h_0^{\e}),
&{\rm in}\ \Omega(t).
\end{aligned}
\right.
\end{equation}

What we are interested in this paper are the convergence rates of inviscid limit
of the free boundary MHD equations,
either $\sigma=0$ or $\sigma>0$ is fixed.
However, we do not study the zero surface tension limit here.
Let $\epsilon, \lambda\to0$,
we formally get the following ideal MHD equations
\begin{equation}
\left\{
\begin{aligned}
&\partial_{t}u+u\cdot\nabla u+\nabla p=H\cdot\nabla H,
&{\rm in}\ \Omega(t),\\
&\partial_{t}H+u\cdot\nabla  H=H\cdot\nabla u,
&{\rm in}\ \Omega(t),\\
&\nabla\cdot u=0,  \nabla\cdot H=0,
&{\rm in}\ \Omega(t),\\
&p\nn=gh\nn-\sigma \mathcal{M}\nn,
&{\rm on}\ S_F(t),\\
&\partial_th=u\cdot N,
&{\rm on}\ S_F(t),\\
&H=0,
&{\rm on}\ S_F(t),\\
&(u,H,h)|_{t=0}=(u_0,H_0,h_0):=\lim_{\epsilon,\lambda\to0}
(u^{\epsilon}_0,H^{\epsilon}_0,h^{\epsilon}_0).
\end{aligned}
\right.
\end{equation}
where $(u_0,H_0,h_0)=\lim\limits_{\epsilon,\lambda\to0}(u^{\epsilon}_0,H^{\epsilon}_0,h^{\epsilon}_0)$
is in the $L^\infty$ sense or even in the $L^2$ sense,
$(u_0,H_0,h_0)$ are independent of $\epsilon,\lambda$.

The investigation of free surface motions is an important topic in fluid dynamics,
which has attracted a lot of attentions during the last thirty years.
For free surface problems of Navier-Stokes equation, in 1981, Beale considered the local existence result in \cite{Beale81} without surface tension.
Some similar results on global well-posedness for the free boundary problems of
incompressible Navier-Stokes equations,
have been obtained, for instance, by Hataya \cite{Hataya09}, Padula \cite{Padula10}.
Guo and Tice \cite{Guo131,Guo132,Guo133}
obtained a series of results by the so called two-tier energy method,
which combines the boundedness of high-order energy with the decay of low-order energy.
For the case of Navier-Stokes equations with surface tension,
we refer the reader to Beale \cite{Beale84}, Nishida, Teramoto and Yoshihara \cite{Nishida04},
Tan and Wang \cite{Tan14}, Tanaka and Tani \cite{Tani95}, Tani \cite{Tani96}.

However, the free boundary problems for Euler equations are much harder and interesting.
For the irrotational case, we refer to Wu \cite{Wu09,Wu11}, Germain,
Masmoudi and Shatah \cite{Germain12}, Ionescu and Pusateri \cite{Ionescu15},
Alazard and Delort \cite{Alazard13}, for the water waves without surface tension.
We also refer to Beyer and G\"{u}nther \cite{Beyer98},
Germain, Masmoudi and Shatah \cite{Germain15} for the water waves with surface tension.
For the general rotational case,
it is still not clear whether the free boundary problems of incompressible Euler equations
for the general small initial data admits a global unique solution or not, even in 2D.
As to local-in-time results,
we refer to Lindblad \cite{Lindblad05}, Coutand and Shkiller \cite{Coutand07},
Shatah and Zeng \cite{Shatah08}, Zhang and Zhang \cite{Zhang08} for the zero surface tension case.

As to the global well-posedness for the free boundary problems of incompressible MHD equations,
we refer the reader to Padula and Solonnikov \cite{Padula11},
Solonnikov \cite{Solonnikov13},
Lee \cite{Lee18} for the viscous and resistive case, Craig\cite{Craig85},
Wang and Xin \cite{Wang18} for the inviscid and resistive case.
For the inviscid case, Hao and Luo \cite{Hao14} established an a priori estimate in the
spirit of \cite{Christodoulou00}.
The plasma-vacuum problem for ideal MHD equations was studied by
Morando, Trakhinin and Trebeschi \cite{Morando14}, Hao \cite{Hao17},
Sun, Wang and Zhang \cite{Sun17}.
The compressible case be refered to Secchi and Trakhinin \cite{Secchi13}.

Another classical and interesting problem in the mathematical theory
of fluid mechanics is to study the asymptotic limit of
the viscous solutions to that of small viscosity solutions.
Of course, it is natural to expect that the limit is given
by a solution of the Euler equation.
Generally, different regions and boundary conditions should
be taken into account when studying inviscid limit problem.
For the case of the whole spaces where the domain has no boundaries,
see for instances \cite{Constantin86,Constantin88,Kato84,Masmoudi07}.
However, in the presence of physical boundaries, the problems become much more
complicated due to the formation of boundary layers.
For the case of Dirichlet boundary condition in the fixed domain,
the inviscid limit is not rigorously verified except for the following special cases, i.e.,
the analytic setting (see \cite{Sammartino981,Sammartino982,Wang17})
and the case where the vorticity is located away from the boundary
in 2D half plane (see \cite{Maekawa13,Maekawa14}).
While, for the Navier-slip boundary condition in a fixed domain,
the $H^2$ convergence rete estimates have been obtained by Xiao and Xin \cite{Xiao07}
for complete slip boundary condition and flat boundary,
which are generalized in \cite{Berselli12,Xiao09}.
For the inviscid limit results with Prandtl expansion,  we refer to \cite{Iftimie11,Liu19,Wang10}.
Recently, the conormal uniform estimates have been widely used to estimate the normal derivatives of first
order of the velocity field, for example, in \cite{Masmoudi12, Wang16}.
Based on such uniform estimates,the authors of \cite{Masmoudi12},
\cite{Xiao13} have proved the convergence rates estimate in Sobolev space $H^1$.

For the free surface with kinetical and dynamical boundary conditions in the moving domain,
the uniform regularity estimates and inviscid limit have been greatly developed
by the co-normal Sobolev spaces for which the normal differential operators vanish on the free surface.
For the Navier-Stokes equations with free surface,
Masmoudi and Rousset \cite{Masmoudi17} first established the local existence of solutions to the incompressible Navier-Stokes system without surface tension by uniform in $\epsilon$
estimates in conormal Sobolev spaces.
Wang and Xin \cite{Wang15}, Elgindi and Lee \cite{Elgindi17}
extend the convergence results with surface tension.
Later, Wu \cite{Wu16} study the regularity structure,
the vorticity layer and the convergence
rates of the inviscid limits for the cases with or without surface tension,
where the author proved that not only tangential derivatives and
standard normal derivative have different convergence rates,
and obtained the convergence rates of high order tangential derivatives and
the first order standard normal derivative in energy norms.
Recently, Mei, Wang and Xin \cite{Mei18} proved the case of
compressible Navier-Stokes equations with or without surface tension.

For the discussions on the inviscid limit of free boundary problems of MHD equations, we refer to
Lee \cite{Lee17} for the case without surface tension, Mei \cite{Mei16} for the case with surface tension.
On the other hand, it is also interesting to investigate the zero surface
tension limit of free boundary problems.
The zero surface tension limit of the free surface
Navier-Stokes equation and Euler equations with damping
has been established by Tan and Wang \cite{Tan14}, Lian \cite{Lian18}, respectively.
For the compressible viscous surface-internal wave problem,
the reader be referred to Jang, Tice and Wang \cite{Jang16}.
However, the convergence rates have not been discussed in these papers.
Moreover, a clear description of the generation principle of strong
and weak vorticity layers for free boundary problems of MHD is still needed.
For the relevant description for Navier-Stokes equations, we refer to Wu \cite{Wu16}.

In this paper,
we are interested in the inviscid limit theory of the
free boundary problems for the MHD equations.
Our main goal is to establish the convergence rate of the vanishing viscous limit,
and to analyze two independent sufficient conditions for the existence of strong
vorticity layers.
Our approach here is motivated by Wu \cite{Wu16} which studies
the same problem for the Navier-Stokes equations
and is based on the following observations:

First,
the estimates of normal derivatives are based on the estimates of
vorticity rather than those of $\Pi S^\varphi v\nn$ and $\Pi S^\varphi b\nn$.
Here we establish the relationship between the vorticity,
the normal derivatives and its regularity structure of MHD equations,
where the boundary value of the vorticity only depend on
its tangential derivatives.

Second,
we show that there are two independent sufficient conditions for the existence of strong vorticity layer. We
note that these two conditions are almost independent.
One condition is that the ideal MHD boundary data satisfies
$\Pi S^\varphi v\nn|_{z=0}=0$ or
$\Pi S^\varphi b\nn|_{z=0}=0$ in $(0,T]$,
and the initial vorticity layer of velocity or magnetic field is strong,
then there exist a strong vorticity layer.
Another condition is that the ideal MHD boundary data satisfies
$\Pi S^\varphi v\nn|_{z=0}\neq0$ or
$\Pi S^\varphi b\nn|_{z=0}\neq0$ in $(0,T]$,
then the MHD solution has a strong vorticity layer too.
Otherwise,
we show that the vorticity layer is weak.
By the following two Lagrangian maps $Y_1$ and $Y_2$:
\begin{equation}
\begin{array}{ll}
\partial_tY_1=u-b,\,\partial_tY_2=u+b,
\end{array}
\end{equation}
the moving domain $\Omega(t)$ is transformed into two fixed domains
$\Omega_1$ and $\Omega_2$.
We combine two $PDE_s$ to get the heat equations with coupled damping,
where it is required $\epsilon=\lambda$:
\begin{equation}\label{1.8}
\begin{array}{ll}
a_0\partial_tW_+-\epsilon\partial_i(a_{i,j}\partial_jW_+)+\gamma a_0W_+-(f^7_v-f^7_b)W_-
=I_{v},
\\[7pt]
b_0\partial_tW_--\epsilon\partial_i(b_{i,j}\partial_jW_-)+\gamma b_0W_--(f^7_v+f^7_b)W_+
=I_{b},
\end{array}
\end{equation}
where $W_\pm =e^{-\gamma t}(\hat{\omega}_{vh}\pm\hat{\omega}_{bh})(t,\Phi^{-1}\circ Y_i)$,
for more detail, see section 3 and section 4.
Since
the maximum principle can not be directly applied to system \eqref{1.8},
we use Duhamel's principle to get the $L^\infty$ estimate of \eqref{1.8},
where $(f^7_v-f^7_b)W_-$, $(f^7_v+f^7_b)W_+$ be treated as force term.
For the discrepancy of the vorticity on the free boundary,
we translate the problem into a symbolic version of ODE system with force term
by Fourier transformation.
By the scaling analysis for the solution of the ODE,
we can make it clear when the strong vorticity layers or weak vorticity layers appear.

Last, we obtain the convergence rates for tangential derivatives of high order and for the normal derivative of first order in Sobolev norms
by the difference equations between viscous MHD equations and ideal MHD equations.
We notice that the convergence rates for tangential derivatives and normal derivative are different.

\subsection{Parametrization into a fixed domain}
In this subsection, we rewrite the free-boundary problem \eqref{MHD2} with $\sigma=0$
into the fixed domain, the lower half space in $\mathbb{R}^3$.
Similar to Masmoudi and Rousset \cite{Masmoudi17},
we define the diffeomorphism between $R_-^3$ and the moving domain $\Omega_t$:
\begin{align}
\Phi(t,\cdot):\mathbb{R}_-^3=\mathbb{R}^2\times(-\infty,0)&\rightarrow\Omega_t,\\
x=(y,z)&\rightarrow(y,\varphi(t,y,z)).
\end{align}

There are many ways to take $\varphi$ and we have to decide which one is optimal for our purposes.
One easy option is to set $\varphi(t,y,z)=z+h(t,y)$.
However, it is more useful to take a function $\Phi$ which is actually more regular than $h$.
If one takes a harmonic extension,
then $\varphi$ gains an aditional $\frac{1}{2}$ derivative, which is more regular than $h$.
We define $\varphi$ as
\begin{align}
\varphi(t,y,z)=Az+\eta(t,y,z),
\end{align}
where $A > 0$ is to be chosen,
$\eta$ is given by the extension of h to the domain $\mathbb{R}^3_-$,
defined by $\hat{\eta}(\xi, z)=\chi(z \xi) \hat{h}(\xi)$,
where $\chi$ is
a smooth, even, compactly supported function such that $\chi=1 \text { on } B(0,1)$.

The constant $A > 0$ is suitably chosen such that $\Phi(0, \cdot)$ is a diffeomorphism,
namely
\begin{align}
\partial_z\varphi(0,y,z)\geq1, \forall x\in \mathbb{R}^3_-.
\end{align}

Now we can rewrite equations \eqref{MHD2} with $\sigma=0$ in the domain $\mathbb{R}^3_-$ by change of variables, i.e
\begin{equation}
\left\{
\begin{aligned}
&v(t,x) = u(t,y, \varphi(t,y,z)), \partial_i^\varphi v(t,x)=\partial_iu(t,y, \varphi(t,y,z)),\\
&b(t,x) = H(t,y, \varphi(t,y,z)),\partial_i^\varphi b(t,x)=\partial_iH(t,y, \varphi(t,y,z)),\\
&q(t,x) = p(t,y, \varphi(t,y,z)),\partial_i^\varphi q(t,x)=\partial_ip(t,y, \varphi(t,y,z)),
\end{aligned}
\right.
\end{equation}
for all $x\in R^3_-,i=t,1,2,3$, while $h(t,y)$ does not change.
Hence, it is convenient to define the following operator:
\begin{align*}
\partial_i^\varphi:=\partial_i-\frac{\partial_i\varphi}{\partial_z\varphi}\partial_z, \, \mbox{for}\,
i=t,1,2, \, \mbox{and}\, \partial_i^\varphi:=\frac{1}{\partial_z\varphi}\partial_z, \, \mbox{for}\,
i=3.
\end{align*}

Then the free boundary problem with $\sigma= 0$ is equivalent to the following system:
\begin{equation}\label{MHDF}
\left\{
\begin{aligned}
&\partial_{t}^\varphi v-\epsilon\Delta^\varphi v+v\cdot\nabla^\varphi v+\nabla^\varphi q
=b\cdot\nabla^\varphi b,
&{\rm in}\ \mathbb{R}^3_-,\\
&\partial_{t}^\varphi b-\lambda\Delta^\varphi b+v\cdot\nabla^\varphi b=b\cdot\nabla^\varphi v,
&{\rm in}\ \mathbb{R}^3_-,\\
&\nabla^\varphi\cdot v=0,  \nabla^\varphi\cdot b=0,
&{\rm in}\ \mathbb{R}^3_-,\\
&q\nn-2\epsilon S^\varphi v\nn=gh\nn,
&{\rm on}\ z=0,\\
&\partial_th=v(t,y,0)\cdot \NN,
&{\rm on}\ z=0,\\
&b=0,
&{\rm on}\ z=0\cup \mathbb{R}^3_+,\\
&(v,b,h)|_{t=0}=(v^{\epsilon}_0,b^{\epsilon}_0,h^{\epsilon}_0),
\end{aligned}
\right.
\end{equation}
where
\begin{equation}
\begin{array}{ll}
\NN=(-\nabla h(t,y), 1)^{\top},\quad  \nn=\frac{\NN}{|\NN|}, \\[7pt]
\mathcal{S}^{\varphi}v =\frac{1}{2}(\nabla^{\varphi} v +\nabla^{\varphi} v^{\top}).
\end{array}
\end{equation}

Letting $\epsilon=\lambda\to0$ in \eqref{MHDF},
we formally get:
\begin{equation}\label{IMHDF}
\left\{
\begin{aligned}
&\partial_{t}^\varphi v+v\cdot\nabla^\varphi v+\nabla^\varphi q
=b\cdot\nabla^\varphi b,
&{\rm in}\ \mathbb{R}^3_-,\\
&\partial_{t}^\varphi b+v\cdot\nabla^\varphi b=b\cdot\nabla^\varphi v,
&{\rm in}\ \mathbb{R}^3_-,\\
&\nabla^\varphi\cdot v=0,  \nabla^\varphi\cdot b=0,
&{\rm in}\ \mathbb{R}^3_-,\\
&q\nn=gh\nn,
&{\rm on}\ z=0,\\
&\partial_th=v(t,y,0)\cdot \NN,
&{\rm on}\ z=0,\\
&b=0,
&{\rm on}\ z=0\cup \mathbb{R}^3_+,\\
&(v,b,h)|_{t=0}=(v_0,b_0,h_0),
\end{aligned}
\right.
\end{equation}
where $v_0, b_0, h_0$ is the limit of $v^{\epsilon}_0, b^{\epsilon}_0, h^{\epsilon}_0$
in the $L^2$ sense for $\sigma=0$, respectively.
The following Taylor sign condition should be imposed when $\sigma=0$,
\begin{eqnarray}\label{1.17}
g-\partial_zq|_{z=0}\geq\delta_q>0.
\end{eqnarray}
The well-posedness of \eqref{IMHDF} under the condition \eqref{1.17} has been obtained by
Hao and Luo in \cite{Hao14}. We state their results as follows:
\begin{lemma}
Let $h_0\in H^s(\mathbb{R}^2)$, $v_0,b_0\in H^s(\mathbb{R}^3_{-})$,
where $s\leq n+1$, and the Taylor sign condition \eqref{1.17} holds at $t=0$. Then there exists $T>0$ and a unique solution $(v,b,q,h)$ of \eqref{IMHDF}
with $v,b\in L^{\infty}([0,T],H^3(\mathbb{R}^s_{-}))$,
$\nabla q\in L^{\infty}([0,T],H^2(\mathbb{R}^{s-1}_{-}))$,
$h\in L^{\infty}([0,T],H^{s}(\mathbb{R}^2))$.
\end{lemma}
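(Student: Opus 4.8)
\medskip

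\noindent\textbf{Proof proposal.}
Since this lemma is the local well-posedness statement of Hao and Luo \cite{Hao14}, the plan is to reprove it directly in the flattened formulation \eqref{IMHDF}, which is the form used throughout the paper. The heart of the argument is a closed a priori estimate for an energy of the type
\[
\mathcal{E}_s(t)=\|v(t)\|_{H^s(\mathbb{R}^3_-)}^2+\|b(t)\|_{H^s(\mathbb{R}^3_-)}^2+|h(t)|_{H^s(\mathbb{R}^2)}^2+\|\nabla q(t)\|_{H^{s-1}}^2,
\]
of the form $\frac{d}{dt}\mathcal{E}_s\le C(\mathcal{E}_s)$ on a time interval depending only on the initial data and on the Taylor constant $\delta_q$ in \eqref{1.17}; the solution is then produced by an approximation scheme and shown to be unique by a difference estimate.

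First I would set up the tangential energy estimate. For a multi-index $\alpha$, $|\alpha|\le s$, in the variables $t,y_1,y_2$ only, apply $\partial^\alpha$ to the momentum and induction equations of \eqref{IMHDF}. To avoid losing a derivative when $\partial^\alpha$ meets the coefficients hidden in $\nabla^\varphi,\Delta^\varphi$, I would work with the Alinhac good unknowns
\[
V^\alpha=\partial^\alpha v-\partial^\alpha\eta\,\partial_z^\varphi v,\qquad
B^\alpha=\partial^\alpha b-\partial^\alpha\eta\,\partial_z^\varphi b,\qquad
Q^\alpha=\partial^\alpha q-\partial^\alpha\eta\,\partial_z^\varphi q,
\]
for which $\partial^\alpha(\nabla^\varphi f)=\nabla^\varphi(\partial^\alpha f-\partial^\alpha\eta\,\partial_z^\varphi f)+(\text{controllable remainder})$. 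Testing the equations for $V^\alpha$ and $B^\alpha$ against $V^\alpha$ and $B^\alpha$, the interior cubic terms combine — using $\nabla^\varphi\cdot V^\alpha=(\text{l.o.t.})$ and the skew-symmetric MHD structure, in which $b\cdot\nabla^\varphi v$ paired with $B^\alpha$ cancels $b\cdot\nabla^\varphi b$ paired with $V^\alpha$ after integrating by parts — into $\tfrac12\frac{d}{dt}\int_{\mathbb{R}^3_-}(|V^\alpha|^2+|B^\alpha|^2)\,\partial_z\varphi\,dx$. The pressure term leaves a boundary integral on $\{z=0\}$; using the dynamic condition $q=gh$, the kinematic condition $\partial_t h=v\cdot\NN$, and the good-unknown correction evaluated at $\{z=0\}$ (where $\eta=h$), this boundary integral collapses to $\tfrac12\frac{d}{dt}\int_{z=0}(g-\partial_z q)\,|\partial^\alpha h|^2\,dy$ up to lower-order commutators, which by \eqref{1.17} is a coercive contribution controlling $|h|_{H^s}$. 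A point worth stressing is that because $b\equiv0$ on $\{z=0\}$, the magnetic field produces no boundary terms at all, so the estimate is structurally no harder than the corresponding free-boundary Euler estimate.

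Next I would recover the pressure and the normal derivatives. The total pressure satisfies the elliptic problem $\nabla^\varphi\cdot\nabla^\varphi q=\nabla^\varphi\cdot(b\cdot\nabla^\varphi b-v\cdot\nabla^\varphi v)$ in $\mathbb{R}^3_-$ with Dirichlet data $q=gh$ on $\{z=0\}$, so elliptic regularity gives $\|\nabla q\|_{H^{s-1}}\lesssim C(\mathcal{E}_s)$. Full $H^s$ control of $v$ and $b$ comes from the div--curl (Hodge) estimate
\[
\|f\|_{H^s(\mathbb{R}^3_-)}\lesssim\|f\|_{L^2}+\|\nabla^\varphi\cdot f\|_{H^{s-1}}+\|\nabla^\varphi\times f\|_{H^{s-1}}+|f\cdot\nn|_{H^{s-1/2}(z=0)},
\]
applied to $f=v$ and $f=b$: the divergences vanish, $v\cdot\nn$ is controlled through $h$ and $b\cdot\nn=0$, and the vorticities $\nabla^\varphi\times v$, $\nabla^\varphi\times b$ obey transport-type equations obtained by taking $\nabla^\varphi\times$ of the first two equations of \eqref{IMHDF}, whose $H^{s-1}$ norms are propagated by Gr\"onwall. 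Combining with the tangential estimate closes the a priori bound. For existence I would regularize — for instance by a Nash--Moser iteration, by tangential mollification, or simply by keeping the artificial viscosity in \eqref{MHDF} and sending $\epsilon=\lambda\to0$ — checking that the estimate above holds uniformly in the regularization parameter and that \eqref{1.17} persists on a uniform time interval, and then pass to the limit by compactness in lower norms together with the uniform bound; uniqueness follows from the same energy method applied to the difference of two solutions, again using \eqref{1.17} for the difference of the free surfaces. The step I expect to be the main obstacle is the tangential energy estimate: one must carry the good-unknown bookkeeping precisely enough that the only surviving top-order boundary contribution is $\tfrac12\frac{d}{dt}\int_{z=0}(g-\partial_z q)|\partial^\alpha h|^2$, so that the Taylor sign condition can absorb it; the magnetic coupling, the elliptic pressure estimate, and the div--curl recovery are comparatively routine once this is secured.
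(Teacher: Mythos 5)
The paper does not prove this lemma at all: it is stated as a citation of Hao and Luo \cite{Hao14}, with the introductory sentence ``The well-posedness of \eqref{IMHDF} under the condition \eqref{1.17} has been obtained by Hao and Luo in \cite{Hao14}.'' So any self-contained argument you supply is by definition a different route from the paper's.

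Your sketch is nonetheless a reasonable outline, but it differs in a substantive way from the cited source. Hao--Luo, as the paper itself notes in the introduction, work ``in the spirit of \cite{Christodoulou00},'' i.e.\ in Lagrangian coordinates with Christodoulou--Lindblad-type geometric energies on the moving domain. You instead propose to work in the fixed graph coordinates of \eqref{IMHDF} with Alinhac good unknowns $V^\alpha,B^\alpha,Q^\alpha$ and a div--curl recovery, which is much closer to the Masmoudi--Rousset framework the present paper uses for its \emph{viscous} uniform estimates (Proposition \ref{Proposition1.1}), but is \emph{not} what the cited reference does. That is worth being aware of: your proof would not be a re-derivation of Hao--Luo's theorem but an independent one, and you would in particular need to reconcile the function spaces (Hao--Luo's result is stated in Lagrangian Sobolev spaces; the lemma as written in the paper is in Eulerian $H^s$ on $\mathbb{R}^3_-$, and indeed the statement itself has typographical inconsistencies --- the undefined $n$ in ``$s\le n+1$'' and the mismatched $H^3(\mathbb{R}^s_-)$ --- suggesting it was transcribed loosely from the source).

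Two smaller points on the internal soundness of your sketch. First, including $\|\nabla q\|_{H^{s-1}}^2$ inside the time-evolving energy $\mathcal{E}_s$ is structurally odd: $q$ is not a dynamical unknown but is slaved to $(v,b,h)$ through an elliptic problem, so the cleaner bookkeeping is to keep $\mathcal{E}_s$ purely in terms of $(v,b,h)$ and close the estimate via the separate elliptic bound for $\nabla q$ (which is exactly how the paper itself organizes the argument in Lemma \ref{Lemma2.5} and equation \eqref{2.25}). Second, your claim that ``the magnetic field produces no boundary terms at all'' because $b|_{z=0}=0$ is correct in the ideal case, but the justification you give is slightly too quick: $B^\alpha|_{z=0}=-\partial_z^\varphi b\,\partial^\alpha\eta|_{z=0}$ is \emph{not} zero (cf.\ the boundary condition for $B^{l,\alpha}$ in the paper's \eqref{AlinhacE}), so the vanishing of boundary contributions has to come from the fact that the only surface integral generated by the Lorentz terms is $\int_{z=0}(b\cdot\NN)\,V^\alpha\!\cdot\! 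B^\alpha\,\mathrm{d}y$, which vanishes because $b\cdot\NN|_{z=0}=0$, not because $B^\alpha$ itself vanishes there. With those two repairs, the outline is a plausible (if far from detailed) alternative to simply citing Hao--Luo as the paper does.
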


Though co-normal derivatives of the MHD solutions and co-normal
derivatives of ideal MHD solutions vanish on the free boundary, their differences oscillate
dramatically in the vicinity of the free boundary, thus the conormal
functional spaces are not suitable for studying the convergence rates of inviscid
limit. Thus, we define the following functional spaces
\begin{alignat*}{2}
&\|f\|^2_{X^{m,s}}:=\sum_{l\leq m,|\alpha|\leq m+s-l}\|\partial_t^lZ^\alpha f\|^2_{L^2(\mathbb{R}^3_-)},
&\|f\|^2_{X^m}:=\|f\|^2_{X^{m,0}},\\
&\|f\|^2_{X^{m,s}_{tan}}:=\sum_{l\leq m,|\alpha|\leq m+s-l}\|\partial_t^l\partial_y^\alpha f\|^2_{L^2(\mathbb{R}^3_-)},
&\|f\|^2_{X^m_{tan}}:=\|f\|^2_{X^{m,0}_{tan}},\\
&|h|^2_{X^{m,s}}:=\sum_{l\leq m,|\alpha|\leq m+s-l}|\partial_t^lZ^\alpha h|^2_{L^2(\mathbb{R}^2)},
&|h|^2_{X^m}:=|h|^2_{X^{m,0}},\\
&\|f\|^2_{Y^{m,s}_{tan}}:=\sum_{l\leq m,|\alpha|\leq m+s-l}\|\partial_t^l\partial_y^\alpha f\|^2_{L^\infty(\mathbb{R}^3_-)},
&\|f\|^2_{Y^m_{tan}}:=\|f\|^2_{X^{m,0}_{tan}},\\
&|h|^2_{Y^{m,s}}:=\sum_{l\leq m,|\alpha|\leq m+s-l}|\partial_t^lZ^\alpha h|^2_{L^\infty(\mathbb{R}^2)},
&|h|^2_{Y^m}:=|h|^2_{X^{m,0}},
\end{alignat*}
where the differential operators are defined as $\mathcal{Z}_1=\partial_{y_1}, \mathcal{Z}_2=\partial_{y_2}, \mathcal{Z}_3 =\frac{z}{1-z}\partial_z$.
Also, we use $|\cdot|_m$ to denote the standard Sobolev norm defined in the
horizontal space $\mathbb{R}^2$.

\subsection{Main Results for MHD Equations without Surface Tension}
Let $\sigma=0$, the following proposition concerns the uniform regularity of time derivatives
of the free boundary problems for MHD when $\epsilon=\lambda\in(0,1]$.

\begin{proposition}\label{Proposition1.1}
For $m\geq6$, assume the initial data $(v_0^{\epsilon},b_0^{\epsilon},h_0^{\epsilon})$
 satisfy the compatibility conditions $\Pi S^\varphi v^\epsilon_0n|_{z=0}=0$
 and $\Pi S^\varphi b^\epsilon_0n|_{z=0}=0$ in $(0,T]$ and the regularities
 \begin{align}\label{p1}
\sup_{\epsilon\in(0,1]}&(|h^\epsilon_0|_{X^{m-1,1}}
+\epsilon^{\frac{1}{2}}|h^\epsilon_0|_{X^{m-1,\frac{3}{2}}}
+\|v^\epsilon_0\|_{X^{m-1,1}}+\|b^\epsilon_0\|_{X^{m-1,1}}
+\|\omega^\epsilon_{v0}\|_{X^{m-1}}+\|\omega^\epsilon_{b0}\|_{X^{m-1}}\nonumber
\\
&+\|\omega^\epsilon_{v0}\|_{1,\infty}+\|\omega^\epsilon_{b0}\|_{1,\infty}
+\epsilon^{\frac{1}{2}}(\|\partial_z\omega^\epsilon_{v0}\|_{L^\infty}
+\|\partial_z\omega^\epsilon_{b0}\|_{L^\infty}))\leq C_0,
 \end{align}
 where $C_0\geq0$ is suitably small such that the Taylor sign condition $g-\partial_z^{\varphi^\epsilon} q^\epsilon|_{z=0}\geq c_0>0$ holds. Then the unique solution to \eqref{MHDF} satisfies
\begin{align}
\sup_{t\in[0,T]}&(|h^\epsilon|^2_{X^{m-1,1}}
+\epsilon^{\frac{1}{2}}|h^\epsilon|^2_{X^{m-1,\frac{3}{2}}}
+\|v^\epsilon\|^2_{X^{m-1,1}}+\|b^\epsilon\|^2_{X^{m-1,1}}
+\|\partial_zv^\epsilon\|^2_{X^{m-2}}+\|\partial_zb^\epsilon\|^2_{X^{m-2}}
\nonumber\\
&+\|\omega^\epsilon_{v}\|_{X^{m-2}}+\|\omega^\epsilon_{b}\|_{X^{m-2}}
+\|\partial_zv^\epsilon\|^2_{1,\infty}+\|\partial_zb^\epsilon\|^2_{1,\infty}
+\epsilon^{\frac{1}{2}}(\|\partial_{zz}v^\epsilon\|^2_{L^\infty}
+\|\partial_{zz}b^\epsilon\|^2_{L^\infty}))
\nonumber\\
&+\|\partial_t^mh\|^2_{L^4([0,T],L^2)}+\epsilon\|\partial_t^mh\|^2_{L^4([0,T],H^{\frac{1}{2}})}
+\epsilon\int_0^T\|\nabla v^\epsilon\|^2_{X^{m-1,1}}+\|\nabla b^\epsilon\|^2_{X^{m-1,1}}
\nonumber\\
&+\|\nabla\partial_zv^\epsilon\|^2_{X^{m-2}}+\|\nabla\partial_zb^\epsilon\|^2_{X^{m-2}}dt
\leq C.
 \end{align}

As $\epsilon,\lambda\to0$,
the solution to \eqref{IMHDF} satisfies the following regularities
\begin{align}
\sup_{t\in[0,T]}&(|h|^2_{X^{m-1,1}}
+\|v\|^2_{X^{m-1,1}}+\|b\|^2_{X^{m-1,1}}
+\|\partial_zv\|^2_{X^{m-2}}+\|\partial_zb\|^2_{X^{m-2}}
\nonumber\\
&+\|\omega_{v}\|_{X^{m-2}}+\|\omega_{b}\|_{X^{m-2}}
+\|\partial_zv\|^2_{1,\infty}+\|\partial_zb\|^2_{1,\infty})
+\|\partial_t^mh\|^2_{L^4([0,T],L^2)}
\leq C,
 \end{align}
where the Taylor sign condition $g-\partial_zp|_{z=0}\geq c_0>0$ holds.
\end{proposition}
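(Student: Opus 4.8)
The plan is to establish the $\epsilon$-uniform bounds through a single high-order a priori estimate in the conormal spaces $X^{m,s}$ and $X^{m,s}_{tan}$ combined with separate estimates for the vorticity, and then to obtain the ideal-MHD bounds by passing to the limit via compactness.

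\emph{Step 1: tangential energy estimates.} I would apply $\partial_t^l Z^\alpha$ with $l\le m-1$ and $|\alpha|\le m-1-l+s$ to the momentum and magnetic equations in \eqref{MHDF}, pair them with $\partial_t^l Z^\alpha v$ and $\partial_t^l Z^\alpha b$ in $L^2(\mathbb{R}^3_-)$, and add. The diffusion terms $-\epsilon\Delta^\varphi v$ and $-\lambda\Delta^\varphi b$ produce the good dissipation furnishing the $\epsilon\int_0^T(\|\nabla v^\epsilon\|^2_{X^{m-1,1}}+\|\nabla b^\epsilon\|^2_{X^{m-1,1}})\,dt$ terms; the pressure term is integrated by parts using $\nabla^\varphi\cdot v=0$, leaving a boundary contribution on $z=0$ which, together with $q\nn-2\epsilon S^\varphi v\nn=gh\nn$ and $\partial_th=v\cdot\NN$, becomes $\tfrac12\partial_t(g|\partial_t^l Z^\alpha h|^2)$ plus a $\sqrt\epsilon$-weighted half-derivative boundary term. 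The Taylor sign condition $g-\partial_z^{\varphi^\epsilon}q^\epsilon|_{z=0}\ge c_0>0$ makes the surface energy coercive, which is exactly what controls $|h^\epsilon|_{X^{m-1,1}}$, the weighted $\epsilon^{1/2}|h^\epsilon|_{X^{m-1,3/2}}$, and the $\|\partial_t^m h\|_{L^4([0,T],L^2)}$, $\epsilon\|\partial_t^m h\|_{L^4([0,T],H^{1/2})}$ norms (the $L^4$ in time coming from interpolating the extra boundary regularity gained through $\varphi$). All nonlinear terms $v\cdot\nabla^\varphi v$, $b\cdot\nabla^\varphi b$, $v\cdot\nabla^\varphi b$, $b\cdot\nabla^\varphi v$ and the commutators $[\partial_t^l Z^\alpha,\nabla^\varphi]$ are treated by standard conormal product and commutator estimates, exploiting that any single normal derivative can be traded against tangential ones via the divergence-free conditions.

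\emph{Step 2: normal derivatives via the vorticity.} Following the observation stressed in the Introduction, $\partial_z v$ and $\partial_z b$ are recovered from their tangential derivatives together with the curls $\omega_v=\nabla^\varphi\times v$ and $\omega_b=\nabla^\varphi\times b$, so it suffices to bound $\omega_v,\omega_b$ in $X^{m-2}$ and in $L^\infty$. Taking the curl of the first two equations of \eqref{MHDF} produces a transport--diffusion system for $(\omega_v,\omega_b)$ with right-hand sides quadratic in the first derivatives of $(v,b)$ and in $(\omega_v,\omega_b)$, whose trace on $z=0$ depends only on tangential derivatives once the compatibility conditions $\Pi S^\varphi v^\epsilon_0\nn=\Pi S^\varphi b^\epsilon_0\nn=0$ are propagated in time. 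A conormal $L^2$ estimate on this system yields $\|\omega^\epsilon_v\|_{X^{m-2}}+\|\omega^\epsilon_b\|_{X^{m-2}}$ and, after recovery, $\|\partial_z v^\epsilon\|^2_{X^{m-2}}+\|\partial_z b^\epsilon\|^2_{X^{m-2}}$. For the $L^\infty$ parts $\|\partial_z v^\epsilon\|^2_{1,\infty}$, $\|\partial_z b^\epsilon\|^2_{1,\infty}$ and the $\epsilon^{1/2}$-weighted $\|\partial_{zz}v^\epsilon\|^2_{L^\infty}$, $\|\partial_{zz}b^\epsilon\|^2_{L^\infty}$, I would pass to the Els\"asser-type variables: using the Lagrangian maps $Y_1,Y_2$ with $\partial_tY_1=u-b$, $\partial_tY_2=u+b$, the horizontal vorticities $W_\pm=e^{-\gamma t}(\hat\omega_{vh}\pm\hat\omega_{bh})$ solve the coupled heat system \eqref{1.8}; since the maximum principle fails directly, Duhamel's formula with the coupling terms $(f^7_v\mp f^7_b)W_\mp$ treated as forcing delivers the pointwise bounds. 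This is where $\epsilon=\lambda$ is essential, so that a single heat semigroup governs both $W_+$ and $W_-$.

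\emph{Step 3: pressure, closure and the inviscid limit.} The pressure $q^\epsilon$ satisfies an elliptic problem obtained by applying $\nabla^\varphi\cdot$ to the momentum equation, with mixed boundary data read off from $q\nn-2\epsilon S^\varphi v\nn=gh\nn$; conormal elliptic regularity then bounds $\nabla q^\epsilon$ and verifies $g-\partial_z^{\varphi^\epsilon}q^\epsilon|_{z=0}\ge c_0>0$ on $[0,T]$ for $C_0$ small. Assembling Steps 1--2, the pressure bound and the vorticity bounds into one energy functional, a Gronwall inequality closes the a priori estimate on a time interval $[0,T]$ depending only on $C_0$, and a continuation argument (with viscous well-posedness following the scheme of Lee and of Masmoudi--Rousset) produces the solution together with the stated $\epsilon$-uniform bound. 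Finally, these bounds give weak-$*$ compactness in the high conormal norms and, through the equations, strong compactness in time by Aubin--Lions; the limit $(v,b,q,h)$ solves \eqref{IMHDF}, coincides with the solution of Lemma~1.1 by uniqueness, and inherits the stated estimates by weak lower semicontinuity, the $\epsilon^{1/2}$-weighted terms simply dropping out. The main obstacle I anticipate is the top-order interplay between the tangential energy, the vorticity and the pressure in the absence of surface tension: one must extract exactly enough control of $|h^\epsilon|$ and of $q^\epsilon|_{z=0}$ from the Taylor sign condition while keeping the MHD coupling $b\cdot\nabla^\varphi b$, $b\cdot\nabla^\varphi v$ from costing a normal derivative — this balancing is the delicate core of the argument, and the $\sqrt\epsilon$-weighted $L^\infty$ estimates for $\partial_{zz}v^\epsilon,\partial_{zz}b^\epsilon$ are the technically heaviest piece.
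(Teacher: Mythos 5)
Your high-level route — tangential energy in conormal spaces, normal derivatives via the vorticity, pressure elliptic estimate, Gronwall closure, and compactness for the limit — does track the paper's scaffolding, but several of the technical ideas that actually make the argument close are missing or misdirected, and the paper singles them out precisely as the points where it departs from Lee \cite{Lee17}.

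First, the tangential estimates in Step 1 cannot be carried out directly on $\partial_t^l Z^\alpha v$ and $\partial_t^l Z^\alpha b$: the paper works with Alinhac's good unknowns $V^{l,\alpha}=\partial_t^l Z^\alpha v-\partial_z^\varphi v\,\partial_t^l Z^\alpha\eta$, $B^{l,\alpha}$, $Q^{l,\alpha}$ to absorb the loss of derivatives coming from the moving boundary; without this the top-order commutator with $\partial_i^\varphi$ is uncontrolled. More importantly, the paper splits the tangential estimate into the cases $|\alpha|\geq1$ and $|\alpha|=0$. For $|\alpha|=0$ the dynamical boundary condition cannot be used because, with infinite fluid depth, $\|\partial_t^\ell q\|_{L^2}$ has no bound; the paper instead estimates $V^{\ell,0}$, $B^{\ell,0}$ and $\nabla\partial_t^\ell q$ directly. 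Your single unified energy estimate glosses over this, and it is exactly where the argument would otherwise fail. Relatedly, your claim of a "mixed" boundary problem for $q^\epsilon$ is off for $\sigma=0$: the paper uses the Dirichlet problem $q|_{z=0}=gh+2\epsilon S^\varphi v\nn\cdot\nn$ here (the Neumann version is reserved for the $\sigma>0$ case), and the Taylor sign condition is imposed on the full $q^\epsilon$ rather than its Euler part, justified precisely by the $L^\infty$ bounds on $\partial_{zz}v^\epsilon$.

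Second, Step 2 conflates two distinct pieces of the paper. The Lagrangian/Els\"asser variables $W_\pm=e^{-\gamma t}(\hat\omega_{vh}\pm\hat\omega_{bh})$ are built from the difference vorticities $\hat\omega=\omega^\epsilon-\omega$ and are the machinery for the vorticity-layer analysis (Theorems 1.1, 3.1, 4.1), not for the uniform regularity of Proposition 1.1; for the $L^\infty$ and $\sqrt\epsilon\,L^\infty$ bounds the paper simply invokes Lee \cite{Lee17}. And your plan omits the decomposition that closes the $X^{m-1}$ vorticity estimate: the paper splits $\omega_{vh}=\omega_{vh}^{nh}+\omega_{vh}^h$ (and likewise for $b$), where $\omega^{nh}$ solves the forced system with zero boundary data and is handled by a conormal $L^2$ estimate, while $\omega^h$ solves the homogeneous system with the $F^{1,2}[\nabla\varphi](\partial_j v^i)$ boundary data and requires the $H^{1/4}([0,T],L^2)$ boundary-layer estimate, feeding into an $L^4$-in-time bound that is then closed by Gronwall. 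Without this split one cannot bound $\|\omega_{vh}\|_{L^4([0,T],X^{m-1})}$, which is what makes the tangential estimate of Step 1 usable.

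So while the proposal is compatible with the paper's architecture, it leaves out the good unknown, the $|\alpha|=0$ work-around for the unbounded pressure, and the homogeneous/nonhomogeneous vorticity decomposition with its $H^{1/4}$ boundary estimate; as written, Steps 1 and 2 each contain a step that would not go through.
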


Since we can not obtain the estimates of $\|\partial_t^m v^{\e}\|_{L^4([0,T],L^2)}$
and $\|\partial_t^m b^{\e}\|_{L^4([0,T],L^2)}$ by \eqref{p1}, we need some additional conditions
$\partial_t^m v^{\e}|_{t=0}$, $\partial_t^m b^{\e}|_{t=0}$,
$\partial_t^m h^{\e}|_{t=0}\in L^2(\mathbb{R}^3_{-})$.
For convenience, we denote $\partial_t^m f|_{t=0}$ and $\|f|_{t=0}\|_{X^{m,s}}$ by $\partial_t^m f_0$ and $\|f_0\|_{X^{m,s}}$
in the following parts of this paper.

We give some remarks on Proposition \ref{Proposition1.1}:
\begin{remark}
In the following viewpoints, the proof of this Proposition is obviously different from that of Lee \rm\cite{Lee17}:

(i)\emph{ Alinhac's good unknown}.
When $\sigma=0$, let $0<l+|\alpha|\leq m,l\leq m-1$,
we need the following Alinhac's good unknown to estimate the tangential derivatives
\begin{equation}
\left\{
\begin{aligned}
&V^{l,\alpha}=\partial_t^lZ^\alpha v-\partial_z^\varphi v\partial_t^lZ^\alpha \eta,
\\
&B^{l,\alpha}=\partial_t^lZ^\alpha b-\partial_z^\varphi v\partial_t^lZ^\alpha \eta,
\\
&Q^{l,\alpha}=\partial_t^lZ^\alpha q-\partial_z^\varphi v\partial_t^lZ^\alpha \eta.
\end{aligned}
\right.
\end{equation}
The divergence free property and the zero boundary condition of $b$
play critical roles in cancelling the
nontransport-type nonlinear terms involving $b\cdot\nabla^\varphi B^{l,\alpha},\, b\cdot\nabla^\varphi V^{l,\alpha}$ when we make higher order energy estimates.

(ii)\emph{Normal derivative estimate}.
The authors of \cite{Elgindi17,Lee17,Masmoudi12,Mei16,Mei18} estimated the normal derivatives
$\|\partial_zv\|_{m-1}$ and $\|\partial_zb\|_{m-1}$
by $\Pi S^\varphi vn$ and $\Pi S^\varphi bn$ and its evolution equations.
Motivated by Wu \cite{Wu16},
we analyze the relationship between the vorticity and the normal derivatives on the free boundary,
and estimate the normal derivatives
by controlling the vorticity and the equations
\begin{equation}\label{vorticity equations}
\left\{
\begin{aligned}
&\partial_t^\varphi\omega_{vh}-\epsilon\Delta^\varphi\omega_{vh}
+v\cdot\nabla^\varphi\omega_{vh}-b\cdot\nabla^\varphi\omega_{bh}
=F_v^0[\nabla\varphi](\omega_{vh},\omega_{bh},\partial_jv^i,\partial_jb^i)\\
&\partial_t^\varphi\omega_{bh}-\epsilon\Delta^\varphi\omega_{bh}
+v\cdot\nabla^\varphi\omega_{bh}-b\cdot\nabla^\varphi\omega_{vh}
=F^0_b[\nabla\varphi](\omega_{vh},\omega_{bh},\partial_jv^i,\partial_jb^i)\\
&\omega^1_{vh}|_{z=0}=F^1[\nabla\varphi](\partial_jv^i),
\omega^1_{bh}|_{z=0}=F^1[\nabla\varphi](\partial_jb^i),\\
&\omega^2_{vh}|_{z=0}=F^2[\nabla\varphi](\partial_jv^i),
\omega^2_{bh}|_{z=0}=F^2[\nabla\varphi](\partial_jb^i).
\end{aligned}
\right.
\end{equation}
where $j=1,2,i=1,2,3$,
$F_v^0=\omega_v\cdot\nabla^\varphi v_h-\omega_b\cdot\nabla^\varphi b_h$ and
$F_b^0=[\nabla^\varphi\times,b\cdot\nabla^\varphi]v-[\nabla^\varphi\times,v\cdot\nabla^\varphi]b$
are the quadratic polynomial vector with respect to
$\omega_{vh},\omega_{bh},\partial_jv^i,\partial_jb^i$,
respectively.
$F^1[\nabla\varphi](\partial_jv^i)$,
$F^2[\nabla\varphi](\partial_jv^i)$,
$F^1[\nabla\varphi](\partial_jb^i)$,
$F^2[\nabla\varphi](\partial_jb^i)$
are polynomials with respect to $\partial_jv^i,\partial_jb^i$,
respectively,
all the coefficients are fractions of $\nabla\varphi$.
Since $b$ is zero on the free boundary, we have $F^1[\nabla\varphi](\partial_jb^i)=F^2[\nabla\varphi](\partial_jb^i)=0$.

(iii) \emph{Pressure estimates and Taylor sign condition}.
In \cite{Lee17},
the Taylor sign condition $g-\partial_z^{\varphi^\epsilon} q^{\epsilon,E}|_{z=0}\geq c_0>0$ is imposed on the ideal MHD part of the pressure $q^\epsilon$.
In fact,
$q^\epsilon$ has a decomposition $q^\epsilon=q^{\epsilon,E}+q^{\epsilon,NS}$,
where $q^{\epsilon,E}$ satisfies
\begin{equation}
\left\{\begin{array}{ll}
\Delta^{\varphi^\epsilon}q^{\epsilon,E}
=-\partial_i^{\varphi^\epsilon}v^{\epsilon,j}\partial_i^{\varphi^\epsilon}v^{\epsilon,j}
+\partial_i^{\varphi^\epsilon}b^{\epsilon,j}\partial_i^{\varphi^\epsilon}b^{\epsilon,j},
\\[6pt]
q^{\epsilon,E}|_{z=0}=gh^\epsilon,
\end{array}\right.
\end{equation}
and $q^{\epsilon,NS}$ satisfies
\begin{equation}
\left\{\begin{array}{ll}
\Delta^{\varphi^\epsilon}q^{\epsilon,NS}=0,\\[6pt]
q^{\epsilon,NS}|_{z=0}=2\epsilon S^{\varphi^\epsilon}v\nn\cdot \nn.
\end{array}\right.
\end{equation}
In \cite{Lee17}, the author do not discuss whether $\partial_z^{\varphi^\epsilon}q^{\epsilon,E}|_{z=0}$
converges pointwisely to  $\partial_z^{\varphi}q|_{z=0}$ or not,
since $q^{\epsilon,E}$ has boundary layer in the vicinity of the free
boundary in general,
thus $\partial_z^{\varphi^\epsilon}q^{\epsilon,E}|_{z=0}$
may also has boundary layer.
Therefore, the Taylor sign condition in \cite{Lee17} is imposed on the Euler part of the pressure $q^\epsilon$.

However, we have proved
$\|\partial_{zz} v\|_{L^{\infty}},\sqrt{\e}\|\partial_{zz} v\|_{L^{\infty}}$ are bounded.
Due to the fact that $\partial_z^{\varphi^{\e}} q^{\e}|_{z=0}
=\e\Delta^{\varphi^{\e}} v^3-\partial_t v^3
-v_y^{\e} \cdot\nabla_y v^{\e,3}$,
$\partial_z^{\varphi^{\e}} q^{\e}|_{z=0}$
converges to $\partial_z^{\varphi} q|_{z=0}$ pointwisely.
In this paper,
our Taylor sign condition is
$g-\partial_z^{\varphi^{\e}} q^{\e}|_{z=0} \geq c_0 >0$.

Moreover, $\|\partial_t^{\ell} q\|_{L^2}$ has no bound in general.
When $|\alpha|=0$, let $0\leq \ell \leq m-1$,
we estimate $V^{\ell,0}$, $B^{\ell,0}$
and $\nabla\partial_t^{\ell}q$,
where the dynamical boundary condition can not be used.
\end{remark}

Denote by $\hat{\omega}_v=\omega^\epsilon_v-\omega_v$
and $\hat{\omega}_b=\omega^\epsilon_b-\omega_b$,
where $\omega^\epsilon_v, \omega^\epsilon_b$, $\omega_v, \omega_b$
are the vorticity of MHD equations and ideal MHD equations, respectively.
It follows that $\hat{\omega}_v$, $\hat{\omega}_b$ satisfy
the following vorticity difference equations
\begin{equation}\label{1.25}
\left\{\begin{array}{ll}
\partial_t^{\varphi^\epsilon}\hat{\omega}_{vh}
-\epsilon\Delta^{\varphi^\epsilon}\hat{\omega}_{vh}
+v^\epsilon\cdot\nabla^{\varphi^\epsilon}\hat{\omega}_{vh}
-b^\epsilon\cdot\nabla^{\varphi^\epsilon}\hat{\omega}_{bh}
\\[8pt]\quad

=F_v^{0,\epsilon}[\nabla\varphi^\epsilon](\omega_{vh}^{\varphi^\epsilon},
\omega_{bh}^{\varphi^\epsilon},\partial_jv^{\epsilon,i},\partial_jb^{\epsilon,i})
-F_v^0[\nabla\varphi](\omega_{vh},\omega_{bh},\partial_jv^i,\partial_jb^i)
+\epsilon\Delta^{\varphi^\epsilon}\omega_{vh}
\\[8pt]\quad

+\partial_z^\varphi\omega_{vh}\partial_t^{\varphi^\epsilon}\hat{\eta}
+\partial_z^\varphi\omega_{vh}v\cdot\nabla^{\varphi^\epsilon}\hat{\eta}
-\hat{v}\cdot\nabla^\varphi\omega_{vh}
-\partial_z^\varphi\omega_{bh}b\cdot\nabla^{\varphi^\epsilon}\hat{\eta}
+\hat{b}\cdot\nabla^\varphi\omega_{bh},
\\[8pt]

\partial_t^{\varphi^\epsilon}\hat{\omega}_{bh}-\epsilon\Delta^{\varphi^\epsilon}\hat{\omega}_{bh}
+v^\epsilon\cdot\nabla^{\varphi^\epsilon}\hat{\omega}_{bh}
-b^\epsilon\cdot\nabla^{\varphi^\epsilon}\hat{\omega}_{vh}
\\[8pt]\quad

=F_b^{0,\epsilon}[\nabla\varphi^\epsilon](\omega_{vh}^{\varphi^\epsilon},
\omega_{bh}^{\varphi^\epsilon},\partial_jv^{\epsilon,i},\partial_jb^{\epsilon,i})
-F_b^0[\nabla\varphi](\omega_{vh},\omega_{bh},\partial_jv^i,\partial_jb^i)
+\epsilon\Delta^{\varphi^\epsilon}\omega_{bh}
\\[8pt]\quad

+\partial_z^\varphi\omega_{bh}\partial_t^{\varphi^\epsilon}\hat{\eta}
+\partial_z^\varphi\omega_{bh}v\cdot\nabla^{\varphi^\epsilon}\hat{\eta}
-\hat{v}\cdot\nabla^\varphi\omega_{bh}
-\partial_z^\varphi\omega_{vh}b\cdot\nabla^{\varphi^\epsilon}\hat{\eta}
+\hat{b}\cdot\nabla^\varphi\omega_{vh},
\\[5pt]

\hat{\omega}_{vh}|_{z=0}=F^{1,2}[\nabla\varphi](\partial_jv^i)
-\omega_{vh}|_{z=0},
\\[8pt]

\hat{\omega}_{bh}|_{z=0}=F^{1,2}[\nabla\varphi](\partial_jb^i)
-\omega_{bh}|_{z=0},
\\[8pt]

(\hat{\omega}_{vh}|_{t=0},\hat{\omega}_{bh}|_{t=0})=(\hat{\omega}_{v0},\hat{\omega}_{b0})^\top.
\end{array}\right.
\end{equation}
where
\begin{align*}
&F_v^{0,\epsilon}[\nabla\varphi^\epsilon](\omega_{vh}^{\varphi^\epsilon},
\omega_{bh}^{\varphi^\epsilon},\partial_jv^{\epsilon,i},\partial_jb^{\epsilon,i})
-F_v^0[\nabla\varphi](\omega_{vh},\omega_{bh},\partial_jv^i,\partial_jb^i)
\\[4pt]
=&\omega_v^\epsilon\cdot\nabla^\varphi v_h^\epsilon-\omega_b^\epsilon\cdot\nabla^\varphi b_h^\epsilon
-\omega_v\cdot\nabla^\varphi v_h+\omega_b\cdot\nabla^\varphi b_h,
\\[4pt]
&F_b^{0,\epsilon}[\nabla\varphi^\epsilon](\omega_{vh}^{\varphi^\epsilon},
\omega_{bh}^{\varphi^\epsilon},\partial_jv^{\epsilon,i},\partial_jb^{\epsilon,i})
-F_b^0[\nabla\varphi](\omega_{vh},\omega_{bh},\partial_jv^i,\partial_jb^i)
\\[4pt]
=&[\nabla^\varphi\times,b^\varphi\cdot\nabla^\varphi]v^\varphi
-[\nabla^\varphi\times,v^\varphi\cdot\nabla^\varphi]b^\varphi
-[\nabla^\varphi\times,b\cdot\nabla^\varphi]v+[\nabla^\varphi\times,v\cdot\nabla^\varphi]b,
\end{align*}
in which $[\cdot,\cdot]$ stands for the commutator.
$F^{1,2}[\nabla\varphi](\partial_jv^i)$,
$F^{1,2}[\nabla\varphi](\partial_jb^i)$
are defined in \eqref{vorticity equations}.

Based on the analysis of the equations \eqref{1.25},
the following theorem implies two independent sufficient conditions for the formation of strong or weak vorticity layer.
The two conditions include the initial vorticity layer in the vicinity of the initial time
and the discrepancy between boundary value of MHD vorticity and that of ideal MHD vorticity.

\begin{theorem}\label{Theorem1.1}
Assume $T>0$ is finite,
$(v^\epsilon,b^\epsilon,h^\epsilon)$ is the solution in $[0,T]$ of MHD equations \eqref{MHDF}
with initial data $(v_0^\epsilon,b_0^\epsilon,h_0^\epsilon)$ satisfying \eqref{p1},
and $(v, b, h)$ is the solution in $[0,T]$ of ideal MHD
equations\eqref{IMHDF} with initial data
$(v_0,b_0,h_0)\in X^{m-1,1}(\mathbb{R}^3_-)\times X^{m-1,1}(\mathbb{R}^2)$.

(1) If the ideal MHD boundary data satisfies
$\Pi S^\varphi vn|_{z=0}=0$ and $\Pi S^\varphi bn|_{z=0}=0$ in $(0,T]$,
the initial MHD data satisfies
$\lim\limits_{\epsilon\to0}(\nabla^{\varphi^\epsilon}\times v_0^\epsilon)-\nabla^\varphi\times\lim\limits_{\epsilon\to0}v_0^\epsilon\neq0$
or $\lim\limits_{\epsilon\to0}(\nabla^{\varphi^\epsilon}\times b_0^\epsilon)-\nabla^\varphi\times\lim\limits_{\epsilon\to0}b_0^\epsilon\neq0$
in the initial set $\mathcal{A}_0=\{x|-\sqrt{\epsilon}\leq z<0\}$,
then the MHD solution has a strong vorticity layer.
Assume
\[\lim\limits_{\epsilon\to0}(\nabla^{\varphi^\epsilon}\times v_0^\epsilon)-\nabla^\varphi\times\lim\limits_{\epsilon\to0}v_0^\epsilon\neq0\,\,
{\rm and}\,\,
\lim\limits_{\epsilon\to0}(\nabla^{\varphi^\epsilon}\times b_0^\epsilon)-\nabla^\varphi\times\lim\limits_{\epsilon\to0}b_0^\epsilon\neq0\]
in the initial set $\mathcal{A}_0$,
then the MHD solution has a strong vorticity layer satisfying
\begin{equation}\label{1.26}
\begin{array}{ll}
\lim\limits_{\epsilon\to0}\|\omega_v^\epsilon-\omega_v\|_{L^\infty(\mathcal{X}(\mathcal{A}_0)\times(0,T]}\neq0,
\\[8pt]
\lim\limits_{\epsilon\to0}\|\omega_b^\epsilon-\omega_b\|_{L^\infty(\mathcal{X}(\mathcal{A}_0)\times(0,T]}\neq0,
\\[8pt]
\lim\limits_{\epsilon\to0}\|\partial_z^{\varphi^\epsilon}v^\epsilon
-\partial_z^\varphi v\|_{L^\infty(\mathcal{X}(\mathcal{A}_0)\times(0,T]}\neq0,
\\[8pt]
\lim\limits_{\epsilon\to0}\|\partial_z^{\varphi^\epsilon}b^\epsilon
-\partial_z^\varphi b\|_{L^\infty(\mathcal{X}(\mathcal{A}_0)\times(0,T]}\neq0,
\\[8pt]
\lim\limits_{\epsilon\to0}\|S^{\varphi^\epsilon}v^\epsilon
-S^\varphi v\|_{L^\infty(\mathcal{X}(\mathcal{A}_0)\times(0,T]}\neq0,
\\[8pt]
\lim\limits_{\epsilon\to0}\|S^{\varphi^\epsilon}b^\epsilon
-S^\varphi b\|_{L^\infty(\mathcal{X}(\mathcal{A}_0)\times(0,T]}\neq0,
\\[8pt]
\lim\limits_{\epsilon\to0}\|\nabla^{\varphi^\epsilon}q^\epsilon
-\nabla^\varphi q\|_{L^\infty(\mathcal{X}(\mathcal{A}_0)\times(0,T]}\neq0,
\end{array}
\end{equation}
where $\mathcal{X}(\mathcal{A}_0)=\{\mathcal{X}(t,x)|\mathcal{X}(0,x)\in\mathcal{A}_0,
\partial_t\mathcal{X}(t,x)=v(t,\Phi^{-1}\circ X)\}$.

(2) If the initial MHD data satisfies
$\lim\limits_{\epsilon\to0}(\nabla^{\varphi^\epsilon}\times v_0^\epsilon)-\nabla^\varphi\times\lim\limits_{\epsilon\to0}v_0^\epsilon=0$
and
$\lim\limits_{\epsilon\to0}(\nabla^{\varphi^\epsilon}\times b_0^\epsilon)-\nabla^\varphi\times\lim\limits_{\epsilon\to0}b_0^\epsilon=0$,
 the ideal MHD boundary data satisfies
 $\Pi S^\varphi vn|_{z=0}\neq0$ or $\Pi S^\varphi bn|_{z=0}\neq0$
 in $(0,T]$,
then the MHD solution has a strong vorticity layer.
Assume
\[\Pi S^\varphi vn|_{z=0}\neq0 \,\,
{\rm and}\,\,
\Pi S^\varphi bn|_{z=0}\neq0\]
in $(0,T]$,
then the MHD solution has a strong vorticity layer satisfying
\begin{equation}\label{1.23}
\begin{array}{ll}
\lim\limits_{\epsilon\to0}|\omega_v^\epsilon|_{z=0}-\omega_v|_{z=0}|_{L^\infty(\mathbb{R}^2)\times(0,T]}\neq0,
\\[8pt]
\lim\limits_{\epsilon\to0}|\omega_b^\epsilon|_{z=0}-\omega_b|_{z=0}|_{L^\infty(\mathbb{R}^2)\times(0,T]}\neq0,
\\[8pt]
\lim\limits_{\epsilon\to0}\|\omega_v^\epsilon-\omega_v\|_{L^\infty(\mathbb{R}^2
\times(0,O(\epsilon^{\frac{1}{2}-\delta_z}))\times(0,T])]}\neq0,
\\[8pt]
\lim\limits_{\epsilon\to0}\|\omega_b^\epsilon-\omega_b\|_{L^\infty(\mathbb{R}^2
\times(0,O(\epsilon^{\frac{1}{2}-\delta_z}))\times(0,T])]}\neq0,
\\[8pt]
\lim\limits_{\epsilon\to0}\|\partial_z^{\varphi^\epsilon}v^\epsilon
-\partial_z^\varphi v\|_{L^\infty(\mathbb{R}^2
\times(0,O(\epsilon^{\frac{1}{2}-\delta_z}))\times(0,T])]}\neq0,
\\[8pt]
\lim\limits_{\epsilon\to0}\|\partial_z^{\varphi^\epsilon}b^\epsilon
-\partial_z^\varphi b\|_{L^\infty(\mathbb{R}^2
\times(0,O(\epsilon^{\frac{1}{2}-\delta_z}))\times(0,T])]}\neq0,
\\[8pt]
\lim\limits_{\epsilon\to0}\|S^{\varphi^\epsilon}v^\epsilon
-S^\varphi v\|_{L^\infty(\mathbb{R}^2
\times(0,O(\epsilon^{\frac{1}{2}-\delta_z}))\times(0,T])]}\neq0,
\\[8pt]
\lim\limits_{\epsilon\to0}\|S^{\varphi^\epsilon}b^\epsilon
-S^\varphi b\|_{L^\infty(\mathbb{R}^2
\times(0,O(\epsilon^{\frac{1}{2}-\delta_z}))\times(0,T])]}\neq0,
\\[8pt]
\lim\limits_{\epsilon\to0}\|\nabla^{\varphi^\epsilon}q^\epsilon
-\nabla^\varphi q\|_{L^\infty(\mathbb{R}^2
\times(0,O(\epsilon^{\frac{1}{2}-\delta_z}))\times(0,T])]}\neq0,
\end{array}
\end{equation}
for some constant $\delta_z>0$.

(3) If the initial MHD data satisfies
$\lim\limits_{\epsilon\to0}(\nabla^{\varphi^\epsilon}\times v_0^\epsilon)-\nabla^\varphi\times\lim\limits_{\epsilon\to0}v_0^\epsilon=0$ and
$\lim\limits_{\epsilon\to0}(\nabla^{\varphi^\epsilon}\times b_0^\epsilon)-\nabla^\varphi\times\lim\limits_{\epsilon\to0}b_0^\epsilon=0$,
the ideal MHD boundary data satisfies $\Pi S^\varphi vn|_{z=0}=0$ and $\Pi S^\varphi bn|_{z=0}=0$ in $(0,T]$,
then the MHD solution has a weak vorticity layer satisfying
\begin{equation}
\begin{array}{ll}
\lim\limits_{\epsilon\to0}\|\omega_v^\epsilon-\omega_v\|_{L^\infty(\mathfrak{A}(R^3_-)\times(0,T])}=0,
\\[7pt]
\lim\limits_{\epsilon\to0}\|\omega_b^\epsilon-\omega_b\|_{L^\infty(\mathfrak{A}(R^3_-)\times(0,T])}=0,
\\[7pt]
\lim\limits_{\epsilon\to0}\|\partial_z^{\varphi^\epsilon}v^\epsilon
-\partial_z^\varphi v\|_{L^\infty(\mathfrak{A}(R^3_-)\times(0,T])}=0,
\\[7pt]
\lim\limits_{\epsilon\to0}\|\partial_z^{\varphi^\epsilon}b^\epsilon
-\partial_z^\varphi b\|_{L^\infty(\mathfrak{A}(R^3_-)\times(0,T])}=0,
\\[7pt]
\lim\limits_{\epsilon\to0}\|S^{\varphi^\epsilon}v^\epsilon
-S^\varphi v\|_{L^\infty(\mathfrak{A}(R^3_-)\times(0,T])}=0,
\\[7pt]
\lim\limits_{\epsilon\to0}\|S^{\varphi^\epsilon}b^\epsilon
-S^\varphi b\|_{L^\infty(\mathfrak{A}(R^3_-)\times(0,T])}=0,
\\[7pt]
\lim\limits_{\epsilon\to0}\|\nabla^{\varphi^\epsilon}q^\epsilon
-\nabla^\varphi q\|_{L^\infty(\mathfrak{A}(R^3_-)\times(0,T])}=0,
\end{array}
\end{equation}
where $\mathfrak{A}(R^3_-)=\mathbb{R}^3_-\cup \{x|z=0\}$ is the closure of $\mathbb{R}^3_-$.
\end{theorem}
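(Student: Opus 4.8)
The plan is to pass to the vorticity difference system \eqref{1.25} and reduce all three cases to the analysis of a linear, coupled, damped parabolic system on the fixed half-space. First I would remove the transport terms $v^\epsilon\cdot\nabla^{\varphi^\epsilon}$, $b^\epsilon\cdot\nabla^{\varphi^\epsilon}$ by composing the horizontal vorticity differences with the Lagrangian maps $Y_1,Y_2$ solving $\partial_tY_1=u-b$, $\partial_tY_2=u+b$; since $b=0$ on $\{z=0\}$, each $Y_i$ restricts to the velocity flow there and keeps the boundary fixed, so the half-space geometry is preserved. Taking $\epsilon=\lambda$ and forming the sum and difference $\hat\omega_{vh}\pm\hat\omega_{bh}$, the shifted unknowns $W_\pm=e^{-\gamma t}(\hat\omega_{vh}\pm\hat\omega_{bh})(t,\Phi^{-1}\circ Y_i)$ with $\gamma$ large satisfy the system \eqref{1.8}, in which $a_0,b_0,a_{ij},b_{ij}$ are uniformly elliptic and bounded and the inhomogeneities $I_v,I_b$ collect the commutator differences $F^{0,\epsilon}-F^0$, the terms $\epsilon\Delta^{\varphi^\epsilon}\omega_{vh}$, $\epsilon\Delta^{\varphi^\epsilon}\omega_{bh}$, and the $\hat\eta,\hat v,\hat b$ contributions from \eqref{1.25}. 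Invoking the uniform bounds of Proposition \ref{Proposition1.1}, every term of $I_v,I_b$ is either $O(\sqrt\epsilon)$ in $L^\infty_{t,x}$ or is linear in $\hat v,\hat b,\hat\omega$ with bounded coefficients, hence eventually absorbed by Gronwall together with the tangential-derivative convergence.

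Next I would represent $W_\pm$ by Duhamel's formula against the Dirichlet heat semigroup $\mathcal{H}_\epsilon(t)$ of $\epsilon\partial_i(a_{ij}\partial_j\cdot)$ on $\mathbb{R}^3_-$, since the maximum principle does not apply directly to \eqref{1.8}; the coupling terms $(f^7_v\mp f^7_b)W_\mp$ in the equation for $W_\pm$ and the forcing $I_v,I_b$ are carried on the right-hand side. This gives a decomposition of $W_\pm$ into (a) the free evolution $\mathcal{H}_\epsilon(t)[\hat\omega_{v0}\pm\hat\omega_{b0}]$ of the initial vorticity difference, (b) the Duhamel integral of $I_v,I_b$ and of the coupling, and (c) the contribution of the Dirichlet datum $\hat\omega_{vh}|_{z=0}\pm\hat\omega_{bh}|_{z=0}$. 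Adding the two Duhamel identities and running Gronwall on $\|W_+\|_{L^\infty}+\|W_-\|_{L^\infty}$ absorbs the $f^7$ coupling and the linear-in-difference part of the forcing. The boundary datum is read off from \eqref{1.25}: by the regularity structure of the solutions $\omega^\epsilon_{vh}|_{z=0}$ depends only on the tangential derivatives of $v^\epsilon$ and on $\nabla\varphi^\epsilon$, both of which converge, while the ideal value $\omega_{vh}|_{z=0}$ carries in addition a fixed nonzero multiple of $\Pi S^\varphi vn|_{z=0}$; hence $\hat\omega_{vh}|_{z=0}=-c_v\,\Pi S^\varphi vn|_{z=0}+O(\sqrt\epsilon)$ in $L^\infty$, and similarly $\hat\omega_{bh}|_{z=0}=-c_b\,\Pi S^\varphi bn|_{z=0}+O(\sqrt\epsilon)$.

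The three cases are then separated according to which of (a)--(c) dominates. In case (3) all three are $o(1)$ in $L^\infty(\mathbb{R}^3_-)$ --- (a) because $\hat\omega_{v0},\hat\omega_{b0}\to0$ and $\mathcal{H}_\epsilon(t)$ is an $L^\infty$-contraction, (b) by the previous paragraph, and (c) because $\Pi S^\varphi vn|_{z=0}=\Pi S^\varphi bn|_{z=0}=0$; Gronwall gives $\|W_\pm\|_{L^\infty}\to0$, and undoing the exponential shift and the maps $Y_i$, then converting to Eulerian quantities through the identities expressing $\partial_zv^\epsilon$ via the vorticity and tangential derivatives, through the momentum equation for $\nabla^{\varphi^\epsilon}q^\epsilon$, and through $\partial_z^{\varphi^\epsilon}q^\epsilon|_{z=0}=\epsilon\Delta^{\varphi^\epsilon}v^{\epsilon,3}-\partial_tv^{\epsilon,3}-v^\epsilon_y\cdot\nabla_yv^{\epsilon,3}$ (valid under the Taylor sign condition), yields the weak-layer statements. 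In case (1) the contributions (b) and (c) are $o(1)$ and $O(\sqrt\epsilon)$ respectively, so on the transported strip $\mathcal{X}(\mathcal{A}_0)$ the leading part of $W_\pm$ is $\mathcal{H}_\epsilon(t)[\hat\omega_{v0}\pm\hat\omega_{b0}]$; since the heat kernel with diffusivity $\epsilon$ spreads in the normal direction only on the scale $\sqrt{\epsilon t}\sim\sqrt\epsilon$, the $O(1)$ initial mass on the $\sqrt\epsilon$-strip $\mathcal{A}_0$ is neither dispersed to $0$ nor cancelled, so $\lim_{\epsilon\to0}\|W_\pm\|_{L^\infty(\mathcal{X}(\mathcal{A}_0)\times(0,T])}\neq0$, which, transported back, gives \eqref{1.26}. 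In case (2) the initial part is $o(1)$, but the $O(1)$ Dirichlet datum $-c_v\,\Pi S^\varphi vn|_{z=0}$ or $-c_b\,\Pi S^\varphi bn|_{z=0}$ forces a genuine boundary layer: Fourier-transforming the $W_\pm$-system in $y$ reduces each frequency mode to a second-order ODE in $z$ with inhomogeneous Dirichlet condition, and rescaling $z=\sqrt\epsilon\,\zeta$ exhibits the leading profile as a nonzero multiple of that datum, so $W_\pm$ is $O(1)$ on a boundary layer of thickness $O(\epsilon^{1/2-\delta_z})$, which gives \eqref{1.23}.

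The main obstacle is the last two steps: deriving the coupled parabolic system \eqref{1.8} with coefficients and inhomogeneities controlled \emph{uniformly} in $\epsilon$ through the $Y_i$ change of variables --- which uses the full strength of Proposition \ref{Proposition1.1} to handle $I_v,I_b$ --- and carrying out the scaling analysis of the symbolic ODE system precisely enough to certify that the boundary-layer profile in case (2), and the diffused initial profile in case (1), do not vanish in the limit; transferring the resulting $L^\infty$ bounds back to $\partial_zv^\epsilon$, $S^{\varphi^\epsilon}v^\epsilon$, $\nabla^{\varphi^\epsilon}q^\epsilon$ and their magnetic counterparts via the regularity structure is then routine but must be done on the moving sets $\mathcal{X}(\mathcal{A}_0)$ and in the boundary layer.
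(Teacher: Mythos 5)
Your proposal follows essentially the same route as the paper: the same pair of Lagrangian maps $Y_1,Y_2$ to kill the transport, the same damped coupled heat system for $W_\pm = e^{-\gamma t}(\hat\omega_{vh}\pm\hat\omega_{bh})$, the same three-way Duhamel split into initial/forcing/boundary contributions (the paper's $W^{ini}_\pm$, $W^{fo}_\pm$, $W^{bdy}_\pm$), and in case (2) the same reduction to a second-order ODE in $z$ by transforming the tangential variables followed by the $z\sim\sqrt\epsilon$ scaling; the only cosmetic deviation is that the paper takes the Fourier transform in both $t$ and $y$ (getting a symbolic ODE in $z$ with Fourier multipliers $A_{v0}, A_{b0}$ etc.) rather than in $y$ alone, and it works directly with an explicit method-of-images fundamental solution rather than appealing abstractly to the Dirichlet heat semigroup. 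These are presentational differences, not different arguments.
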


\begin{remark}
(i) Denote that $\textsf{S}_n^v =\Pi \mathcal{S}^{\varphi} v \nn$
 and $\textsf{S}_n^v =\Pi \mathcal{S}^{\varphi} v \nn$ which satisfy the forced transport equations:
\begin{equation}\label{1.29}
\begin{array}{ll}
\partial_t^{\varphi} \textsf{S}_n^v + v\cdot\nabla^{\varphi} \textsf{S}_n^v
-b\cdot\nabla^{\varphi} \textsf{S}_n^b =
-\frac{1}{2}\Pi\big((\nabla^{\varphi} v)^2+((\nabla^{\varphi} v)^{\top})^2\big) \nn
\\[8pt]\quad
+ (\partial_t^{\varphi}\Pi + v\cdot\nabla^{\varphi}\Pi)\mathcal{S}^{\varphi} v\nn
+ \Pi\mathcal{S}^{\varphi} v(\partial_t^{\varphi}\nn + v\cdot\nabla^{\varphi}\nn)
- \Pi((\mathcal{D}^{\varphi})^2 q)\nn
\\[8pt]\quad
+\frac{1}{2}\Pi\big((\nabla^{\varphi} b)^2+((\nabla^{\varphi} b)^{\top})^2\big) \nn
- b\cdot\nabla^{\varphi}\Pi\mathcal{S}^{\varphi} b\nn
- \Pi\mathcal{S}^{\varphi} bb\cdot\nabla^{\varphi}\nn,
\\[8pt]

\partial_t^{\varphi} \textsf{S}_n^b + v\cdot\nabla^{\varphi} \textsf{S}_n^b
-b\cdot\nabla^{\varphi} \textsf{S}_n^v =
(\partial_t^{\varphi}\Pi + v\cdot\nabla^{\varphi}\Pi)\mathcal{S}^{\varphi} b\nn
\\[8pt]\quad
+ \Pi\mathcal{S}^{\varphi} b(\partial_t^{\varphi}\nn + v\cdot\nabla^{\varphi}\nn)
- b\cdot\nabla^{\varphi}\Pi\mathcal{S}^{\varphi} v\nn
- \Pi\mathcal{S}^{\varphi} vb\cdot\nabla^{\varphi}\nn,
\end{array}
\end{equation}
where $\big((\mathcal{D}^{\varphi})^2 q\big)$ is the Hessian matrix of $q$.
The equation \eqref{1.29} implies that even if $\textsf{S}_n^v|_{t=0} = 0$
and $\textsf{S}_n^b|_{t=0} = 0$,
it is still possible that $\textsf{S}_n^v \neq 0$ or $\textsf{S}_n^b \neq 0$ in $(0,T]$
due to the force terms of \eqref{1.29}.

(ii) The estimate of $\lim\limits_{\e\rto 0}\|\partial_z^{\varphi^{\e}} f^{\e}
-\partial_z^{\varphi} f\|_{L^{\infty}}\neq 0$
results from that of $\lim\limits_{\e\rto 0}\|\partial_z f^{\e} - \partial_z f\|_{L^{\infty}}\neq 0$
and $\lim\limits_{\e\rto 0}\|\partial_z(\eta^{\e} -\eta)\|_{L^{\infty}} =0$ due to
the formula:
\begin{equation}\label{1.30}
\begin{array}{ll}
\partial_z^{\varphi^{\e}} f^{\e} - \partial_z^{\varphi} f
= \partial_z^{\varphi^{\e}}(f^{\e} -f) - \partial_z^{\varphi} f \, \partial_z^{\varphi^{\e}} (\eta^{\e} -\eta) \\[5pt]\hspace{1.95cm}

= \frac{1}{\partial_z\varphi^{\e}} \cdot\partial_z(v^{\e} -v)
- \partial_z^{\varphi} v \, \frac{1}{\partial_z\varphi^{\e}} \cdot \partial_z (\eta^{\e} -\eta).
\end{array}
\end{equation}
\end{remark}

Now we show our strategy of the proofs of Theorem \ref{Theorem1.1}.

First, we prove that the strong initial vorticity layer is one of sufficient
conditions for the existence of strong vorticity layer of the free boundary MHD equations in \eqref{1.26}.
By introducing  Lagrangian coordinates map, the equations \eqref{1.25} can be
transformed into two coupling  heat equations with damping and force terms.
If $\lim\limits_{\epsilon\to0}\|\widehat{\omega}_{uh}|_{t=0}\|_{L^\infty}(\mathcal{A}_0)\neq0$
or
$\lim\limits_{\epsilon\to0}\|\widehat{\omega}_{bh}|_{t=0}\|_{L^\infty}(\mathcal{A}_0)\neq0$,
then it follows that $\lim\limits_{\epsilon\to0}\|W_+|_{t=0}\|_{X(\mathcal{A}_0)\times(0,T])}\neq0$
or
$\lim\limits_{\epsilon\to0}\|W_-|_{t=0}\|_{X(\mathcal{A}_0)\times(0,T])}\neq0$.
Hence, we deduce that $\lim\limits_{\epsilon\to0}\|W_+\|_{X(\mathcal{A}_0)\times(0,T])}\neq0$
or $\lim\limits_{\epsilon\to0}\|W_-\|_{X(\mathcal{A}_0)\times(0,T])}\neq0$,
which implies there is at least one of
 $\lim\limits_{\epsilon\to0}\|\widehat{\omega}_{uh}\|_{X(\mathcal{A}_0)\times(0,T])}\neq0$,
$\lim\limits_{\epsilon\to0}\|\widehat{\omega}_{bh}\|_{X(\mathcal{A}_0)\times(0,T])}\neq0$
hold.

Second, \eqref{1.23} implies that the discrepancy between boundary value of
MHD vorticity and boundary value of ideal MHD vorticity is also one of sufficient conditions for
the existence of strong vorticity layer for the free boundary MHD equations.
If $\lim\limits_{\e\rto 0}\big\|\hat{\omega}_{vh}|_{t=0}\big\|_{L^{\infty}}=0$
and $\lim\limits_{\e\rto 0}\big\|\hat{\omega}_{bh}|_{t=0}\big\|_{L^{\infty}}=0$,
and the boundary value satisfies
$\hat{\omega}_{vh}|_{z=0} \neq 0$ or $\hat{\omega}_{bh}|_{z=0} \neq 0$,
by splitting into \eqref{fo1} and \eqref{bdy1},
we can prove that
$\lim\limits_{\e\rto 0}\|W_{+}\|_{L^{\infty}(\mathbb{R}^2\times
[0, O(\e^{\frac{1}{2} -\delta_z}))\times (0,T])}\neq 0$
or $\lim\limits_{\e\rto 0}\|W_{-}\|_{L^{\infty}(\mathbb{R}^2\times
[0, O(\e^{\frac{1}{2} -\delta_z}))\times (0,T])}\neq 0$,
which implies that there is at least one of
$\lim\limits_{\e\rto 0}\|\omega^{\e}_v-\omega_v\|_{L^{\infty}
(\mathbb{R}^2\times [0, O(\e^{\frac{1}{2} -\delta_z}))\times (0,T])}\neq 0$,
 $\lim\limits_{\e\rto 0}\|\omega^{\e}_b-\omega_b\|_{L^{\infty}
(\mathbb{R}^2\times [0, O(\e^{\frac{1}{2}-\delta_z}))\times(0,T])}\neq 0$ holds.

Before estimating the the convergence rates,
we give the difference equations between viscous MHD system and ideal MHD system.
Denote by $\hat{v}= v^\epsilon-v$, $\hat{b}= b^\epsilon-b$,
$\hat{h} = h^\epsilon-h$, then $\hat{v}$, $\hat{b}$, $\hat{h}$
satisfy the following equations:
\begin{equation}\label{difference1}
\left\{\begin{aligned}
&\partial_t^{\varphi^{\e}}\hat{v} -\partial_z^{\varphi} v \partial_t^{\varphi^{\e}}\hat{\eta}
+ v^{\e} \cdot\nabla^{\varphi^{\e}} \hat{v}-b^{\e} \cdot\nabla^{\varphi^{\e}} \hat{b}
 -v^{\e}\cdot \nabla^{\varphi^{\e}}\hat{\eta}\, \partial_z^{\varphi} v
+b^{\e}\cdot \nabla^{\varphi^{\e}}\hat{\eta}\, \partial_z^{\varphi} b
 \\
&\quad= 2\e \nabla^{\varphi^{\e}} \cdot\mathcal{S}^{\varphi^{\e}} \hat{v}
+ \e \Delta^{\varphi^{\e}} v
- \nabla^{\varphi^{\e}} \hat{q} +\partial_z^{\varphi} q\nabla^{\varphi^{\e}}\hat{\eta}
- \hat{v}\cdot\nabla^{\varphi} v+ \hat{b}\cdot\nabla^{\varphi} b,
&  x\in\mathbb{R}^3_{-},
\\
&\partial_t^{\varphi^{\e}}\hat{b} -\partial_z^{\varphi} b \partial_t^{\varphi^{\e}}\hat{\eta}
+ v^{\e} \cdot\nabla^{\varphi^{\e}} \hat{b}-b^{\e} \cdot\nabla^{\varphi^{\e}} \hat{v}
 -v^{\e}\cdot \nabla^{\varphi^{\e}}\hat{\eta}\, \partial_z^{\varphi} b
+b^{\e}\cdot \nabla^{\varphi^{\e}}\hat{\eta}\, \partial_z^{\varphi} v
 \\
&\quad= 2\e \nabla^{\varphi^{\e}} \cdot\mathcal{S}^{\varphi^{\e}} \hat{v}
+ \e \Delta^{\varphi^{\e}} b
- \hat{v}\cdot\nabla^{\varphi} b+ \hat{b}\cdot\nabla^{\varphi} v,
&  x\in\mathbb{R}^3_{-},
\\
&\nabla^{\varphi^{\e}}\cdot \hat{v} = \partial_z^{\varphi}v \cdot\nabla^{\varphi^{\e}}\hat{\eta},
\nabla^{\varphi^{\e}}\cdot \hat{b} = \partial_z^{\varphi}b \cdot\nabla^{\varphi^{\e}}\hat{\eta},
& x\in\mathbb{R}^3_{-},\\
&\partial_t \hat{h} + v_y\cdot \nabla_y \hat{h} = \hat{v}\cdot\NN^{\e},
&\{z=0\},\\
&(\hat{q} -g \hat{h})\NN^{\e} -2\e \mathcal{S}^{\varphi^\e}\hat{v}\,\NN^\e = 2\e \mathcal{S}^{\varphi^\e}v\,\NN^\e,
&\{z=0\},\\
&\hat{b} =0,
&\{z=0\},\\
&(\hat{v},\hat{b},\hat{h})|_{t=0} = (v_0^\e -v_0,b_0^\e -b_0,h_0^\e -h_0).
\end{aligned}\right.
\end{equation}

Now, we define the following variables, which is similar to Alinhac's good unknown:
\begin{equation}\label{Alinhac's good unknown}
\left\{
\begin{aligned}
\hat{V}^{l,\alpha}=\partial_t^lZ^\alpha\hat{v}-\partial_z^\varphi v\partial_t^lZ^\alpha\hat{\eta},\\
\hat{B}^{l,\alpha}=\partial_t^lZ^\alpha\hat{b}-\partial_z^\varphi v\partial_t^lZ^\alpha\hat{\eta},\\
\hat{Q}^{l,\alpha}=\partial_t^lZ^\alpha\hat{q}-\partial_z^\varphi v\partial_t^lZ^\alpha\hat{\eta}.
\end{aligned}
\right.
\end{equation}
Similarly to the estimates of $V^{l,\alpha}$, $B^{l,\alpha}$, $Q^{l,\alpha}$,
we estimate $\hat{V}^{l,\alpha}$, $\hat{B}^{l,\alpha}$, $\hat{Q}^{l,\alpha}$ when $|\alpha|>0$,
and estimate $\hat{V}^{l,0},\hat{B}^{l,0},
\partial_t^l\nabla \hat{q}$ when $|\alpha| = 0$.
By the divergence free property and the zero boundary condition of $b^\epsilon$,
we can cancel the nontransport type nonlinear terms involving $ b\cdot\nabla^\varphi \hat{B}^{l,\alpha},\, b\cdot\nabla^\varphi \hat{V}^{l,\alpha}$
by combining the two energy estimates.

The following theorem concerns the convergence rates of the inviscid limits
of \eqref{MHDF}.

\begin{theorem}\label{Theorem1.2}
Assume $T>0$ is finite,
$(v^\epsilon,b^\epsilon,h^\epsilon)$ is the solution in $[0,T]$ of MHD equations \eqref{MHDF}
with initial data $(v_0^\epsilon,b_0^\epsilon,h_0^\epsilon)$ satisfies \eqref{p1},
and $(v, b, h)$ is the solution in $[0,T]$ of ideal MHD equations \eqref{IMHDF}
 with initial data  satisfy $(v_0,b_0,h_0)\in X^{m-1,1}(\mathbb{R}^3_-)\times X^{m-1,1}(\mathbb{R}^2)$.
Assume there exists an integer k where $1\leq k\leq m-2$, such that
$\|v_0^\epsilon-v_0\|_{X^{k-1,1}}(\mathbb{R}^3_-)=O(\epsilon^{\lambda^v})$,
$\|b_0^\epsilon-b_0\|_{X^{k-1,1}}(\mathbb{R}^3_-)=O(\epsilon^{\lambda^b})$,
$|h_0^\epsilon-h_0|_{X^{k-1,1}}(\mathbb{R}^2)=O(\epsilon^{\lambda^h})$,
$\|\omega_{u0}^\epsilon-\omega_{u0}\|_{X^{k-1,1}}(\mathbb{R}^3_-)=O(\epsilon^{\lambda_1^{\omega_v}})$,
$\|\omega_{b0}^\epsilon-\omega_{b0}\|_{X^{k-1,1}}(\mathbb{R}^3_-)=O(\epsilon^{\lambda_1^{\omega_b}})$,
where $\lambda^v>0$,$\lambda^b>0$,$\lambda^h>0$, $\lambda_1^{\omega_u}>0$,$\lambda_1^{\omega_b}>0$,
and $g-\partial_z^{\varphi^\epsilon} q^\epsilon|_{z=0}\geq c_0>0$ and
$g-\partial_z^{\varphi} q|_{z=0}\geq c_0>0$ hold.

$(1)$ If the Euler boundary data satisfies $\Pi S^\varphi vn|_{z=0}\neq0$ and $\Pi S^\varphi bn|_{z=0}\neq0$ in $(0,T]$,
then the convergence rates of the inviscid limit satisfy
\begin{equation}\label{ConvergenceRates1}
\begin{array}{ll}
\|v^\epsilon-v\|_{X_{tan}^{k-1,1}}+\|b^\epsilon-b\|_{X_{tan}^{k-1,1}}+|h^\epsilon-h|_{X^{k-1,1}}
=O(\epsilon^{min\{\frac{1}{4},\lambda^v,\lambda^b,\lambda^h,\lambda_1^{\omega_v},\lambda_1^{\omega_b}\}}),
\\[5pt]
\|N^\epsilon\cdot\partial_z^{\varphi^\epsilon}v^\epsilon-N\cdot\partial_zv\|_{X_{tan}^{k-1,1}}
+\|N^\epsilon\cdot\partial_z^{\varphi^\epsilon}b^\epsilon-N\cdot\partial_zb\|_{X_{tan}^{k-1,1}}
+\|N^\epsilon\cdot\omega_v^\epsilon-N\cdot\omega_v\|_{X_{tan}^{k-1,1}}
\\[5pt]
\quad+\|N^\epsilon\cdot\omega_b^\epsilon-N\cdot\omega_b\|_{X_{tan}^{k-1,1}}
=O(\epsilon^{min\{\frac{1}{4},\lambda^v,\lambda^b,\lambda^h,\lambda_1^{\omega_v},\lambda_1^{\omega_b}\}}),
\\[5pt]
\|\partial_z^{\varphi^\epsilon}v^\epsilon-\partial_z^\varphi v\|_{X_{tan}^{k-2}}
+\|\partial_z^{\varphi^\epsilon}b^\epsilon-\partial_z^\varphi b\|_{X_{tan}^{k-2}}
+\|\omega^\epsilon_v-\omega_v\|_{X_{tan}^{k-2}}+\|\omega^\epsilon_b-\omega_b\|_{X_{tan}^{k-2}}
\\[5pt]
\quad=O(\epsilon^{min\{\frac{1}{8},\frac{\lambda^v}{2},\frac{\lambda^b}{2},\frac{\lambda^h}{2},
\frac{\lambda_1^{\omega_v}}{2},\frac{\lambda_1^{\omega_b}}{2}\}}),
\\[5pt]
\|\nabla^{\varphi^\epsilon}q^\epsilon-\nabla^\varphi q\|_{X_{tan}^{k-2}}
+\|\Delta^{\varphi^\epsilon}q^\epsilon-\Delta^\varphi q\|_{X_{tan}^{k-2}}
=O(\epsilon^{min\{\frac{1}{8},\frac{\lambda^v}{2},\frac{\lambda^b}{2},\frac{\lambda^h}{2},
\frac{\lambda_1^{\omega_v}}{2},\frac{\lambda_1^{\omega_b}}{2}\}}),
\\[5pt]
\|v^\epsilon-v\|_{Y_{tan}^{k-3}}+\|b^\epsilon-b\|_{Y_{tan}^{k-3}}+|h^\epsilon-h|_{Y^{k-3}}
=O(\epsilon^{min\{\frac{1}{8},\frac{\lambda^v}{2},\frac{\lambda^b}{2},\frac{\lambda^h}{2},
\frac{\lambda_1^{\omega_v}}{2},\frac{\lambda_1^{\omega_b}}{2}\}}),
\\[5pt]
\|N^\epsilon\cdot\partial_z^{\varphi^\epsilon}v^\epsilon-N\cdot\partial_zv\|_{Y_{tan}^{k-4}}
+\|N^\epsilon\cdot\partial_z^{\varphi^\epsilon}b^\epsilon-N\cdot\partial_zb\|_{Y_{tan}^{k-4}}
+\|N^\epsilon\cdot\omega_v^\epsilon-N\cdot\omega_v\|_{Y_{tan}^{k-4}}
\\[5pt]
\quad+\|N^\epsilon\cdot\omega_b^\epsilon-N\cdot\omega_b\|_{Y_{tan}^{k-4}}
=O(\epsilon^{min\{\frac{1}{8},\frac{\lambda^v}{2},\frac{\lambda^B}{2},\frac{\lambda^h}{2},
\frac{\lambda_1^{\omega_v}}{2},\frac{\lambda_1^{\omega_b}}{2}\}}).
\end{array}
\end{equation}

$(2)$ If the ideal MHD boundary data satisfies
$\Pi S^\varphi vn|_{z=0}=0$ and $\Pi S^\varphi bn|_{z=0}=0$ in $(0,T]$,
assume $\|\omega^\epsilon_{v0}-\omega_{v0}\|_{X^{k-2}(\mathbb{R}^3_-)}=O(\epsilon^{\lambda_2^{\omega_u}})$
and $\|\omega^\epsilon_{b0}-\omega_{b0}\|_{X^{k-2}(\mathbb{R}^3_-)}=O(\epsilon^{\lambda_2^{\omega_b}})$,
where $\lambda_2^{\omega_u}>0,\lambda_2^{\omega_b}>0$,
then the convergence rates of the inviscid limit satisfy
\begin{equation}\label{ConvergenceRates2}
\begin{array}{ll}
\|v^\epsilon-v\|_{X_{tan}^{k-2,1}}+\|b^\epsilon-b\|_{X_{tan}^{k-2,1}}+|h^\epsilon-h|_{X^{k-2,1}}
=O(\epsilon^{min\{\frac{1}{2},\lambda^v,\lambda^b,\lambda^h,\lambda_2^{\omega_v},\lambda_2^{\omega_b}\}}),
\\[5pt]
\|N^\epsilon\cdot\partial_z^{\varphi^\epsilon}v^\epsilon-N\cdot\partial_zv\|_{X_{tan}^{k-2}}
+\|N^\epsilon\cdot\partial_z^{\varphi^\epsilon}b^\epsilon-N\cdot\partial_zb\|_{X_{tan}^{k-2}}
+\|N^\epsilon\cdot\omega_v^\epsilon-N\cdot\omega_v\|_{X_{tan}^{k-2}}
\\[5pt]
\quad+\|N^\epsilon\cdot\omega_b^\epsilon-N\cdot\omega_b\|_{X_{tan}^{k-2}}
=O(\epsilon^{min\{\frac{1}{2},\lambda^v,\lambda^b,\lambda^h,\lambda_2^{\omega_v},\lambda_2^{\omega_b}\}}),
\\[5pt]
\|\partial_z^{\varphi^\epsilon}v^\epsilon-\partial_z^\varphi v\|_{X_{tan}^{k-3}}
+\|\partial_z^{\varphi^\epsilon}b^\epsilon-\partial_z^\varphi b\|_{X_{tan}^{k-3}}
+\|\omega^\epsilon_v-\omega_v\|_{X_{tan}^{k-2}}+\|\omega^\epsilon_b-\omega_b\|_{X_{tan}^{k-3}}
\\[5pt]
\quad=O(\epsilon^{min\{\frac{1}{4},\frac{\lambda^v}{2},\frac{\lambda^b}{2},\frac{\lambda^h}{2},
\frac{\lambda_2^{\omega_v}}{2},\frac{\lambda_2^{\omega_b}}{2}\}}),
\\[5pt]
\|\nabla^{\varphi^\epsilon}q^\epsilon-\nabla^\varphi q\|_{X_{tan}^{k-3}}
+\|\Delta^{\varphi^\epsilon}q^\epsilon-\Delta^\varphi q\|_{X_{tan}^{k-3}}
=O(\epsilon^{min\{\frac{1}{4},\frac{\lambda^v}{2},\frac{\lambda^b}{2},\frac{\lambda^h}{2},
\frac{\lambda_2^{\omega_v}}{2},\frac{\lambda_2^{\omega_b}}{2}\}}),
\\[5pt]
\|v^\epsilon-v\|_{Y_{tan}^{k-4}}+\|b^\epsilon-b\|_{Y_{tan}^{k-4}}+|h^\epsilon-h|_{Y^{k-4}}
=O(\epsilon^{min\{\frac{1}{4},\frac{\lambda^v}{2},\frac{\lambda^b}{2},\frac{\lambda^h}{2},
\frac{\lambda_2^{\omega_v}}{2},\frac{\lambda_2^{\omega_b}}{2}\}}),
\\[5pt]
\|N^\epsilon\cdot\partial_z^{\varphi^\epsilon}v^\epsilon-N\cdot\partial_zv\|_{Y_{tan}^{k-5}}
+\|N^\epsilon\cdot\partial_z^{\varphi^\epsilon}b^\epsilon-N\cdot\partial_zb\|_{Y_{tan}^{k-5}}
+\|N^\epsilon\cdot\omega_v^\epsilon-N\cdot\omega_v\|_{Y_{tan}^{k-5}}
\\[5pt]
\quad+\|N^\epsilon\cdot\omega_b^\epsilon-N\cdot\omega_b\|_{Y_{tan}^{k-5}}
=O(\epsilon^{min\{\frac{1}{4},\frac{\lambda^v}{2},\frac{\lambda^b}{2},\frac{\lambda^h}{2},
\frac{\lambda_2^{\omega_v}}{2},\frac{\lambda_2^{\omega_b}}{2}\}}).
\end{array}
\end{equation}
\end{theorem}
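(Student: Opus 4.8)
The plan is to run coupled energy estimates on the two difference systems already at hand --- the momentum/magnetic/kinematic difference system \eqref{difference1} together with the good unknowns \eqref{Alinhac's good unknown}, and the vorticity difference system \eqref{1.25} --- and to let the \emph{only} genuinely $\e$-sized source, the viscous boundary stress $2\e\,\mathcal{S}^{\varphi^\e}v\,\NN^\e$ on $\{z=0\}$, dictate the convergence rate. The dichotomy in the statement is built in from the start: since $\Pi\nn\equiv0$, the dynamical boundary condition of \eqref{MHDF} forces $\Pi\mathcal{S}^{\varphi^\e}v^\e\,\nn^\e|_{z=0}=0$ for every $\e>0$, so the hypothesis $\Pi\mathcal{S}^{\varphi}v\,\nn|_{z=0}\neq0$ (Case (1)) means that the viscous boundary stress does not vanish in the limit and is only $O(\e)$ in norm, while the hypothesis $\Pi\mathcal{S}^{\varphi}v\,\nn|_{z=0}=0$ (Case (2)) lets one express $\mathcal{S}^{\varphi^\e}v\,\NN^\e|_{z=0}$ entirely through the differences $\hat v,\eta^\e-\eta$ (via \eqref{1.30}), making this boundary source effectively quadratic in the quantities being estimated. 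I would first obtain the tangential rates and the pressure rates for $\hat v,\hat b,\hat h,\hat q$, then feed them into the vorticity difference system to recover the normal derivatives, and finally pass to $L^\infty$ by anisotropic Sobolev embedding.

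\emph{Tangential estimates, $|\alpha|>0$.} Apply $\partial_t^lZ^\alpha$ to \eqref{difference1}, rewrite everything in terms of $\hat V^{l,\alpha},\hat B^{l,\alpha},\hat Q^{l,\alpha}$ so that the dangerous commutators with $\nabla^{\varphi^\e}$ become lower-order terms bounded by Proposition \ref{Proposition1.1}; pair the $\hat V$- and $\hat B$-equations with $\hat V^{l,\alpha}$ and $\hat B^{l,\alpha}$, integrate over $\mathbb{R}^3_-$ and add. The non-transport terms $b^\e\cdot\nabla^{\varphi^\e}\hat B^{l,\alpha}$ and $b^\e\cdot\nabla^{\varphi^\e}\hat V^{l,\alpha}$ cancel in the sum because $\nabla^{\varphi^\e}\cdot b^\e=0$ and $b^\e|_{z=0}=0$ (as in the remark after Proposition \ref{Proposition1.1}). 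The viscous term $2\e\nabla^{\varphi^\e}\cdot\mathcal{S}^{\varphi^\e}\hat v$ yields the dissipation $\e\|\nabla^{\varphi^\e}\hat V^{l,\alpha}\|_{L^2}^2$ plus a boundary term which, by the dynamical condition of \eqref{difference1}, equals $\int_{z=0}\big[(\hat q-g\hat h)\NN^\e-2\e\,\mathcal{S}^{\varphi^\e}v\,\NN^\e\big]\cdot\hat V^{l,\alpha}$; using the kinematic condition for $\partial_t^lZ^\alpha\hat h$, the $g\hat h$ piece produces $\frac{d}{dt}\frac{g}{2}|\partial_t^lZ^\alpha\hat h|^2$, and positivity of the full surface energy is secured by the Taylor sign conditions $g-\partial_z^{\varphi^\e}q^\e|_{z=0}\ge c_0>0$ and $g-\partial_z^\varphi q|_{z=0}\ge c_0>0$. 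The bulk forcing $\e\Delta^{\varphi^\e}v+\e\Delta^{\varphi^\e}b$ is $O(\e)$ in $L^2$ by Proposition \ref{Proposition1.1}. What remains is the viscous boundary source $2\e\int_{z=0}\mathcal{S}^{\varphi^\e}v\,\NN^\e\cdot\hat V^{l,\alpha}$, controlled by the trace inequality $\|\hat V^{l,\alpha}|_{z=0}\|_{L^2}\lesssim\|\hat V^{l,\alpha}\|_{L^2}^{1/2}\|\nabla\hat V^{l,\alpha}\|_{L^2}^{1/2}+\|\hat V^{l,\alpha}\|_{L^2}$ and Young's inequality against the dissipation. In Case (2), $\|\mathcal{S}^{\varphi^\e}v\,\NN^\e|_{z=0}\|_{L^2}$ is itself bounded by $|\nabla\hat h|$ and $\|\partial_z(\eta^\e-\eta)\|$ at one lower order, so this source is bootstrapped away and a Gronwall argument with initial data $O(\e^{2\min\{\lambda^v,\lambda^b,\lambda^h,\lambda_2^{\omega_v},\lambda_2^{\omega_b}\}})$ yields the rates in \eqref{ConvergenceRates2}. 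In Case (1) the source is only $\e\cdot O(1)\cdot\|\hat V^{l,\alpha}|_{z=0}\|_{L^2}$, and the interpolation/absorption costs a further square root, producing the rates in \eqref{ConvergenceRates1}; the components carrying the factor $\NN^\e$ (the traces $\NN^\e\cdot\partial_z^{\varphi^\e}v^\e-\NN\cdot\partial_z v$, $\NN^\e\cdot\omega_v^\e-\NN\cdot\omega_v$, etc.) follow from the kinematic condition and inherit the tangential rate.

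\emph{Pressure, normal derivatives, and pointwise bounds.} For $|\alpha|=0$ the dynamical boundary condition is unavailable, so one estimates $\hat V^{l,0},\hat B^{l,0}$ together with $\partial_t^l\nabla\hat q$, obtaining $\hat q$ from the elliptic problem produced by $\nabla^{\varphi^\e}\cdot$ of the momentum difference equation: split $\hat q=\hat q^{E}+\hat q^{NS}$ with Dirichlet data $g\hat h$ for $\hat q^{E}$ and Neumann-type data $2\e\,\mathcal{S}^{\varphi^\e}v\,\nn^\e\cdot\nn^\e$ for the harmonic part, exactly as in the remark after Proposition \ref{Proposition1.1}; the elliptic estimates then give $\|\nabla^{\varphi^\e}q^\e-\nabla^\varphi q\|$ and $\|\Delta^{\varphi^\e}q^\e-\Delta^\varphi q\|$ at the halved rate recorded in \eqref{ConvergenceRates1}--\eqref{ConvergenceRates2}. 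For the genuine normal derivatives, use the vorticity difference system \eqref{1.25}: it is a pair of convection--diffusion equations in which the $b$-convection again cancels in the summed $L^2$-estimate, with boundary values $\hat\omega_{vh}|_{z=0},\hat\omega_{bh}|_{z=0}$ that are polynomials in $(\partial_j v^{\e,i}-\partial_j v^i)|_{z=0}$ (and vanish identically for $b$, since $b^\e,b$ vanish on $\{z=0\}$) and are thus controlled by traces of the tangential estimates; passing this boundary data into the interior and absorbing the diffusion costs one more square root, so $\|\omega_v^\e-\omega_v\|_{X^{k-2}_{tan}}$, $\|\partial_z^{\varphi^\e}v^\e-\partial_z^\varphi v\|_{X^{k-2}_{tan}}$, etc., come at the halved rate. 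Combining $\hat\omega$ with the divergence identities $\nabla^{\varphi^\e}\cdot\hat v=\partial_z^\varphi v\cdot\nabla^{\varphi^\e}\hat\eta$ and $\nabla^{\varphi^\e}\cdot\hat b=\partial_z^\varphi b\cdot\nabla^{\varphi^\e}\hat\eta$ through a div--curl decomposition recovers $\|\partial_z\hat v\|$ and $\|\partial_z\hat b\|$, and \eqref{1.30} converts $\partial_z$ into $\partial_z^{\varphi^\e}$ at no cost since $\|\partial_z(\eta^\e-\eta)\|\to0$. Finally all the $Y^{j}_{tan}$ (pointwise) statements follow from the $X$-estimates by the anisotropic Sobolev embedding, which accounts for the additional loss of admissible derivatives ($k-3$, $k-4$, $k-5$); in each line the exponent is the minimum of the geometric/viscous exponent ($\tfrac14$ or $\tfrac12$, halved wherever a normal derivative, the pressure, or an embedding is used) and the data exponents $\lambda^v,\lambda^b,\lambda^h,\lambda_1^{\omega_v},\lambda_1^{\omega_b}$ (respectively $\lambda_2^{\omega_v},\lambda_2^{\omega_b}$).

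\emph{The main difficulty.} The heart of the matter is the viscous boundary source $2\e\int_{z=0}\mathcal{S}^{\varphi^\e}v\,\NN^\e\cdot\hat V^{l,\alpha}$ in the tangential energy identity: controlling it uniformly forces one to trade the trace of the top-order good unknown against the viscous dissipation $\e\|\nabla^{\varphi^\e}\hat V^{l,\alpha}\|^2$ (which is itself only $O(\e^{1/2})$-small because of the boundary layer), and the size of $\Pi\mathcal{S}^\varphi v\,\nn|_{z=0}$ (together with the parallel assumption for $b$) is precisely what decides whether this yields a full power of $\e$ or only $\e^{1/2}$ --- and, after the square-root losses in the normal/pressure/$L^\infty$ steps, $\e^{1/4}$ or $\e^{1/8}$. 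Keeping the bookkeeping of these half-power losses consistent across all the norms in \eqref{ConvergenceRates1}--\eqref{ConvergenceRates2}, while simultaneously closing the tangential and vorticity estimates (which feed each other through the boundary values), is the delicate part; everything else is a variant of the a priori estimates already established for Proposition \ref{Proposition1.1}.
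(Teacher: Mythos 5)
The proposal misidentifies the mechanism that produces the $\e^{1/4}$-vs.-$\e^{1/2}$ dichotomy, and as a result contains a step that fails in Case (1). You locate the dichotomy in the viscous boundary source $2\e\int_{z=0}\mathcal{S}^{\varphi^\e}v\,\NN^\e\cdot\hat V^{\ell,\alpha}$ of the tangential momentum estimate and argue that in Case (2) this term ``is bootstrapped away'' because $\mathcal{S}^{\varphi^\e}v\,\NN^\e|_{z=0}$ would be expressible through $\hat v,\,\eta^\e-\eta$. That is not so: $\Pi\mathcal{S}^\varphi v\,\nn|_{z=0}=0$ only kills the \emph{tangential} projection, while $\mathcal{S}^\varphi v\,\nn\cdot\nn$ remains $O(1)$, so $\mathcal{S}^{\varphi^\e}v\,\NN^\e|_{z=0}$ is $O(1)$ in both cases. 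In the paper's tangential estimate (Lemma \ref{Lemma5.3}, see \eqref{5.8}) this boundary term is simply $O(\e)$ regardless of which case one is in; it never produces the dichotomy. The rate in Lemma \ref{Lemma5.3} is governed by $\|\partial_z\hat v\|_{L^4([0,T],X^{k-1})}$ on the right-hand side, and \emph{that} quantity is where the two cases separate.

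The actual mechanism in the paper is the discrepancy between the boundary values of the vorticities. In Case (1), $\omega_{vh}|_{z=0}=\textsf{F}^{1,2}[\nabla\varphi](\partial_j v^i)+(\text{the }\Theta\text{ terms of Lemma \ref{lemma4.1}})$ whereas the dynamical boundary condition for the viscous problem forces $\omega^{\e}_{vh}|_{z=0}=F^{1,2}[\nabla\varphi^\e](\partial_j v^{\e,i})$ exactly, so $\hat\omega_{vh}|_{z=0}$ contains the $O(1)$ $\Theta$-terms and is \emph{not} controlled by traces of $\hat v$. Your claim that the boundary values $\hat\omega_{vh}|_{z=0}$ ``are polynomials in $(\partial_j v^{\e,i}-\partial_j v^i)|_{z=0}$'' is only true in Case (2), yet you apply it to derive the normal-derivative rates in both cases; if your reasoning were correct it would give the improved rate in both cases, contradicting the theorem. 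The paper instead decomposes $\hat\omega=\hat\omega^{nh}+\hat\omega^{h}$ (nonhomogeneous equation with zero boundary data, plus homogeneous heat-type equation carrying the boundary data), and applies the $\sqrt{\e}$-scaled $H^{1/4}([0,T],L^2)$ estimate of Lee's Theorem 10.5 to $\hat\omega^h$: in Case (1) the boundary data is $O(1)$ (the $\Theta$'s), so $\|\hat\omega^h\|_{L^4([0,T],X^{k-1})}^2=O(\sqrt\e)$ and Gronwall gives the $\e^{1/4}$ rate; in Case (2) the boundary data is itself a difference, so the same estimate yields an $O(\e)$ bound and the $\e^{1/2}$ rate. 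Your proposal omits this decomposition and the $H^{1/4}$ scaling altogether, which is the core of Lemmas \ref{Lemma5.5} and \ref{Lemma5.7} and the reason the hypothesis $\Pi\mathcal{S}^\varphi v\,\nn|_{z=0}\neq 0$ (resp.\ $=0$) enters the rate. The rest of your outline (good unknowns, cancellation of the $b\cdot\nabla^{\varphi^\e}$ terms, Taylor sign condition, Hardy/pressure estimates, anisotropic embedding for the $Y$-norms) matches the paper, but without the correct treatment of the vorticity boundary values the argument cannot distinguish the two cases and the stated rates would not follow.
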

\begin{remark}
Due to formula \eqref{1.30},
the estimate of $\|\partial_z^{\varphi^{\e}} v^{\e}-\partial_z^{\varphi} v\|_{X_{tan}^{k-2}}$
and $\|\partial_z^{\varphi^{\e}} b^{\e}-\partial_z^{\varphi} b\|_{X_{tan}^{k-2}}$
results from the estimate of $\|\partial_z v^{\e}-\partial_z v\|_{X_{tan}^{k-2}}$,
$\|\partial_z b^{\e}-\partial_z b\|_{X_{tan}^{k-2}}$
and $\|\partial_z(\eta^{\e} -\eta)\|_{X_{tan}^{k-2}}$.
The $L^{\infty}$ type estimates are based on the formula \eqref{1.47}.
$\|v^{\e} -v\|_{X_{tan}^k}$, $\|b^{\e} -b\|_{X_{tan}^k}$
and $|h^{\e}-h|_{X^k}$ can not be estimated because
we can not control $\|\partial_t^k(q^{\e} -q)\|_{L^2}$.
In general,
the convergence rates of the inviscid limit for the free boundary problem
are slower than those of the Navier-slip boundary case.
\end{remark}

\subsection{Main Results for MHD Equations with Surface Tension}
For fixed $\sigma > 0$,
we study the free boundary problem with surface tension when $\epsilon=\lambda$,
which is equivalent to
\begin{equation}\label{MHDFS}
\left\{
\begin{aligned}
&\partial_{t}^\varphi v-\epsilon\Delta^\varphi v+v\cdot\nabla^\varphi v+\nabla^\varphi q
=b\cdot\nabla^\varphi b,
&{\rm in}\ \mathbb{R}^3_-,\\
&\partial_{t}^\varphi b-\lambda\Delta^\varphi b+v\cdot\nabla^\varphi b=b\cdot\nabla^\varphi v,
&{\rm in}\ \mathbb{R}^3_-,\\
&\nabla^\varphi\cdot v=0,  \nabla^\varphi\cdot b=0,
&{\rm in}\ \mathbb{R}^3_-,\\
&q\nn-2\epsilon S^\varphi vn=gh\nn-\sigma \mathcal{M}\nn,
&{\rm on}\ z=0,\\
&\partial_th=v(t,y,0)\cdot \NN,
&{\rm on}\ z=0,\\
&b=0,
&{\rm on}\ z=0\cup \mathbb{R}^3_+,\\
&(v, b, h)|_{t=0}=(v^{\epsilon}_0, b^{\epsilon}_0, h^{\epsilon}_0),
\end{aligned}
\right.
\end{equation}
where $\mathcal{M}=\nabla\cdot\frac{\nabla h}{\sqrt{1+|\nabla h|^2}}
=-\nabla_x\cdot(\frac{(-\nabla_y h,1)}{\sqrt{1+|\nabla h|^2}})
=-\nabla_y\cdot(\frac{(\nabla_y h)}{\sqrt{1+|\nabla h|^2}})$.

As $\epsilon\to0$, \eqref{MHDFS}
reduces to the following free boundary problems for ideal MHD system:
\begin{equation}\label{IMHDFS}
\left\{
\begin{aligned}
&\partial_{t}^\varphi v+v\cdot\nabla^\varphi v+\nabla^\varphi q
=b\cdot\nabla^\varphi b,
&{\rm in}\ \mathbb{R}^3_-,\\
&\partial_{t}^\varphi b+v\cdot\nabla^\varphi b=b\cdot\nabla^\varphi v,
&{\rm in}\ \mathbb{R}^3_-,\\
&\nabla^\varphi\cdot v=0,  \nabla^\varphi\cdot b=0,
&{\rm in}\ \mathbb{R}^3_-,\\
&q\nn=gh\nn-\sigma \mathcal{M}\nn,
&{\rm on}\ z=0,\\
&\partial_th=v(t,y,0)\cdot \NN,
&{\rm on}\ z=0,\\
&b=0,
&{\rm on}\ z=0\cup \mathbb{R}^3_+,\\
&(v, b, h)|_{t=0}=(v_0, b_0, h_0),
\end{aligned}
\right.
\end{equation}
where $(v_0,b_0,h_0)=\lim\limits_{\epsilon\to0}(v^{\epsilon}_0,b^{\epsilon}_0,h^{\epsilon}_0)$.
In this subsection,
 we do not need the Taylor sign condition.

The following proposition establishes the regularity structures for
\eqref{MHDFS} and \eqref{IMHDFS}.
\begin{proposition}\label{Proposition1.2}
Fix $\sigma>0$, let $\epsilon=\lambda\in(0,1]$. For $m\geq6$, assume the initial data $(v_0^{\epsilon},B_0^{\epsilon},h_0^{\epsilon})$
 satisfies the compatibility conditions $\Pi S^\varphi v^\epsilon_0\nn|_{z=0}=0$
and $\Pi S^\varphi b^\epsilon_0\nn|_{z=0}=0$ in $(0,T]$ and the regularities
 \begin{align}
\sup_{\epsilon\in(0,1]}&(|h^\epsilon_0|_{X^{m}}
+\epsilon^{\frac{1}{2}}|h^\epsilon_0|_{X^{m,\frac{1}{2}}}+\sigma|h^\epsilon_0|_{X^{m,1}}
+\|v^\epsilon_0\|_{X^{m}}+\|b^\epsilon_0\|_{X^{m}}
+\|\omega^\epsilon_{v0}\|_{X^{m-1}}+\|\omega^\epsilon_{b0}\|_{X^{m-1}}\nonumber
\\
&+\epsilon(\|\nabla v_0\|^2_{X^{m-1,1}}+\|\nabla b_0\|^2_{X^{m-1,1}})
+\epsilon(\|\nabla \omega_{u0}\|^2_{X^{m-1,1}}+\|\nabla \omega_{b0}\|^2_{X^{m-1,1}})
\nonumber\\
&+\|\omega^\epsilon_{v0}\|^2_{X^{1,\infty}}+\|\omega^\epsilon_{b0}\|^2_{X^{1,\infty}}
+\epsilon^{\frac{1}{2}}(\|\partial_z\omega^\epsilon_{v0}\|_{L^\infty}
+\|\partial_z\omega^\epsilon_{b0}\|_{L^\infty}))\leq C_0,
 \end{align}
 then the unique solution to \eqref{MHDFS} satisfies
\begin{align}
\sup_{\epsilon\in[0,T]}&(|h^\epsilon|^2_{X^{m-1,1}}
+\epsilon^{\frac{1}{2}}|h^\epsilon|^2_{X^{m-1,\frac{3}{2}}}
+\sigma|h^\epsilon|^2_{X^{m-1,2}}
+\|v^\epsilon\|^2_{X^{m-1,1}}+\|b^\epsilon\|^2_{X^{m-1,1}}
+\|\partial_zv^\epsilon\|^2_{X^{m-2}}
\nonumber\\
&+\|\partial_zb^\epsilon\|^2_{X^{m-2}}
+\|\omega^\epsilon_{v}\|_{X^{m-2}}+\|\omega^\epsilon_{b}\|_{X^{m-2}}
+\|\partial_zv^\epsilon\|^2_{1,\infty}+\|\partial_zb^\epsilon\|^2_{1,\infty}
+\epsilon^{\frac{1}{2}}(\|\partial_{zz}v^\epsilon\|^2_{L^\infty}
\nonumber\\
&+\|\partial_{zz}b^\epsilon\|^2_{L^\infty}))
+\|\partial_{z}v\|^2_{L^4([0,T],X^{m-1})}+\|\partial_{z}b\|^2_{L^4([0,T],X^{m-1})}
+\|\partial_t^mv\|^2_{L^4([0,T],L^2)}
\nonumber\\
&+\|\partial_t^mb\|^2_{L^4([0,T],L^2)}
+\|\partial_t^mh\|^2_{L^4([0,T],L^2)}+\epsilon\|\partial_t^mh\|^2_{L^4([0,T],X^{0,\frac{1}{2}})}
+\sigma\|\partial_t^mh\|^2_{L^4([0,T],X^{0,1})}
\nonumber\\
&+\epsilon\int_0^T\|\nabla v^\epsilon\|^2_{X^{m-1,1}}+\|\nabla b^\epsilon\|^2_{X^{m-1,1}}
+\|\nabla\partial_zv^\epsilon\|^2_{X^{m-2}}+\|\nabla\partial_zb^\epsilon\|^2_{X^{m-2}}dt
\leq C.
 \end{align}

As $\epsilon=\lambda\to0$,
the solution to \eqref{IMHDFS} satisfies
\begin{align}
\sup_{\epsilon\in[0,T]}&(|h^\epsilon|^2_{X^{m-1,1}}
+\sigma|h^\epsilon|^2_{X^{m-1,2}}
+\|v^\epsilon\|^2_{X^{m-1,1}}+\|b^\epsilon\|^2_{X^{m-1,1}}
+\|\partial_zv^\epsilon\|^2_{X^{m-2}}
+\|\partial_zb^\epsilon\|^2_{X^{m-2}}
\nonumber\\
&+\|\omega^\epsilon_{v}\|_{X^{m-2}}+\|\omega^\epsilon_{b}\|_{X^{m-2}}
+\|\partial_{z}v\|^2_{L^4([0,T],X^{m-1})}+\|\partial_{z}b\|^2_{L^4([0,T],X^{m-1})}
+\|\partial_t^mv\|^2_{L^4([0,T],L^2)}
\nonumber\\
&+\|\partial_t^mb\|^2_{L^4([0,T],L^2)}
+\|\partial_t^mh\|^2_{L^4([0,T],L^2)}
+\sigma\|\partial_t^mh\|^2_{L^4([0,T],X^{0,1})}
\leq C.
 \end{align}
\end{proposition}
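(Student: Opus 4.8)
The plan is to follow the scheme that was already laid out for the zero–surface–tension case in Proposition \ref{Proposition1.1}, paying attention to the additional terms that the curvature operator $\mathcal{M}=\nabla\cdot\frac{\nabla h}{\sqrt{1+|\nabla h|^2}}$ contributes on the boundary. First, I would set up the energy estimates for the tangential derivatives $V^{l,\alpha}=\partial_t^lZ^\alpha v-\partial_z^\varphi v\,\partial_t^lZ^\alpha\eta$, $B^{l,\alpha}$, $Q^{l,\alpha}$ (Alinhac's good unknowns) by applying $\partial_t^lZ^\alpha$ to the momentum and induction equations in \eqref{MHDFS}, commuting through $\partial_i^\varphi$, and testing against $V^{l,\alpha}$, $B^{l,\alpha}$ in $L^2(\mathbb{R}^3_-)$. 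The transport structure gives skew-symmetry of $v\cdot\nabla^\varphi$, and — crucially — the divergence-free condition $\nabla^\varphi\cdot b=0$ together with $b|_{z=0}=0$ lets one integrate by parts and cancel the non-transport nonlinear terms $b\cdot\nabla^\varphi V^{l,\alpha}$, $b\cdot\nabla^\varphi B^{l,\alpha}$ when the two estimates are summed. The new ingredient is that the boundary term produced by $\nabla^\varphi q$ now reads, after using the dynamical condition, $\int_{z=0}\big(gh-\sigma\mathcal{M}\big)(\partial_t^lZ^\alpha\cdot)\,\mathrm{d}y$ plus the viscous contribution $2\epsilon\int_{z=0}S^\varphi v\,\nn\cdot(\cdots)$; the surface-tension piece is handled exactly as in Wang–Xin \cite{Wang15} and Mei \cite{Mei16}, namely $-\sigma\int_{z=0}\mathcal{M}\,\partial_t^lZ^\alpha(v\cdot\NN)\,\mathrm{d}y$ is, up to commutators, $-\frac{\sigma}{2}\frac{d}{dt}\big|\partial_t^lZ^\alpha h\big|^2_{X^{0,1}}$-type quantity after writing $\mathcal{M}$ as a perturbation of $\Delta_y h$ and using the kinematic boundary condition $\partial_t h=v\cdot\NN$. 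This is the source of the $\sigma|h^\epsilon|^2_{X^{m-1,2}}$ and $\sigma\|\partial_t^mh\|^2_{L^4([0,T],X^{0,1})}$ terms on the left-hand side, and it comes with favorable sign, so no Taylor sign condition is needed.

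Second, for the normal derivatives I would not use $\Pi S^\varphi v\nn$; instead, following observation (ii) in the Remark, I would control $\|\partial_zv^\epsilon\|_{X^{m-2}}$, $\|\partial_zb^\epsilon\|_{X^{m-2}}$ through the vorticities $\omega_{vh}$, $\omega_{bh}$ using the vorticity system \eqref{vorticity equations}: the horizontal components of $\omega_v$, $\omega_b$ are estimated by an $L^2$ energy method applied to the parabolic system, with the boundary data $\omega^1_{vh}|_{z=0}$, $\omega^2_{vh}|_{z=0}$ expressed purely through tangential derivatives $F^{1,2}[\nabla\varphi](\partial_jv^i)$ (and the corresponding magnetic ones vanishing since $b|_{z=0}=0$), and then the remaining normal-derivative component is recovered algebraically from $\nabla^\varphi\cdot v=0$ and the definition of $\omega$. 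The vertical component of vorticity and the quantities $\partial_{zz}v$, $\partial_{zz}b$ in $L^\infty$ are obtained by the maximum principle / Duhamel argument for the scalar parabolic equations they satisfy, exactly as in the $\sigma=0$ case, together with $\epsilon^{1/2}$-weighted estimates; the surface tension does not enter these interior estimates. The $\partial_t^m$ estimates for $(v,b,h)$ in $L^4([0,T],L^2)$ follow by differentiating in time $m$ times, using the extra assumed regularity on $\partial_t^mv_0$, $\partial_t^mb_0$, $\partial_t^mh_0$, and closing a Grönwall inequality; here one should note that unlike the $\sigma=0$ case, the surface tension term provides additional control $\sigma|\partial_t^mh|_{X^{0,1}}$ which actually helps close the top-order estimate for $h$.

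Third, I would assemble all the pieces into a single a priori inequality of the form $\frac{d}{dt}\mathcal{E}(t)\le C\mathcal{P}(\mathcal{E}(t))$ on $[0,T]$ for the energy functional $\mathcal{E}$ equal to the quantity under $\sup$ on the left-hand side, with $\mathcal{P}$ a polynomial, and conclude by a continuity/bootstrap argument starting from the bound $C_0$ on the initial data. Finally, the ideal-MHD regularity statement is obtained by passing to the limit $\epsilon=\lambda\to0$: the $\epsilon$-independent bounds just proved give weak-$*$ compactness, and one identifies the limit as the solution of \eqref{IMHDFS}, dropping the $\epsilon^{1/2}$-weighted and the $\epsilon\int_0^T(\cdots)$ dissipation terms while keeping the $\sigma$-terms. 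The main obstacle I anticipate is the careful bookkeeping of the commutators $[\partial_t^lZ^\alpha,\partial_i^\varphi]$ and $[\partial_t^lZ^\alpha,\mathcal{M}]$ at top order — in particular showing that the surface-tension boundary integral really produces a coercive $\sigma|\partial_t^lZ^\alpha h|^2_{X^{0,1}}$ modulo terms controllable by $\mathcal{E}$, and simultaneously keeping the viscous boundary term $2\epsilon\int_{z=0}S^\varphi v\nn\cdot(\cdots)$ bounded using the compatibility condition $\Pi S^\varphi v_0^\epsilon\nn|_{z=0}=0$ and the evolution equation \eqref{1.29}; this is where the bulk of the technical work lies, and it parallels the corresponding step in Wu \cite{Wu16} and Mei \cite{Mei16} but must now be done for the coupled velocity–magnetic system.
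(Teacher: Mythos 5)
Your plan tracks the paper's structure for $\sigma=0$ (Alinhac's good unknowns for the tangential estimates, vorticity for the normal estimates), but for $\sigma>0$ the paper deliberately \emph{does not} use Alinhac's good unknowns, and this is not a cosmetic choice. If you substitute the dynamical boundary condition into $Q^{\ell,\alpha}|_{z=0}=\partial_t^{\ell}Z^{\alpha}q-\partial_z^{\varphi}q\,\partial_t^{\ell}Z^{\alpha}\eta|_{z=0}$, the subtraction reintroduces a boundary term of the form $(g-\partial_z^{\varphi}q)\partial_t^{\ell}Z^{\alpha}h$, which is sign-indefinite without the Taylor sign condition. Your proposal asserts ``no Taylor sign condition is needed'' while also using the good unknowns, but you give no mechanism to absorb this indefinite piece (one could imagine absorbing it into the coercive $\sigma|\nabla_y\partial_t^{\ell}Z^{\alpha}h|^2$ via interpolation, but that argument is not in your plan and is exactly what the paper avoids). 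The paper instead tests directly with $\partial_t^{\ell}Z^{\alpha}v$, $\partial_t^{\ell}Z^{\alpha}b$ so that the boundary integral becomes $-\int_{z=0}[g\,\partial_t^{\ell}Z^{\alpha}h-\sigma(\cdots)\nabla_y\partial_t^{\ell}Z^{\alpha}h]\cdot(\partial_t\partial_t^{\ell}Z^{\alpha}h+v_y\cdot\nabla_y\partial_t^{\ell}Z^{\alpha}h)\,\mathrm{d}y$, producing $\frac{g}{2}\frac{d}{dt}|\partial_t^{\ell}Z^{\alpha}h|^2+\frac{\sigma}{2}\frac{d}{dt}|\nabla_y\partial_t^{\ell}Z^{\alpha}h|^2$ with good sign and no $\partial_z^{\varphi}q$ residue; the $\nabla^{\varphi}q$ commutators that would otherwise worry you migrate into $\|\nabla q\|_{X^{m-1}}$.

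Two further ingredients are missing from your plan, both flagged in the paper's Remark after the statement. First, the pressure estimate for $\sigma>0$ must be done with the \emph{Neumann} data $\nabla^{\varphi}q\cdot\NN|_{z=0}=-\partial_t^{\varphi}v\cdot\NN-v\cdot\nabla^{\varphi}v\cdot\NN+b\cdot\nabla^{\varphi}b\cdot\NN+\epsilon\Delta^{\varphi}v\cdot\NN$, not the Dirichlet data $q|_{z=0}=gh-\sigma\mathcal{M}+2\epsilon S^{\varphi}v\nn\cdot\nn$: the curvature $\mathcal{M}$ costs two $y$-derivatives of $h$ and the resulting $|h|_{X^{m-1,\,5/2}}$-type norm is not controlled, so the Dirichlet route does not close. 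Second, for the top-order time derivatives, $\|\partial_t^m q\|_{L^2}$ is unbounded in the infinite-depth domain $\mathbb{R}^3_-$; the paper deals with $\int_0^t\int\partial_t^m q\,\nabla^{\varphi}\cdot\partial_t^m v$ by Hardy's inequality (weight $\tfrac{1}{1-z}$) and a double integration in time to obtain the $L^4([0,T],L^2)$ bound, not by a direct Gronwall closure. Your plan's ``close a Grönwall inequality'' step glosses over this and would break without the Hardy device. Finally, your description of the normal derivatives and $L^{\infty}$ estimates (maximum principle/Duhamel for vorticity, same as $\sigma=0$) is consistent with the paper, so that part is fine.
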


\begin{remark}
We estimate the pressure by the elliptic equation of the pressure subject to the Neumann boundary condition as follows
\begin{equation}\label{1.40}
\left\{\begin{array}{ll}
\Delta^{\varphi} q = -\partial_j^{\varphi} v^i \partial_i^{\varphi} v^j
+\partial_j^{\varphi} b^i \partial_i^{\varphi} b^j, \\[5pt]

\nabla^{\varphi} q\cdot\NN|_{z=0}
= - \partial_t^{\varphi} v\cdot\NN - v\cdot\nabla^{\varphi} v\cdot\NN
+ \e\Delta^{\varphi} v\cdot\NN.
\end{array}\right.
\end{equation}
If we couple $\triangle^{\varphi} q = -\partial_j^{\varphi} v^i \partial_i^{\varphi} v^j
+\partial_j^{\varphi} b^i \partial_i^{\varphi} b^j$ with
its nonhomogeneous Dirichlet boundary condition $q|_{z=0} = g h - \sigma H + 2\e\mathcal{S}^{\varphi} v\nn\cdot\nn$,
the estimates can not be closed due to the less regularity of $h$.
In $(\ref{1.40})$, we have to prove $\partial_t^m v\in L^4([0,T],L^2)$
and $\partial_t^m b\in L^4([0,T],L^2)$,
 due to the nontransport-type nonlinear terms.
We use the following Hardy's inequality to overcome the difficulties generated by $\partial_t^m q$,
\begin{equation}\label{Sect1_HardyIneq}
\begin{array}{ll}
\|\frac{1}{1-z} \partial_t^{\ell} q\|_{L^2(\mathbb{R}^3_{-})}
\lem \big|\partial_t^{\ell} q|_{z=0}\big|_{L^2(\mathbb{R}^2)} + \|\partial_z \partial_t^{\ell} q\|_{L^2(\mathbb{R}^3_{-})},
\quad 0\leq\ell\leq m-1.
\end{array}
\end{equation}
In \rm\cite{Mei16}, the depth of the fluid is finite, then $\|\partial_t^{\ell} q\|_{L^2(\mathbb{R}^2\times [-L,0])}$ is bounded.
While we consider the infinite fluid depth in this paper, therefore $\|\partial_t^{\ell} q\|_{L^2}$ has no bound in general.
\end{remark}

Since the equations of the vorticity and its boundary condition
$\Pi S^\varphi vn|_{z=0}=0$, $\Pi S^\varphi bn|_{z=0}=0$ are the same as the $\sigma=0$ case,
Theorem \ref{Theorem1.1} is also valid for the
equations \eqref{MHDF} with $\sigma>0$.
Thus, in this case we only need to discuss the convergence
rates of the inviscid limit. The results are stated in the following theorem:

\begin{theorem}\label{Theorem1.3}
Assume $T>0$ is finite,
$(v^\epsilon,b^\epsilon,h^\epsilon)$ is the solution of MHD equations in $[0,T]$
with initial data $(v_0^\epsilon,b_0^\epsilon,h_0^\epsilon)$,
and $(v, b, h)$ is the solution of ideal MHD equations in $[0,T]$ with initial data
$(v_0,b_0,h_0)\in X^{m-1,1}(\mathbb{R}^3_-)\times X^{m-1,1}(\mathbb{R}^2)$.
Assume there exists an integer k where $1\leq k\leq m-2$, such that
$\|v_0^\epsilon-v_0\|_{X^{k-1,1}}(\mathbb{R}^3_-)=O(\epsilon^{\lambda^v})$,
$\|b_0^\epsilon-b_0\|_{X^{k-1,1}}(\mathbb{R}^3_-)=O(\epsilon^{\lambda^b})$,
$|h_0^\epsilon-h_0|_{X^{k-1,1}}(\mathbb{R}^2)=O(\epsilon^{\lambda^h})$,
$\|\omega_{v0}^\epsilon-\omega_{v0}\|_{X^{k-1,1}}(\mathbb{R}^3_-)=O(\epsilon^{\lambda_1^{\omega_v}})$,
$\|\omega_{b0}^\epsilon-\omega_{b0}\|_{X^{k-1,1}}(\mathbb{R}^3_-)=O(\epsilon^{\lambda_1^{\omega_b}})$,
where $\lambda^v>0$,$\lambda^b>0$,$\lambda^h>0$, $\lambda_1^{\omega_v}>0$,$\lambda_1^{\omega_b}>0$.

(1) If the ideal MHD boundary data satisfies $\Pi S^\varphi vn|_{z=0}\neq0$ and
$\Pi S^\varphi bn|_{z=0}\neq0$ in $(0,T]$,
then the convergence rates satisfy
\begin{align*}
&\|v^\epsilon-v\|_{X_{tan}^{k-1,1}}+\|b^\epsilon-b\|_{X_{tan}^{k-1,1}}+|h^\epsilon-h|_{X^{k-1,1}}
=O(\epsilon^{min\{\frac{1}{4},\lambda^v,\lambda^b,\lambda^h,\lambda_1^{\omega_v},\lambda_1^{\omega_b}\}}),\\
&\|N^\epsilon\cdot\partial_z^{\varphi^\epsilon}v^\epsilon-N\cdot\partial_zv\|_{X_{tan}^{k-1,1}}
+\|N^\epsilon\cdot\partial_z^{\varphi^\epsilon}b^\epsilon-N\cdot\partial_zb\|_{X_{tan}^{k-1,1}}
+\|N^\epsilon\cdot\omega_v^\epsilon-N\cdot\omega_v\|_{X_{tan}^{k-1,1}}
\nonumber\\
&\qquad+\|N^\epsilon\cdot\omega_b^\epsilon-N\cdot\omega_b\|_{X_{tan}^{k-1,1}}
=O(\epsilon^{min\{\frac{1}{4},\lambda^v,\lambda^b,\lambda^h,\lambda_1^{\omega_v},\lambda_1^{\omega_b}\}}),\\
&\|\partial_z^{\varphi^\epsilon}v^\epsilon-\partial_z^\varphi v\|_{X_{tan}^{k-2}}
+\|\partial_z^{\varphi^\epsilon}b^\epsilon-\partial_z^\varphi b\|_{X_{tan}^{k-2}}
+\|\omega^\epsilon_v-\omega_v\|_{X_{tan}^{k-2}}+\|\omega^\epsilon_b-\omega_b\|_{X_{tan}^{k-2}}
\nonumber\\
&\qquad=O(\epsilon^{min\{\frac{1}{8},\frac{\lambda^v}{2},\frac{\lambda^b}{2},\frac{\lambda^h}{2},
\frac{\lambda_1^{\omega_v}}{2},\frac{\lambda_1^{\omega_b}}{2}\}}),\\
&\|\nabla^{\varphi^\epsilon}q^\epsilon-\nabla^\varphi q\|_{X_{tan}^{k-2}}
+\|\Delta^{\varphi^\epsilon}q^\epsilon-\Delta^\varphi q\|_{X_{tan}^{k-2}}
=O(\epsilon^{min\{\frac{1}{8},\frac{\lambda^v}{2},\frac{\lambda^b}{2},\frac{\lambda^h}{2},
\frac{\lambda_1^{\omega_v}}{2},\frac{\lambda_1^{\omega_b}}{2}\}}),
\\
&\|v^\epsilon-v\|_{Y_{tan}^{k-3}}+\|b^\epsilon-b\|_{Y_{tan}^{k-3}}+|h^\epsilon-h|_{Y^{k-3}}
=O(\epsilon^{min\{\frac{1}{8},\frac{\lambda^v}{2},\frac{\lambda^b}{2},\frac{\lambda^h}{2},
\frac{\lambda_1^{\omega_v}}{2},\frac{\lambda_1^{\omega_b}}{2}\}}),
\\
&\|N^\epsilon\cdot\partial_z^{\varphi^\epsilon}v^\epsilon-N\cdot\partial_zv\|_{Y_{tan}^{k-4}}
+\|N^\epsilon\cdot\partial_z^{\varphi^\epsilon}b^\epsilon-N\cdot\partial_zb\|_{Y_{tan}^{k-4}}
+\|N^\epsilon\cdot\omega_u^\epsilon-N\cdot\omega_v\|_{Y_{tan}^{k-4}}
\nonumber\\
&\qquad+\|N^\epsilon\cdot\omega_b^\epsilon-N\cdot\omega_b\|_{Y_{tan}^{k-4}}
=O(\epsilon^{min\{\frac{1}{8},\frac{\lambda^v}{2},\frac{\lambda^b}{2},\frac{\lambda^h}{2},
\frac{\lambda_1^{\omega_u}}{2},\frac{\lambda_1^{\omega_b}}{2}\}}).
\end{align*}

(2) If the ideal MHD boundary data satisfies
$\Pi S^\varphi vn|_{z=0}=0$ and $\Pi S^\varphi bn|_{z=0}=0$ in $(0,T]$,
assume $\|\omega^\epsilon_{v0}-\omega_{v0}\|_{X^{k-2}(R^3_-)}=O(\epsilon^{\lambda_2^{\omega_v}})$
and $\|\omega^\epsilon_{b0}-\omega_{b0}\|_{X^{k-2}(R^3_-)}=O(\epsilon^{\lambda_2^{\omega_b}})$,
where $\lambda_2^{\omega_v}>0,\lambda_2^{\omega_b}>0$,
then the convergence rates satisfy
\begin{align*}
&\|v^\epsilon-v\|_{X_{tan}^{k-2,1}}+\|b^\epsilon-b\|_{X_{tan}^{k-2,1}}+|h^\epsilon-h|_{X^{k-2,1}}
=O(\epsilon^{min\{\frac{1}{2},\lambda^v,\lambda^b,\lambda^h,\lambda_2^{\omega_u},\lambda_2^{\omega_b}\}}),\\
&\|N^\epsilon\cdot\partial_z^{\varphi^\epsilon}v^\epsilon-N\cdot\partial_zv\|_{X_{tan}^{k-2}}
+\|N^\epsilon\cdot\partial_z^{\varphi^\epsilon}b^\epsilon-N\cdot\partial_zb\|_{X_{tan}^{k-2}}
+\|N^\epsilon\cdot\omega_v^\epsilon-N\cdot\omega_v\|_{X_{tan}^{k-2}}
\nonumber\\
&\qquad+\|N^\epsilon\cdot\omega_b^\epsilon-N\cdot\omega_b\|_{X_{tan}^{k-2}}
=O(\epsilon^{min\{\frac{1}{2},\lambda^v,\lambda^b,\lambda^h,\lambda_2^{\omega_v},\lambda_2^{\omega_b}\}}),\\
&\|\partial_z^{\varphi^\epsilon}v^\epsilon-\partial_z^\varphi v\|_{X_{tan}^{k-3}}
+\|\partial_z^{\varphi^\epsilon}b^\epsilon-\partial_z^\varphi b\|_{X_{tan}^{k-3}}
+\|\omega^\epsilon_v-\omega_v\|_{X_{tan}^{k-2}}+\|\omega^\epsilon_b-\omega_b\|_{X_{tan}^{k-3}}
\nonumber\\
&\qquad=O(\epsilon^{min\{\frac{1}{4},\frac{\lambda^v}{2},\frac{\lambda^b}{2},\frac{\lambda^h}{2},
\frac{\lambda_2^{\omega_v}}{2},\frac{\lambda_2^{\omega_b}}{2}\}}),
\\
&\|\nabla^{\varphi^\epsilon}q^\epsilon-\nabla^\varphi q\|_{X_{tan}^{k-3}}
+\|\Delta^{\varphi^\epsilon}q^\epsilon-\Delta^\varphi q\|_{X_{tan}^{k-3}}
=O(\epsilon^{min\{\frac{1}{4},\frac{\lambda^v}{2},\frac{\lambda^b}{2},\frac{\lambda^h}{2},
\frac{\lambda_2^{\omega_v}}{2},\frac{\lambda_2^{\omega_b}}{2}\}}),
\\
&\|v^\epsilon-v\|_{Y_{tan}^{k-4}}+\|b^\epsilon-b\|_{Y_{tan}^{k-4}}+|h^\epsilon-h|_{Y^{k-4}}
=O(\epsilon^{min\{\frac{1}{4},\frac{\lambda^v}{2},\frac{\lambda^b}{2},\frac{\lambda^h}{2},
\frac{\lambda_2^{\omega_v}}{2},\frac{\lambda_2^{\omega_b}}{2}\}}),
\\
&\|N^\epsilon\cdot\partial_z^{\varphi^\epsilon}v^\epsilon-N\cdot\partial_zv\|_{Y_{tan}^{k-5}}
+\|N^\epsilon\cdot\partial_z^{\varphi^\epsilon}b^\epsilon-N\cdot\partial_zb\|_{Y_{tan}^{k-5}}
+\|N^\epsilon\cdot\omega_v^\epsilon-N\cdot\omega_v\|_{Y_{tan}^{k-5}}
\nonumber\\
&\qquad+\|N^\epsilon\cdot\omega_b^\epsilon-N\cdot\omega_b\|_{Y_{tan}^{k-5}}
=O(\epsilon^{min\{\frac{1}{4},\frac{\lambda^v}{2},\frac{\lambda^b}{2},\frac{\lambda^h}{2},
\frac{\lambda_2^{\omega_v}}{2},\frac{\lambda_2^{\omega_b}}{2}\}}).
\end{align*}
\end{theorem}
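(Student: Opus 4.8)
The plan is to run the same energy scheme used for Theorem \ref{Theorem1.2} on the difference system, now keeping the surface-tension contributions, and to feed into it the uniform bounds already secured in Proposition \ref{Proposition1.2}. Since the vorticity difference equations \eqref{1.25} and the boundary conditions $\Pi S^\varphi vn|_{z=0}=0$, $\Pi S^\varphi bn|_{z=0}=0$ do not involve $\sigma$, the normal-derivative part of the argument is word-for-word the $\sigma=0$ one; what genuinely changes is the energy functional for the tangential derivatives of $(\hat v,\hat b,\hat h)$ and the pressure estimate, where $\sigma|\hat h|^2_{X^{\cdot,2}}$ plays the coercive role that the Taylor sign condition played before. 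First I would write the equations for $\hat v=v^\epsilon-v$, $\hat b=b^\epsilon-b$, $\hat h=h^\epsilon-h$: these are \eqref{difference1} with the dynamical boundary condition replaced by its surface-tension version, $(\hat q-g\hat h)\NN^\epsilon+\sigma(\mathcal M^\epsilon-\mathcal M)\NN^\epsilon-2\epsilon\mathcal S^{\varphi^\epsilon}\hat v\,\NN^\epsilon=2\epsilon\mathcal S^{\varphi^\epsilon}v\,\NN^\epsilon$. The right-hand sides are $O(\epsilon)$ in the relevant norms by Proposition \ref{Proposition1.2}, together with the extension terms carrying $\hat\eta$, which are controlled by $|\hat h|$ through the harmonic-extension bounds.

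For the tangential estimates, fix $l+|\alpha|\le k$ with $|\alpha|>0$, apply $\partial_t^lZ^\alpha$, and pass to the Alinhac good unknowns $\hat V^{l,\alpha},\hat B^{l,\alpha},\hat Q^{l,\alpha}$ of \eqref{Alinhac's good unknown}; the commutators generated by the $\partial_i^\varphi$'s are lower order and absorbed via the uniform bounds. Pairing the resulting equations with $\hat V^{l,\alpha}$ and $\hat B^{l,\alpha}$ in $L^2$ and adding, the divergence-free property together with $b^\epsilon|_{z=0}=0$ cancels the non-transport terms $b\cdot\nabla^\varphi\hat B^{l,\alpha}$, $b\cdot\nabla^\varphi\hat V^{l,\alpha}$, exactly as in the a priori estimate. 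Integrating the viscous term by parts produces a boundary integral that, after using $\partial_t\hat h+v_y\cdot\nabla_y\hat h=\hat v\cdot\NN^\epsilon$ and the modified dynamical condition, yields $\frac{d}{dt}\big(g|\hat h|^2_{X^{\cdot}}+\sigma|\hat h|^2_{X^{\cdot,1}}\big)$ up to controllable errors, while the viscous bulk contributes the nonnegative $\epsilon\|\nabla\hat V^{l,\alpha}\|_{L^2}^2$; Gronwall then bounds $\|\hat v\|_{X^{k-1,1}_{tan}}+\|\hat b\|_{X^{k-1,1}_{tan}}+|\hat h|_{X^{k-1,1}}$ (and $X^{k-2,1}$ in case (2)) in terms of the initial discrepancies plus the $O(\epsilon)$ forcing, the $\epsilon^{1/2}$ in the stated rate being the usual loss produced by the $\epsilon\|\nabla v\|^2$ term on the right. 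For $|\alpha|=0$ one estimates $\hat V^{l,0},\hat B^{l,0}$ together with $\partial_t^l\nabla\hat q$, avoiding the dynamical boundary condition entirely, the pressure difference solving the elliptic problem obtained by differencing \eqref{1.40} with Neumann data of size $O(\epsilon)$ plus transport terms; here the infinite-depth Hardy inequality \eqref{Sect1_HardyIneq} is used to handle $\partial_t^l\hat q$.

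The normal derivative and vorticity are then recovered exactly as in the $\sigma=0$ case. One integrates the advection–diffusion difference equations \eqref{1.25} for $\hat\omega_{vh},\hat\omega_{bh}$; the two alternatives split here. If $\Pi S^\varphi vn|_{z=0}=\Pi S^\varphi bn|_{z=0}=0$ in $(0,T]$, then $\omega_{vh}|_{z=0}$, $\omega_{bh}|_{z=0}$ are given by tangential data alone, the trace of $\hat\omega$ on $z=0$ is $O(\epsilon^{\lambda})$, and no boundary layer is generated, yielding the faster rate $\epsilon^{1/2}$; if the ideal MHD boundary data is nonzero, the boundary layer forces the $\epsilon^{1/4}$ rate, by the scaling analysis of \cite{Wu16}. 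The first-order normal derivative is reconstructed through \eqref{1.30}: $\partial_z^{\varphi^\epsilon}v^\epsilon-\partial_z^\varphi v$ is controlled by $\partial_z(v^\epsilon-v)$, which via the div–curl elliptic structure is controlled by $\hat\omega_{vh}$ and $\nabla_y\hat v$, plus $\partial_z(\eta^\epsilon-\eta)$ controlled by $|\hat h|$. Converting the $X^{k-1,1}_{tan}$ control into $X^{k-2}$ control of $\partial_z$-quantities costs half a derivative, and interpolating the resulting bound against the uniform $X^{m-1,1}$ estimate of Proposition \ref{Proposition1.2} halves the exponent, producing the third and fourth blocks of rates; a further Sobolev embedding $X^{k-3}\hookrightarrow Y^{k-3}_{tan}$ (possible since $m\ge6$) gives the $L^\infty$ rates. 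Note $\|v^\epsilon-v\|_{X^k_{tan}}$, $\|b^\epsilon-b\|_{X^k_{tan}}$ and $|h^\epsilon-h|_{X^k}$ are unreachable because $\|\partial_t^k(q^\epsilon-q)\|_{L^2}$ is uncontrolled.

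\textbf{Main obstacle.} The delicate point is the pressure/time-derivative bookkeeping in the $|\alpha|=0$ block: because the fluid occupies the infinite-depth half-space, $\|\partial_t^\ell q\|_{L^2}$ is not bounded, so throughout the estimate one must carry only $\nabla\partial_t^\ell\hat q$ and the weighted quantity $\|\frac{1}{1-z}\partial_t^\ell\hat q\|_{L^2}$, reconstructing whatever is needed through \eqref{Sect1_HardyIneq}, exactly as flagged in the remark following Proposition \ref{Proposition1.2}. Keeping the surface-tension boundary term $\sigma|\hat h|^2_{X^{\cdot,1}}$ coercive while not overcounting derivatives of $\hat h$ — which is why the scheme stops one derivative short and why the rates for $\partial_z$ and for the tangential directions differ — is where the real care is required; the remaining manipulations parallel the $\sigma=0$ proof of Theorem \ref{Theorem1.2}.
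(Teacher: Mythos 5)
Your overall strategy — reuse the $\sigma=0$ vorticity and normal-derivative machinery, and modify only the tangential energy estimate and the pressure estimate to exploit surface tension — matches the paper's architecture. However, there are two places where your proposal departs from what is needed.

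\textbf{Alinhac good unknowns.} You propose to pass to $\hat V^{\ell,\alpha}$, $\hat B^{\ell,\alpha}$, $\hat Q^{\ell,\alpha}$ as in the $\sigma=0$ proof. The paper explicitly does \emph{not} do this for $\sigma>0$ (see the remark in Lemma~6.3, and the equations \eqref{7.11} which are written for $\partial_t^{\ell}\mathcal{Z}^{\alpha}\hat v$, $\partial_t^{\ell}\mathcal{Z}^{\alpha}\hat b$ directly). The good unknowns are designed to absorb the $\partial_z^{\varphi}q\,\partial_t^{\ell}\mathcal Z^{\alpha}\hat\eta$ contribution so that the Taylor sign condition can produce coercivity for $|\hat h|$. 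When $\sigma>0$, coercivity instead comes from the surface-tension term $\sigma\int_{z=0}\mathfrak{M}_1|\nabla_y\partial_t^{\ell}\mathcal Z^{\alpha}\hat h|^2$, and the boundary calculation \eqref{7.14}--\eqref{7.18} is carried out for the raw unknown $\partial_t^{\ell}\mathcal Z^{\alpha}\hat v$ with the kinematic condition $\partial_t\partial_t^{\ell}\mathcal Z^{\alpha}\hat h + v_y^{\e}\cdot\nabla_y\partial_t^{\ell}\mathcal Z^{\alpha}\hat h - \NN^{\e}\cdot\partial_t^{\ell}\mathcal Z^{\alpha}\hat v = \cdots$, not for $\hat V^{\ell,\alpha}\cdot\NN^{\e}$. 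Using the good unknown here would reintroduce $\partial_z^{\varphi}v\,\partial_t^{\ell}\mathcal Z^{\alpha}\hat\eta$ terms inside the surface-tension pairing, generating derivatives of $\hat h$ that are not absorbed by the surface-tension coercivity, and the cancellations you invoke are not automatic. Your proposal should either verify that those terms close, or drop the good unknowns as the paper does.

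\textbf{Closing the pressure/time-derivative loop.} You write that the Neumann data for $\hat q$ is ``of size $O(\epsilon)$ plus transport terms,'' but this understates the difficulty. As in Lemma~7.1, the Neumann data involves $(\partial_t\hat v + v_y^{\e}\cdot\nabla_y\hat v)\cdot\NN^{\e}$, so $\|\nabla\hat q\|_{X^{k-1}}$ controls against $\|\partial_t^k\hat v\|_{L^2}$ and $|\partial_t^{k-1}\hat h|_{X^{0,\frac12}}$, i.e.\ top-order quantities that themselves need the pressure estimate. Feeding this into the tangential energy inequality does not directly Gronwall. The paper resolves this with a \emph{separate} $L^4([0,T],L^2)$-type estimate (Lemma~7.4) for $\partial_t^k\hat v$, $\partial_t^k\hat b$, $\partial_t^k\hat h$, $\partial_t^k\nabla_y\hat h$, obtained by two integrations in time and a dedicated treatment of $\int_0^t\int\partial_t^k\hat q\,\nabla^{\varphi^{\e}}\cdot\partial_t^k\hat v$ via the Hardy inequality \eqref{Sect1_HardyIneq}, producing \eqref{7.31}--\eqref{7.34}. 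Your proposal mentions Hardy's inequality for $\partial_t^{\ell}\hat q$ but does not identify that a standalone top-order time-derivative estimate is required to break the circular dependency; without it the argument does not close. This is the substantive step missing from the proposal.

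The vorticity/normal-derivative part of your sketch and the derivation of the $\epsilon^{1/4}$ vs.\ $\epsilon^{1/2}$ dichotomy are consistent with the paper's Section~5 arguments, which carry over verbatim since those equations and boundary conditions are $\sigma$-independent.
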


\subsection{Preliminaries}
In this subsection,
we collect some necessary notations, propositions
and preliminary estimates,
the function is defined in the fixed domain $\mathbb{R}_-^3$.

We denote the commutator,
\begin{align}
[\partial_t^l\mathcal{Z}^\alpha,\partial_i^\varphi]f=-\partial_z^\varphi f\partial_i^\varphi(\partial_t^l\mathcal{Z}^\alpha \eta)
+{\text b.t.}, i = t, 1, 2, 3.
\end{align}
Here the abbreviation b.t. represents bounded terms in this paper
\begin{align*}
b.t.=\mathcal{C}_i^\alpha(f)=\mathcal{C}_{i,1}^\alpha(f)+\mathcal{C}_{i,2}^\alpha(f)
+\mathcal{C}_{i,3}^\alpha(f),
\end{align*}
$i=1,2$, and
\begin{equation}
\left\{\begin{aligned}
&\mathcal{C}_{i,1}:=[\partial_t^l\mathcal{Z}^\alpha,\frac{\partial_i\varphi}{\partial_z\varphi},\partial_zf]\\
&\mathcal{C}_{i,2}:=-\partial_zf[\partial_t^l\mathcal{Z}^\alpha,\partial_i\varphi,\frac{1}{\partial_z\varphi}]
-\partial_i\varphi(\partial_t^l\mathcal{Z}^\alpha(\frac{1}{\partial_z\varphi})
+\frac{Z^\alpha\partial_z\eta}{(\partial_z\varphi)^2})\partial_zf,\\
&\mathcal{C}_{i,3}^\alpha:=-\frac{\partial_i\varphi}{\partial_z\varphi}[\partial_t^l\mathcal{Z}^\alpha,\partial_z]f
+\frac{\partial_i\varphi}{(\partial_z\varphi)^2}\partial_zf[\partial_t^l\mathcal{Z}^\alpha,\partial_z]\eta.
\end{aligned}\right.
\end{equation}
For $i = 3$, the result is very similar, and it suffices to replace $\partial_i\varphi$
 by 1 in the above terms.
We need the following lemma to estimate the commutators.
\begin{lemma}[\cite{Masmoudi17}]\label{Lemma1.1}
For $1\leq|\alpha|\leq m, i = 1, 2, 3$, we have
\begin{align*}
\|\mathcal{C}_i^\alpha(f)\|\leq \Lambda(\frac{1}{c_0},|h|_{2,\infty}+\|\nabla f\|_{1,\infty})
(\|\nabla f\|_{m-1}+|h|_{m-\frac{1}{2}}).
\end{align*}
\end{lemma}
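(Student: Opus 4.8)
The plan is to expand the commutator $[\partial_t^l\mathcal{Z}^\alpha,\partial_i^\varphi]f$ into the three pieces $\mathcal{C}_{i,1}^\alpha(f)$, $\mathcal{C}_{i,2}^\alpha(f)$, $\mathcal{C}_{i,3}^\alpha(f)$ displayed above, and bound each of them in $L^2(\mathbb{R}^3_-)$ using the product (Moser-type) and commutator estimates in co-normal Sobolev spaces, together with the regularity of $\varphi$ that follows from the definition $\varphi = Az + \eta$, $\widehat{\eta}(\xi,z)=\chi(z\xi)\widehat{h}(\xi)$. The crucial structural input is that $\eta$ is one half-derivative smoother than $h$, so that $\|\nabla\eta\|_{m}\lesssim |h|_{m-\frac12}$ and the coefficient $\partial_z\varphi$ is bounded below by $c_0>0$, which makes $1/\partial_z\varphi$ and the quotients $\partial_i\varphi/\partial_z\varphi$ well-behaved multipliers; these are exactly the hypotheses hidden in the argument of $\Lambda$ on the right-hand side.

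First I would treat $\mathcal{C}_{i,1}^\alpha(f)=[\partial_t^l\mathcal{Z}^\alpha,\tfrac{\partial_i\varphi}{\partial_z\varphi},\partial_z f]$, the symmetric (double) commutator. Here I would invoke the standard co-normal commutator lemma: for a smooth function $g=\tfrac{\partial_i\varphi}{\partial_z\varphi}$ and any $h$, $\|[\partial_t^l\mathcal{Z}^\alpha, g, \partial_z h]\|_{L^2}\lesssim \|\nabla g\|_{L^\infty}\|\partial_z h\|_{m-1} + \|\nabla g\|_{m-1}\|\partial_z h\|_{L^\infty}$, plus lower-order mixed terms. Using $\|\nabla g\|_{m-1}\lesssim \Lambda(1/c_0,|h|_{2,\infty})(|h|_{m-\frac12}+1)$ (from $g$ being a smooth function of $\nabla\varphi$ and the elliptic-type gain for $\eta$) converts this into the asserted bound. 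The term $\mathcal{C}_{i,3}^\alpha$ is handled the same way once one notes $[\partial_t^l\mathcal{Z}^\alpha,\partial_z]$ only produces lower-order co-normal operators (because $\mathcal{Z}_3=\tfrac{z}{1-z}\partial_z$ commutes with $\partial_z$ up to a smooth bounded factor), so $\|[\partial_t^l\mathcal{Z}^\alpha,\partial_z]f\|_{L^2}\lesssim \|\nabla f\|_{m-1}$ and likewise for $\eta$ in place of $f$, the latter giving $|h|_{m-\frac12}$.

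The term I expect to be the main obstacle is $\mathcal{C}_{i,2}^\alpha$, which contains $\partial_t^l\mathcal{Z}^\alpha(1/\partial_z\varphi)$ acting on $\partial_z f$: differentiating $1/\partial_z\varphi = 1/(A+\partial_z\eta)$ many times produces, by the chain/Faà di Bruno rule, a sum of terms with up to $m$ co-normal derivatives falling on $\partial_z\eta$, i.e. on the half-derivative-smoother extension of $h$. One must check carefully that these top-order terms are still controlled by $|h|_{m-\frac12}$ (not $|h|_{m+\frac12}$), using that each such factor is multiplied by at least one other derivative factor or by the low-$L^\infty$ norm $\|\nabla f\|_{1,\infty}$, and that the explicit correction $+\,\tfrac{\mathcal{Z}^\alpha\partial_z\eta}{(\partial_z\varphi)^2}$ is designed precisely to cancel the single worst top-order contribution of $\partial_t^l\mathcal{Z}^\alpha(1/\partial_z\varphi)$, leaving only a genuine commutator. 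After this cancellation the remaining terms are again of product type and are absorbed into $\Lambda(1/c_0,|h|_{2,\infty}+\|\nabla f\|_{1,\infty})(\|\nabla f\|_{m-1}+|h|_{m-\frac12})$. Finally, for $i=3$ the same argument applies verbatim with $\partial_i\varphi$ replaced by $1$, which only simplifies matters. Collecting the three estimates yields the stated bound on $\|\mathcal{C}_i^\alpha(f)\|$.
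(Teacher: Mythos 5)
This lemma is cited from [\cite{Masmoudi17}] (Masmoudi--Rousset) and the paper does not supply its own proof, so there is no in-paper argument to compare against. Judged against the argument in the cited reference, your outline captures the right structure: the commutator $[\partial_t^l\mathcal{Z}^\alpha,\partial_i^\varphi]f$ is split into the symmetric double commutator $\mathcal{C}_{i,1}^\alpha$, the Fa\`a di Bruno residue $\mathcal{C}_{i,2}^\alpha$, and the $[\partial_t^l\mathcal{Z}^\alpha,\partial_z]$ pieces $\mathcal{C}_{i,3}^\alpha$, and each is bounded by Moser-type product and tame commutator estimates in conormal Sobolev spaces, exploiting (i) the lower bound $\partial_z\varphi\ge c_0$ to control quotients, (ii) the half-derivative smoothing $\|\nabla\eta\|_{H^{s}_{\rm co}}\lesssim |h|_{s+\frac12}$ built into the extension $\hat\eta(\xi,z)=\chi(z\xi)\hat h(\xi)$, and (iii) the fact that $[\mathcal{Z}_3,\partial_z]=-\tfrac{1}{(1-z)^2}\partial_z$ is again a conormal operator of the allowed type. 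Your identification of the cancellation in $\mathcal{C}_{i,2}^\alpha$ --- the correction $+\tfrac{Z^\alpha\partial_z\eta}{(\partial_z\varphi)^2}$ (which should read $\partial_t^l\mathcal{Z}^\alpha\partial_z\eta$ in the general case; the statement in the paper is a typo) removes the single top-order contribution $-\tfrac{\partial_t^l\mathcal{Z}^\alpha\partial_z\eta}{(\partial_z\varphi)^2}$ of $\partial_t^l\mathcal{Z}^\alpha\bigl(\tfrac{1}{\partial_z\varphi}\bigr)$ --- is exactly the point.

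Two remarks where the sketch is thinner than a complete proof would need to be. First, you invoke a generic tame symmetric-commutator estimate for $\mathcal{C}_{i,1}^\alpha$ but do not state it; one should write out the precise form $\|[\partial_t^l\mathcal{Z}^\alpha,g,\partial_z f]\|_{L^2}\lesssim \|g\|_{1,\infty}\|\partial_z f\|_{m-1}+\|g\|_{m-1}\|\partial_z f\|_{1,\infty}$ and verify that $\|g\|_{m-1}$ for $g=\partial_i\varphi/\partial_z\varphi$ is genuinely controlled by $\Lambda(\tfrac1{c_0},|h|_{2,\infty})(1+|h|_{m-\frac12})$; this is a Moser composition estimate that has to be proved, not assumed, and it depends on the half-derivative gain being stable under compositions with $\zeta\mapsto 1/(A+\zeta)$. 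Second, your claim that after the cancellation in $\mathcal{C}_{i,2}^\alpha$ ``each such factor is multiplied by at least one other derivative factor or by the low-$L^\infty$ norm'' is the right slogan, but the precise statement is that every surviving Fa\`a di Bruno term has all factors $\partial_t^{l_j}\mathcal{Z}^{\alpha_j}\partial_z\eta$ of conormal order strictly less than $l+|\alpha|$, so that the highest-order one lands in $L^2$ with weight $|h|_{m-\frac12}$ while the rest go in $L^\infty$ under the weight $|h|_{2,\infty}$, and then the outer factor $\partial_z f$ contributes the $\|\nabla f\|_{1,\infty}$. Writing this out explicitly (with the multinomial index bookkeeping) is what actually closes the estimate; the outline leaves it at the level of a plausibility argument.

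A small structural point you should also confirm: the estimate is stated uniformly in $i=1,2,3$, but for $i=3$ one must replace $\partial_i\varphi$ by $1$, which removes the $\mathcal{C}_{3,2}^\alpha$ residue entirely except for the $1/\partial_z\varphi$ differentiation. Your remark that the $i=3$ case ``only simplifies matters'' is correct, but it is worth recording that the needed cancellation is then between $\partial_t^l\mathcal{Z}^\alpha(1/\partial_z\varphi)$ and the leading term it produces, with no $\partial_i\varphi$ prefactor, so the bound follows from the same Fa\`a di Bruno analysis.
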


Now, we state the integration by parts for $\int_{\mathbb{R}^3_{-}}\partial_i^{\varphi}fg\mathrm{d}\mathcal{V}_t$.
\begin{lemma}
In $\mathbb{R}^3_{-}$, we have the following integration by parts rules:
\begin{equation}
\begin{array}{ll}
\int_{\mathbb{R}^3_{-}}\partial_i^{\varphi}fg\mathrm{d}\mathcal{V}_t
=-\int_{\mathbb{R}^3_{-}}f\partial_i^{\varphi}g\mathrm{d}\mathcal{V}_t
+\int_{z=0}fg\NN_i\mathrm{d}y, \\[12pt]

\frac{\mathrm{d}}{\mathrm{d}t}\int_{\mathbb{R}^3_{-}} f \mathrm{d}\mathcal{V}_t
=\int_{\mathbb{R}^3_{-}} \partial_t^{\varphi} f \mathrm{d}\mathcal{V}_t
+ \int_{z=0} f v\cdot\NN \mathrm{d}y,  \\[12pt]

\int_{\mathbb{R}^3_{-}} \vec{a} \cdot\nabla^{\varphi} f \mathrm{d}\mathcal{V}_t
= \int_{z=0} \vec{a}\cdot \NN\, f  \mathrm{d}y
- \int_{\mathbb{R}^3_{-}} \nabla^{\varphi}\cdot \vec{a}\, f \mathrm{d}\mathcal{V}_t, \\[12pt]

\int_{\mathbb{R}^3_{-}} \vec{a} \cdot (\nabla^{\varphi}\times \vec{b}) \,\mathrm{d}\mathcal{V}_t
= \int_{z=0} \vec{a} \cdot (\NN \times \vec{b}) \,\mathrm{d}y
+ \int_{\mathbb{R}^3_{-}} (\nabla^{\varphi}\times \vec{a}) \cdot \vec{b} \,\mathrm{d}\mathcal{V}_t,
\end{array}
\end{equation}
where $\mathrm{d}\mathcal{V}_t = \partial_z\varphi \mathrm{d}y\mathrm{d}z$ is defined on $\mathbb{R}^3_{-}$.
\end{lemma}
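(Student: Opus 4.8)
The plan is to obtain all four identities as the change‑of‑variables images, under the diffeomorphism $\Phi(t,\cdot)\colon\mathbb{R}^3_-\to\Omega_t$, of the classical integration by parts formula, the divergence theorem, the curl identity, and the Reynolds transport theorem on the moving domain $\Omega_t$. Concretely, I would first record the pullback dictionary: $\int_{\mathbb{R}^3_-}F\,\mathrm{d}\mathcal{V}_t=\int_{\Omega_t}\widetilde F\,\mathrm{d}x$ since $\mathrm{d}\mathcal{V}_t=\partial_z\varphi\,\mathrm{d}y\,\mathrm{d}z$ is the pullback of Lebesgue measure; $\partial_i^\varphi g$ is the pullback of $\partial_i\widetilde g$; and on the free surface $\Sigma_t$ the oriented surface element $\nu\,\mathrm{d}S$ pulls back to $\NN\,\mathrm{d}y$. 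One could then simply quote the classical theorems, but since the statement is phrased intrinsically in the fixed domain I would instead verify everything directly there, which is cleaner and makes the boundary terms transparent. Throughout I would assume $f,g,\vec a,\vec b$ (and the needed derivatives of $\varphi$) are smooth and decay fast enough that all integrals converge and the boundary contributions as $z\to-\infty$ and as $|y|\to\infty$ vanish.

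For the scalar identity (the first one) the key computation is the algebraic relation $\partial_z\varphi\,\partial_i^\varphi f=\partial_i(\partial_z\varphi\,f)-\partial_z(\partial_i\varphi\,f)$ for $i=1,2$, valid because the two $\partial_i\partial_z\varphi\,f$ contributions cancel; multiplying by $g$, integrating over $\mathbb{R}^3_-$, and integrating by parts in the flat variables $y_i$ (no boundary term) and $z$ (boundary term only at $z=0$) would give $\int\partial_i^\varphi f\,g\,\mathrm{d}\mathcal{V}_t=-\int f\,\partial_i^\varphi g\,\mathrm{d}\mathcal{V}_t-\int_{z=0}\partial_i\varphi\,f\,g\,\mathrm{d}y$. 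I would then use $\eta(t,y,0)=h(t,y)$ — a consequence of $\widehat\eta(\xi,z)=\chi(z\xi)\widehat h(\xi)$ with $\chi(0)=1$ — so that $\partial_i\varphi|_{z=0}=\partial_{y_i}h=-\NN_i$, turning the boundary term into $+\int_{z=0}fg\,\NN_i\,\mathrm{d}y$; the case $i=3$ is even simpler, $\int\partial_3^\varphi f\,g\,\mathrm{d}\mathcal{V}_t=\int\partial_z f\,g\,\mathrm{d}y\,\mathrm{d}z$, integrate by parts in $z$, and $\NN_3=1$. The divergence identity then follows by writing $\vec a\cdot\nabla^\varphi f=\sum_i a^i\partial_i^\varphi f$ and applying the scalar identity to each term with $(f,g)=(f,a^i)$; the curl identity follows by expanding $\vec a\cdot(\nabla^\varphi\times\vec b)=\sum_{i,j,k}\varepsilon_{ijk}a^i\partial_j^\varphi b^k$, moving $\partial_j^\varphi$ onto $a^i$ by the scalar identity, and relabelling indices using the cyclic symmetry of $\varepsilon_{ijk}$, which turns the interior term into $(\nabla^\varphi\times\vec a)\cdot\vec b$ and the boundary term into $\vec a\cdot(\NN\times\vec b)$.

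For the transport identity (the second one) I would differentiate $\int_{\mathbb{R}^3_-}f\,\partial_z\varphi\,\mathrm{d}y\,\mathrm{d}z$ in $t$, use $\partial_t f\,\partial_z\varphi=\partial_t^\varphi f\,\partial_z\varphi+\partial_t\varphi\,\partial_z f$ together with $\partial_t\varphi\,\partial_z f+f\,\partial_t\partial_z\varphi=\partial_z(\partial_t\varphi\,f)$ to write $\frac{\mathrm d}{\mathrm dt}\int f\,\mathrm{d}\mathcal{V}_t=\int\partial_t^\varphi f\,\mathrm{d}\mathcal{V}_t+\int\partial_z(\partial_t\varphi\,f)\,\mathrm{d}y\,\mathrm{d}z$, reduce the last integral to $\int_{z=0}\partial_t\varphi\,f\,\mathrm{d}y$, and finally invoke the kinematic boundary condition in the form $\partial_t\varphi|_{z=0}=\partial_t h=v\cdot\NN$. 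I do not expect any genuine obstacle here; the work is entirely bookkeeping. The only points requiring care are justifying that the boundary contributions at $z\to-\infty$ and $|y|\to\infty$ vanish, correctly identifying $\partial_i\varphi|_{z=0}$ with $-\NN_i$ through $\eta|_{z=0}=h$, and being aware that the transport identity (unlike the other three) uses the kinematic boundary condition, hence holds along solutions of the system rather than for an arbitrary $\varphi$.
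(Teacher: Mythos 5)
Your proof is correct, and it is the natural direct verification in the flattened coordinates. The paper itself states this lemma without proof, treating it as a known fact from the Masmoudi--Rousset framework, so there is no argument to compare against; but every step of your derivation checks out.

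A few confirmations. The identity $\partial_z\varphi\,\partial_i^\varphi f=\partial_i(\partial_z\varphi\,f)-\partial_z(\partial_i\varphi\,f)$ for $i=1,2$ is exactly the clean way to remove the Jacobian factor before integrating by parts in the flat coordinates, and the mixed-partial cancellation $\partial_i\partial_z\varphi=\partial_z\partial_i\varphi$ is what makes it work. The boundary identification $\partial_i\varphi|_{z=0}=\partial_{y_i}h=-\NN_i$ via $\eta(t,y,0)=h(t,y)$, which itself comes from $\chi(0)=1$ in the harmonic-type extension $\widehat\eta(\xi,z)=\chi(z\xi)\widehat h(\xi)$, gives the correct sign $+\int_{z=0}fg\,\NN_i\,\mathrm{d}y$; together with the trivial $i=3$ case ($\NN_3=1$) this is the whole first formula, and the third and fourth formulas are then pure Levi--Civita bookkeeping applied componentwise. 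For the transport formula, the rearrangement $\partial_t f\,\partial_z\varphi=\partial_t^\varphi f\,\partial_z\varphi+\partial_t\varphi\,\partial_z f$ followed by $\partial_t\varphi\,\partial_z f+f\,\partial_t\partial_z\varphi=\partial_z(\partial_t\varphi\,f)$ and the boundary evaluation $\partial_t\varphi|_{z=0}=\partial_t h=v\cdot\NN$ is exactly right, and you are correct to flag that this last identity alone uses the kinematic boundary condition and therefore holds along solutions rather than for arbitrary $\varphi$ — the other three are geometric identities valid for any admissible $\varphi$. Your remarks on the required decay as $z\to-\infty$ and $|y|\to\infty$ are the standing regularity hypotheses under which these manipulations are legitimate.
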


The following lemma states the Korn inequality in $\mathbb{R}^3_{-}$.
\begin{lemma}[\cite{Masmoudi17}]
If $\partial_z\varphi\geq c_0$, $\|\nabla\varphi\|_{L^\infty}+\|\nabla^2\varphi\|_{L^\infty}\leq \frac{1}{c_0}$ for some $c_0>0$, then there exists $\Lambda_0=\Lambda(\frac{1}{c_0})>0$,
such that for every $v\in H^1(\mathbb{R}^3_{-})$, one has
\begin{equation}
\begin{array}{ll}
\|\nabla v\|_{L^2}^2 \lem \Lambda_0(\int_{\mathbb{R}^3_{-}}|\mathcal{S}^{\varphi} v|^2 \,\mathrm{d}\mathcal{V}_t + \|v\|_{L^2}^2).
\end{array}
\end{equation}
\end{lemma}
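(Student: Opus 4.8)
The plan is to reduce the weighted, $\varphi$-twisted Korn inequality to the classical second Korn inequality on the physical moving domain $\Omega_t$, where the twisted symmetric gradient becomes the ordinary one. First I would push everything forward by the diffeomorphism $\Phi(t,\cdot):\mathbb{R}^3_-\to\Omega_t$, $x=(y,z)\mapsto(y,\varphi(t,y,z))$. Setting $u:=v\circ\Phi(t,\cdot)^{-1}$ on $\Omega_t$, the very definition of $\partial_i^\varphi$ (namely $\partial_i^\varphi v(t,x)=(\partial_i u)(t,\Phi(t,x))$) gives $(\nabla^\varphi v)(x)=(\nabla u)(\Phi(t,x))$, and hence also $(\mathcal{S}^\varphi v)(x)=(\mathcal{S}u)(\Phi(t,x))$. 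Since $\det D\Phi=\partial_z\varphi$, the volume element $\mathrm{d}\mathcal{V}_t=\partial_z\varphi\,\mathrm{d}y\mathrm{d}z$ is exactly the pullback of Lebesgue measure under $\Phi$, so the change of variables turns $\int_{\mathbb{R}^3_-}|\mathcal{S}^\varphi v|^2\,\mathrm{d}\mathcal{V}_t$, $\int_{\mathbb{R}^3_-}|\nabla^\varphi v|^2\,\mathrm{d}\mathcal{V}_t$ and $\int_{\mathbb{R}^3_-}|v|^2\,\mathrm{d}\mathcal{V}_t$ into $\|\mathcal{S}u\|_{L^2(\Omega_t)}^2$, $\|\nabla u\|_{L^2(\Omega_t)}^2$ and $\|u\|_{L^2(\Omega_t)}^2$, respectively.

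Next I would invoke the classical second Korn inequality on $\Omega_t=\{z<h(t,y)\}$, $\|\nabla u\|_{L^2(\Omega_t)}^2\le C_K\big(\|\mathcal{S}u\|_{L^2(\Omega_t)}^2+\|u\|_{L^2(\Omega_t)}^2\big)$ for $u\in H^1(\Omega_t)$, with the crucial point that $C_K$ may be taken to depend only on $1/c_0$. Indeed, since $\varphi=Az+\eta$ with $\hat\eta(\xi,0)=\chi(0)\hat h(\xi)=\hat h(\xi)$, one has $\nabla_y h(t,\cdot)=\nabla_y\varphi(t,\cdot,0)$, so the hypothesis $\|\nabla\varphi\|_{L^\infty}\le 1/c_0$ forces $\|\nabla_y h\|_{L^\infty}\le 1/c_0$; together with $h\to 0$ at infinity this makes $\Omega_t$ a uniformly Lipschitz graph domain whose Lipschitz character is controlled solely by $c_0$. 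The second Korn inequality for such domains with a constant depending only on that character — obtained by a partition of unity localizing to boundary charts plus the flat half-space estimate, or equivalently by the Ne\v{c}as/Lions argument built on the algebraic identity $\partial_k\partial_l u^i=\partial_k(\mathcal{S}u)_{il}+\partial_l(\mathcal{S}u)_{ik}-\partial_i(\mathcal{S}u)_{kl}$ — is exactly the estimate established in Masmoudi--Rousset \cite{Masmoudi17}, which I would cite (or reproduce the localization).

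Finally I would pull back and convert the twisted quantities to the flat ones. Combining the change-of-variables identities with Korn on $\Omega_t$ yields $\int_{\mathbb{R}^3_-}|\nabla^\varphi v|^2\,\mathrm{d}\mathcal{V}_t\lesssim \Lambda_0\big(\int_{\mathbb{R}^3_-}|\mathcal{S}^\varphi v|^2\,\mathrm{d}\mathcal{V}_t+\int_{\mathbb{R}^3_-}|v|^2\,\mathrm{d}\mathcal{V}_t\big)$. From $\partial_z v=\partial_z\varphi\,\partial_3^\varphi v$ and $\partial_i v=\partial_i^\varphi v+\partial_i\varphi\,\partial_3^\varphi v$ for $i=1,2$, together with $|\partial_z\varphi|+|\partial_i\varphi|\le 1/c_0$, one gets $|\nabla v|\le \Lambda(1/c_0)|\nabla^\varphi v|$ pointwise; and $c_0\le\partial_z\varphi\le 1/c_0$ gives $\|\nabla v\|_{L^2}^2\le \Lambda(1/c_0)\int_{\mathbb{R}^3_-}|\nabla^\varphi v|^2\,\mathrm{d}\mathcal{V}_t$ and $\int_{\mathbb{R}^3_-}|v|^2\,\mathrm{d}\mathcal{V}_t\le \Lambda(1/c_0)\|v\|_{L^2}^2$. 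Chaining these estimates produces the claimed inequality with $\Lambda_0=\Lambda(1/c_0)$. The only genuine obstacle is the uniformity of the Korn constant over the whole admissible family $\{\Omega_t\}_t$, i.e. that $C_K$ depends on $c_0$ alone and not on finer features of $h(t,\cdot)$; this is precisely what the hypotheses $\partial_z\varphi\ge c_0$, $\|\nabla\varphi\|_{L^\infty}+\|\nabla^2\varphi\|_{L^\infty}\le 1/c_0$ are designed to guarantee, and it is handled exactly as in \cite{Masmoudi17}.
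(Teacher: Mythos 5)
Your proposal is correct and follows exactly the route taken in the cited reference \cite{Masmoudi17}: by design $\partial_i^\varphi v=(\partial_i u)\circ\Phi$ and $\mathrm{d}\mathcal{V}_t$ is the pullback of Lebesgue measure, so all three twisted integrals become the flat $L^2(\Omega_t)$ quantities, the classical second Korn inequality on the uniformly Lipschitz graph domain $\Omega_t$ applies with a constant controlled by $\|\nabla_y h\|_{L^\infty}\le\|\nabla\varphi\|_{L^\infty}\le 1/c_0$, and the pointwise bounds $\partial_z v=\partial_z\varphi\,\partial_3^\varphi v$, $\partial_i v=\partial_i^\varphi v+\partial_i\varphi\,\partial_3^\varphi v$ together with $c_0\le\partial_z\varphi\le 1/c_0$ convert everything back to the stated conormal form. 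The paper does not reprove this lemma but imports it verbatim from Masmoudi--Rousset, and your reduction is exactly their argument.
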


Denote the viscous terms by
\begin{align*}
\Delta^\varphi f
&=2\nabla^\varphi\cdot S^\varphi f-\nabla^\varphi(\nabla^\varphi\cdot f)\\
&=\nabla^\varphi(\nabla^\varphi\cdot f)-\nabla^\varphi\times(\nabla^\varphi\times f).
\end{align*}

The following estimate allows us to control the gradient of $\omega$ by its vorticity and $|\omega\cdot \nn|_{\frac{1}{2}}$.
\begin{equation}
\begin{array}{ll}
\|\nabla \omega\|_{L^2}^2 \lem \int_{\mathbb{R}^3_{-}}|\nabla^{\varphi}\times \omega|^2 \,\mathrm{d}\mathcal{V}_t + \|\omega\|_{L^2}^2
+ |\omega\cdot \nn|_{\frac{1}{2}},
\end{array}
\end{equation}
which is proved by the
Hodge decomposition and $\nabla^\varphi\cdot\omega=0$.

We shall also need the following embedding and trace estimates for these spaces:
\begin{lemma}[\cite{Masmoudi17}]
For $s_{1} \geq 0, s_{2} \geq 0$ such that $s_{1}+s_{2}>2$,
$f \in H_{t a n}^{s_{1}}$ and $\partial_{z} f \in H_{t a n}^{s_{2}}$,
we have the anisotropic Sobolev embedding
\begin{equation}\label{1.47}
\|f\|_{L^{\infty}}^{2} \lesssim\left\|\partial_{z} f\right\|_{H_{t a n}^{s_{2}}}\|f\|_{H_{t a n}^{s_{1}}}.
\end{equation}

For $f \in H^{1}(\mathcal{S})$,
we have the trace estimates
\begin{equation}\label{1.48}
|f(\cdot, 0)|_{H^{s}\left(\mathbb{R}^{2}\right)}
\leqq C\left\|\partial_{z} f\right\|_{H_{t a n}}^{\frac{1}{2}}\|f\|_{H_{t a n}^{s_{1}}}^{\frac{1}{2}},
\end{equation}
with $s_{1}+s_{2}=2 s \geqq 0$.
\end{lemma}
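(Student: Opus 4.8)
The plan is to reduce both inequalities to one‑dimensional estimates in the vertical variable by taking the Fourier transform in the tangential variables $y\in\mathbb{R}^2$. Writing $\hat f(\xi,z)$ for the partial Fourier transform, Plancherel's theorem identifies the anisotropic norms with
$\|f\|_{H^{s}_{tan}}^2=\int_{\mathbb{R}^2}\langle\xi\rangle^{2s}\|\hat f(\xi,\cdot)\|_{L^2_z(-\infty,0)}^2\,d\xi$,
and likewise for $\partial_z f$. Since $f$ and $\partial_z f$ lie in $L^2(\mathbb{R}^3_-)$, Fubini gives $\hat f(\xi,\cdot)\in H^1((-\infty,0))$ for a.e.\ $\xi$, so each such slice has a trace at $z=0$ and vanishes as $z\to-\infty$. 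First I would perform a routine density reduction to $f$ smooth and compactly supported in $\mathbb{R}^3_-$, carry out the manipulations below, and then pass to the limit. The single analytic ingredient is the fundamental theorem of calculus in $z$: for a.e.\ $\xi$,
\begin{equation*}
|\hat f(\xi,z)|^2=2\,\mathrm{Re}\!\int_{-\infty}^{z}\overline{\hat f(\xi,z')}\,\partial_{z'}\hat f(\xi,z')\,dz'
\le 2\,\|\hat f(\xi,\cdot)\|_{L^2_z}\,\|\partial_z\hat f(\xi,\cdot)\|_{L^2_z},
\end{equation*}
uniformly for $z\le 0$, in particular at $z=0$.

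\textbf{The $L^\infty$ embedding.} Fourier inversion gives $|f(y,z)|\le\int_{\mathbb{R}^2}|\hat f(\xi,z)|\,d\xi\le\int_{\mathbb{R}^2}\big(\sup_{z\le0}|\hat f(\xi,z)|\big)\,d\xi$. Inserting the pointwise bound above and factoring in the weights $\langle\xi\rangle^{-s_1/2}$ and $\langle\xi\rangle^{-s_2/2}$, I would obtain
\begin{equation*}
|f(y,z)|\lesssim\int_{\mathbb{R}^2}\langle\xi\rangle^{-(s_1+s_2)/2}
\Big(\langle\xi\rangle^{2s_1}\|\hat f(\xi,\cdot)\|_{L^2_z}^2\Big)^{1/4}
\Big(\langle\xi\rangle^{2s_2}\|\partial_z\hat f(\xi,\cdot)\|_{L^2_z}^2\Big)^{1/4}\,d\xi .
\end{equation*}
Applying H\"older's inequality with exponents $(4,4,2)$ (note $\tfrac14+\tfrac14+\tfrac12=1$), the first two factors integrate to $\|f\|_{H^{s_1}_{tan}}^{1/2}$ and $\|\partial_z f\|_{H^{s_2}_{tan}}^{1/2}$, while the third contributes $\big(\int_{\mathbb{R}^2}\langle\xi\rangle^{-(s_1+s_2)}\,d\xi\big)^{1/2}$, which is finite precisely because $s_1+s_2>2$. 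Taking the supremum over $(y,z)$ and squaring gives $\|f\|_{L^\infty}^2\lesssim\|\partial_z f\|_{H^{s_2}_{tan}}\|f\|_{H^{s_1}_{tan}}$, which is \eqref{1.47}.

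\textbf{The trace estimate.} Here I would take $z=0$ in the pointwise bound, multiply both sides by $\langle\xi\rangle^{2s}$ with $s_1+s_2=2s$, integrate in $\xi$, and split $\langle\xi\rangle^{2s}=\langle\xi\rangle^{s_1}\langle\xi\rangle^{s_2}$. The Cauchy--Schwarz inequality in $\xi$ then yields
\begin{equation*}
|f(\cdot,0)|_{H^s(\mathbb{R}^2)}^2\le 2\int_{\mathbb{R}^2}\!\Big(\langle\xi\rangle^{s_1}\|\hat f(\xi,\cdot)\|_{L^2_z}\Big)\Big(\langle\xi\rangle^{s_2}\|\partial_z\hat f(\xi,\cdot)\|_{L^2_z}\Big)d\xi
\lesssim\|f\|_{H^{s_1}_{tan}}\,\|\partial_z f\|_{H^{s_2}_{tan}},
\end{equation*}
which is \eqref{1.48} (with the exponent $s_2$ understood in the displayed norm $\|\partial_z f\|_{H_{tan}}$ there). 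No integrability of a negative power of $\langle\xi\rangle$ is used, so only $s_1+s_2=2s\ge0$ is needed; in particular $s_2=0$ is admissible, recovering the literal statement.

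\textbf{Main obstacle.} The only genuinely delicate point is the rigorous justification of the decay $\hat f(\xi,z)\to0$ as $z\to-\infty$ and the almost-everywhere-in-$\xi$ validity of the one-dimensional trace and Sobolev facts for $\hat f(\xi,\cdot)$. This is handled by the density reduction to smooth compactly supported $f$ together with Fubini's theorem, which converts $L^2(\mathbb{R}^3_-)$ regularity of $f$ and $\partial_z f$ into $H^1$ regularity of the slices $\hat f(\xi,\cdot)$ on $(-\infty,0)$ for a.e.\ $\xi$; the inequalities then pass to the limit by lower semicontinuity of the $L^\infty$, $H^s$ and $H^s_{tan}$ norms. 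Everything else is weight bookkeeping and a single application of H\"older's inequality.
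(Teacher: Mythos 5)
Your proof is correct, and it is essentially the standard argument for this lemma (the one used in the cited reference \cite{Masmoudi17}): partial Fourier transform in $y$, the one-dimensional bound $|\hat f(\xi,z)|^2\le 2\|\hat f(\xi,\cdot)\|_{L^2_z}\|\partial_z\hat f(\xi,\cdot)\|_{L^2_z}$, and then H\"older with exponents $(4,4,2)$ for the $L^\infty$ embedding (where $s_1+s_2>2$ enters through $\int\langle\xi\rangle^{-(s_1+s_2)}\,d\xi<\infty$ in $\mathbb{R}^2$) respectively Cauchy--Schwarz in $\xi$ for the trace estimate. The present paper only cites the lemma without reproving it, so there is nothing further to compare; your reading of the missing exponent $s_2$ in \eqref{1.48} as a typo is also correct.
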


The rest of the paper is organized as follows:
In Section 2, we investigate the relationship between the vorticity,
the normal derivative and regularity structure of MHD solutions with $\sigma=0$,
and obtain the well-posedness which includes the time derivatives estimates.
In Section 3, we study the strong initial vorticity layer is one of sufficient conditions for the existence
strong vorticity layer for the free boundary MHD equations.
In Section 4, we show the discrepancy between boundary value of MHD vorticity and boundary value of
ideal MHD vorticity is also one of sufficient conditions for the existence strong vorticity layer for
the free boundary MHD equations.
In Section 5, we establish the convergence rates estimates for $\sigma= 0$.
In Section 6, we estimate the regularity structure of MHD solutions with $\sigma> 0$.
In Section 7, we estimate the convergence rates of the inviscid limit for $\sigma> 0$.

\section{Regularity Structure of MHD Solutions for $\sigma=0$}
In this section, let $\sigma=0$, we first analyze the relationship between the vorticity
and the normal derivatives on the free boundary,
and establish the vorticity equations with  boundary conditions.
Next we give an a priori estimates for Proposition \ref{Proposition1.1}
on the regularities with respect to the time derivatives.
For simplicity, we omit the superscript $\epsilon=\lambda$ in this section.

\subsection{Vorticity and Normal Derivatives}
The following lemma shows that the normal derivatives $(\partial_zv^1, \partial_zv^2,
\partial_zb^1, \partial_zb^2)$ can be
estimated by the tangential vorticity $(\omega_{vh},\omega_{bh})$.
\begin{lemma}\label{Lemma2.1}
Assume $\omega_v$, $\omega_b$ are the vorticity of $v$, $b$ of the MHD equations, respectively.
If $\|v\|_{X^{m-1,1}}+\|b\|_{X^{m-1,1}}+\|h\|_{X^{m-1,1}}<+\infty$, then
\begin{align}
&\|\partial_zv^1\|_{X^k}+\|\partial_zv^2\|_{X^k}+\|\partial_zb^1\|_{X^k}+\|\partial_zb^2\|_{X^k}
\nonumber\\
\leq &\|\omega_{vh}\|_{X^k}+\|\omega_{bh}\|_{X^k}+\|v\|_{X^{k,1}}+\|b\|_{X^{k,1}}+|h|_{X^{k,\frac{1}{2}}}
\end{align}
for $k\leq m-1$.
\end{lemma}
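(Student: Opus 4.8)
\emph{Proof proposal.} The idea is to turn the curl together with the divergence-free constraint into an algebraic system for the normal derivatives. Writing $\omega_v=\nabla^\varphi\times v$ componentwise, using $\partial_3^\varphi=\frac{1}{\partial_z\varphi}\partial_z$ and $\partial_i^\varphi=\partial_i-\frac{\partial_i\varphi}{\partial_z\varphi}\partial_z$ for $i=1,2$, and then multiplying each resulting identity by $\partial_z\varphi$, one obtains
\begin{align*}
&\partial_z v^1+\partial_1\varphi\,\partial_z v^3=\partial_z\varphi\,(\omega_v^2+\partial_1 v^3),\\
&\partial_z v^2+\partial_2\varphi\,\partial_z v^3=\partial_z\varphi\,(-\omega_v^1+\partial_2 v^3),\\
&-\partial_1\varphi\,\partial_z v^1-\partial_2\varphi\,\partial_z v^2+\partial_z v^3=-\partial_z\varphi\,(\partial_1 v^1+\partial_2 v^2),
\end{align*}
the last line being $\nabla^\varphi\cdot v=0$ rewritten. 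The coefficient matrix
\[
M[\nabla\varphi]=\begin{pmatrix}1&0&\partial_1\varphi\\0&1&\partial_2\varphi\\-\partial_1\varphi&-\partial_2\varphi&1\end{pmatrix}
\]
has $\det M=1+|\nabla_y\varphi|^2\geq 1$, so $M^{-1}$ exists with entries that are smooth functions of $\nabla\varphi$ bounded by $\Lambda(\|\nabla\varphi\|_{L^\infty})$. Solving gives, for $j=1,2,3$,
\[
\partial_z v^j=\mathcal{R}^j[\nabla\varphi]\big(\omega_{vh},\,\partial_1 v^1,\partial_2 v^2,\partial_1 v^3,\partial_2 v^3\big),
\]
with $\mathcal{R}^j$ linear in the listed arguments (so there is no circularity: all three normal derivatives are recovered simultaneously). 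The identical computation with $b$ in place of $v$, using $\nabla^\varphi\cdot b=0$ and $\omega_b=\nabla^\varphi\times b$, yields the analogous representation for $\partial_z b^j$ in terms of $\omega_{bh}$ and tangential derivatives of $b$.

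Then I would apply $\partial_t^l\mathcal{Z}^\alpha$ with $l+|\alpha|\leq k\leq m-1$ to these representations and estimate in $L^2(\mathbb{R}^3_-)$ via the co-normal Leibniz rule. Terms in which all derivatives land on $\omega_{vh}$ are bounded by $\|\omega_{vh}\|_{X^k}$; terms with all derivatives on $\partial_i v^j$ carry one extra horizontal derivative and are bounded by $\|v\|_{X^{k,1}}$. For the terms in which derivatives fall on the coefficients $\mathcal{R}^j[\nabla\varphi]$ (or on the factor $\partial_z\varphi$), I would use that $\varphi=Az+\eta$ with $\eta$ the extension $\hat\eta(\xi,z)=\chi(z\xi)\hat h(\xi)$, which gains half a tangential derivative over $h$, so that $k$ co-normal derivatives of $\nabla\varphi$ are controlled by $|h|_{X^{k,\frac12}}$; the remaining low-order factors are handled with the anisotropic product and commutator estimates (Lemma \ref{Lemma1.1}) together with the standing bound $\|v\|_{X^{m-1,1}}+\|b\|_{X^{m-1,1}}+\|h\|_{X^{m-1,1}}<\infty$. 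Summing the three components and the $v$ and $b$ versions gives the asserted inequality (with an implicit constant depending on $c_0$ and $|h|_{X^{m-1,1}}$).

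The step I expect to be most delicate is the bookkeeping in the Leibniz expansion: one must verify that in each term the derivatives distributed over $v$ (or $b$), over $h$ through the coefficients, and the single unavoidable loss contained in $\partial_i v^j$ are jointly compatible with the spaces $X^k$, $X^{k,1}$, $X^{k,\frac12}$ on the right-hand side — in particular that a coefficient factor never carries the top order, and that the half-derivative gain of the extension $\eta$ is exploited precisely so that the highest-order coefficient contribution lands in $X^{k,\frac12}$ rather than $X^{k,1}$. The magnetic part needs no separate treatment: the identities for $b$ have the same algebraic form, and since the estimate is purely interior the boundary condition $b|_{z=0}=0$ plays no role here.
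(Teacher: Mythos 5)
Your proposal is correct and is essentially the paper's argument: both use the two tangential components of $\omega_v=\nabla^\varphi\times v$ together with $\nabla^\varphi\cdot v=0$ to form a nondegenerate linear system for the normal derivatives, with invertibility coming from a determinant that is (up to a harmless $(\partial_z\varphi)^{-2}$ factor) equal to $1+|\nabla_y\varphi|^2$, and then the conormal estimate follows by Leibniz plus the half-derivative smoothing of the extension $\eta$. The only difference is organizational: the paper substitutes the divergence constraint to eliminate $\partial_z v^3$ first and inverts a $2\times 2$ system for $(\partial_z v^1,\partial_z v^2)$, whereas you invert the full $3\times 3$ system; these are equivalent, since block-reducing your matrix along its third row reproduces the paper's $2\times 2$ system.
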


\begin{proof}
According to the definition of the vorticity, $\omega_v$, $\omega_b$ can be rewritten as:
\begin{equation}\label{2.2}
\left\{\begin{aligned}
&\omega^1_v=\partial_2^\varphi v^3-\partial_z^\varphi v^2
=\partial_2v^3-\frac{\partial_2\varphi}{\partial_z\varphi}\partial_zv^3
-\frac{1}{\partial_z\varphi}\partial_z v^2,
\\
&\omega^2_v=\partial_z^\varphi v^1-\partial_1^\varphi v^3
=-\partial_1v^3+\frac{\partial_1\varphi}{\partial_z\varphi}\partial_zv^3
+\frac{1}{\partial_z\varphi}\partial_z v^1,
\\
&\omega^3_v=\partial_1^\varphi v^2-\partial_2^\varphi v^1
=\partial_1v^2-\frac{\partial_1\varphi}{\partial_z\varphi}\partial_zv^2
-\frac{\partial_2\varphi}{\partial_z\varphi}\partial_z v^1,
\end{aligned}\right.
\end{equation}
and
\begin{equation}\label{2.3}
\left\{\begin{aligned}
&\omega^1_b=\partial_2^\varphi b^3-\partial_z^\varphi b^2
=\partial_2b^3-\frac{\partial_2\varphi}{\partial_z\varphi}\partial_zb^3
-\frac{1}{\partial_z\varphi}\partial_z b^2,
\\
&\omega^2_b=\partial_z^\varphi b^1-\partial_1^\varphi b^3
=-\partial_1b^3+\frac{\partial_1\varphi}{\partial_z\varphi}\partial_zb^3
+\frac{1}{\partial_z\varphi}\partial_z b^1,
\\
&\omega^3_b=\partial_1^\varphi b^2-\partial_2^\varphi b^1
=\partial_1b^2-\frac{\partial_1\varphi}{\partial_z\varphi}\partial_zb^2
-\frac{\partial_2\varphi}{\partial_z\varphi}\partial_z b^1.
\end{aligned}\right.
\end{equation}
Pluging the following divergence free condition
\begin{equation}\label{divergence free condition}
\begin{aligned}
&\partial_zv^3=\partial_1\varphi\partial_z v^1+\partial_2\varphi\partial_z v^2-\partial_z\varphi(\partial_1 v^1+\partial_2 v^2),\\
&\partial_zb^3=\partial_1\varphi\partial_z b^1+\partial_2\varphi\partial_z b^2-\partial_z\varphi(\partial_1 b^1+\partial_2 b^2),
\end{aligned}
\end{equation}
into \eqref{2.2} and \eqref{2.3}, respectively, one has
\begin{equation}
\left\{\begin{aligned}
&\omega^1_v=-\frac{\partial_1\varphi\partial_2\varphi}{\partial_z\varphi}\partial_z v^1
-\frac{1+(\partial_2\varphi)^2}{\partial_z\varphi}\partial_z v^2+\partial_2 v^3
+\partial_2\varphi(\partial_1 v^1+\partial_2 v^2),
\\
&\omega^2_v=\frac{1+(\partial_2\varphi)^2}{\partial_z\varphi}\partial_z v^1
+\frac{1+\partial_1\varphi\partial_2\varphi}{\partial_z\varphi}\partial_z v^2-\partial_1 v^3
-\partial_1\varphi(\partial_1 v^1+\partial_2 v^2),
\end{aligned}\right.
\end{equation}
and
\begin{equation}
\left\{\begin{aligned}
&\omega^1_b=-\frac{\partial_1\varphi\partial_2\varphi}{\partial_z\varphi}\partial_z b^1
-\frac{1+(\partial_2\varphi)^2}{\partial_z\varphi}\partial_z b^2+\partial_2 b^3
+\partial_2\varphi(\partial_1 b^1+\partial_2 b^2),
\\
&\omega^2_b=\frac{1+(\partial_2\varphi)^2}{\partial_z\varphi}\partial_z b^1
+\frac{1+\partial_1\varphi\partial_2\varphi}{\partial_z\varphi}\partial_z b^2-\partial_1 b^3
-\partial_1\varphi(\partial_1 b^1+\partial_2 b^2).
\end{aligned}\right.
\end{equation}
It follows that
\begin{equation}\label{2.8}
\left\{\begin{aligned}
&\frac{\partial_1\varphi\partial_2\varphi}{\partial_z\varphi}\partial_z v^1
+\frac{1+(\partial_2\varphi)^2}{\partial_z\varphi}\partial_z v^2=-\omega^1_v+\partial_2 v^3
+\partial_2\varphi(\partial_1 v^1+\partial_2 v^2),
\\
&\frac{1+(\partial_2\varphi)^2}{\partial_z\varphi}\partial_z v^1
+\frac{1+\partial_1\varphi\partial_2\varphi}{\partial_z\varphi}\partial_z v^2
=\omega^2_v+\partial_1 v^3
+\partial_1\varphi(\partial_1 v^1+\partial_2 v^2),
\end{aligned}\right.
\end{equation}
and
\begin{equation}\label{2.9}
\left\{\begin{aligned}
&\frac{\partial_1\varphi\partial_2\varphi}{\partial_z\varphi}\partial_z b^1
+\frac{1+(\partial_2\varphi)^2}{\partial_z\varphi}\partial_z b^2
=-\omega^1_b+\partial_2 b^3
+\partial_2\varphi(\partial_1 b^1+\partial_2 b^2),
\\
&\frac{1+(\partial_2\varphi)^2}{\partial_z\varphi}\partial_z b^1
+\frac{1+\partial_1\varphi\partial_2\varphi}{\partial_z\varphi}\partial_z b^2
=\omega^2_b+\partial_1 b^3
+\partial_1\varphi(\partial_1 b^1+\partial_2 b^2).
\end{aligned}\right.
\end{equation}
Therefore, the determinant of the coefficient matrix of \eqref{2.8}, \eqref{2.9} is
\begin{align}
\begin{vmatrix}
\frac{\partial_1\varphi\partial_2\varphi}{\partial_z\varphi}
& \frac{1+(\partial_2\varphi)^2}{\partial_z\varphi}\\
\frac{1+(\partial_2\varphi)^2}{\partial_z\varphi}
& \frac{1+\partial_1\varphi\partial_2\varphi}{\partial_z\varphi}\\
\end{vmatrix}
=-\frac{1+(\partial_1\varphi)^2+(\partial_2\varphi)^2}{(\partial_z\varphi)^2}\neq0.
\end{align}

Thus we can solve $\partial_z v^1$,$\partial_z v^2$,$\partial_z b^1$ and
$\partial_z b^2$ from \eqref{2.8} and \eqref{2.9},
respectively.
It exists four homogeneous polynomials
$f^k[\nabla\varphi](\partial_jv_i)$, $f^k[\nabla\varphi](\partial_jb_i)$, k=1,2,3,4,
for $j = 1, 2, i = 1, 2, 3,$
such that
\begin{equation}\label{2.10}
\left\{\begin{aligned}
&\partial_z v^1
=f^1[\nabla\varphi](\omega^1_v,\omega^2_v)+f^2[\nabla\varphi](\partial_jv_i),
\\
&\partial_z v^2
=f^3[\nabla\varphi](\omega^1_v,\omega^2_v)+f^4[\nabla\varphi](\partial_jv_i),
\end{aligned}\right.
\end{equation}
and
\begin{equation}\label{2.11}
\left\{\begin{aligned}
&\partial_z b^1
=f^1[\nabla\varphi](\omega^1_b,\omega^2_b)+f^2[\nabla\varphi](\partial_jb_i),
\\
&\partial_z b^2
=f^3[\nabla\varphi](\omega^1_b,\omega^2_b)+f^4[\nabla\varphi](\partial_jb_i).
\end{aligned}\right.
\end{equation}
It follows that
\begin{align}
&\|\partial_zv^1\|_{X^k}+\|\partial_zv^2\|_{X^k}+\|\partial_zb^1\|_{X^k}+\|\partial_zb^2\|_{X^k}
\nonumber\\
\leq &\|\omega_{vh}\|_{X^k}+\|\omega_{bh}\|_{X^k}+
\mathop{\Sigma}\limits_{i,j}\|\partial_jv_i\|_{X^k}+\mathop{\Sigma}\limits_{i,j}\|\partial_jb_i\|_{X^k}+|h|_{X^{k,\frac{1}{2}}}
\nonumber\\
\leq &\|\omega_{vh}\|_{X^k}+\|\omega_{bh}\|_{X^k}+\|v\|_{X^{k,1}}+\|b\|_{X^{k,1}}+|h|_{X^{k,\frac{1}{2}}}.
\end{align}
Thus, Lemma \eqref{Lemma2.1} is proved.
\end{proof}

The following lemma claims that the tangential vorticities $\omega_{vh}$, $\omega_{bh}$
satisfy vorticity equations \eqref{vorticity equations}.

\begin{lemma}
Assume $\omega_v,\omega_b$ are the vorticities of v, b, respectively.
Then there exist polynomials $F_v^0[\nabla\varphi](\omega_{vh},\omega_{bh},\partial_jv_i,\partial_jb_i)$,
$F_b^0[\nabla\varphi](\omega_{vh},\omega_{bh},\partial_jv_i,\partial_jb_i)$
such that $\omega_{vh},\omega_{bh}$ satisfy \eqref{vorticity equations}
where $F_v^0[\nabla\varphi](\omega_{vh},\omega_{bh},\partial_jv_i,\partial_jb_i)$,
$F^0_b[\nabla\varphi](\omega_{vh},\omega_{bh},\partial_jv_i,\partial_jb_i)$
are the quadratic polynomial vector with respect to $\omega_{vh},\omega_{bh},\partial_jv_i,\partial_jb_i$.
\end{lemma}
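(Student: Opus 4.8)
The plan is to exploit the identity, recorded when the change of variables was introduced, that $\partial_i^\varphi v(t,x)=(\partial_i u)(t,y,\varphi(t,y,z))$ for $i=t,1,2,3$ (and likewise for $b,q$): under $\Phi$ the operators $\partial_t^\varphi,\partial_1^\varphi,\partial_2^\varphi,\partial_3^\varphi$ are exact transplants of the Euclidean derivatives on the physical domain $\Omega_t$. In particular they commute pairwise, and the Euclidean vector-calculus identities hold verbatim for their $\varphi$-analogues: $\nabla^\varphi\times\nabla^\varphi(\cdot)=0$, $\nabla^\varphi\cdot(\nabla^\varphi\times\,\cdot\,)=0$ (so $\nabla^\varphi\cdot\omega_v=\nabla^\varphi\cdot\omega_b=0$), the curl-of-a-cross-product formula, the Lamb identity $w\cdot\nabla^\varphi w=(\nabla^\varphi\times w)\times w+\tfrac12\nabla^\varphi|w|^2$, and the commutation $\nabla^\varphi\times\Delta^\varphi(\cdot)=\Delta^\varphi(\nabla^\varphi\times\,\cdot\,)$. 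So the computation is just the classical derivation of the resistive MHD vorticity system carried out with $\varphi$-operators, the only extra work being the passage back through the change of variables; throughout I take $\epsilon=\lambda$, as in this section.

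First I would apply $\nabla^\varphi\times$ to the momentum equation in \eqref{MHDF}. The pressure gradient is annihilated; using the Lamb identity for $v\cdot\nabla^\varphi v$ and $b\cdot\nabla^\varphi b$, the curl-of-a-cross-product formula, and the four vanishing divergences, this yields
\[
\partial_t^\varphi\omega_v-\epsilon\Delta^\varphi\omega_v+v\cdot\nabla^\varphi\omega_v-b\cdot\nabla^\varphi\omega_b
=\omega_v\cdot\nabla^\varphi v-\omega_b\cdot\nabla^\varphi b.
\]
Since $v\cdot\nabla^\varphi$, $b\cdot\nabla^\varphi$ and $\Delta^\varphi$ act componentwise on vector fields, extracting the first two components gives the first line of \eqref{vorticity equations} with $F_v^0=\omega_v\cdot\nabla^\varphi v_h-\omega_b\cdot\nabla^\varphi b_h$. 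For the induction equation, $v\cdot\nabla^\varphi b$ and $b\cdot\nabla^\varphi v$ are not of Lamb type, so I would instead commute the curl past the transport terms, $\nabla^\varphi\times(v\cdot\nabla^\varphi b)=v\cdot\nabla^\varphi\omega_b+[\nabla^\varphi\times,v\cdot\nabla^\varphi]b$ and symmetrically with $v,b$ interchanged, to get
\[
\partial_t^\varphi\omega_b-\epsilon\Delta^\varphi\omega_b+v\cdot\nabla^\varphi\omega_b-b\cdot\nabla^\varphi\omega_v
=[\nabla^\varphi\times,b\cdot\nabla^\varphi]v-[\nabla^\varphi\times,v\cdot\nabla^\varphi]b=:F_b^0,
\]
whose horizontal part is the second line of \eqref{vorticity equations}.

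Next I would verify the polynomial structure of $F_v^0$ and $F_b^0$. Expand each into sums of products of two first-order quantities: $\omega_v\cdot\nabla^\varphi v_h$ into terms $\omega_v^a\,\partial_a^\varphi v_h$, similarly $\omega_b\cdot\nabla^\varphi b_h$, and each commutator into terms of the shape $\partial_a^\varphi w^l\,\partial_l^\varphi f^k$ ($a=1,2,3$). Writing $\partial_a^\varphi=\partial_a-\frac{\partial_a\varphi}{\partial_z\varphi}\partial_z$ for $a=1,2$ and $\partial_3^\varphi=\frac1{\partial_z\varphi}\partial_z$, every such factor becomes a $\nabla\varphi$-rational linear combination of the tangential derivatives $\partial_j v^i,\partial_j b^i$ ($j=1,2$) and of the normal derivatives $\partial_z v^i,\partial_z b^i$. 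The normal derivatives $\partial_z v^1,\partial_z v^2$ are then eliminated in favour of $\omega_{vh}$ and the tangential derivatives by \eqref{2.10} (equivalently Lemma \ref{Lemma2.1}), and $\partial_z b^1,\partial_z b^2$ in favour of $\omega_{bh}$ and tangential derivatives by \eqref{2.11}; after that, $\partial_z v^3,\partial_z b^3$ reduce by the divergence-free relations \eqref{divergence free condition}, and the vertical vorticities $\omega_v^3,\omega_b^3$ by \eqref{2.2}--\eqref{2.3}, to the same list. Hence every factor is a $\nabla\varphi$-rational linear form in $(\omega_{vh},\omega_{bh},\{\partial_j v^i\}_{j=1,2},\{\partial_j b^i\}_{j=1,2})$ with denominators powers of $\partial_z\varphi\ (\ge c_0>0)$, so $F_v^0$ and $F_b^0$ are quadratic polynomials in these variables, as claimed. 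The boundary identities in \eqref{vorticity equations} would be obtained by evaluating the formulas \eqref{2.2}--\eqref{2.3} for $\omega_v^1,\omega_v^2,\omega_b^1,\omega_b^2$ on $\{z=0\}$ and removing the normal derivatives $\partial_z v^1,\partial_z v^2$ there by the two scalar compatibility relations $\Pi\mathcal S^\varphi v\,n|_{z=0}=0$, which leaves $\omega_{vh}|_{z=0}$ a polynomial in the tangential derivatives $\partial_j v^i$; since $b\equiv 0$ on $\{z=0\}$, its tangential derivatives vanish there, so $F^1=F^2=0$ for $\omega_{bh}$.

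I expect no hard analytic estimate in this proof; the point requiring the most care is the asymmetry between the two equations --- the velocity vorticity equation handled via the Lamb form, the magnetic one via commutators --- so that the coupling terms $-b\cdot\nabla^\varphi\omega_{bh}$, $-b\cdot\nabla^\varphi\omega_{vh}$ and the signs in $F_v^0,F_b^0$ come out exactly as in \eqref{vorticity equations}, together with the verification that each vector-calculus manipulation is legitimately applied to genuine $\varphi$-transplants. The polynomial reduction is routine but should be carried out in detail, since the conormal and $L^\infty$ estimates of Proposition \ref{Proposition1.1} rely precisely on this structural form.
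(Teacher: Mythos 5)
Your proposal is correct and follows essentially the same route as the paper's proof: take curl of the momentum equation in Lamb form, take curl of the induction equation in commutator form, reduce the forcing terms to quadratic polynomials in $(\omega_{vh},\omega_{bh},\partial_j v^i,\partial_j b^i)$ via the algebraic relations \eqref{2.10}--\eqref{2.11} and the divergence-free constraints, and obtain the boundary values of $\omega_{vh},\omega_{bh}$ from the cross-product form $\mathcal{S}^\varphi v\,\nn\times\nn=0$, $\mathcal{S}^\varphi b\,\nn\times\nn=0$ of the compatibility conditions, with $F^1[\nabla\varphi](\partial_j b^i)=F^2[\nabla\varphi](\partial_j b^i)=0$ because $b$ vanishes identically on $\{z=0\}$. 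The only presentational difference is that you derive the transplanted vorticity system explicitly from the $\varphi$-vector-calculus identities (a step the paper merely asserts) and reverse the order of the boundary and interior computations; the substantive content is identical.
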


\begin{proof}
Firstly, we consider the following stress tensor on the free boundary:
\begin{equation}
S^\varphi v\nn=
\left(
  \begin{array}{c}
n^1\partial_1^\varphi v^1+\frac{n^2}{2}(\partial_1^\varphi v^2+\partial_2^\varphi v^1)
+\frac{n^3}{2}(\partial_1^\varphi v^3+\partial_z^\varphi v^1) \\
\frac{n^1}{2}(\partial_1^\varphi v^2+\partial_2^\varphi v^1)+n^2\partial_2^\varphi v^2
+\frac{n^3}{2}(\partial_2^\varphi v^3+\partial_z^\varphi v^2)\\
\frac{n^1}{2}(\partial_1^\varphi v^3+\partial_z^\varphi v^1)+\frac{n^2}{2}(\partial_2^\varphi v^3+\partial_z^\varphi v^2)
-n^3\partial_1^\varphi v^1-n^3\partial_2^\varphi v^2)\\
  \end{array}
\right)
\end{equation}
and
\begin{equation}
S^\varphi b\nn=
\left(
  \begin{array}{c}
n^1\partial_1^\varphi b^1+\frac{n^2}{2}(\partial_1^\varphi b^2+\partial_2^\varphi b^1)
+\frac{n^3}{2}(\partial_1^\varphi b^3+\partial_z^\varphi b^1) \\
\frac{n^1}{2}(\partial_1^\varphi b^2+\partial_2^\varphi b^1)+n^2\partial_2^\varphi b^2
+\frac{n^3}{2}(\partial_2^\varphi b^3+\partial_z^\varphi b^2)\\
\frac{n^1}{2}(\partial_1^\varphi b^3+\partial_z^\varphi b^1)+\frac{n^2}{2}(\partial_2^\varphi B^3+\partial_z^\varphi b^2)
-n^3\partial_1^\varphi b^1-n^3\partial_2^\varphi b^2)\\
  \end{array}
\right).
\end{equation}
Since $\Pi S^\varphi v\nn=0$, $\Pi S^\varphi b\nn=0$,
then $S^\varphi vn\times \nn=0$, $S^\varphi bn\times \nn=0$,
by formula $\mathbf{A}\times \mathbf{B}=\varepsilon_{ijk}\mathbf{A}_j\mathbf{B}_k$, one has
\begin{equation}
\left\{
\begin{aligned}
&n^3[n^1\partial_1^\varphi v^1+\frac{n^2}{2}(\partial_1^\varphi v^2+\partial_2^\varphi v^1)
+\frac{n^3}{2}(\partial_1^\varphi v^3+\partial_z^\varphi v^1)]\\
&\qquad=n^1[\frac{n^1}{2}(\partial_1^\varphi v^3+\partial_z^\varphi v^1)+\frac{n^2}{2}(\partial_2^\varphi v^3+\partial_z^\varphi v^2)
-n^3\partial_1^\varphi v^1-n^3\partial_2^\varphi v^2)],\\
&n^3[\frac{n^1}{2}(\partial_1^\varphi v^2+\partial_2^\varphi v^1)+n^2\partial_2^\varphi v^2
+\frac{n^3}{2}(\partial_2^\varphi v^3+\partial_z^\varphi v^2)]\\
&\qquad=n^2[\frac{n^1}{2}(\partial_1^\varphi v^3+\partial_z^\varphi v^1)+\frac{n^2}{2}(\partial_2^\varphi v^3+\partial_z^\varphi v^2)
-n^3\partial_1^\varphi v^1-n^3\partial_2^\varphi v^2)],\\
&n^2[n^1\partial_1^\varphi v^1+\frac{n^2}{2}(\partial_1^\varphi v^2+\partial_2^\varphi v^1)
+\frac{n^3}{2}(\partial_1^\varphi v^3+\partial_z^\varphi v^1)]\\
&\qquad=n^1[\frac{n^1}{2}(\partial_1^\varphi v^2+\partial_2^\varphi v^1)+n^2\partial_2^\varphi v^2
+\frac{n^3}{2}(\partial_2^\varphi v^3+\partial_z^\varphi v^2)],
\end{aligned}
\right.
\end{equation}
and
\begin{equation}
\left\{
\begin{aligned}
&n^3[n^1\partial_1^\varphi b^1+\frac{n^2}{2}(\partial_1^\varphi b^2+\partial_2^\varphi b^1)
+\frac{n^3}{2}(\partial_1^\varphi b^3+\partial_z^\varphi b^1)]\\
&\qquad=n^1[\frac{n^1}{2}(\partial_1^\varphi b^3+\partial_z^\varphi b^1)+\frac{n^2}{2}(\partial_2^\varphi b^3+\partial_z^\varphi b^2)-n^3\partial_1^\varphi b^1-n^3\partial_2^\varphi b^2)],\\
&n^3[\frac{n^1}{2}(\partial_1^\varphi b^2+\partial_2^\varphi b^1)+n^2\partial_2^\varphi b^2
+\frac{n^3}{2}(\partial_2^\varphi b^3+\partial_z^\varphi b^2)]\\
&\qquad=n^2[\frac{n^1}{2}(\partial_1^\varphi b^3+\partial_z^\varphi b^1)+\frac{n^2}{2}(\partial_2^\varphi b^3+\partial_z^\varphi b^2)-n^3\partial_1^\varphi b^1-n^3\partial_2^\varphi b^2)],\\
&n^2[n^1\partial_1^\varphi b^1+\frac{n^2}{2}(\partial_1^\varphi b^2+\partial_2^\varphi b^1)
+\frac{n^3}{2}(\partial_1^\varphi b^3+\partial_z^\varphi b^1)]\\
&\qquad=n^1[\frac{n^1}{2}(\partial_1^\varphi b^2+\partial_2^\varphi b^1)+n^2\partial_2^\varphi b^2
+\frac{n^3}{2}(\partial_2^\varphi b^3+\partial_z^\varphi b^2)].
\end{aligned}
\right.
\end{equation}
Due to the fact that $\partial_i\varphi=-\frac{n^i}{n^3}, i=1,2$,
it follows that $\partial_zv^i, i=1,2$,
\begin{align*}
&[(n^1)^2+\frac{(n^3)^2}{2}+\frac{1}{2}\frac{(n^1)^4}{(n^3)^2}-\frac{1}{2}\frac{(n^2)^4}{(n^3)^2}]
\partial_zv^1+[n^1n^2+\frac{(n^1)^3n^2}{(n^3)^2}+\frac{^1(n^2)^3}{(n^3)^2}]\partial_zv^2\\
=&-[\frac{(n^3)^2}{2}-\frac{(n^1)^2}{2}-\frac{(n^2)^2}{2}][\partial_z\varphi\partial_1v^3
-\partial_1\varphi[-\partial_z\varphi(\partial_1v^1+\partial_2v^2)]]\\
&+(\frac{n^1(n^2)^2}{n^3}-2n^1n^3)(\partial_z\varphi\partial_1v^1)
-(n^1n^3+\frac{n^1(n^2)^2}{n^3})(\partial_z\varphi\partial_1v^2)\\
&+[\frac{(n^2)^2}{n^3}\frac{n^2}{2}-\frac{n^1n^2}{n^3}\frac{n^1}{2}-\frac{n^2n^3}{2}]
(\partial_z\varphi\partial_1v^2+\partial_z\varphi\partial_1v^1),
\end{align*}
\begin{align*}
&[n^1n^2+\frac{(n^1)^3n^2}{(n^3)^2}+\frac{n^1(n^2)^3}{(n^3)^2}]\partial_zv^1
+[(n^2)^2+\frac{(n^3)^2}{2}+\frac{1}{2}\frac{(n^2)^4}{(n^3)^2}-\frac{1}{2}\frac{(n^1)^4}{(n^3)^2}]\partial_zv^2\\
=&-[\frac{(n^3)^2}{2}-\frac{(n^1)^2}{2}-\frac{(n^2)^2}{2}][\partial_z\varphi\partial_2v^3
-\partial_z\varphi[-\partial_z\varphi(\partial_1v^1+\partial_2v^2)]]\\
&+(n^2n^3+\frac{n^2(n^1)^2}{n^3})(\partial_z\varphi\partial_1v^1)
-(\frac{n^2(n^1)^2}{n^3}-2n^2n^3)(\partial_z\varphi\partial_2v^2)\\
&+[\frac{(n^1)^2}{n^3}\frac{n^1}{2}-\frac{n^1n^2}{n^3}\frac{n^2}{2}-\frac{n^1n^3}{2}]
(\partial_z\varphi\partial_1v^2+\partial_z\varphi\partial_2v^1),
\end{align*}
and $\partial_zb^i, i=1,2$,
\begin{align*}
&[(n^1)^2+\frac{(n^3)^2}{2}+\frac{1}{2}\frac{(n^1)^4}{(n^3)^2}-\frac{1}{2}\frac{(n^2)^4}{(n^3)^2}]
\partial_zb^1+[n^1n^2+\frac{(n^1)^3n^2}{(n^3)^2}+\frac{^1(n^2)^3}{(n^3)^2}]\partial_zb^2\\
=&-[\frac{(n^3)^2}{2}-\frac{(n^1)^2}{2}-\frac{(n^2)^2}{2}][\partial_z\varphi\partial_1b^3
-\partial_1\varphi[-\partial_z\varphi(\partial_1b^1+\partial_2b^2)]]\\
&+(\frac{n^1(n^2)^2}{n^3}-2n^1n^3)(\partial_z\varphi\partial_1b^1)
-(n^1n^3+\frac{n^1(n^2)^2}{n^3})(\partial_z\varphi\partial_1b^2)\\
&+[\frac{(n^2)^2}{n^3}\frac{n^2}{2}-\frac{n^1n^2}{n^3}\frac{n^1}{2}-\frac{n^2n^3}{2}]
(\partial_z\varphi\partial_1b^2+\partial_z\varphi\partial_1b^1),
\end{align*}
\begin{align*}
&[n^1n^2+\frac{(n^1)^3n^2}{(n^3)^2}+\frac{n^1(n^2)^3}{(n^3)^2}]\partial_zb^1
+[(n^2)^2+\frac{(n^3)^2}{2}+\frac{1}{2}\frac{(n^2)^4}{(n^3)^2}-\frac{1}{2}\frac{(n^1)^4}{(n^3)^2}]\partial_zb^2\\
=&-[\frac{(n^3)^2}{2}-\frac{(n^1)^2}{2}-\frac{(n^2)^2}{2}][\partial_z\varphi\partial_2b^3
-\partial_z\varphi[-\partial_z\varphi(\partial_1b^1+\partial_2b^2)]]\\
&+(n^2n^3+\frac{n^2(n^1)^2}{n^3})(\partial_z\varphi\partial_1b^1)
-(\frac{n^2(n^1)^2}{n^3}-2n^2n^3)(\partial_z\varphi\partial_2b^2)\\
&+[\frac{(n^1)^2}{n^3}\frac{n^1}{2}-\frac{n^1n^2}{n^3}\frac{n^2}{2}-\frac{n^1n^3}{2}]
(\partial_z\varphi\partial_1b^2+\partial_z\varphi\partial_2b^1),
\end{align*}
where the coefficient matrix of $(\partial_zv^1,\partial_zv^2)^\top$ and $(\partial_zb^1,\partial_zb^2)^\top$ is
\begin{equation}\label{M}
M=
\left(
  \begin{array}{cc}
(n^1)^2+\frac{(n^3)^2}{2}+\frac{1}{2}\frac{(n^1)^4}{(n^3)^2}-\frac{1}{2}\frac{(n^2)^4}{(n^3)^2}&
 n^1n^2+\frac{(n^1)^3n^2}{(n^3)^2}+\frac{^1(n^2)^3}{(n^3)^2}\\
n^1n^2+\frac{(n^1)^3n^2}{(n^3)^2}+\frac{n^1(n^2)^3}{(n^3)^2}  &(n^2)^2+\frac{(n^3)^2}{2}+\frac{1}{2}\frac{(n^2)^4}{(n^3)^2}-\frac{1}{2}\frac{(n^1)^4}{(n^3)^2} \\
  \end{array}
\right).
\end{equation}
Assume $|\nabla h|_\infty$ is suitably small, then $n^3$ is suitably large
and $|n^1| + |n^2|$ is suitably small, such that M is strictly diagonally dominant matrix.
Hence, M is nondegenerate. We can solve $\partial_zv^1$, $\partial_zv^2$, $\partial_zb^1$ and $\partial_zb^2$,
namely, there exist four homogeneous polynomials $f^5[\nabla\varphi](\partial_jv^i)$, $f^6[\nabla\varphi](\partial_jv^i)$, $f^5[\nabla\varphi](\partial_jb^i)$ and $f^6[\nabla\varphi](\partial_jb^i)$,
which are linear functions of $\partial_jv^i$ with the coefficients being fractions of $\nabla\varphi$.
Precisely, for $j = 1, 2, i = 1, 2, 3,$ we have
\begin{equation}\label{2.18}
\left\{
\begin{aligned}
&\partial_zv^1=f^5[\nabla\varphi](\partial_jv^i),
&\partial_zv^2=f^6[\nabla\varphi](\partial_jv^i),\\
&\partial_zb^1=f^5[\nabla\varphi](\partial_jb^i),
&\partial_zb^2=f^6[\nabla\varphi](\partial_jb^i).
\end{aligned}
\right.
\end{equation}

We have the boundary values of $\omega_{vh}=(\omega^1_v,\omega^2_v)$ as follows
\begin{align*}
\omega^1_v&=-\frac{\partial_1\varphi\partial_2\varphi}{\partial_z\varphi}\partial_z v^1
-\frac{1+(\partial_2\varphi)^2}{\partial_z\varphi}\partial_z v^2+\partial_2 v^3
+\partial_2\varphi(\partial_1 v^1+\partial_2 v^2)
\\
&=-\frac{\partial_1\varphi\partial_2\varphi}{\partial_z\varphi}f_v^5[\nabla\varphi](\partial_jv^i)
-\frac{1+(\partial_2\varphi)^2}{\partial_z\varphi}f_v^6[\nabla\varphi](\partial_jv^i)
+\partial_2^\varphi v^3+\partial_2\varphi(\partial_1 v^1+\partial_2 v^2)
\\
&=F^1[\nabla\varphi](\partial_jv^i),
\end{align*}
\begin{align*}
\omega^2_v&=\frac{1+(\partial_2\varphi)^2}{\partial_z\varphi}\partial_z v^1
+\frac{1+\partial_1\varphi\partial_2\varphi}{\partial_z\varphi}\partial_z v^2-\partial_1 v^3
-\partial_1\varphi(\partial_1 v^1+\partial_2 v^2)\\
&=\frac{1+(\partial_2\varphi)^2}{\partial_z\varphi}f_v^5[\nabla\varphi](\partial_jv^i)
+\frac{1+\partial_1\varphi\partial_2\varphi}{\partial_z\varphi}f_v^6[\nabla\varphi](\partial_jv^i)
-\partial_1^\varphi v^3-\partial_1\varphi(\partial_1 v^1+\partial_2 v^2)
\\
&=F^2[\nabla\varphi](\partial_jv^i).
\end{align*}
Similarly, we have the boundary value of $\omega_{bh}=(\omega^1_b,\omega^2_b)$
\begin{align*}
\omega^1_b&=-\frac{\partial_1\varphi\partial_2\varphi}{\partial_z\varphi}\partial_z b^1
-\frac{1+(\partial_2\varphi)^2}{\partial_z\varphi}\partial_z b^2+\partial_2 b^3
+\partial_2\varphi(\partial_1 b^1+\partial_2 b^2)
\\
&=-\frac{\partial_1\varphi\partial_2\varphi}{\partial_z\varphi}f_b^5[\nabla\varphi](\partial_jb^i)
-\frac{1+(\partial_2\varphi)^2}{\partial_z\varphi}f_b^6[\nabla\varphi](\partial_jb^i)
+\partial_2^\varphi b^3+\partial_2\varphi(\partial_1 b^1+\partial_2 b^2)
\\
&=F^1[\nabla\varphi](\partial_jb^i),
\end{align*}
\begin{align*}
\omega^2_b&=\frac{1+(\partial_2\varphi)^2}{\partial_z\varphi}\partial_z b^1
+\frac{1+\partial_1\varphi\partial_2\varphi}{\partial_z\varphi}\partial_z b^2-\partial_1 b^3
-\partial_1\varphi(\partial_1 b^1+\partial_2 b^2)\\
&=\frac{1+(\partial_2\varphi)^2}{\partial_z\varphi}f_b^5[\nabla\varphi](\partial_jb^i)
+\frac{1+\partial_1\varphi\partial_2\varphi}{\partial_z\varphi}f_b^6[\nabla\varphi](\partial_jb^i)
-\partial_1^\varphi b^3-\partial_1\varphi(\partial_1 b^1+\partial_2 b^2)
\\
&=F^2[\nabla\varphi](\partial_jb^i).
\end{align*}
Since the zero boundary value of magnetic field, one has $F^1[\nabla\varphi](\partial_jb^i)=F^2[\nabla\varphi](\partial_jb^i)=0$
on $z=0$ for $j = 1, 2, i = 1, 2, 3$.
In addition, $\omega_{vh},\omega_{bh}$ satisfy the equations
\begin{equation*}
\left\{\begin{aligned}
&\partial_t^\varphi\omega_{vh}-\epsilon\Delta^\varphi\omega_{vh}+v\cdot\nabla^\varphi\omega_{vh}
-b\cdot\nabla^\varphi\omega_{bh}=\omega_{vh}\cdot\nabla_h^\varphi v_h
+\omega^3_v\partial_z^\varphi v_h-\omega_{bh}\cdot\nabla_h^\varphi b_h
-\omega^3_b\partial_z^\varphi b^h,
\\
&\partial_t^\varphi\omega_{bh}-\epsilon\Delta^\varphi\omega_{bh}+v\cdot\nabla^\varphi\omega_{bh}
-b\cdot\nabla^\varphi\omega_{vh}
=[\nabla^\varphi\times,b\cdot\nabla^\varphi]v_h-[\nabla^\varphi\times,v\cdot\nabla^\varphi]b_h,
\end{aligned}\right.
\end{equation*}
where the force term can be transformed as follows:
\begin{align*}
&\omega_{vh}\cdot\nabla_h^\varphi v_h
+\omega^3_v\partial_z^\varphi v_h
-\omega_{bh}\cdot\nabla_h^\varphi b_h
-\omega^3_b\partial_z^\varphi b^h
\\
=&\omega_{v 1}(\partial_{1} v_{h}-\frac{\partial_{1}\varphi}{\partial_{z}\varphi}\partial_{z} v_{h})
+\omega_{v 2}(\partial_{2} v_{h}-\frac{\partial_{2}\varphi}{\partial_{z}\varphi}\partial_{z} v_{h})
\\
&
+(\partial_{1} v_{2}-\frac{\partial_{1}\varphi}{\partial_{z}\varphi}\partial_{z} v_{2}
-\partial_{2} v_{1}+\frac{\partial_{2}\varphi}{\partial_{z}\varphi}\partial_{z}v_{1})\frac{1}{\partial_{z} \varphi} \partial_{z} v_{h}
\\
&-\omega_{b1}(\partial_1b_h-\frac{\partial_1\varphi}{\partial_z\varphi}\partial_zb_h)
-\omega_{b2}(\partial_2b_h-\frac{\partial_2\varphi}{\partial_z\varphi}\partial_zb_h)
\\
&
-(\partial_1b_2-\frac{\partial_1\varphi}{\partial_z\varphi}\partial_zb_2-\partial_2b_1
+\frac{\partial_2\varphi}{\partial_z\varphi}\partial_zb_1)\frac{1}{\partial_z\varphi}\partial_zb_h.
\end{align*}
By \eqref{2.10} and \eqref{2.11}, one has
\begin{align*}
\omega_{vh}\cdot\nabla_h^\varphi v_h
+\omega^3_v\partial_z^\varphi v_h
-\omega_{bh}\cdot\nabla_h^\varphi b_h
+\omega^3_b\partial_z^\varphi b^h
=F_v^0[\nabla\varphi](\omega_{vh},\omega_{bh},\partial_jv^i,\partial_jb^i),j = 1, 2, i = 1, 2, 3.
\end{align*}
It follows from the definition of curl that
\begin{align*}
&([\nabla^\varphi\times,b\cdot\nabla^\varphi]v_h-[\nabla^\varphi\times,v\cdot\nabla^\varphi]b_h)_1
\\
=&\Sigma_{i=1}^3(\partial_2^\varphi b^i\partial_i^\varphi v^3
-\partial_z^\varphi b^i\partial_i^\varphi v^2
-\partial_2^\varphi v^i\partial_i^\varphi b^3
+\partial_z^\varphi v^i\partial_i^\varphi b^2)\\
=&(\partial_2b^1-\frac{\partial_2\varphi}{\partial_z\varphi}\partial_zb^1)
(\partial_1 v^3-\frac{\partial_1\varphi}{\partial_z\varphi}\partial_zv^3)
+(\partial_2b^2-\frac{\partial_2\varphi}{\partial_z\varphi}\partial_zb^2)
(\partial_2 v^3-\frac{\partial_2\varphi}{\partial_z\varphi}\partial_zv^3)
\\
&+(\partial_2b^3-\frac{\partial_2\varphi}{\partial_z\varphi}\partial_zb^3)
\frac{\partial_zv^3}{\partial_z\varphi}
-(\partial_2v^1-\frac{\partial_2\varphi}{\partial_z\varphi}\partial_zv^1)
(\partial_1 b^3-\frac{\partial_1\varphi}{\partial_z\varphi}\partial_zb^3)
\\
&
-(\partial_2v^2-\frac{\partial_2\varphi}{\partial_z\varphi}\partial_zv^1)
(\partial_2 b^3-\frac{\partial_2\varphi}{\partial_z\varphi}\partial_zb^3)
-(\partial_2v^3-\frac{\partial_2\varphi}{\partial_z\varphi}\partial_zv^1)
\frac{\partial_zb^3}{\partial_z\varphi}
\\
&
-\frac{\partial_zb^1}{\partial_z\varphi}(\partial_1 v^2-\frac{\partial_1\varphi}{\partial_z\varphi}\partial_zv^2)
-\frac{\partial_zb^2}{\partial_z\varphi}(\partial_2 v^2-\frac{\partial_2\varphi}{\partial_z\varphi}\partial_zv^2)
-\frac{\partial_zb^3}{\partial_z\varphi}\frac{\partial_zv^2}{\partial_z\varphi}
\\
&
+\frac{\partial_zv^1}{\partial_z\varphi}(\partial_1 b^2-\frac{\partial_1\varphi}{\partial_z\varphi}\partial_zb^2)
+\frac{\partial_zv^2}{\partial_z\varphi}(\partial_2 b^2-\frac{\partial_2\varphi}{\partial_z\varphi}\partial_zb^2)
+\frac{\partial_zv^3}{\partial_z\varphi}\frac{\partial_zb^2}{\partial_z\varphi}.
\end{align*}
Due to \eqref{2.10}, \eqref{2.11} and divergence free condition \eqref{divergence free condition},
we get $([\nabla^\varphi\times,b\cdot\nabla^\varphi]v_h-[\nabla^\varphi\times,v\cdot\nabla^\varphi]b_h)_1
=F_b^0[\nabla\varphi](\omega_{vh},\omega_{bh},\partial_jv^i,\partial_jb^i)$.
For $([\nabla^\varphi\times,b\cdot\nabla^\varphi]v_h-[\nabla^\varphi\times,v\cdot\nabla^\varphi]b_h)_2$,
one can deduce that $([\nabla^\varphi\times,b\cdot\nabla^\varphi]v_h-[\nabla^\varphi\times,v\cdot\nabla^\varphi]b_h)_2
=F_b^0[\nabla\varphi](\omega_{vh},\omega_{bh},\partial_jv^i,\partial_jb^i),
j = 1, 2, i = 1, 2, 3$.
where $F_v^0[\nabla\varphi](\omega_{vh},\omega_{bh},\partial_jv^i,\partial_jb^i),
F_v^0[\nabla\varphi](\omega_{vh},\omega_{bh},\partial_jv^i,\partial_jb^i)$
is a quadratic polynomial vector with
$\omega_{vh},\omega_{bh},\partial_jv^i,\partial_jb^i$, the
coefficients are fractions of $\nabla\varphi$.
\end{proof}

\subsection{Time Derivatives Estimates}
We establish a priori estimates for the free boundary problem for MHD equations \eqref{MHDF}
of the tangential derivatives and time derivatives.
The estimates for the normal derivatives are very different from Lee\cite{Lee17}.
Lee used the variable $S^v_n =\Pi S^\varphi vn$,
$S^b_n =\Pi S^\varphi bn$,
while we investigate the normal derivatives by the vorticity in this paper.

Before the estimates of tangential derivatives,
we give two preliminary lemmas for h by using the kinetical boundary condition.
\begin{lemma}[\cite{Wu16}]\label{Lemma2.3}
Assume $0\leq l\leq m-1$. Then $\|\partial_t^lh\|_{L^2}$ can be estimated as follows
\begin{align}
\|\partial_t^lh\|^2_{L^2}\leq |h_0|^2_{X^{m-1,1}}+\int_0^t|h|^2_{X^{m-1}}+\|v\|^2_{X^{m-1,1}}dt+
\|\partial_zv\|^2_{L^4(0,T],X^{m-1})}.
\end{align}
\end{lemma}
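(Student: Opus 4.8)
The plan is to read the estimate off the kinematic boundary condition $\partial_t h = v\cdot\NN$ on $\{z=0\}$, differentiated in time, combined with the trace inequality \eqref{1.48} and the anisotropic embedding \eqref{1.47}. The case $\ell=0$ is immediate: $\|h(t)\|_{L^2}^2 = \|h_0\|_{L^2}^2 + 2\int_0^t\int_{\mathbb{R}^2} h\,(v\cdot\NN)\,\mathrm{d}y\,\mathrm{d}s$, and one bounds the trace $|v(\cdot,0)|_{L^2(\mathbb{R}^2)}\lesssim \|\partial_z v\|_{L^2}^{1/2}\|v\|_{L^2}^{1/2}$ by \eqref{1.48} with $s=0$. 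For $1\le \ell \le m-1$, I would start from
\[
\|\partial_t^\ell h(t)\|_{L^2(\mathbb{R}^2)}^2 = \|\partial_t^\ell h|_{t=0}\|_{L^2}^2 + 2\int_0^t\int_{\mathbb{R}^2}\partial_t^\ell h\,\partial_t^{\ell+1}h\,\mathrm{d}y\,\mathrm{d}s ,
\]
bound the initial term by $|h_0|_{X^{m-1,1}}^2$ (using $\partial_t^\ell h|_{t=0}=\partial_t^\ell h_0$ and $\ell\le m-1$), and substitute $\partial_t^{\ell+1}h = \partial_t^\ell(v\cdot\NN)|_{z=0} = \partial_t^\ell v^3|_{z=0} - \partial_t^\ell(v_y\cdot\nabla_y h)|_{z=0}$.

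Then I would expand $\partial_t^\ell(v_y\cdot\nabla_y h)$ by the Leibniz rule and split the time integral into the linear contribution and the product contributions. For the linear part, Cauchy--Schwarz in $y$ together with \eqref{1.48} ($s=0$) gives, pointwise in time, $|\partial_t^\ell v^3|_{z=0}|_{L^2(\mathbb{R}^2)}\lesssim \|\partial_z\partial_t^\ell v^3\|_{L^2(\mathbb{R}^3_-)}^{1/2}\|\partial_t^\ell v^3\|_{L^2(\mathbb{R}^3_-)}^{1/2}$; the reason one differentiates the kinematic condition exactly $\ell$ times here is that for $\ell\le m-1$ the factor $\partial_z\partial_t^\ell v$ (no conormal derivatives) is still controlled by $\|\partial_z v\|_{X^{m-1}}$ and $\partial_t^\ell v$ by $\|v\|_{X^{m-1,1}}$. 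For the product terms $\partial_t^j v_y\cdot\nabla_y\partial_t^{\ell-j}h|_{z=0}$ I would place the lower-order factor in $L^\infty$ via \eqref{1.47} (using the a priori bounds on $v$ and $h$ that are being propagated) and the top-order factor in $L^2$, producing trace quantities controlled as above together with $|h|_{X^{m-1}}$ and $\|v\|_{X^{m-1,1}}$; the worst case, all time derivatives on $h$, yields $\nabla_y\partial_t^\ell h$, which stays inside $|h|_{X^{m-1}}$ precisely because $\ell\le m-1$.

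It then remains to absorb everything into the claimed right-hand side. By Young's inequality the linear contribution is $\lesssim \int_0^t\|\partial_t^\ell h\|_{L^2}^2\,\mathrm{d}s + \int_0^t\|\partial_z v\|_{X^{m-1}}\|v\|_{X^{m-1,1}}\,\mathrm{d}s$, the first piece being absorbed into $\int_0^t|h|_{X^{m-1}}^2\,\mathrm{d}s$ and the second split by H\"older and Young in time (AM--GM followed by $\int_0^t g\le T^{3/4}\|g\|_{L^4([0,T])}$ applied to $g=\|\partial_z v\|_{X^{m-1}}$) to produce $\|\partial_z v\|_{L^4([0,T],X^{m-1})}^2$ while the remainder goes into $\int_0^t\|v\|_{X^{m-1,1}}^2\,\mathrm{d}s$; the product terms are handled the same way and land entirely in $\int_0^t(|h|_{X^{m-1}}^2+\|v\|_{X^{m-1,1}}^2)\,\mathrm{d}s$. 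Summing over $\ell$ and over the Leibniz terms finishes the proof.

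I expect the \emph{main obstacle} to be precisely this last bookkeeping step: compensating the half-derivative loss from the trace inequality with the correct time-integrability (landing on the $L^4_t$, not $L^2_t$ or $L^1_t$, norm of $\|\partial_z v\|_{X^{m-1}}$), and verifying that none of the Leibniz terms costs more than $m-1$ derivatives --- in particular that the worst normal-derivative factor $\partial_z\partial_t^\ell v$ and the worst horizontal factor $\nabla_y\partial_t^\ell h$ both remain inside the already-controlled quantities $\|\partial_z v\|_{X^{m-1}}$ and $|h|_{X^{m-1}}$. The remaining estimates are routine Moser-type product bounds together with the trace and embedding lemmas quoted above.
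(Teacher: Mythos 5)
Your overall scheme -- differentiate $\partial_t h=v\cdot\NN$ in time $\ell$ times, test against $\partial_t^\ell h$, use the trace inequality \eqref{1.48} (with $s=0$) on $\partial_t^\ell v|_{z=0}$, and compensate the half--derivative loss with H\"older in time so the forcing lands on $\|\partial_z v\|_{L^4_tX^{m-1}}^2$ -- is indeed the argument one expects from \cite{Wu16}, and the identification of this time-interpolation step as the main bookkeeping concern is apt.

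However, there is a concrete gap at the top Leibniz term, and it is exactly the place you flag as needing care. You assert that the $j=0$ contribution $v_y\cdot\nabla_y\partial_t^\ell h$ ``stays inside $|h|_{X^{m-1}}$ precisely because $\ell\le m-1$.'' This is false at $\ell=m-1$: by the paper's definition $|h|^2_{X^{m-1}}=\sum_{l\le m-1,\,|\alpha|\le m-1-l}|\partial_t^l\mathcal{Z}^{\alpha}h|^2$, so the term $\nabla_y\partial_t^{m-1}h$ (with $l=m-1$, $|\alpha|=1$, total order $m$) lies in $X^{m-1,1}$ but \emph{not} in $X^{m-1}$. If you estimate $\int_{\mathbb{R}^2}\partial_t^{m-1}h\,(v_y\cdot\nabla_y\partial_t^{m-1}h)\,\mathrm{d}y$ by putting $v_y$ in $L^\infty$ and the rest in $L^2$ as you propose, you pay $|h|_{X^{m-1,1}}$, which does not appear on the stated right-hand side, and the bound cannot close. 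The missing idea is a cancellation: integrate by parts in $y$ to get
\begin{equation*}
\int_{\mathbb{R}^2}\partial_t^{\ell}h\,(v_y\cdot\nabla_y\partial_t^{\ell}h)\,\mathrm{d}y
=\tfrac12\int_{\mathbb{R}^2}v_y\cdot\nabla_y|\partial_t^{\ell}h|^2\,\mathrm{d}y
=-\tfrac12\int_{\mathbb{R}^2}(\nabla_y\cdot v_y)\,|\partial_t^{\ell}h|^2\,\mathrm{d}y,
\end{equation*}
which is $\lesssim\|\nabla_y v_y|_{z=0}\|_{L^\infty}\,\|\partial_t^{\ell}h\|_{L^2}^2$ and is absorbed into the Gronwall term $\int_0^t|h|^2_{X^{m-1}}\,\mathrm{d}s$. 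The remaining $j\ge1$ Leibniz terms genuinely do fit in $|h|_{X^{m-1}}$ because $(\ell-j)+1\le\ell\le m-1$, so your $L^\infty\times L^2$ splitting is fine there. (One small slip elsewhere: you need $\int_0^t a^2\,\mathrm{d}s\le T^{1/2}\|a\|^2_{L^4([0,T])}$ to produce the squared $L^4_t$ norm, not $\int_0^t a\le T^{3/4}\|a\|_{L^4}$, but this is cosmetic.)
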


The second preliminary lemma concerns the estimate of $\sqrt{\epsilon}|\partial_t^lZ^\alpha h|_{\frac{1}{2}}$,
by which we can bound $\sqrt{\epsilon}\|S^\varphi\partial_t^lZ^\alpha \eta\|_{L^2}$
and then we can bound $\sqrt{\epsilon}\|S^\varphi\partial_t^lZ^\alpha v\|_{L^2}$
and $\sqrt{\epsilon}\|S^\varphi\partial_t^lZ^\alpha b\|_{L^2}$.

\begin{lemma}[\cite{Wu16}]\label{Lemma2.4}
Assume $0\leq l\leq m-1$. We have the estimate
\begin{align}
\epsilon|h|^2_{X^{m-1,\frac{3}{2}}}\leq\epsilon|h_0|^2_{X^{m-1,\frac{3}{2}}}+
\int_0^t|h|^2_{X^{m-1,1}}
+\epsilon\sum_{l\leq m-1,l+|\alpha|\leq m}|\nabla V^{l,\alpha}|^2_{L^2}dt.
\end{align}
\end{lemma}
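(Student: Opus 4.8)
The first observation is that this estimate involves only the kinematic boundary condition $\partial_t h = v|_{z=0}\cdot\NN$ and the Alinhac good unknown $V^{l,\alpha}$ of the \emph{velocity}; neither quantity contains the magnetic field, so the MHD statement reduces word-for-word to the Navier–Stokes one, which is why it is legitimately imported from \cite{Wu16}. Concretely, for each admissible pair $(l,\alpha)$ with $l\le m-1$, $l+|\alpha|\le m$, I apply $\partial_t^l Z^\alpha$ to $\partial_t h = v^3 - v_y\cdot\nabla_y h$ on $z=0$ and split off the transport part, obtaining
\[
\partial_t(\partial_t^l Z^\alpha h) + v_y|_{z=0}\cdot\nabla_y(\partial_t^l Z^\alpha h) = \partial_t^l Z^\alpha v^3|_{z=0} + \mathcal{R}^{l,\alpha},
\]
where $\mathcal{R}^{l,\alpha}$ gathers the commutator $[\partial_t^l Z^\alpha, v_y|_{z=0}\cdot\nabla_y]h$ together with strictly-lower-order products of $(\partial_t,\partial_y)$-derivatives of $v_y$ and $h$; by the product and commutator estimates (Lemma \ref{Lemma1.1}), the anisotropic embedding \eqref{1.47}, and the uniform bounds on $v$ from Proposition \ref{Proposition1.1}, one has $|\mathcal{R}^{l,\alpha}|_{L^2(\mathbb{R}^2)}\lesssim \Lambda(\tfrac1{c_0},\dots)\,|h|_{X^{m-1,1}}$. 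Since $\varphi|_{z=0}=\eta|_{z=0}=h$, so that $\partial_t^l Z^\alpha\eta|_{z=0}=\partial_t^l Z^\alpha h$, the definition $V^{l,\alpha}=\partial_t^l Z^\alpha v-\partial_z^\varphi v\,\partial_t^l Z^\alpha\eta$ lets me rewrite the leading term as $\partial_t^l Z^\alpha v^3|_{z=0}=V^{l,\alpha,3}(\cdot,0)+(\partial_z^\varphi v^3|_{z=0})\,\partial_t^l Z^\alpha h$.

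\textbf{The $H^{1/2}$ energy estimate.} Next I pair the above identity in $L^2(\mathbb{R}^2)$ with $(1-\Delta_y)^{1/2}\partial_t^l Z^\alpha h$: the left side produces $\tfrac12\tfrac{d}{dt}|\partial_t^l Z^\alpha h|^2_{H^{1/2}}$ plus a transport-commutator term $[(1-\Delta_y)^{1/4},v_y\cdot\nabla_y]$ bounded by $\|v_y\|_{1,\infty}|\partial_t^l Z^\alpha h|^2_{H^{1/2}}$. The decisive term on the right is $\langle V^{l,\alpha,3}(\cdot,0),(1-\Delta_y)^{1/2}\partial_t^l Z^\alpha h\rangle_{L^2}\le |V^{l,\alpha}(\cdot,0)|_{H^{1/2}}\,|\partial_t^l Z^\alpha h|_{H^{1/2}}$; I bound the trace via \eqref{1.48} at $s=\tfrac12$, $|V^{l,\alpha}(\cdot,0)|_{H^{1/2}}\lesssim \|\partial_z V^{l,\alpha}\|_{L^2}^{1/2}\|V^{l,\alpha}\|_{H^1_{tan}}^{1/2}\lesssim \|\nabla V^{l,\alpha}\|_{L^2}^{1/2}\big(\|V^{l,\alpha}\|_{L^2}+\|\nabla V^{l,\alpha}\|_{L^2}\big)^{1/2}$, multiply the whole inequality by $\epsilon$, and apply Young's inequality: the product $\epsilon\,|\partial_t^l Z^\alpha h|_{H^{1/2}}\,\|\nabla V^{l,\alpha}\|_{L^2}^{1/2}(\cdots)^{1/2}$ splits into $\delta\,\epsilon|\partial_t^l Z^\alpha h|^2_{H^{1/2}}+C_\delta\,\epsilon\|\nabla V^{l,\alpha}\|^2_{L^2}+C_\delta\,\epsilon\|V^{l,\alpha}\|^2_{L^2}$, and the last term is $\lesssim |h|^2_{X^{m-1,1}}$ after inserting $V^{l,\alpha}=\partial_t^l Z^\alpha v-\partial_z^\varphi v\,\partial_t^l Z^\alpha\eta$, using $\epsilon\le1$ and the uniform bounds of Proposition \ref{Proposition1.1} on $v$ (treated as available on the bootstrap interval). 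The remaining right-hand contributions, $\epsilon(\partial_z^\varphi v^3|_{z=0})\partial_t^l Z^\alpha h$ and $\epsilon\mathcal{R}^{l,\alpha}$ paired against $(1-\Delta_y)^{1/2}\partial_t^l Z^\alpha h$, are $L^\infty\!\times\!L^2$ products of bounded coefficients with conormal derivatives of $h$ of order at most $m$, hence $\lesssim |h|^2_{X^{m-1,1}}$ by Young.

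\textbf{Conclusion and main obstacle.} Summing over all admissible $(l,\alpha)$ and recognizing $\epsilon\sum_{l\le m-1,\,l+|\alpha|\le m}\|\nabla V^{l,\alpha}\|^2_{L^2}$ as the $\epsilon$-weighted conormal dissipation already present in Proposition \ref{Proposition1.1}, I arrive at $\tfrac{d}{dt}\big(\epsilon|h|^2_{X^{m-1,3/2}}\big)\lesssim \Lambda(\tfrac1{c_0},\dots)\big(|h|^2_{X^{m-1,1}}+\epsilon\sum\|\nabla V^{l,\alpha}\|^2_{L^2}+\epsilon|h|^2_{X^{m-1,3/2}}\big)$; integrating in time and absorbing the self-term $\epsilon|h|^2_{X^{m-1,3/2}}$ by Grönwall against the uniformly bounded coefficient on $[0,T]$ (equivalently, running the estimate for $e^{-Kt}\epsilon|h|^2_{X^{m-1,3/2}}$) yields the claimed bound at the cost of a constant $C=C(T)$. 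The main obstacle is the half-derivative loss in the boundary trace of $V^{l,\alpha}$: it forces the $\epsilon$-prefactor on the left-hand side, because only the viscous gradient $\sqrt\epsilon\,\|\nabla V^{l,\alpha}\|_{L^2}$ can pay for that half derivative (so the estimate degenerates entirely at $\epsilon=0$). The only other nontrivial chore is the careful bookkeeping of the commutators $[\partial_t^l Z^\alpha,\text{coefficient}]$ and $[(1-\Delta_y)^{1/4},v_y\cdot\nabla_y]$, so that every leftover lands inside $|h|_{X^{m-1,1}}$ or the dissipative term.
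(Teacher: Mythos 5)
Your overall architecture is the right one, and the natural one: apply $\partial_t^l Z^\alpha$ to the kinematic boundary condition, rewrite the leading term via the Alinhac good unknown, run an $H^{1/2}$-energy estimate for $\partial_t^l Z^\alpha h$, and let the viscous dissipation $\sqrt\epsilon\,\|\nabla V^{l,\alpha}\|_{L^2}$ pay for the half-derivative loss in the boundary trace. Your observation that this reduces verbatim to the Navier--Stokes case is also correct: neither the kinematic condition nor $V^{l,\alpha}$ involves $b$.

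There is, however, a concrete gap in the Young step. After the inhomogeneous trace estimate \eqref{1.48} and Young you produce, as you yourself record, a leftover $C_\delta\,\epsilon\|V^{l,\alpha}\|^2_{L^2}$, and you assert it is $\lesssim |h|^2_{X^{m-1,1}}$. It is not. Writing $V^{l,\alpha}=\partial_t^l Z^\alpha v-\partial_z^\varphi v\,\partial_t^l Z^\alpha\eta$, the $\eta$-part is indeed controlled by $|h|$ (by the construction of the extension, $\|\partial_t^l Z^\alpha\eta\|_{L^2(\mathbb{R}^3_-)}\lesssim |\partial_t^l Z^\alpha h|_{H^{-1/2}(\mathbb{R}^2)}$), but $\|\partial_t^l Z^\alpha v\|_{L^2}$ is a pure velocity quantity: it is bounded on the bootstrap interval by Proposition~\ref{Proposition1.1}, and a bounded constant is not $\lesssim |h|^2_{X^{m-1,1}}$, which can be as small as one likes. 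Carrying $\epsilon\|V^{l,\alpha}\|^2_{L^2}$ into the time integral would place $\|v\|^2_{X^{m-1,1}}$ on the right side of the lemma; compare with Lemma~\ref{Lemma2.3}, where $\|v\|^2_{X^{m-1,1}}$ \emph{does} appear on the right --- its absence here is deliberate, so your argument as written does not deliver the stated inequality.

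The repair is to avoid producing $\|V^{l,\alpha}\|_{L^2}$ at all by working homogeneously. Pair the transported equation with $|\nabla_y|\,\partial_t^l Z^\alpha h$ rather than with $(1-\Delta_y)^{1/2}\partial_t^l Z^\alpha h$, and bound the boundary trace by the homogeneous estimate
\begin{equation*}
|V^{l,\alpha}(\cdot,0)|_{\dot H^{1/2}(\mathbb{R}^2)}\ \lesssim\ \|\partial_z V^{l,\alpha}\|^{1/2}_{L^2}\,\|\nabla_y V^{l,\alpha}\|^{1/2}_{L^2}\ \leq\ \|\nabla V^{l,\alpha}\|_{L^2},
\end{equation*}
which follows from $|\xi|\,|\hat f(\xi,0)|^2\le 2|\xi|\int_{-\infty}^0|\hat f\,\partial_z\hat f|\,\mathrm{d}z$ plus Cauchy--Schwarz and involves only gradients, no zeroth-order term. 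Young then yields $\delta\,\epsilon|\partial_t^l Z^\alpha h|^2_{\dot H^{1/2}}+C_\delta\,\epsilon\|\nabla V^{l,\alpha}\|^2_{L^2}$ with nothing left over. The missing $L^2$ part of the full $H^{1/2}$ norm, $\epsilon|\partial_t^l Z^\alpha h|^2_{L^2}$, lies inside $|h|^2_{X^{m-1,1}}$ since $\epsilon\le1$, and the self-term $\delta\,\epsilon|\partial_t^l Z^\alpha h|^2_{\dot H^{1/2}}$ is removed by Gr\"onwall exactly as you already propose. Your bookkeeping for the transport commutator, $\mathcal{R}^{l,\alpha}$, and the $\partial_z^\varphi v^3\,\partial_t^l Z^\alpha h$ term is otherwise consistent with the $|h|^2_{X^{m-1,1}}$ budget.
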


Next we give the estimates for the tangential derivatives $\|\partial_t^lv\|_{L^2}$,
$\|\partial_t^lb\|_{L^2}$ and
$\sqrt{\epsilon}\|\nabla\partial_t^lZ^\alpha v\|_{L^2}$,
$\sqrt{\epsilon}\|\nabla\partial_t^lZ^\alpha b\|_{L^2}$.
However, the proof is different from Lee \cite{Lee17}
since $\|\partial_t^{\ell} q\|$ has no bound for infinite fluid depth.
The following lemma concerns the estimates of tangential derivatives.
\begin{lemma}\label{Lemma2.5}
Assume the conditions are the same with those of Proposition \ref{Proposition1.1},
then $v$, $b$ and $h$ satisfy the a priori estimate:
\begin{equation}\label{2.21}
\begin{aligned}
&\|v\|_{X^{m-1,1}}+\|b\|_{X^{m-1,1}}+|h|^2_{X^{m-1,1}}
+\epsilon|h|^2_{X^{m-1,\frac{3}{2}}}+\epsilon\int_0^t(\|\nabla v\|^2_{X^{m-1,1}}+\|\nabla b\|^2_{X^{m-1,1}})
\\
&\leq \|v_0\|_{X^{m-1,1}}+\|b_0\|_{X^{m-1,1}}+|h_0|^2_{X^{m-1,1}}
+\epsilon|h_0|^2_{X^{m-1,\frac{3}{2}}}
\\
&+\|\partial_zv\|_{L^4([0,T],X^{m-1})}+\|\partial_zb\|_{L^4([0,T],X^{m-1})}.
\end{aligned}
\end{equation}
\end{lemma}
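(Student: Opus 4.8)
The plan is to run a higher–order tangential energy estimate on the Alinhac good unknowns $V^{l,\alpha},B^{l,\alpha},Q^{l,\alpha}$ defined in \eqref{Alinhac's good unknown}-type form, applying $\partial_t^l Z^\alpha$ with $0< l+|\alpha|\le m$, $l\le m-1$ to the first two equations of \eqref{MHDF}. Using the commutator identity $[\partial_t^l Z^\alpha,\partial_i^\varphi]f=-\partial_z^\varphi f\,\partial_i^\varphi(\partial_t^l Z^\alpha\eta)+\mathrm{b.t.}$ together with Lemma \ref{Lemma1.1}, the transported quantities $\partial_t^l Z^\alpha v$, $\partial_t^l Z^\alpha b$ get replaced by $V^{l,\alpha}$, $B^{l,\alpha}$, so that the principal part of the differentiated system reads
\[
\partial_t^\varphi V^{l,\alpha}-\epsilon\Delta^\varphi V^{l,\alpha}+v\cdot\nabla^\varphi V^{l,\alpha}-b\cdot\nabla^\varphi B^{l,\alpha}+\nabla^\varphi Q^{l,\alpha}=\mathcal{R}_v^{l,\alpha},
\]
and symmetrically for $B^{l,\alpha}$ (with $v$ and $b$ exchanged in the transport/stretching structure and without the pressure term), where $\mathcal{R}_v^{l,\alpha},\mathcal{R}_b^{l,\alpha}$ collect the commutators $\mathcal{C}_i^\alpha$, lower–order products, and all terms carrying $\partial_t^l Z^\alpha\eta$; these are controlled by $\Lambda(1/c_0,\dots)\big(\|v\|_{X^{m-1,1}}+\|b\|_{X^{m-1,1}}+|h|_{X^{m-1,1}}\big)$ plus absorbable good terms, using Lemma \ref{Lemma1.1} and the anisotropic embedding \eqref{1.47}.

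Next I would take the $L^2(\mathrm{d}\mathcal{V}_t)$ inner product of the $V$–equation with $V^{l,\alpha}$ and of the $B$–equation with $B^{l,\alpha}$ and \emph{add them}. By the integration–by–parts rules, the transport terms $v\cdot\nabla^\varphi$ leave only boundary contributions on $\{z=0\}$ governed by the kinematic condition $\partial_t h=v\cdot\NN$; crucially, the cross terms $-\int b\cdot\nabla^\varphi B^{l,\alpha}\!\cdot V^{l,\alpha}$ and $-\int b\cdot\nabla^\varphi V^{l,\alpha}\!\cdot B^{l,\alpha}$ cancel upon summation after one integration by parts, using $\nabla^\varphi\!\cdot b=0$ in the interior and $b=0$ on $\{z=0\}$ — this is precisely why the two equations must be treated jointly. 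The viscous term produces $-2\epsilon\int(|\mathcal{S}^\varphi V^{l,\alpha}|^2+|\mathcal{S}^\varphi B^{l,\alpha}|^2)$ plus boundary terms which are combined with the pressure boundary term coming from $\nabla^\varphi Q^{l,\alpha}$ via the dynamical boundary condition $q\nn-2\epsilon\mathcal{S}^\varphi v\nn=gh\nn$; the surviving boundary contribution is, up to controllable remainders, $-\tfrac12\tfrac{d}{dt}\int_{z=0} g|\partial_t^l Z^\alpha h|^2$ together with a term of sign $(g-\partial_z^{\varphi^\epsilon} q^\epsilon)|_{z=0}$, which is nonnegative by the Taylor sign condition assumed in Proposition \ref{Proposition1.1}. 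The interior pressure term integrates by parts into $-\int Q^{l,\alpha}\,\nabla^\varphi\!\cdot V^{l,\alpha}$, and $\nabla^\varphi\!\cdot V^{l,\alpha}$ is a commutator with the divergence–free condition, hence lower order and absorbable.

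For the remaining case $|\alpha|=0$ (pure time derivatives $\partial_t^l$, $l\le m-1$) the dynamical boundary condition is not available in this form, so I would instead estimate $V^{l,0},B^{l,0}$ together with $\nabla\partial_t^l q$ read off from the elliptic pressure problem. Since the fluid is infinitely deep, $\|\partial_t^l q\|_{L^2}$ is not bounded, and I would compensate with Hardy's inequality \eqref{Sect1_HardyIneq} to control $\|\tfrac{1}{1-z}\partial_t^l q\|_{L^2}$ by $|\partial_t^l q|_{z=0}|_{L^2}+\|\partial_z\partial_t^l q\|_{L^2}$, the latter being part of the energy; this is the analytically delicate point. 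Finally I would convert the viscous control of the good unknowns into control of $v,b$ themselves: the Korn inequality gives $\|\nabla V^{l,\alpha}\|_{L^2}^2\lem\Lambda_0(\int|\mathcal{S}^\varphi V^{l,\alpha}|^2+\|V^{l,\alpha}\|_{L^2}^2)$, and $\|\nabla\partial_t^l Z^\alpha v\|_{L^2}$ follows from $\|\nabla V^{l,\alpha}\|_{L^2}$ plus $\|\nabla(\partial_z^\varphi v\,\partial_t^l Z^\alpha\eta)\|_{L^2}$, the $\eta$–piece handled through the trace bound \eqref{1.48} and Lemma \ref{Lemma2.4} for $\sqrt\epsilon\,|h|_{X^{m-1,3/2}}$; the zeroth–order pieces and $\|\partial_t^l h\|_{L^2}$ come from Lemma \ref{Lemma2.3}. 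Summing over all admissible $(l,\alpha)$, using the smallness of $C_0$ to absorb the $\Lambda$–factors and a Gronwall argument in time, yields \eqref{2.21}.

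\textbf{Main obstacle.} The delicate part is the boundary/pressure bookkeeping: arranging the viscous boundary terms, the pressure boundary term and the dynamical boundary condition so that the only uncontrolled boundary quantity is the manifestly good $\tfrac{d}{dt}(g|h|^2)$ term, with the Taylor sign condition supplying positivity; and, in the $|\alpha|=0$ case, controlling $\partial_t^l q$ for infinite depth via Hardy's inequality rather than a direct $L^2$ bound.
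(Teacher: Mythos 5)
Your outline for $|\alpha|\ge 1$ matches the paper's proof almost exactly: apply $\partial_t^l Z^\alpha$, pass to the Alinhac good unknowns $V^{l,\alpha},B^{l,\alpha},Q^{l,\alpha}$, add the two energy identities so that the Lorentz cross terms $\int b\cdot\nabla^\varphi B^{l,\alpha}\!\cdot V^{l,\alpha}$ and $\int b\cdot\nabla^\varphi V^{l,\alpha}\!\cdot B^{l,\alpha}$ cancel (using $\nabla^\varphi\!\cdot b=0$ in the interior and $b=0$ on $\{z=0\}$), combine the viscous and pressure boundary contributions via the dynamical boundary condition to produce the good term $-\tfrac12\tfrac{d}{dt}\int_{z=0}(g-\partial_z^\varphi q)|\partial_t^l Z^\alpha h|^2$ under the Taylor sign condition, and close with Korn plus Lemmas \ref{Lemma2.3}--\ref{Lemma2.4} and Gronwall. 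That is the paper's Step one, and your account of it is sound.

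For $|\alpha|=0$ your route departs from the paper's, and the departure is a small but real confusion. The paper does not invoke Hardy's inequality \eqref{Sect1_HardyIneq} in the $\sigma=0$ case; it keeps the pressure out of the energy by never forming $Q^{l,0}$: it simply multiplies by $V^{l,0},B^{l,0}$, bounds the resulting $\int \partial_t^l\nabla^\varphi q\cdot V^{l,0}$ by $\|\partial_t^l\nabla q\|_{L^2}\|V^{l,0}\|_{L^2}$, and then estimates $\|\nabla q\|_{X^{m-1}}$ from the \emph{Dirichlet} elliptic problem with $q|_{z=0}=gh+2\epsilon\mathcal{S}^\varphi v\nn\cdot\nn$, as in \eqref{2.25}. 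Because only $\nabla q$ and the boundary trace of $q$ are ever used, the fact that $\|\partial_t^l q\|_{L^2}$ has no bound in infinite depth is harmless and Hardy is not needed. The Hardy inequality, and the Neumann formulation \eqref{1.40} that it compensates, belong to the $\sigma>0$ argument (Section 6), where the Dirichlet datum $q|_{z=0}=gh-\sigma\mathcal{M}+\cdots$ costs two extra derivatives of $h$ and must be avoided. If you insist on the Neumann/Hardy route here, you would additionally need $\partial_t^m v\cdot\NN|_{z=0}$ (hence $\|\partial_t^m v\|_{L^4_t L^2}$), which the $\sigma=0$ proposition does not claim to control; so while that route is not obviously fatal, it imports extra requirements the Dirichlet route avoids. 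Replacing the Hardy step by the Dirichlet elliptic estimate \eqref{2.25} would bring your proof into line with the paper's.
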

\begin{proof}
We choose the Alinhac's good unknown as the variable.
Applying $\partial_t^lZ^\alpha$ to \eqref{MHDF},
then $V^{l,\alpha}$, $B^{l,\alpha}$ and $Q^{l,\alpha}$ satisfy
\begin{equation}\label{AlinhacE}
\left\{\begin{aligned}
&\partial_t^\varphi V^{l,\alpha}-2\epsilon\nabla^\varphi\cdot S^\varphi V^{l,\alpha}
+v\cdot\nabla^\varphi V^{l,\alpha}-b\cdot\nabla^\varphi B^{l,\alpha}+\nabla^\varphi Q^{l,\alpha}\\
&\quad=-\partial_t^\varphi\partial_z^\varphi v\partial_t^lZ^\alpha \eta
-v\cdot\nabla^\varphi\partial_z^\varphi v\partial_t^lZ^\alpha \eta
+b\cdot\nabla^\varphi\partial_z^\varphi b\partial_t^lZ^\alpha \eta
+\nabla^\varphi\partial_z^\varphi q\partial_t^lZ^\alpha \eta
\\
&\quad+2\epsilon\nabla^\varphi\cdot(S^\varphi\partial_z^\varphi v\partial_t^lZ^\alpha \eta)
-2\epsilon\partial_z^\varphi S^\varphi v_y\partial_t^lZ^\alpha \eta+{\text b.t.},
\\
&\partial_t^\varphi B^{l,\alpha}-2\epsilon\nabla^\varphi\cdot S^\varphi B^{l,\alpha}+v\cdot\nabla^\varphi\varphi B^{l,\alpha}-b\cdot\nabla^\varphi V^{l,\alpha}
\\
&\quad=-\partial_t^\varphi\partial_z^\varphi B\partial_t^lZ^\alpha \eta
-v\cdot\nabla^\varphi\partial_z^\varphi b\partial_t^lZ^\alpha \eta
+b\cdot\nabla^\varphi\partial_z^\varphi v\partial_t^lZ^\alpha \eta
\\
&\quad+2\epsilon\nabla^\varphi\cdot(S^\varphi\partial_z^\varphi b\partial_t^lZ^\alpha \eta)
-2\epsilon\partial_z^\varphi S^\varphi b_y\partial_t^lZ^\alpha \eta+{\text b.t.},
\\
&\nabla^\varphi\cdot V^{l,\alpha}=-(\nabla^\varphi\cdot\partial_z^\varphi v)\partial_t^lZ^\alpha \eta+{\text b.t.}={\text b.t.},
\\
&\nabla^\varphi\cdot B^{l,\alpha}=-(\nabla^\varphi\cdot\partial_z^\varphi b)\partial_t^lZ^\alpha \eta+{\text b.t.}={\text b.t.},
\end{aligned}\right.
\end{equation}
and the following initial boundary conditions
\begin{equation*}
\left\{\begin{aligned}
&Q^{l,\alpha}N-2\epsilon S^\varphi V^{l,\alpha}\NN
\\
&\quad=(g-\partial_z^\varphi q)\partial_t^lZ^\alpha h\NN
+2\epsilon(S^\varphi\partial_z^\varphi v\NN)\partial_t^lZ^\alpha h
-[\partial_t^lZ^\alpha, 2\epsilon S^\varphi vn\cdot n,\NN]
\\
&\quad+(2\epsilon S^\varphi v-2\epsilon S^\varphi vn\cdot n)\partial_t^lZ^\alpha N
+2\epsilon[\partial_t^lZ^\alpha, S^\varphi v,\NN]+{\text b.t.},
\\
&B^{l,\alpha}|_{z=0}=-(\partial_z^\varphi b\partial_t^lZ^\alpha\eta)|_{z=0},
\\
&(\partial_t^lZ^\alpha v,\partial_t^lZ^\alpha b,\partial_t^lZ^\alpha h)|_{t=0}
=(\partial_t^lZ^\alpha v_0),\partial_t^lZ^\alpha b_0,\partial_t^lZ^\alpha h_0).
\end{aligned}\right.
\end{equation*}

Step one:
When $|\alpha| \geq 1, 1 \leq l + |\alpha| \leq m$,
we establish the $L^2$ estimate of $V^{l,\alpha},B^{l,\alpha}$.
The estimates are similar to \cite{Lee17},
but we do not use $g-\partial_z^\varphi q^E>0$.
\begin{align*}
&\frac{1}{2}\frac{\mathrm{d}}{\mathrm{d}t}\int_{\mathbb{R}^3_-}|V^{l,\alpha}|^2+|B^{l,\alpha}|^2\mathrm{d}\mathcal{V}_t
+2\epsilon\int_{\mathbb{R}^3_-}|S^\varphi V^{l,\alpha}|^2+|S^\varphi B^{l,\alpha}|^2\mathrm{d}\mathcal{V}_t
-\int_{\mathbb{R}^3_-}Q^{l,\alpha}\nabla^\varphi\cdot V^{l,\alpha}\mathrm{d}\mathcal{V}_t
\\
&\leq \int_{z=0}(2\epsilon S^\varphi V^{l,\alpha}\NN-Q^{l,\alpha}\NN)V^{l,\alpha}\mathrm{d}y
+\int_{z=0}2\epsilon S^\varphi B^{l,\alpha}\NN B^{l,\alpha}\mathrm{d}y
\\
&\quad+\|\partial_z v\|^2_{X^{m-1}}+\|\partial_z b\|^2_{X^{m-1}}+\|\nabla q\|^2_{X^{m-1}}+b.t.
\\
&\leq-\frac{1}{2}\frac{\mathrm{d}}{\mathrm{d}t}\int_{z=0}(g-\partial_z^\varphi q)|\partial_t^lZ^\alpha h|^2\mathrm{d}y
+\|\partial_z v\|^2_{X^{m-1}}+\|\partial_z b\|^2_{X^{m-1}}
+\|\nabla q\|^2_{X^{m-1}}+\epsilon|h|_{X^{m,\frac{1}{2}}}+{\text b.t.}
\end{align*}
where we use $\int_{\mathbb{R}^3_-}b\cdot\nabla^\varphi B^{l,\alpha}V^{l,\alpha}\mathrm{d}\mathcal{V}_t
+\int_{R^3_-}b\cdot\nabla^\varphi B^{l,\alpha}V^{l,\alpha}\mathrm{d}\mathcal{V}_t=0$,
then
\begin{align*}
&\frac{1}{2}\frac{d}{dt}\int_{\mathbb{R}^3_-}|V^{l,\alpha}|^2+|B^{l,\alpha}|^2\mathrm{d}\mathcal{V}_t
+\frac{1}{2}\frac{d}{dt}\int_{z=0}(g-\partial_z^\varphi q)|\partial_t^lZ^\alpha h|^2\mathrm{d}y
\\
&\quad+2\epsilon\int_{\mathbb{R}^3_-}|S^\varphi V^{l,\alpha}|^2\mathrm{d}\mathcal{V}_t
+2\epsilon\int_{\mathbb{R}^3_-}|S^\varphi B^{l,\alpha}|^2\mathrm{d}\mathcal{V}_t
\\
&\leq\|\partial_z v\|^2_{X^{m-1}}+\|\partial_z b\|^2_{X^{m-1}}+\|\nabla q\|^2_{X^{m-1}}+{\text b.t.}
\end{align*}

Since $(g-\partial_z^\varphi q)_{z=0}\geq c_0>0$,
a priori estimates can be closed. Thus,
\begin{align}\label{2.23}
&\|\partial_t^lZ^\alpha v\|^2+\|\partial_t^lZ^\alpha b\|^2+|\partial_t^lZ^\alpha h|^2
+\epsilon |\partial_t^lZ^\alpha h|^2_{\frac{1}{2}}
+\epsilon\int_0^t\|\nabla\partial_t^lZ^\alpha v\|^2\mathrm{d}t
+\epsilon\int_0^t\|\nabla\partial_t^lZ^\alpha b\|^2\mathrm{d}t
\nonumber\\
&\leq\|v_0\|^2_{X^{m-1}}+\|b_0\|^2_{X^{m-1}}+|h_0|^2_{X^{m-1,1}}+\epsilon |h_0|^2_{X^{m-1},\frac{3}{2}}
\nonumber\\
&+\int_0^T\|\partial_zv\|^2_{X^{m-1}}+\int_0^T\|\partial_zb\|^2_{X^{m-1}}+\|\nabla q\|^2_{X^{m-1}}
\end{align}
where we have used the estimate of $\epsilon |\partial_t^lZ^\alpha h|^2_{\frac{1}{2}}$
that is proved by Lemma \ref{Lemma2.4}.

Step two:
when $|\alpha|=0$ and $0 \leq l\leq m-1$, we have no bounds of $q$ and $\partial_t^l q$.
Since we have developed the $L^2$ estimate of $V^{l,0},B^{l,0}$,
there is no need to use the variable $Q^{l,\alpha}$,
the divergence free condition and the dynamical
boundary condition. Then
\begin{align*}
&\frac{1}{2}\frac{\mathrm{d}}{\mathrm{d}t}\int_{\mathbb{R}^3_-}|V^{l,0}|^2+|B^{l,0}|^2\mathrm{d}\mathcal{V}_t
+2\epsilon\int_{\mathbb{R}^3_-}|S^\varphi V^{l,0}|^2\mathrm{d}\mathcal{V}_t
+2\epsilon\int_{\mathbb{R}^3_-}|S^\varphi B^{l,0}|^2\mathrm{d}\mathcal{V}_t
\\
&\leq -\int_{R^3_-}\partial_t^l\nabla^\varphi q\cdot V^{l,0}\mathrm{d}\mathcal{V}_t
+\int_{z=0}2\epsilon S^\varphi V^{l,0}\NN\cdot V^{l,0}\mathrm{d}y
+\int_{z=0}2\epsilon S^\varphi B^{l,0}\NN\cdot B^{l,0}\mathrm{d}y
\\
&\quad+\|\partial_z v\|^2_{X^{m-1}}+\|\partial_z b\|^2_{X^{m-1}}+{\text b.t.}
\\
&\leq \|\partial_t^l\nabla q\|^2_{L^2}+\epsilon\|\partial_t^lv|_{z=0}\|^2_{L^2}
+\epsilon|\partial_t^lh|_{z=0}|^2_{L^2}
+\epsilon\|\partial_t^l\partial_yv|_{z=0}\|^2_{L^2}
+\epsilon\|\partial_t^l\partial_zv|_{z=0}\|^2_{L^2}
\\
&\quad+\epsilon\|\partial_t^l\partial_zb|_{z=0}\|^2_{L^2}
+\|\partial_z v\|^2_{X^{m-1}}+\|\partial_z b\|^2_{X^{m-1}}+\|\nabla q\|^2_{X^{m-1}}+{\text b.t.}.
\end{align*}

Since $\partial_zv,\partial_zb$ can be expressed in terms of tangential derivatives,
then
\begin{align*}
&\frac{1}{2}\frac{\mathrm{d}}{\mathrm{d}t}\int_{\mathbb{R}^3_-}|V^{l,0}|^2+|B^{l,0}|^2\mathrm{d}\mathcal{V}_t
+2\epsilon\int_{\mathbb{R}^3_-}|S^\varphi V^{l,0}|^2\mathrm{d}\mathcal{V}_t
+2\epsilon\int_{\mathbb{R}^3_-}|S^\varphi B^{l,0}|^2\mathrm{d}\mathcal{V}_t
\\
&\leq \|\partial_t^l\nabla q\|^2_{L^2}
+\epsilon|\partial_t^l\partial_yv|_{z=0}\|^2_{L^2}
+\epsilon|\partial_t^lh|^2_{X^{0,\frac{1}{2}}}
+\|\partial_z v\|^2_{X^{m-1}}+\|\partial_z b\|^2_{X^{m-1}}+{\text b.t.}
\\
&\leq \|\partial_t^l\nabla q\|^2_{L^2}+\epsilon\|\nabla v\|^2_{X^{m-1,1}}
+\epsilon|\partial_t^lh|^2_{X^{0,\frac{1}{2}}}
+\|\partial_z v\|^2_{X^{m-1}}+\|\partial_z b\|^2_{X^{m-1}}+{\text b.t.}.
\end{align*}

Similarly, we have
\begin{equation}\label{2.24}
\begin{aligned}
&\|V^{l,0}\|^2+\|B^{l,0}\|^2+\epsilon\int_0^t\|\nabla V^{l,0}\|^2\mathrm{d}t
+\epsilon\int_0^t\|\nabla B^{l,0}\|^2\mathrm{d}t
\\
&\leq \|v_0\|^2_{X^{m-1,1}}+\|b_0\|^2_{X^{m-1,1}}+|h_0|^2_{X^{m-1,1}}
+\epsilon|h_0|^2_{X^{m-1,\frac{3}{2}}}+\|\nabla q\|^2_{L^4([0,T],X^{m-1})}
\\
&+\|\partial_zv\|^2_{L^4([0,T],X^{m-1})}+\|\partial_zb\|^2_{L^4([0,T],X^{m-1})}
+\epsilon\int_0^T\|\nabla v\|^2_{X^{m-1,1}}\mathrm{d}t.
\end{aligned}
\end{equation}
By \eqref{2.24} and Lemma \ref{Lemma2.3},
we obtain the estimate of $\partial_t^lv,\partial_t^lb$:
\begin{equation*}
\begin{aligned}
&\int_{\mathbb{R}^2}|\partial_t^lv|^2+|\partial_t^lb|^2\mathrm{d}y\leq \|v_0\|^2_{X^{m-1,1}}+\|b_0\|^2_{X^{m-1,1}}+|h_0|^2_{X^{m-1,1}}
+\epsilon|h_0|^2_{X^{m-1,\frac{3}{2}}}
\\
&+\|\nabla q\|^2_{L^4([0,T],X^{m-1})}+\|\partial_zv\|^2_{L^4([0,T],X^{m-1})}
+\|\partial_zb\|^2_{L^4([0,T],X^{m-1})}+\epsilon\int_0^T\|\nabla v\|^2_{X^{m-1}}\mathrm{d}t
\\
&+\epsilon\int_0^T\|\nabla b\|^2_{X^{m-1}}\mathrm{d}t
+\int_0^t|h|^2_{X^{m-1}}+\|v\|^2_{X^{m-1}}+\|v\|^2_{X^{m-1}}\mathrm{d}t.
\end{aligned}
\end{equation*}
While $q$ satisfies the elliptic equation with nonhomogeneous Dirichlet boundary condition
\begin{equation*}
\left\{
\begin{aligned}
&\Delta^\varphi q=-\nabla\cdot(v\cdot\nabla^\varphi v)+\nabla\cdot(b\cdot\nabla^\varphi b),\\
&q|_{z=0}=gh+2\epsilon S^\varphi vn\cdot n,
\end{aligned}
\right.
\end{equation*}
then it is easy to get the gradient estimate for $q$ as follows
\begin{align}\label{2.25}
\|\nabla q\|_{X^{m-1}}\leq& \|\nabla\cdot(v\cdot\nabla^\varphi v)\|_{X^{m-1}}
+\|\nabla\cdot(b\cdot\nabla^\varphi b)\|_{X^{m-1}}+|q|_{z=0}|_{X^{m-1},\frac{1}{2}}
\nonumber\\
\leq& \|v\|_{X^{m-1,1}}+\|b\|_{X^{m-1,1}}+\|\partial_zv\|_{X^{m-1}}+\|\partial_zb\|_{X^{m-1}}
\nonumber\\
&+g|h|_{X^{m-1},\frac{3}{2}}
+\epsilon|v_{z=0}|_{X^{m-1},\frac{3}{2}}
+\epsilon|h|_{X^{m-1},\frac{3}{2}}.
\end{align}
Takig summation for $l$ and $\alpha$ \eqref{2.23}, \eqref{2.24} and \eqref{2.25}, we get the estimate \eqref{2.21}.
Thus, Lemma \ref{Lemma2.5} is proved.
\end{proof}

\subsection{Proof Proposition 1.1}
In order to study $\partial_zv,\partial_zb$ on the right hand of \eqref{2.21} to close the energy estimates, we should estimate
$\|\partial_zv\|^2_{L^4([0,T],X^{m-1})}$, $\|\partial_zb\|^2_{L^4([0,T],X^{m-1})}$.

\begin{lemma}\label{Lemma2.6}
Assume $\omega_v$ and $\omega_b$  are the vorticities of $v$ and $b$ of the free boundary problems for
MHD equations. Then $\omega_{vh}$ and $\omega_{bh}$ satisfy the following estimate:
\begin{align}
&\|\omega_{vh}\|^2_{L^4([0,T],X^{m-1})}
+\|\omega_{bh}\|^2_{L^4([0,T],X^{m-1})}
+\|\partial_zv\|^2_{L^4([0,T],X^{m-1})}
+\|\partial_zb\|^2_{L^4([0,T],X^{m-1})}
\nonumber\\[6pt]
&\leq\|\omega_{vh}|_{t=0}\|^2_{X^{m-1}}+\|\omega_{bh}|_{t=0}\|^2_{X^{m-1}}
+\int_0^t\|v\|^2_{X^{m-1,1}}+\|b\|^2_{X^{m-1,1}}+|h|^2_{X^{m-1,1}}dt
\nonumber\\[6pt]
&+\epsilon\int_0^t\|\partial_zv\|^2_{X^{m-1,1}}+\|\partial_zb\|^2_{X^{m-1,1}}\mathrm{d}t.
\end{align}
\end{lemma}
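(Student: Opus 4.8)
The plan is to read off the estimate from the vorticity system \eqref{vorticity equations} and then transfer it to $\partial_z v,\partial_z b$ via Lemma \ref{Lemma2.1}. Indeed, \eqref{2.10}--\eqref{2.11} express $\partial_z v^{1,2},\partial_z b^{1,2}$ as $\nabla\varphi$-linear combinations of $(\omega_{vh},\omega_{bh})$ and the tangential derivatives $\partial_j v^i,\partial_j b^i$ with $j=1,2$, while $\partial_z v^3,\partial_z b^3$ are recovered from the divergence-free relation \eqref{divergence free condition}; hence, after a Moser-type product estimate in $X^{m-1}$ and using the a priori control of $\|v\|_{X^{m-1,1}}$, $\|b\|_{X^{m-1,1}}$, $|h|_{X^{m-1,1}}$ coming from Lemma \ref{Lemma2.5}, it suffices to bound $\|\omega_{vh}\|_{L^4([0,T],X^{m-1})}$ and $\|\omega_{bh}\|_{L^4([0,T],X^{m-1})}$.

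For the vorticity itself I would apply $\partial_t^l\mathcal{Z}^\alpha$ with $l+|\alpha|\le m-1$ to \eqref{vorticity equations} and, exactly as in the proof of Lemma \ref{Lemma2.5}, pass to the Alinhac-type good unknowns $\Omega_v^{l,\alpha}=\partial_t^l\mathcal{Z}^\alpha\omega_{vh}-\partial_z^\varphi\omega_{vh}\,\partial_t^l\mathcal{Z}^\alpha\eta$ and $\Omega_b^{l,\alpha}$ defined analogously with $\omega_{bh}$, so that the leading commutator $\partial_z^\varphi\omega\,\partial_i^\varphi(\partial_t^l\mathcal{Z}^\alpha\eta)$ is cancelled and only bounded commutators (controlled by Lemma \ref{Lemma1.1}) survive. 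The pair $(\Omega_v^{l,\alpha},\Omega_b^{l,\alpha})$ then solves a coupled parabolic system with transport part $v\cdot\nabla^\varphi(\cdot)-b\cdot\nabla^\varphi(\cdot)$, viscosity $\epsilon\Delta^\varphi$, and a right-hand side collecting $\partial_t^l\mathcal{Z}^\alpha F_v^0,\partial_t^l\mathcal{Z}^\alpha F_b^0$, the time/transport derivatives of the correction $\partial_z^\varphi\omega\,\partial_t^l\mathcal{Z}^\alpha\eta$, the term $\epsilon\Delta^\varphi(\partial_z^\varphi\omega\,\partial_t^l\mathcal{Z}^\alpha\eta)$, and bounded commutators. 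Taking the $L^2(\mathrm{d}\mathcal{V}_t)$ inner products with $\Omega_v^{l,\alpha}$ and $\Omega_b^{l,\alpha}$ and summing, the antisymmetric couple $\int b\cdot\nabla^\varphi\Omega_b^{l,\alpha}\,\Omega_v^{l,\alpha}+\int b\cdot\nabla^\varphi\Omega_v^{l,\alpha}\,\Omega_b^{l,\alpha}$ cancels after integrating by parts, using $\nabla^\varphi\cdot b=0$ and $b|_{z=0}=0$; the $v\cdot\nabla^\varphi$ contributions combine with the transport identity for $\mathrm{d}\mathcal{V}_t$ and $\nabla^\varphi\cdot v=0$ into harmless terms; and the viscosity yields the dissipation $2\epsilon(\|\nabla^\varphi\Omega_v^{l,\alpha}\|^2+\|\nabla^\varphi\Omega_b^{l,\alpha}\|^2)$ plus a boundary term. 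The forcing $\partial_t^l\mathcal{Z}^\alpha F_v^0,\partial_t^l\mathcal{Z}^\alpha F_b^0$, being quadratic in $(\omega_{vh},\omega_{bh},\partial_j v^i,\partial_j b^i)$ with $\nabla\varphi$-coefficients, is handled by the product estimate $\|fg\|_{X^{m-1}}\lesssim\|f\|_{L^\infty}\|g\|_{X^{m-1}}+\|f\|_{X^{m-1}}\|g\|_{L^\infty}$ together with Lemma \ref{Lemma2.1} and the $C_0$-smallness of the relevant $L^\infty$ norms, so it contributes at most $\Lambda(\cdot)(\|\omega_{vh}\|^2_{X^{m-1}}+\|\omega_{bh}\|^2_{X^{m-1}}+\|v\|^2_{X^{m-1,1}}+\|b\|^2_{X^{m-1,1}})$, while the $\epsilon\Delta^\varphi$ correction and the commutators are absorbed into $\epsilon\int_0^t(\|\partial_z v\|^2_{X^{m-1,1}}+\|\partial_z b\|^2_{X^{m-1,1}})\,\mathrm{d}t$ plus the lower-order terms on the right of the claim.

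The genuinely delicate point is the viscous boundary term $\epsilon\int_{z=0}(\partial_{\mathbf{n}}^\varphi\Omega_v^{l,\alpha})\,\Omega_v^{l,\alpha}\,\mathrm{d}y$; its $\omega_{bh}$-analogue disappears because $\omega_{bh}|_{z=0}=0$, the magnetic field vanishing on the free boundary, so only the velocity vorticity matters. Since the normal derivative of the vorticity is not controlled a priori on $\{z=0\}$, I would use the explicit Dirichlet data $\omega_{vh}^{1,2}|_{z=0}=F^{1,2}[\nabla\varphi](\partial_j v^i)$, lift it by a bounded explicit extension, and reduce to a homogeneous-boundary problem; the remaining boundary/trace terms are then estimated by the anisotropic trace inequality \eqref{1.48}, producing quantities of the type $\epsilon^{1/2}|\partial_t^l\mathcal{Z}^\alpha v|_{z=0}|^2\lesssim\epsilon^{1/2}\|\partial_z v\|_{X^{m-1,1}}\|v\|_{X^{m-1,1}}$ and $\epsilon^{1/2}|h|^2_{X^{m-1,3/2}}$-type terms, all of which sit inside the $\epsilon\int_0^t(\|\partial_z v\|^2_{X^{m-1,1}}+\|\partial_z b\|^2_{X^{m-1,1}})\,\mathrm{d}t$ and the lower-order terms of the statement. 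Summing over $l+|\alpha|\le m-1$ and applying Gronwall's inequality bounds $\sup_{[0,T]}(\|\omega_{vh}\|^2_{X^{m-1}}+\|\omega_{bh}\|^2_{X^{m-1}})$ by the right-hand side of the lemma; the asserted $L^4_t$ form then follows a fortiori from $\|\cdot\|^2_{L^4([0,T],X^{m-1})}\le T^{1/2}\|\cdot\|^2_{L^\infty([0,T],X^{m-1})}$, and the $\partial_z v,\partial_z b$ bounds follow via Lemma \ref{Lemma2.1}. I expect the handling of this non-homogeneous Dirichlet viscous boundary term — and checking that the trade-offs between powers of $\epsilon$ and derivatives dictated by \eqref{1.48} really fit into the stated right-hand side — to be the main obstacle; a secondary technical point is verifying that the $\epsilon\Delta^\varphi(\partial_z^\varphi\omega\,\partial_t^l\mathcal{Z}^\alpha\eta)$ corrections from the good-unknown substitution are genuinely absorbed by the dissipation and lower-order terms, which again rests on Lemma \ref{Lemma2.1} and Lemma \ref{Lemma1.1}.
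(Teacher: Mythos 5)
The paper's argument and yours diverge at exactly the point you flagged as ``the main obstacle,'' and that is where your version fails. The paper does \emph{not} try to prove an $L^\infty_t X^{m-1}$ bound for $\omega_{vh},\omega_{bh}$. Instead it decomposes $\omega_{vh}=\omega_{vh}^{nh}+\omega_{vh}^h$ (and likewise for $\omega_{bh}$), where $\omega^{nh}$ carries the forcing $F^0_v,F^0_b$ and the initial data but has homogeneous Dirichlet boundary condition, while $\omega^h$ carries the inhomogeneous boundary data $F^{1,2}[\nabla\varphi](\partial_j v^i)|_{z=0}$ but zero force and zero initial data. For $\omega^{nh}$, a direct energy estimate after applying $\partial_t^\ell Z^\alpha$ (no Alinhac unknowns are needed because the vorticity system carries no pressure gradient) gives an $L^\infty_t X^{m-1}$ bound, which trivially yields $L^4_t$. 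For $\omega^h$ the paper invokes the parabolic $H^{1/4}([0,T],L^2)$ estimate of Lee (Theorem~10.5 in \cite{Lee17}) for the homogeneous heat-type system with $L^2_t L^2_y$ Dirichlet data, which produces \emph{only} $L^4_t X^{m-1}$, with the factor $\sqrt{\e}$ in front of the boundary data needed to close via the trace inequality. The $L^4$ topology in the statement is not a weakening for convenience; it is dictated by the nonhomogeneous boundary part.

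Your plan, by contrast, lifts the boundary data and aims at a Gronwall bound on $\sup_{[0,T]}(\|\omega_{vh}\|^2_{X^{m-1}}+\|\omega_{bh}\|^2_{X^{m-1}})$, from which the $L^4_t$ estimate would follow trivially. That bound is \emph{false} — the paper states explicitly just before Lemma~\ref{Lemma2.7} that one cannot have $\partial_z v,\partial_z b\in L^\infty([0,T],X^{m-1})$, precisely because $\omega_v|_{z=0}=F^{1,2}[\nabla\varphi](\partial_j v^i)$. Concretely, the lift is an $X^{m-1}$-extension of $\partial_t^\ell Z^\alpha F^{1,2}(\partial_j v^i)|_{z=0}$ with $\ell+|\alpha|\le m-1$; this is an $m$-th order tangential trace of $v$, and the trace estimate \eqref{1.48} converts it to $\|\partial_z v\|^{1/2}_{X^{m-1,1}}\|v\|^{1/2}_{X^{m-1,1}}$, which is controlled \emph{in $L^2_t$} after multiplying by $\sqrt{\e}$ and pairing with the $\e$-dissipation, not uniformly in $t$. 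Your Gronwall step would therefore require a pointwise-in-time bound on a quantity that is only $L^2_t$; the argument does not close. The decomposition into $\omega^{nh}$ and $\omega^h$, together with a genuine parabolic maximal-regularity/$H^{1/4}_t$ estimate for the inhomogeneous-boundary piece, is not an optional organizational device but the mechanism that rescues the estimate at the top order $m-1$. The introduction of Alinhac good unknowns for the vorticity is also unnecessary (since there is no pressure in the vorticity system), but that choice is harmless; the $L^\infty_t$ claim is the genuine gap.
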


\begin{proof}
We have the equations for $\omega_{vh},\omega_{bh}$:
\begin{equation}
\left\{
\begin{aligned}
&\partial_t^\varphi\omega_{vh}-\epsilon\Delta^\varphi\omega_{vh}+v\cdot\nabla^\varphi\omega_{vh}
-b\cdot\nabla^\varphi\omega_{bh}=F_v^0[\nabla\varphi](\omega_{vh},\omega_{bh},\partial_jv^i,\partial_jb^i),
\\[6pt]
&\partial_t^\varphi\omega_{bh}-\epsilon\Delta^\varphi\omega_{bh}+v\cdot\nabla^\varphi\omega_{bh}
-b\cdot\nabla^\varphi\omega_{vh}
=F_b^0[\nabla\varphi](\omega_{vh},\omega_{bh},\partial_jv^i,\partial_jb^i),\\[6pt]
&\omega_{vh}|_{z=0}=F^{1,2}[\nabla\varphi](\partial_jv^i),\\[6pt]
&\omega_{bh}|_{z=0}=F^{1,2}[\nabla\varphi](\partial_jb^i),\\[6pt]
&(\omega_{vh},\omega_{bh})|_{t=0}=(\omega^1_{v0},\omega^2_{v0},\omega^1_{b0},\omega^2_{b0}),
\end{aligned}\right.
\end{equation}
where $j=1,2, i=1,2,3$.

Similarly to \cite{Lee17}, we decompose $\omega_{vh}=\omega_{vh}^{nh}+\omega_{vh}^h$,
$\omega_{bh}=\omega_{bh}^{nh}+\omega_{bh}^h$,
such that $\omega_{vh}^{nh},\omega_{bh}^{nh}$ solve
the nonhomogeneous equations with zero boundary condition:
\begin{equation}
\left\{
\begin{aligned}
&\partial_t^\varphi\omega_{vh}^{nh}-\epsilon\Delta^\varphi\omega_{vh}^{nh}
+v\cdot\nabla^\varphi\omega_{vh}^{nh}
-b\cdot\nabla^\varphi\omega_{bh}^{nh}
=F_v^0[\nabla\varphi](\omega_{vh},\omega_{bh},\partial_jv^i,\partial_jb^i),
\\[6pt]
&\partial_t^\varphi\omega_{bh}^{nh}-\epsilon\Delta^\varphi\omega_{bh}^{nh}
+v\cdot\nabla^\varphi\omega_{bh}^{nh}
-b\cdot\nabla^\varphi\omega_{vh}^{nh}
=F_b^0[\nabla\varphi](\omega_{vh},\omega_{bh},\partial_jv^i,\partial_jb^i),
\\[6pt]
&\omega_{vh}^{nh}|_{z=0}=0,
\\[6pt]
&\omega_{bh}^{nh}|_{z=0}=0,
\\[6pt]
&(\omega_{vh}^{nh},\omega_{bh}^{nh})|_{t=0}=(\omega^1_{v0},\omega^2_{v0},\omega^1_{b0},\omega^2_{b0})^\top.
\end{aligned}
\right.
\end{equation}

For $j=1, 2$, $i=1, 2, 3$, $\omega_{vh}^{h}$, $\omega_{bh}^{h}$ solve
the homogeneous equations with general boundary condition
as follows:
\begin{equation}
\left\{
\begin{aligned}
&\partial_t^\varphi\omega_{vh}^h-\epsilon\Delta^\varphi\omega_{vh}^h+v\cdot\nabla^\varphi\omega_{vh}^h
-b\cdot\nabla^\varphi\omega_{Bh}^h=0,
\\[6pt]
&\partial_t^\varphi\omega_{bh}^h-\epsilon\Delta^\varphi\omega_{bh}^h+v\cdot\nabla^\varphi\omega_{bh}^h
-b\cdot\nabla^\varphi\omega_{vh}^h=0,
\\[6pt]
&\omega_{vh}^h|_{z=0}=F^{1,2}[\nabla\varphi](\partial_jv^i),\\[6pt]
&\omega_{bh}^h|_{z=0}=F^{1,2}[\nabla\varphi](\partial_jb^i),\\[6pt]
&(\omega_{vh}^h,\omega_{bh}^h)|_{t=0}=0.
\end{aligned}
\right.
\end{equation}
where $F^{1,2}[\nabla\varphi](\partial_jb^i)=0$,
which is derived from the zero magnetic field.

The nonhomogeneous equations are equivalent to
\begin{equation}\label{2.30}
\left\{
\begin{aligned}
&\partial_t\omega_{vh}^{nh}-\epsilon\Delta^\varphi\omega_{vh}^{nh}
+v_y\cdot\nabla_y\omega_{vh}^{nh}+V_z\partial_z\omega_{vh}^{nh}
-b_y\cdot\nabla_y\omega_{bh}^{nh}-B_z\partial_z\omega_{bh}^{nh}\\
&\quad=F_v^0[\nabla\varphi](\omega_{vh},\omega_{bh},\partial_jv^i,\partial_jb^i),
\\[6pt]
&\partial_t\omega_{bh}^{nh}-\epsilon\Delta^\varphi\omega_{bh}^{nh}
+v_y\cdot\nabla_y\omega_{bh}^{nh}+V_z\partial_z\omega_{bh}^{nh}
-b_y\cdot\nabla_y\omega_{vh}^{nh}-B_z\partial_z\omega_{vh}^{nh}
\\[6pt]
&\quad=F_B^0[\nabla\varphi](\omega_{vh},\omega_{bh},\partial_jv^i,\partial_jb^i).
\end{aligned}
\right.
\end{equation}
where $V_z=\frac{1}{\partial_z\varphi}(v\cdot N-\partial_t\varphi)=\frac{1}{\partial_z\varphi}(v^3-\partial_t\eta-v_y\cdot\nabla_y\eta)$,
$B_z=\frac{1}{\partial_z\varphi}b\cdot N=\frac{1}{\partial_z\varphi}(b^3-b_y\cdot\nabla_y\eta)$.

Applying $\partial_t^lZ^\alpha$, where $l+|\alpha|\leq m-1$, to the equations \eqref{2.30},
we have
\begin{equation}
\left\{
\begin{aligned}
&\partial_t\partial_t^lZ^\alpha\omega_{vh}^{nh}-\epsilon\Delta^\varphi\partial_t^lZ^\alpha\omega_{vh}^{nh}
+v_y\cdot\nabla_y\partial_t^lZ^\alpha\omega_{vh}^{nh}+V_z\partial_z\partial_t^lZ^\alpha\omega_{vh}^{nh}
-b_y\cdot\nabla_y\partial_t^lZ^\alpha\omega_{bh}^{nh}-B_z\partial_z\partial_t^lZ^\alpha\omega_{bh}^{nh}
\\[6pt]
&\qquad=\partial_t^lZ^\alpha F_v^0[\nabla\varphi](\omega_{vh},\omega_{bh},\partial_jv^i,\partial_jb^i)
-[\partial_t^lZ^\alpha,v_y\cdot\nabla_y]\omega_{vh}^{nh}
-[\partial_t^lZ^\alpha,V_z\partial_z]\omega_{vh}^{nh}
\\[6pt]
&\qquad+[\partial_t^lZ^\alpha,b_y\cdot\nabla_y]\omega_{bh}^{nh}
+[\partial_t^lZ^\alpha,B_z\partial_z]\omega_{bh}^{nh}
+\epsilon\nabla^\varphi\cdot[\partial_t^lZ^\alpha,\nabla^\varphi]\omega_{vh}^{nh}
+\epsilon[\partial_t^lZ^\alpha,\nabla^\varphi\cdot]\omega_{vh}^{nh}
\\[6pt]
&\partial_t\partial_t^lZ^\alpha\omega_{bh}^{nh}-\epsilon\Delta^\varphi\partial_t^lZ^\alpha\omega_{bh}^{nh}
+v_y\cdot\nabla_y\partial_t^lZ^\alpha\omega_{bh}^{nh}+V_z\partial_z\partial_t^lZ^\alpha\omega_{bh}^{nh}
-b_y\cdot\nabla_y\partial_t^lZ^\alpha\omega_{vh}^{nh}-B_z\partial_z\partial_t^lZ^\alpha\omega_{vh}^{nh}
\\[6pt]
&\qquad=\partial_t^lZ^\alpha F_B^0[\nabla\varphi](\omega_{vh},\omega_{bh},\partial_jv^i,\partial_jb^i)
-[\partial_t^lZ^\alpha,v_y\cdot\nabla_y]\omega_{bh}^{nh}
-[\partial_t^lZ^\alpha,V_z\partial_z]\omega_{bh}^{nh}
\\[6pt]
&\qquad+[\partial_t^lZ^\alpha,b_y\cdot\nabla_y]\omega_{vh}^{nh}
+[\partial_t^lZ^\alpha,B_z\partial_z]\omega_{vh}^{nh}
+\epsilon\nabla^\varphi\cdot[\partial_t^lZ^\alpha,\nabla^\varphi]\omega_{bh}^{nh}
+\epsilon[\partial_t^lZ^\alpha,\nabla^\varphi\cdot]\omega_{bh}^{nh}
\\[6pt]
&\partial_t^lZ^\alpha\omega_{vh}^{nh}|_{z=0}=0,\\[6pt]
&\partial_t^lZ^\alpha\omega_{bh}^{nh}|_{z=0}=0,\\[6pt]
&(\partial_t^lZ^\alpha\omega_{vh}^{nh},\partial_t^lZ^\alpha\omega_{bh}^{nh})|_{t=0}
=(\partial_t^lZ^\alpha\omega^1_{v0},\partial_t^lZ^\alpha\omega^2_{v0},
\partial_t^lZ^\alpha\omega^1_{b0},\partial_t^lZ^\alpha\omega^2_{b0})^\top.
\end{aligned}
\right.
\end{equation}

For the $L^2$ estimate, one has
\begin{equation}\label{2.32}
\begin{aligned}
&\frac{d}{dt}(\|\partial_t^lZ^\alpha\omega_{vh}^{nh}\|^2_{L^2}
+\|\partial_t^lZ^\alpha\omega_{bh}^{nh}\|^2_{L^2})
+2\epsilon\|\nabla^\varphi\partial_t^lZ^\alpha\omega_{vh}^{nh}\|^2_{L^2}
+2\epsilon\|\nabla^\varphi\partial_t^lZ^\alpha\omega_{bh}^{nh}\|^2_{L^2}
\\[6pt]
\leq &\|\partial_t^lZ^\alpha\omega_{vh}\|^2_{L^2}
+\|\partial_t^lZ^\alpha\omega_{bh}\|^2_{L^2}
+\|\partial_t^lZ^\alpha\partial_jv^i\|^2_{L^2}+\|\partial_t^lZ^\alpha\partial_jb^i\|^2_{L^2}
+\|\partial_t^lZ^\alpha\nabla\varphi\|^2_{L^2}
\\[6pt]
&+\epsilon\int_{\mathbb{R}^3_-}\nabla^\varphi\cdot[\partial_t^lZ^\alpha,\nabla^\varphi]
\omega_{vh}^{nh}\cdot\partial_t^lZ^\alpha\omega_{vh}^{nh}\mathrm{d}\mathcal{V}_t
+\epsilon\int_{\mathbb{R}^3_-}[\partial_t^lZ^\alpha,\nabla^\varphi\cdot]\omega_{vh}^{nh}
\cdot\partial_t^lZ^\alpha\omega_{vh}^{nh}\mathrm{d}\mathcal{V}_t
\\
&+\epsilon\int_{\mathbb{R}^3_-}\nabla^\varphi\cdot[\partial_t^lZ^\alpha,\nabla^\varphi]
\omega_{bh}^{nh}\cdot\partial_t^lZ^\alpha\omega_{bh}^{nh}\mathrm{d}\mathcal{V}_t
+\epsilon\int_{\mathbb{R}^3_-}[\partial_t^lZ^\alpha,\nabla^\varphi\cdot]\omega_{bh}^{nh}
\cdot\partial_t^lZ^\alpha\omega_{bh}^{nh}\mathrm{d}\mathcal{V}_t
\\
&
+\|[\partial_t^lZ^\alpha,V_z\partial_z]\omega_{vh}^{nh}\|^2_{L^2}
+\|[\partial_t^lZ^\alpha,V_z\partial_z]\omega_{bh}^{nh}\|^2_{L^2}
+\|[\partial_t^lZ^\alpha,B_z\partial_z]\omega_{vh}^{nh}\|^2_{L^2}
\\
&
+\|[\partial_t^lZ^\alpha,B_z\partial_z]\omega_{bh}^{nh}\|^2_{L^2}
+{\text b.t.}.
\end{aligned}
\end{equation}

Now we estimate the last eight terms on the right hand of \eqref{2.32}. It follows from \cite{Wu16} that
\begin{equation}
\begin{aligned}
&\|[\partial_t^lZ^\alpha,V_z\partial_z]\omega_{vh}^{nh}\|^2_{L^2}\leq |h|^2_{X^{m-1,1}}
+\|\partial_zv\|^2_{X^{m-1,1}}+b.t.,
\\
&\epsilon\int_{\mathbb{R}^3_-}\nabla^\varphi\cdot[\partial_t^lZ^\alpha,\nabla^\varphi]
\omega_{vh}^{nh}\cdot\partial_t^lZ^\alpha\omega_{vh}^{nh}\mathrm{d}\mathcal{V}_t
\\
&\leq |h|^2_{X^{m-1,1}}+\epsilon\|\nabla^\varphi\partial_t^lZ^\alpha\omega_{vh}^{nh}\|^2_{L^2}
+\epsilon\|\nabla^\varphi\partial_t^lZ^\alpha\omega_{bh}^{nh}\|^2_{L^2}+{\text b.t.}.
\end{aligned}
\end{equation}
Similarly, one has
\begin{align*}
&\|[\partial_t^lZ^\alpha,V_z\partial_z]\omega_{bh}^{nh}\|^2_{L^2}
+\|[\partial_t^lZ^\alpha,B_z\partial_z]\omega_{vh}^{nh}\|^2_{L^2}
+\|[\partial_t^lZ^\alpha,B_z\partial_z]\omega_{bh}^{nh}\|^2_{L^2}
\\
&\leq |h|^2_{X^{m-1,1}}+\|\partial_zv\|^2_{X^{m-1,1}}+\|\partial_zb\|^2_{X^{m-1,1}}+{\text b.t.},
\end{align*}
and
\begin{align*}
&\epsilon\int_{R^3_-}[\partial_t^lZ^\alpha,\nabla^\varphi\cdot]\omega_{vh}^{nh}
\cdot\partial_t^lZ^\alpha\omega_{vh}^{nh}\mathrm{d}\mathcal{V}_t
+\epsilon\int_{R^3_-}\nabla^\varphi\cdot[\partial_t^lZ^\alpha,\nabla^\varphi]
\omega_{bh}^{nh}\cdot\partial_t^lZ^\alpha\omega_{bh}^{nh}\mathrm{d}\mathcal{V}_t
\\
&+\epsilon\int_{R^3_-}[\partial_t^lZ^\alpha,\nabla^\varphi\cdot]\omega_{bh}^{nh}
\cdot\partial_t^lZ^\alpha\omega_{bh}^{nh}\mathrm{d}\mathcal{V}_t
\\
&\leq |h|^2_{X^{m-1,1}}+\epsilon\|\nabla^\varphi\partial_t^lZ^\alpha\omega_{vh}^{nh}\|^2_{L^2}
+\epsilon\|\nabla^\varphi\partial_t^lZ^\alpha\omega_{bh}^{nh}\|^2_{L^2}+{\text b.t.}.
\end{align*}
Therefore, one gets
\begin{equation}\label{2.34}
\begin{aligned}
&\frac{\mathrm{d}}{\mathrm{d}t}(\|\partial_t^lZ^\alpha\omega_{vh}^{nh}\|^2_{L^2}
+\|\partial_t^lZ^\alpha\omega_{bh}^{nh}\|^2_{L^2})
+2\epsilon\|\nabla^\varphi\partial_t^lZ^\alpha\omega_{vh}^{nh}\|^2_{L^2}
+2\epsilon\|\nabla^\varphi\partial_t^lZ^\alpha\omega_{bh}^{nh}\|^2_{L^2}
\\[6pt]
\leq &\|\partial_t^lZ^\alpha\omega_{vh}\|^2_{L^2}
+\|\partial_t^lZ^\alpha\omega_{bh}\|^2_{L^2}
+|h|^2_{X^{m-1,1}}+\|\partial_zv\|^2_{X^{m-1,1}}+\|\partial_zb\|^2_{X^{m-1,1}}
\\[6pt]
&+\epsilon\|\nabla^\varphi\partial_t^lZ^\alpha\omega_{vh}^{nh}\|^2_{L^2}
+\epsilon\|\nabla^\varphi\partial_t^lZ^\alpha\omega_{bh}^{nh}\|^2_{L^2}+{\text b.t.}.
\end{aligned}
\end{equation}
Taking Summation for $l$ and $\alpha$ in \eqref{2.34}, and integrating from 0 to $t$, we get
\begin{align}
&\|\omega_{vh}^{nh}\|^2_{X^{m-1}}+\|\omega_{bh}^{nh}\|^2_{X^{m-1}}
+\epsilon\int_0^t\|\nabla\omega_{vh}^{nh}\|^2_{X^{m-1}}\mathrm{d}t
+\epsilon\int_0^t\|\nabla\omega_{bh}^{nh}\|^2_{X^{m-1}}\mathrm{d}t
\nonumber\\
\leq&\|\omega_{vh}^{nh}|_{t=0}\|^2_{X^{m-1}}+\|\omega_{bh}^{nh}|_{t=0}\|^2_{X^{m-1}}
+\int_0^t\|\omega_{vh}\|^2_{X^{m-1}}dt+\int_0^t\|\omega_{bh}\|^2_{X^{m-1}}\mathrm{d}t
\nonumber\\
&+\int_0^t\|v\|^2_{X^{m-1,1}}+\|b\|^2_{X^{m-1,1}}+\|h\|^2_{X^{m-1,1}}\mathrm{d}t
\nonumber\\
\leq&\|\omega_{vh}|_{t=0}\|^2_{X^{m-1}}+\|\omega_{bh}|_{t=0}\|^2_{X^{m-1}}
+\sqrt{t}\|\omega_{vh}\|^2_{L^4([0,T],X^{m-1})}+\sqrt{t}\|\omega_{bh}\|^2_{L^4([0,T],X^{m-1})}
\nonumber\\
&+\int_0^t\|v\|^2_{X^{m-1,1}}dt+\int_0^t\|b\|^2_{X^{m-1,1}}dt+\int_0^t\|h\|^2_{X^{m-1,1}}\mathrm{d}t.
\end{align}
It follows that
\begin{align*}
&\|\omega_{vh}^{nh}\|^4_{X^{m-1}}+\|\omega_{bh}^{nh}\|^4_{X^{m-1}}
\leq\|\omega_{vh}|_{t=0}\|^4_{X^{m-1}}+\|\omega_{bh}|_{t=0}\|^4_{X^{m-1}}
+T\|\omega_{vh}\|^4_{L^4([0,T],X^{m-1})}
\nonumber\\
&\qquad+T\|\omega_{bh}\|^4_{L^4([0,T],X^{m-1})}
+(\int_0^t\|v\|^2_{X^{m-1,1}}dt)^2+(\int_0^t\|b\|^2_{X^{m-1,1}}dt)^2+(\int_0^t\|h\|^2_{X^{m-1,1}}\mathrm{d}t)^2.
\end{align*}
Then
\begin{align*}
&\int_0^T\|\omega_{vh}^{nh}\|^4_{X^{m-1}}+\int_0^T\|\omega_{bh}^{nh}\|^4_{X^{m-1}}
\leq T\|\omega_{vh}|_{t=0}\|^4_{X^{m-1}}+T\|\omega_{bh}|_{t=0}\|^4_{X^{m-1}}
\nonumber\\
&\qquad+T\int_0^T\|\omega_{vh}\|^4_{L^4([0,T],X^{m-1})}
+T\int_0^T\|\omega_{bh}\|^4_{L^4([0,T],X^{m-1})}
\nonumber\\
&\qquad+T(\int_0^t\|v\|^2_{X^{m-1,1}}dt)^2+T(\int_0^t\|b\|^2_{X^{m-1,1}}\mathrm{d}t)^2
+T(\int_0^t\|h\|^2_{X^{m-1,1}}\mathrm{d}t)^2.
\end{align*}

For the homogeneous equations,
similarly to Lee \cite{Lee17}(see Theorem 10.5 in \cite{Lee17}),
we can obtain the $H^\frac{1}{4}([0,T],L^2)$ estimate when $l+|\alpha|\leq m-1$,
\begin{equation}\label{2.36}
\begin{aligned}
&\|\partial_t^lZ^\alpha\omega_{vh}^{h}\pm
\partial_t^lZ^\alpha\omega_{bh}^{h}\|^2_{H^\frac{1}{4}([0,T],L^2(\mathbb{R}^3_-))}
\\
\leq &\sqrt{\epsilon}\int_0^T|\partial_t^lZ^\alpha\omega_{vh}^{h}|_{z=0}|^2_{L^2(R^2)}\mathrm{d}t
+\sqrt{\epsilon}\int_0^T|\partial_t^lZ^\alpha\omega_{bh}^{h}|_{z=0}|^2_{L^2(R^2)}\mathrm{d}t
\\
\leq&\sqrt{\epsilon}\int_0^T|\partial_t^lZ^\alpha(F^{1,2}[\nabla\varphi](\partial_jv^i))
|_{z=0}|^2_{L^2(R^2)}\mathrm{d}t
+\sqrt{\epsilon}\int_0^T|\partial_t^lZ^\alpha F^{1,2}[\nabla\varphi](\partial_jb^i)
|_{z=0}|^2_{L^2(R^2)}\mathrm{d}t
\\
\leq&\sqrt{\epsilon}\int_0^T|h|^2_{X^{m-1,1}}\mathrm{d}t
+\sqrt{\epsilon}\int_0^T|v|_{z=0}|^2_{X^{m-1,1}_{tan}}\mathrm{d}t
+\sqrt{\epsilon}\int_0^T|b|_{z=0}|^2_{X^{m-1,1}_{tan}}\mathrm{d}t
\\
\leq&\sqrt{\epsilon}\int_0^T|h|^2_{X^{m-1,1}}\mathrm{d}t
+\int_0^T\|v\|^2_{X^{m-1,1}}dt
+\epsilon\int_0^T\|\partial_zv\|^2_{X^{m-1,1}}\mathrm{d}t,
\end{aligned}
\end{equation}
where $\partial_jv^i|_{z=0}|_{X^{m-1}}=|v|_{z=0}|_{H^m}$,
$\partial_jb^i|_{z=0}|_{X^{m-1}}=|b|_{z=0}|_{H^m}$ for j = 1, 2.
Taking summation for $\alpha$ in \eqref{2.36}, we have
\begin{align*}
&\|\omega_{vh}^{h}\pm\omega_{bh}^{h}\|^2_{L^4([0,T],X^{m-1})}
\leq\int_0^T|h|^2_{X^{m-1,1}}\mathrm{d}t
+\int_0^T\|v\|^2_{X^{m-1,1}}\mathrm{d}t
+\epsilon\int_0^T\|\partial_zv\|^2_{X^{m-1,1}}\mathrm{d}t.
\end{align*}
Hence, one has
\begin{align*}
\|\omega_{vh}^{h}\pm\omega_{bh}^{h}\|^4_{L^4([0,t],X^{m-1})}
\leq(\int_0^T|h|^2_{X^{m-1,1}}\mathrm{d}t)^2
+(\int_0^T\|v\|^2_{X^{m-1,1}}\mathrm{d}t)^2
+(\epsilon\int_0^T\|\partial_zv\|^2_{X^{m-1,1}}\mathrm{d}t)^2.
\end{align*}
Therefore
\begin{align*}
&\|\omega_{vh}\|^4_{L^4([0,t],X^{m-1})}+\|\omega_{bh}\|^4_{L^4([0,t],X^{m-1})}
\\
\leq&\|\omega_{vh}^{h}\|^4_{L^4([0,T],X^{m-1})}
+\|\omega_{vh}^{nh}\|^4_{L^4([0,T],X^{m-1})}
+\|\omega_{bh}^{h}\|^4_{L^4([0,T],X^{m-1})}
+\|\omega_{bh}^{nh}\|^4_{L^4([0,T],X^{m-1})}
\\
\leq&\|\omega_{vh}|_{t=0}\|^4_{X^{m-1}}
+\|\omega_{bh}|_{t=0}\|^4_{X^{m-1}}
+\int_0^T\|\omega_{vh}\|^4_{L^4([0,T],X^{m-1})}
+\int_0^T\|\omega_{bh}\|^4_{L^4([0,T],X^{m-1})}
\\
&+(\int_0^T|h|^2_{X^{m-1,1}}\mathrm{d}t)^2
+(\int_0^T\|v\|^2_{X^{m-1,1}}\mathrm{d}t)^2
+(\int_0^T\|b\|^2_{X^{m-1,1}}\mathrm{d}t)^2
\\
&+(\epsilon\int_0^T\|\partial_zv\|^2_{X^{m-1,1}}\mathrm{d}t)^2
+(\epsilon\int_0^T\|\partial_zb\|^2_{X^{m-1,1}}\mathrm{d}t)^2.
\end{align*}
It follows from the Gronwall's inequality that
\begin{align*}
&\|\omega_{vh}\|^2_{L^4([0,t],X^{m-1})}+\|\omega_{bh}\|^2_{L^4([0,t],X^{m-1})}
\\
\leq&\|\omega_{vh}|_{t=0}\|^2_{X^{m-1}}
+\|\omega_{bh}|_{t=0}\|^2_{X^{m-1}}
+\int_0^T|h|^2_{X^{m-1,1}}\mathrm{d}t
+\int_0^T\|v\|^2_{X^{m-1,1}}\mathrm{d}t
\\
&+\int_0^T\|b\|^2_{X^{m-1,1}}\mathrm{d}t
+\epsilon\int_0^T\|\partial_zv\|^2_{X^{m-1,1}}\mathrm{d}t
+\epsilon\int_0^T\|\partial_zb\|^2_{X^{m-1,1}}\mathrm{d}t.
\end{align*}
Then, we have
\begin{align*}
&\|\partial_zv_h\|^2_{L^4([0,T],X^{m-1})}+\|\partial_zb_h\|^2_{L^4([0,T],X^{m-1})}
\\
\leq&\|\omega_{vh}\|^4_{L^4([0,t],X^{m-1})}+\|\omega_{bh}\|^4_{L^4([0,t],X^{m-1})}
+|h|^2_{L^4([0,T],X^{m-1,\frac{1}{2}})}\\
&+\|v\|^2_{L^4([0,T],X^{m-1,1})}
+\|b\|^2_{L^4([0,T],X^{m-1,1})}.
\end{align*}
By the divergence free condition,
one has
\begin{align*}
&\|\partial_zv^3\|^2_{L^4([0,T],X^{m-1})}+\|\partial_zb^3\|^2_{L^4([0,T],X^{m-1})}
\\
\leq&\|\partial_zv_h\|^2_{L^4([0,T],X^{m-1})}+\|\partial_zb_h\|^2_{L^4([0,T],X^{m-1})}
+\|\nabla\varphi\|^2_{L^4([0,T],X^{m-1})}
\\
&+\|\partial_jv_i\|^2_{L^4([0,T],X^{m-1})}
+\|\partial_jb_i\|^2_{L^4([0,T],X^{m-1})}
\\
\leq&\|\omega_{vh}\|^4_{L^4([0,t],X^{m-1})}+\|\omega_{bh}\|^4_{L^4([0,t],X^{m-1})}
+|h|^2_{L^4([0,T],X^{m-1,\frac{1}{2}})}\\
&+\|v\|^2_{L^4([0,T],X^{m-1,1})}
+\|b\|^2_{L^4([0,T],X^{m-1,1})}.
\end{align*}
The lemma is proved.
\end{proof}

Refer to \cite{Lee17} for the $L^\infty$ estimates, which implies
$\partial_zv$, $\partial_zb$, $Z^3\partial_zv$, $Z^3\partial_zb$
$\sqrt{\epsilon}\partial_{zz}v$, $\sqrt{\epsilon}\partial_{zz}b\in L^\infty$.
The following lemma establishes the estimates of $\|\partial_zv\|_{L^\infty([0,T],X^{m-2})}$,
$\|\partial_zb\|_{L^\infty([0,T],X^{m-2})}$.
Note that we can not have $\partial_zv,\partial_zb\in L^\infty([0,T],X^{m-1})$,
due to the fact that $\omega_v|_{z=0}=F^{1,2}_v[\nabla\varphi](\partial_jv^i))$.

\begin{lemma}\label{Lemma2.7}
Assume $\omega_v$ and $\omega_b$ are the vorticity of the v and b for the free boundary problems of
MHD equations, respectively. $\omega_v$, $\omega_b$ satisfy the following estimate:
\begin{align}
&\|\partial_zv\|^2_{X^{m-2}}+\|\partial_zb\|^2_{X^{m-2}}
+\epsilon\int_0^t\|\partial_{zz}v\|^2_{X^{m-2}}+\|\partial_{zz}b\|^2_{X^{m-2}}\mathrm{d}t
\nonumber\\
&+\|\omega_v\|^2_{X^{m-2}}+\|\omega_b\|^2_{X^{m-2}}
+\epsilon\int_0^t\|\nabla\omega_v\|^2_{X^{m-2}}+\|\nabla\omega_b\|^2_{X^{m-2}}\mathrm{d}t
\nonumber\\
\leq&\|\partial_zv_0\|^2_{X^{m-2}}+\|\partial_zb_0\|^2_{X^{m-2}}
+\int_0^t\|v\|^2_{X^{m-1,1}}+\|b\|^2_{X^{m-1,1}}+|h|^2_{X^{m-1}}\mathrm{d}t
\nonumber\\
&\|\partial_zv\|^2_{L^4([0,T],X^{m-1})}+\|\partial_zb\|^2_{L^4([0,T],X^{m-1})}
\end{align}
\end{lemma}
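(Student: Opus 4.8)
\emph{Proof plan.} The plan is to deduce the estimate from an $X^{m-2}$ energy estimate for the coupled vorticity system \eqref{vorticity equations}, reusing the homogeneous/nonhomogeneous splitting from the proof of Lemma~\ref{Lemma2.6}, and then to recover the normal derivatives and the full vorticity from the algebraic identities of the preceding subsection. First I would reduce the claim to an estimate for $\omega_{vh},\omega_{bh}$ alone: by \eqref{2.10}--\eqref{2.11} the normal derivatives $\partial_z v^1,\partial_z v^2,\partial_z b^1,\partial_z b^2$ are linear in $(\omega_{vh},\omega_{bh})$ and in the tangential derivatives $\partial_j v^i,\partial_j b^i$ with coefficients smooth in $\nabla\varphi$; the divergence-free relations \eqref{divergence free condition} then give $\partial_z v^3,\partial_z b^3$; and \eqref{2.2}--\eqref{2.3} express $\omega_v^3,\omega_b^3$ through tangential derivatives of $(v,b)$ and the $\partial_z v^i,\partial_z b^i$ just obtained. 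Applying $\partial_t^l Z^\alpha$ with $l+|\alpha|\le m-2$, handling the commutators $[\partial_t^l Z^\alpha,\partial_i^\varphi]$ with Lemma~\ref{Lemma1.1} and the $h$-bounds from Lemma~\ref{Lemma2.5}, one sees that $\|\partial_z v\|_{X^{m-2}}^2+\|\partial_z b\|_{X^{m-2}}^2+\|\omega_v\|_{X^{m-2}}^2+\|\omega_b\|_{X^{m-2}}^2$ and the matching dissipation $\epsilon\int_0^t(\|\partial_{zz}v\|_{X^{m-2}}^2+\|\partial_{zz}b\|_{X^{m-2}}^2+\|\nabla\omega_v\|_{X^{m-2}}^2+\|\nabla\omega_b\|_{X^{m-2}}^2)\,\mathrm{d}t$ are majorised by $\|\omega_{vh}\|_{X^{m-2}}^2+\|\omega_{bh}\|_{X^{m-2}}^2+\epsilon\int_0^t(\|\nabla\omega_{vh}\|_{X^{m-2}}^2+\|\nabla\omega_{bh}\|_{X^{m-2}}^2)\,\mathrm{d}t$ plus terms of the form $\|v\|_{X^{m-1,1}}^2+\|b\|_{X^{m-1,1}}^2+|h|_{X^{m-1,1}}^2$ already controlled by Lemma~\ref{Lemma2.5}. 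Thus it suffices to bound $\omega_{vh},\omega_{bh}$ in $X^{m-2}$ together with their $\epsilon$-dissipation.

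Next I would run the energy estimate on \eqref{vorticity equations}. Applying $\partial_t^l Z^\alpha$ with $l+|\alpha|\le m-2$ and setting $\Omega_v:=\partial_t^l Z^\alpha\omega_{vh}$, $\Omega_b:=\partial_t^l Z^\alpha\omega_{bh}$ yields, as in the derivation of \eqref{2.30}, a coupled convection--diffusion system for $(\Omega_v,\Omega_b)$ whose forcing is built from $\partial_t^l Z^\alpha F_v^0$, $\partial_t^l Z^\alpha F_b^0$ and the commutators $[\partial_t^l Z^\alpha,\partial_t^\varphi]$, $[\partial_t^l Z^\alpha,v\cdot\nabla^\varphi]$, $[\partial_t^l Z^\alpha,b\cdot\nabla^\varphi]$ and $\epsilon[\partial_t^l Z^\alpha,\Delta^\varphi]$. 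Testing with $(\Omega_v,\Omega_b)$ against $\mathrm{d}\mathcal{V}_t$, the diagonal transport terms are handled by $\nabla^\varphi\cdot v=0$, and the off-diagonal transport terms cancel because, by the integration-by-parts rules recalled in the preliminaries,
\[
\int_{\mathbb{R}^3_-}b\cdot\nabla^\varphi\Omega_b\cdot\Omega_v\,\mathrm{d}\mathcal{V}_t+\int_{\mathbb{R}^3_-}b\cdot\nabla^\varphi\Omega_v\cdot\Omega_b\,\mathrm{d}\mathcal{V}_t=-\int_{\mathbb{R}^3_-}(\nabla^\varphi\cdot b)(\Omega_v\cdot\Omega_b)\,\mathrm{d}\mathcal{V}_t+\int_{z=0}(b\cdot\NN)(\Omega_v\cdot\Omega_b)\,\mathrm{d}y=0
\]
using $\nabla^\varphi\cdot b=0$ and $b|_{z=0}=0$. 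The viscosity term produces $2\epsilon(\|\nabla^\varphi\Omega_v\|_{L^2}^2+\|\nabla^\varphi\Omega_b\|_{L^2}^2)$ plus a boundary integral on $\{z=0\}$, and the forcing is absorbed by Cauchy--Schwarz and Lemma~\ref{Lemma1.1} into $\|\omega_{vh}\|_{X^{m-2}}^2+\|\omega_{bh}\|_{X^{m-2}}^2+\|v\|_{X^{m-1,1}}^2+\|b\|_{X^{m-1,1}}^2+|h|_{X^{m-1,1}}^2+\|\partial_z v\|_{X^{m-1,1}}^2+\|\partial_z b\|_{X^{m-1,1}}^2$, the $\epsilon$-weighted commutators being absorbed by a small multiple of the dissipation.

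The hard part is the boundary integral on $\{z=0\}$ coming from the viscosity, since $\Omega_v|_{z=0}=\partial_t^l Z^\alpha\bigl(F^{1,2}[\nabla\varphi](\partial_j v^i)\bigr)|_{z=0}$ does not vanish (while $\Omega_b|_{z=0}=\partial_t^l Z^\alpha\bigl(F^{1,2}[\nabla\varphi](\partial_j b^i)\bigr)|_{z=0}=0$ because $b|_{z=0}=0$). Following the proof of Lemma~\ref{Lemma2.6}, I would split $\omega_{vh}=\omega_{vh}^{nh}+\omega_{vh}^{h}$: the nonhomogeneous part $\omega_{vh}^{nh}$ carries zero Dirichlet data, so its $X^{m-2}$ estimate closes with no boundary contribution, controlled by the forcing above and by $\epsilon\int_0^t(\|\partial_z v\|_{X^{m-1,1}}^2+\|\partial_z b\|_{X^{m-1,1}}^2)\,\mathrm{d}t$; the homogeneous part $\omega_{vh}^{h}$ solves the homogeneous convection--diffusion equation with boundary value $\partial_t^l Z^\alpha\bigl(F^{1,2}[\nabla\varphi](\partial_j v^i)\bigr)|_{z=0}$, and a parabolic (maximal-regularity/trace) bound for the half-space heat operator gives $\sup_{[0,T]}\|\omega_{vh}^{h}\|_{X^{m-2}}^2\lesssim\int_0^T|v|_{z=0}|_{X^{m-1,1}_{tan}}^2\,\mathrm{d}t$; the trace inequality \eqref{1.48} then bounds the right-hand side by $\int_0^T(\|v\|_{X^{m-1,1}}^2+\|\partial_z v\|_{X^{m-1}}^2)\,\mathrm{d}t\lesssim\int_0^T\|v\|_{X^{m-1,1}}^2\,\mathrm{d}t+\|\partial_z v\|_{L^4([0,T],X^{m-1})}^2$. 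This is exactly where one spatial derivative is lost---hence the estimate holds in $X^{m-2}$ and not $X^{m-1}$---and where the quantities $\|\partial_z v\|_{L^4([0,T],X^{m-1})}^2+\|\partial_z b\|_{L^4([0,T],X^{m-1})}^2$ of Lemma~\ref{Lemma2.6} enter the right-hand side. Collecting the two pieces, substituting the reduction of the first step, bounding $\|\omega_{vh}|_{t=0}\|_{X^{m-2}}^2+\|\omega_{bh}|_{t=0}\|_{X^{m-2}}^2$ by $\|\partial_z v_0\|_{X^{m-2}}^2+\|\partial_z b_0\|_{X^{m-2}}^2$ together with the data bound \eqref{p1}, and closing with Gr\"onwall's inequality, one obtains the asserted estimate.
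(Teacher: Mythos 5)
Your route is genuinely different from the paper's, and it has a gap at its crucial step.

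\emph{How the paper proceeds.} The paper does not return to the vorticity system \eqref{vorticity equations} or to the nonhomogeneous/homogeneous splitting of Lemma~\ref{Lemma2.6}. Instead, it observes $\Delta^\varphi v=-\nabla^\varphi\times\omega_v$, $\Delta^\varphi b=-\nabla^\varphi\times\omega_b$ and performs a curl--curl energy estimate directly on the MHD momentum equations: after applying $\partial_t^l Z^\alpha$ with $l+|\alpha|\le m-2$, it tests the $v$- and $b$-equations with $\nabla^\varphi\times(\nabla^\varphi\times\partial_t^l Z^\alpha v)$ and $\nabla^\varphi\times(\nabla^\varphi\times\partial_t^l Z^\alpha b)$. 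The pressure disappears because $\nabla^\varphi\times\nabla^\varphi\partial_t^l Z^\alpha q=0$, the off-diagonal magnetic transport terms cancel exactly as in your computation, and the boundary contributions after integration by parts are of the form $\int_{z=0}(\ldots)\cdot \NN\times(\nabla^\varphi\times\partial_t^l Z^\alpha v)\,\mathrm{d}y$. Because one is at level $m-2$, the trace $|\nabla^\varphi\times\partial_t^l Z^\alpha v|_{z=0}|_{1/2}$ is a sub-top-order quantity controlled by $X^{m-1}$ norms (and hence by $\|\partial_z v\|_{L^4([0,T],X^{m-1})}$), so the boundary terms close without any parabolic boundary-layer analysis. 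The Hardy-type argument \eqref{Hady} handles the $V_z\partial_z$, $B_z\partial_z$ commutators, and Gr\"onwall finishes. This is the mechanism by which the lemma works at $X^{m-2}$ but not $X^{m-1}$: the boundary traces must be one order below top.

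\emph{Where your proposal breaks.} You carry over the decomposition $\omega_{vh}=\omega_{vh}^{nh}+\omega_{vh}^h$ and then assert that a ``parabolic (maximal-regularity/trace) bound for the half-space heat operator'' gives
\[
\sup_{[0,T]}\|\omega_{vh}^h\|_{X^{m-2}}^2 \lesssim \int_0^T |v|_{z=0}|_{X^{m-1,1}_{tan}}^2\,\mathrm{d}t.
\]
This is the step that is not available. The estimate for the homogeneous boundary-value problem invoked in Lemma~\ref{Lemma2.6} (from Lee's Theorem 10.5) is an $H^{1/4}([0,T],L^2(\mathbb{R}^3_-))$ bound, which by Sobolev embedding gives $L^4_t L^2_x$; it does not give $L^\infty_t L^2_x$, since $H^{1/4}_t\not\hookrightarrow L^\infty_t$. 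A Fourier--Laplace computation on the model problem $\partial_t u-\epsilon\partial_{zz}u=0$, $u|_{z=0}=g$, $u|_{t=0}=0$ gives $\|u\|^2_{H^s_t L^2_z}\approx\sqrt{\epsilon}\,\|g\|^2_{H^{s-1/4}_t L^2_y}$, so a uniform-in-time $L^2_z$ bound would require the boundary data in roughly $H^{1/4+\delta}_t$, a time regularity you neither assume nor can extract from the hypotheses at the $m-2$ level without re-deriving the full time-derivative estimates. Since the $\sup_{[0,T]}$ claim on $\omega_{vh}^h$ is exactly what distinguishes Lemma~\ref{Lemma2.7} from the $L^4_t$ content of Lemma~\ref{Lemma2.6}, this gap is essential to your argument, not a side remark. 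The fix is to abandon the splitting and argue as the paper does: work on the momentum equations with the curl--curl test function, so the boundary contribution is a trace one order below top and can be absorbed by the already-controlled $X^{m-1}$ quantities rather than by a parabolic boundary-layer estimate.

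Two smaller points: your reduction step lists $\|\partial_z v\|_{X^{m-1,1}}^2$ among the forcing terms, which is one order higher than what the right-hand side of the lemma provides; it should read $\|\partial_z v\|_{X^{m-1}}^2$ (which is controlled in $L^4_t$ by Lemma~\ref{Lemma2.6}). And the initial data term in your conclusion should be $\|\omega_{v0}\|_{X^{m-2}}^2+\|\omega_{b0}\|_{X^{m-2}}^2$ (or equivalently $\|\partial_z v_0\|_{X^{m-2}}^2+\|\partial_z b_0\|_{X^{m-2}}^2$); this is harmless but worth stating carefully since the passage between $\omega$ and $\nabla^\varphi\times v$ uses \eqref{Sect2_NormalDer_Estimate_2_Formula}.
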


\begin{proof}
By $\nabla^\varphi\cdot v=0$
and $\nabla^\varphi\cdot b=0$, we have
\begin{align}
&\Delta^\varphi v=\nabla^\varphi(\nabla^\varphi\cdot v)
-\nabla^\varphi\times(\nabla^\varphi\times v)=-\nabla^\varphi\times\omega_v,
\\
&\Delta^\varphi b=\nabla^\varphi(\nabla^\varphi\cdot b)
-\nabla^\varphi\times(\nabla^\varphi\times b)=-\nabla^\varphi\times\omega_b.
\end{align}

Firstly, we deduce the $L^2$ estimate of $\omega_v$ and $\omega_b$. We have
\begin{align*}
&\int_{\mathbb{R}^3_-}(\partial_{t}^\varphi v-\epsilon\Delta^\varphi v+v\cdot\nabla^\varphi v+\nabla^\varphi q
-b\cdot\nabla^\varphi b)\cdot\nabla^\varphi\times\omega_v\mathrm{d}\mathcal{V}_t=0,
\\
&\int_{\mathbb{R}^3_-}(\partial_{t}^\varphi b-\epsilon\Delta^\varphi b+v\cdot\nabla^\varphi b
-b\cdot\nabla^\varphi v)\cdot\nabla^\varphi\times\omega_b\mathrm{d}\mathcal{V}_t=0,
\end{align*}
then
\begin{align*}
&\int_{\mathbb{R}^3_-}(\partial_{t}^\varphi \omega_v+v\cdot\nabla^\varphi \omega_v
+\nabla^\varphi\times\nabla^\varphi q
-b\cdot\nabla^\varphi \omega_b)\cdot\omega_v \mathrm{d}\mathcal{V}_t
+\epsilon\int_{\mathbb{R}^3_-}|\nabla^\varphi\times\omega_v|^2\mathrm{d}\mathcal{V}_t
\\
=&-\int_{z=0}(\partial_{t}^\varphi v+v_y\cdot\nabla_y v+V_z\partial_zv
-b_y\cdot\nabla_y b+\nabla^\varphi q)
\cdot N\times\omega_v\mathrm{d}\mathcal{V}_t
\\
&-\int_{\mathbb{R}^3_-}\sum_{i=1}^3\nabla^\varphi v^i\partial_i^\varphi v\cdot\omega_vd\mathcal{V}_t
+\int_{\mathbb{R}^3_-}\sum_{i=1}^3\nabla^\varphi b^i\partial_i^\varphi b\cdot\omega_vd\mathcal{V}_t,
\\
&\int_{\mathbb{R}^3_-}(\partial_{t}^\varphi \omega_b+v\cdot\nabla^\varphi \omega_b-b\cdot\nabla^\varphi \omega_v)\cdot\omega_b)\mathrm{d}\mathcal{V}_t
+\epsilon\int_{\mathbb{R}^3_-}|\nabla^\varphi\times\omega_b|^2\mathrm{d}\mathcal{V}_t
\\
=&-\int_{z=0}(\partial_{t}^\varphi b+v_y\cdot\nabla_y b+V_z\partial_zb
-b_y\cdot\nabla_y b)
\cdot N\times\omega_b\mathrm{d}\mathcal{V}_t
\\
&-\int_{\mathbb{R}^3_-}\sum_{i=1}^3\nabla^\varphi v^i\partial_i^\varphi b\cdot\omega_b\mathrm{d}\mathcal{V}_t
+\int_{\mathbb{R}^3_-}\sum_{i=1}^3\nabla^\varphi b^i\partial_i^\varphi v\cdot\omega_b\mathrm{d}\mathcal{V}_t.
\end{align*}
Hence
\begin{align*}
&\|\omega_v\|^2_{L^2}+\|\omega_b\|^2_{L^2}
+\epsilon\int_0^t\|\nabla\omega_v\|^2_{L^2}+\|\nabla\omega_b\|^2_{L^2}\mathrm{d}t
\\
\leq&\|\omega_v|_{t=0}\|^2_{L^2}+\|\omega_b|_{t=0}\|^2_{L^2}+{\text b.t.}.
\end{align*}
Note that $\int_{z=0}\nabla^\varphi q\cdot N\times\omega_v\mathrm{d}\mathcal{V}_t
\leq |\nabla^\varphi q|_{t=0}|_{-\frac{1}{2}}+|N\times\omega_v|_{t=0}|_{\frac{1}{2}}={\text b.t.}.$

When $1\leq l+|\alpha|\leq m-2$, applying $\partial_t^l Z^\alpha$ to the equations\eqref{MHDF}, we have
\begin{equation}\label{3.47}
\left\{\begin{aligned}
&\partial_{t}^\varphi \partial_t^l Z^\alpha v+\epsilon(\nabla^\varphi\times)^2 \partial_t^l Z^\alpha v
+v\cdot\nabla^\varphi \partial_t^l Z^\alpha v
-b\cdot\nabla^\varphi \partial_t^l Z^\alpha b
+\nabla^\varphi \partial_t^l Z^\alpha q
=\epsilon I^v_{1,1}+I^v_{1,2},
\\
&\partial_{t}^\varphi \partial_t^l Z^\alpha b+\epsilon(\nabla^\varphi\times)^2 \partial_t^l Z^\alpha b+v\cdot\nabla^\varphi \partial_t^l Z^\alpha b
-b\cdot\nabla^\varphi \partial_t^l Z^\alpha v=\epsilon I^b_{1,1}+I^b_{1,2},
\end{aligned}\right.
\end{equation}
where
\begin{equation}
\left\{\begin{aligned}
&I^v_{1,1}=-[\partial_t^l Z^\alpha,\nabla^\varphi\times]\omega_v
-\nabla^\varphi\times[\partial_t^l Z^\alpha,\nabla^\varphi\times]v,\\
&I^v_{1,2}=-[\partial_t^l Z^\alpha,v_y\cdot\nabla_y+V_z\partial_z]v
+[\partial_t^l Z^\alpha,b_y\cdot\nabla_y+B_z\partial_z]b
-[\partial_t^l Z^\alpha,N\partial_z^\varphi]q,
\\
&I^b_{1,1}=-[\partial_t^l Z^\alpha,\nabla^\varphi\times]\omega_b
-\nabla^\varphi\times[\partial_t^l Z^\alpha,\nabla^\varphi\times]b,\\
&I^b_{1,2}=-[\partial_t^l Z^\alpha,v_y\cdot\nabla_y+V_z\partial_z]b
+[\partial_t^l Z^\alpha,b_y\cdot\nabla_y+B_z\partial_z]v.
\end{aligned}\right.
\end{equation}
Multiplying \eqref{3.47} with $\nabla^\varphi\times(\nabla^\varphi\times\partial_t^l Z^\alpha v)$,
$\nabla^\varphi\times(\nabla^\varphi\times \partial_t^l Z^\alpha b)$, respectively,
integrating in $R^3_-$ and note that $[\partial_{t}^\varphi,\nabla^\varphi]=0$, we have
\begin{align*}
&\int_{\mathbb{R}^3_-}\partial_{t}^\varphi|\nabla^\varphi\times\partial_t^l Z^\alpha v|^2\mathrm{d}\mathcal{V}_t
+\int_{\mathbb{R}^3_-}v\cdot\nabla^\varphi|\nabla^\varphi\times\partial_t^l Z^\alpha v|^2\mathrm{d}\mathcal{V}_t
\\
&+\int_{\mathbb{R}^3_-}(\nabla^\varphi\times\nabla^\varphi \partial_t^l Z^\alpha q)\cdot(\nabla^\varphi\times\partial_t^l Z^\alpha v)\mathrm{d}\mathcal{V}_t
+\epsilon\int_{\mathbb{R}^3_-}|\nabla^\varphi\times(\nabla^\varphi\times\partial_t^l Z^\alpha v)|^2\mathrm{d}\mathcal{V}_t
\\
=&\int_{\mathbb{R}^3_-}b\cdot\nabla^\varphi\nabla^\varphi\times\partial_t^l Z^\alpha b\cdot\nabla^\varphi\times\partial_t^l Z^\alpha v\mathrm{d}\mathcal{V}_t
-\int_{\mathbb{R}^3_-}[(\mathop{\Sigma}\limits_{i=1}^3\nabla^\varphi v^i\cdot\partial_i^\varphi)\partial_t^l Z^\alpha v]\cdot(\nabla^\varphi\times\partial_t^l Z^\alpha v)\mathrm{d}\mathcal{V}_t
\\
&+\int_{\mathbb{R}^3_-}[(\mathop{\Sigma}\limits_{i=1}^3\nabla^\varphi b^i\cdot\partial_i^\varphi)\partial_t^l Z^\alpha b]\cdot(\nabla^\varphi\times\partial_t^l Z^\alpha v)\mathrm{d}\mathcal{V}_t
+\int_{z=0}I^v_{1,2}\cdot N\times(\nabla^\varphi\times\partial_t^l Z^\alpha v)\mathrm{d}y
\\
&-\int_{z=0}(\partial_{t}^\varphi \partial_t^l Z^\alpha v
+v\cdot\nabla^\varphi \partial_t^l Z^\alpha v
-b\cdot\nabla^\varphi \partial_t^l Z^\alpha b
+\nabla^\varphi \partial_t^l Z^\alpha q)\cdot N\times(\nabla^\varphi\times\partial_t^l Z^\alpha v)\mathrm{d}y
\\
&+\int_{\mathbb{R}^3_-}\epsilon I^v_{1,1}\nabla^\varphi\times(\nabla^\varphi\times\partial_t^l Z^\alpha v)\mathrm{d}\mathcal{V}_t
+\int_{\mathbb{R}^3_-}\nabla^\varphi\times I^v_{1,2}(\nabla^\varphi\times\partial_t^l Z^\alpha v)\mathrm{d}\mathcal{V}_t,
\end{align*}
and
\begin{align*}
&\int_{\mathbb{R}^3_-}\partial_{t}^\varphi|\nabla^\varphi\times\partial_t^l Z^\alpha b|^2\mathrm{d}\mathcal{V}_t
+\int_{\mathbb{R}^3_-}v\cdot\nabla^\varphi|\nabla^\varphi\times\partial_t^l Z^\alpha b|^2\mathrm{d}\mathcal{V}_t
+\epsilon\int_{\mathbb{R}^3_-}|\nabla^\varphi\times(\nabla^\varphi\times\partial_t^l Z^\alpha b)|^2\mathrm{d}\mathcal{V}_t
\\
=&\int_{\mathbb{R}^3_-}b\cdot\nabla^\varphi\nabla^\varphi\times\partial_t^l Z^\alpha v\cdot\nabla^\varphi\times\partial_t^l Z^\alpha b\mathrm{d}\mathcal{V}_t
-\int_{\mathbb{R}^3_-}[(\mathop{\Sigma}\limits_{i=1}^3\nabla^\varphi v^i\cdot\partial_i^\varphi)\partial_t^l Z^\alpha b]\cdot(\nabla^\varphi\times\partial_t^l Z^\alpha b)\mathrm{d}\mathcal{V}_t
\\
&+\int_{\mathbb{R}^3_-}[(\mathop{\Sigma}\limits_{i=1}^3\nabla^\varphi b^i\cdot\partial_i^\varphi)\partial_t^l Z^\alpha v]\cdot(\nabla^\varphi\times\partial_t^l Z^\alpha b)\mathrm{d}\mathcal{V}_t
+\int_{z=0}I^b_{1,2}\cdot N\times(\nabla^\varphi\times\partial_t^l Z^\alpha b)\mathrm{d}y
\\
&-\int_{z=0}(\partial_{t}^\varphi \partial_t^l Z^\alpha b
+v\cdot\nabla^\varphi \partial_t^l Z^\alpha b
-b\cdot\nabla^\varphi \partial_t^l Z^\alpha v)\cdot N\times(\nabla^\varphi\times\partial_t^l Z^\alpha b)\mathrm{d}y
\\
&+\int_{\mathbb{R}^3_-}\epsilon I^b_{1,1}\nabla^\varphi\times(\nabla^\varphi\times\partial_t^l Z^\alpha b)\mathrm{d}\mathcal{V}_t
+\int_{\mathbb{R}^3_-}\nabla^\varphi\times I^b_{1,2}(\nabla^\varphi\times\partial_t^l Z^\alpha b)\mathrm{d}\mathcal{V}_t.
\end{align*}
It follow from $\nabla^\varphi\times\nabla^\varphi \partial_t^l Z^\alpha q=0$ that
\begin{align*}
&\frac{d}{dt}\int_{\mathbb{R}^3_-}|\nabla^\varphi\times\partial_t^l Z^\alpha v|^2
+|\nabla^\varphi\times\partial_t^l Z^\alpha b|^2\mathrm{d}\mathcal{V}_t
+2\epsilon\int_{\mathbb{R}^3_-}|(\nabla^\varphi\times)^2\partial_t^l Z^\alpha v|^2
+|(\nabla^\varphi\times)^2\partial_t^l Z^\alpha b|^2\mathrm{d}\mathcal{V}_t
\\
&\leq \|\nabla^\varphi\times\partial_t^l Z^\alpha v\|^2_{L^2}
+\|\nabla^\varphi\times\partial_t^l Z^\alpha b\|^2_{L^2}
+\epsilon\|(\nabla^\varphi\times)^2\partial_t^l Z^\alpha v\|^2_{L^2}
+\epsilon\|(\nabla^\varphi\times)^2\partial_t^l Z^\alpha b\|^2_{L^2}
\\
&+|v|_{z=0}|^2_{X_{tan}^{m-1}}+|b|_{z=0}|^2_{X_{tan}^{m-1}}
+|\nabla^\varphi\times\partial_t^l Z^\alpha v|_{z=0}|^2_{\frac{1}{2}}
+|\nabla^\varphi\times\partial_t^l Z^\alpha b|_{z=0}|^2_{\frac{1}{2}}
+|\nabla^\varphi\partial_t^l Z^\alpha q|_{z=0}|^2_{-\frac{1}{2}}
\\
&+\|\nabla\partial_t^l Z^\alpha v\|^2_{L^2}+\|\nabla\partial_t^l Z^\alpha b\|^2_{L^2}
+\mathop{\Sigma}\limits_{l_1+|\alpha|_1\leq m-3}|\nabla^\varphi\partial_t^{l_1} Z^{\alpha_1} q|_{z=0}|^2_{-\frac{1}{2}}+\epsilon(\|I^v_{1,1}\|^2_{L^2}+\|I^b_{1,1}\|^2_{L^2})
\\
&+\|\nabla^\varphi\times I^v_{1,2}\|^2_{L^2}+\|\nabla^\varphi\times I^b_{1,2}\|^2_{L^2}.
\end{align*}

It is easy to prove that $\|I^v_{1,1}\|^2_{L^2}\leq \|\omega_v\|_{X^{m-2}}$
and $\|I^b_{1,1}\|^2_{L^2}\leq \|\omega_b\|_{X^{m-2}}$.
Next we estimate $\nabla^\varphi\times I^v_{1,2}$ and $\nabla^\varphi\times I^v_{1,2}$,
by normal co-norm estimate. We have
\begin{align}\label{Hady}
&\|\nabla^\varphi\times([\partial_t^l Z^\alpha,V_z\partial_z]v
+[\partial_t^l Z^\alpha,B_z\partial_z]b)\|_{L^2}
+\|\nabla^\varphi\times([\partial_t^l Z^\alpha,V_z\partial_z]b
+[\partial_t^l Z^\alpha,B_z\partial_z]v)\|_{L^2}
\nonumber\\
\leq&\mathop{\Sigma}\limits_{l^1+|\alpha^1|>0}(\|\frac{1-z}{z}\partial_t^{l^1}Z^{\alpha^1}V_z\cdot
\frac{z}{1-z}\nabla^\varphi\times\partial_t^{l^2}Z^{\alpha^2}\partial_zv\|_{L^2}
+\|\nabla^\varphi\partial_t^{l^1}Z^{\alpha^1}V_z\times\partial_t^{l^2}Z^{\alpha^2}\partial_zv\|_{L^2})
\nonumber\\
&+\mathop{\Sigma}\limits_{l^1+|\alpha^1|>0}(\|\frac{1-z}{z}\partial_t^{l^1}Z^{\alpha^1}B_z\cdot
\frac{z}{1-z}\nabla^\varphi\times\partial_t^{l^2}Z^{\alpha^2}\partial_zb\|_{L^2}
+\|\nabla^\varphi\partial_t^{l^1}Z^{\alpha^1}B_z\times\partial_t^{l^2}Z^{\alpha^2}\partial_zb\|_{L^2})
\nonumber\\
&\mathop{\Sigma}\limits_{l^1+|\alpha^1|>0}(\|\frac{1-z}{z}\partial_t^{l^1}Z^{\alpha^1}V_z\cdot
\frac{z}{1-z}\nabla^\varphi\times\partial_t^{l^2}Z^{\alpha^2}\partial_zb\|_{L^2}
+\|\nabla^\varphi\partial_t^{l^1}Z^{\alpha^1}V_z\times\partial_t^{l^2}Z^{\alpha^2}\partial_zb\|_{L^2})
\nonumber\\
&+\mathop{\Sigma}\limits_{l^1+|\alpha^1|>0}(\|\frac{1-z}{z}\partial_t^{l^1}Z^{\alpha^1}B_z\cdot
\frac{z}{1-z}\nabla^\varphi\times\partial_t^{l^2}Z^{\alpha^2}\partial_zv\|_{L^2}
+\|\nabla^\varphi\partial_t^{l^1}Z^{\alpha^1}B_z\times\partial_t^{l^2}Z^{\alpha^2}\partial_zv\|_{L^2})
\nonumber\\
\leq&\mathop{\Sigma}\limits_{l^1+|\alpha^1|>0}(\|\nabla^\varphi\partial_t^{l^1}Z^{\alpha^1}V_z\|_{L^2}
+\|\nabla^\varphi\partial_t^{l^1}Z^{\alpha^1}B_z\|_{L^2}
+\|\nabla^\varphi\times\partial_t^{l^2}Z^{\alpha^2}Z^3v\|_{L^2}
\nonumber\\
&+\|\nabla^\varphi\times\partial_t^{l^2}Z^{\alpha^2}Z^3b\|_{L^2}
+\|\partial_t^{l^2}Z^{\alpha^2}\partial_zv\|_{L^2}
+\|\partial_t^{l^2}Z^{\alpha^2}\partial_zb\|_{L^2})
\nonumber\\
\leq&\|\omega_v\|_{X^{m-2}}+\|\omega_b\|_{X^{m-2}}+\|v\|_{X^{m-1}}+\|b\|_{X^{m-1}}+|h|_{X^{m-1}},
\end{align}
and
\begin{align*}
\|\nabla^\varphi\times([\partial_t^l Z^\alpha,N\partial_z^\varphi]q)\|_{L^2}
\leq \|\omega_v\|_{X^{m-2}}+\|v\|_{X^{m-2}}+\|\nabla q\|_{X^{m-1}}.
\end{align*}

Applying the Gronwall's inequality, we have the estimate of the vorticity:
\begin{align*}
&\|\nabla^\varphi\times\partial_t^l Z^\alpha v\|^2_{L^2}
+\|\nabla^\varphi\times\partial_t^l Z^\alpha b\|^2_{L^2}
+\epsilon\int_0^t\|(\nabla^\varphi\times)^2\partial_t^l Z^\alpha v\|^2_{L^2}
+\|(\nabla^\varphi\times)^2\partial_t^l Z^\alpha b\|^2_{L^2}\mathrm{d}t
\\
\leq&\|\nabla^\varphi\times\partial_t^l Z^\alpha v|_{t=0}\|^2_{L^2}
+\|\nabla^\varphi\times\partial_t^l Z^\alpha b|_{t=0}\|^2_{L^2}
+\int_0^t\|\omega_v\|_{X^{m-2}}+\|\omega_b\|_{X^{m-2}}+\|v\|_{X^{m-1}}
\\
&+\|b\|_{X^{m-1}}+|h|_{X^{m-1}}
+\|\partial_zv\|^2_{X^{m-1}}+\|\partial_zb\|^2_{X^{m-1}}+\|\nabla q\|^2_{X^{m-1}}\mathrm{d}t+{\text b.t.}
\\
\leq&\|\nabla^\varphi\times\partial_t^l Z^\alpha v|_{t=0}\|^2_{L^2}
+\|\nabla^\varphi\times\partial_t^l Z^\alpha b|_{t=0}\|^2_{L^2}
+\int_0^t+\|v\|_{X^{m-1}}
\\
&+\|b\|_{X^{m-1}}+|h|_{X^{m-1}}
+\|\partial_zv\|^2_{X^{m-1}}+\|\partial_zb\|^2_{X^{m-1}}+\|\nabla q\|^2_{X^{m-1}}\mathrm{d}t+{\text b.t.}.
\end{align*}
$\partial_t^{\ell}\mathcal{Z}^{\alpha} \omega_v$, $\partial_t^{\ell}\mathcal{Z}^{\alpha} \omega_b$
are equivalent to $\nabla^{\varphi} \times \partial_t^{\ell}\mathcal{Z}^{\alpha} v$,
 $\nabla^{\varphi} \times \partial_t^{\ell}\mathcal{Z}^{\alpha} b$, due to
$\ell +|\alpha|\leq m-2$ and
\begin{equation}\label{Sect2_NormalDer_Estimate_2_Formula}
\begin{array}{ll}
\partial_t^{\ell}\mathcal{Z}^{\alpha} \omega_v - \partial_t^{\ell}\mathcal{Z}^{\alpha} (\nabla^{\varphi} \times v)
= \sum\limits_{\ell_1 +|\alpha_1| >0} \partial_t^{\ell_1}\mathcal{Z}^{\alpha_1} (\frac{\NN}{\partial_z\varphi})\partial_z
\times \partial_t^{\ell_2}\mathcal{Z}^{\alpha_2} v,
\\[14pt]

\partial_t^{\ell}\mathcal{Z}^{\alpha} \omega_b - \partial_t^{\ell}\mathcal{Z}^{\alpha} (\nabla^{\varphi} \times b)
= \sum\limits_{\ell_1 +|\alpha_1| >0} \partial_t^{\ell_1}\mathcal{Z}^{\alpha_1} (\frac{\NN}{\partial_z\varphi})\partial_z
\times \partial_t^{\ell_2}\mathcal{Z}^{\alpha_2} b,
\\[14pt]

\|\partial_t^{\ell}\mathcal{Z}^{\alpha} \omega_v - \partial_t^{\ell}\mathcal{Z}^{\alpha} (\nabla^{\varphi} \times v)\|_{L^2}
\lem \|\partial_z v\|_{X^{m-3}} + |h|_{X^{m-2,\frac{1}{2}}},
\\[14pt]

\|\partial_t^{\ell}\mathcal{Z}^{\alpha} \omega_b - \partial_t^{\ell}\mathcal{Z}^{\alpha} (\nabla^{\varphi} \times b)\|_{L^2}
\lem \|\partial_z b\|_{X^{m-3}} + |h|_{X^{m-2,\frac{1}{2}}}.
\end{array}
\end{equation}
Then we have the estimate of the vorticity:
\begin{equation}\label{Sect2_NormalDer_Estimate_7}
\begin{array}{ll}
\|\omega_v\|_{X^{m-2}}^2 +\|\omega_b\|_{X^{m-2}}^2
+ \e\int\limits_0^t\|\nabla\omega_v\|_{X^{m-2}}^2
+ \e\int\limits_0^t\|\nabla\omega_b\|_{X^{m-2}}^2
\mathrm{d}\mathcal{V}_t\mathrm{d}t \\[6pt]

\lem \|\omega_{v0} \|_{X^{m-2}}^2 +\|\omega_{b0} \|_{X^{m-2}}^2
+ \int\limits_0^t \|v\|_{X^{m-1,1}}^2 +\|b\|_{X^{m-1,1}}^2 + |h|_{X^{m-1}}^2
\\[6pt]\quad

+ \|\partial_z v\|_{X^{m-1}}^2+ \|\partial_z b\|_{X^{m-1}}^2
+ \|\nabla q\|_{X^{m-1}}^2\,\mathrm{d}t.
\end{array}
\end{equation}

Lemma \ref{Lemma2.7} is proved.
\end{proof}
By the estimates proved in Lemmas \ref{Lemma2.5}, \ref{Lemma2.6}, \ref{Lemma2.7}, it is standard to prove
Proposition \ref{Proposition1.1}.

\section{Strong Initial Vorticity Layer}
In this section, we establish the $L^{\infty}$ estimate of strong vorticity layer
under the assumption that the boundary value of ideal MHD equations satisfies
$\Pi S^\varphi v\nn|_{z=0}=0$ and
$\Pi S^\varphi b\nn|_{z=0}=0$ in $(0,T]$.
It shows that the strong initial vorticity layer is one of sufficient
conditions for the existence of strong vorticity layer
of the free boundary problems for MHD equations\eqref{MHDF}.

\subsection{Lagrangian Coordinates Maps}
In this subsection, we derive the evolution equations of
$\hat{\omega}_{vh}=\omega^\epsilon_{vh}-\omega_{vh}$ and
$\hat{\omega}_{bh}=\omega^\epsilon_{bh}-\omega_{bh}$,
and construct a variable which satisfies the heat equations with damping.

$\hat{\omega}_{vh}$ and $\hat{\omega}_{bh}$ satisfy the equations \eqref{vorticity equations}. Due to the symmetry of the coefficient,
one has
\begin{align}
&F_v^0[\nabla\varphi^\epsilon](\omega_{vh}^\epsilon,\omega_{bh}^\epsilon,\partial_jv^{\epsilon,i},\partial_jb^{\epsilon,i})
-F_v^0[\nabla\varphi](\omega_{vh},\omega_{bh},\partial_jv^i,\partial_jb^i)
\nonumber\\
=&f_v^7[\omega_{vh}^\epsilon,\omega_{bh}^\epsilon,\nabla\varphi^\epsilon,\partial_jv^{\epsilon,i},\partial_jb^{\epsilon,i},\nabla\varphi,
\partial_jv^i,\partial_jb^i,\omega_{vh},\omega_{bh}]\hat{\omega}_{vh}
\nonumber\\
&+f_b^7[\omega_{vh}^\epsilon,\omega_{bh}^\epsilon,\nabla\varphi^\epsilon,\partial_jv^{\epsilon,i},\partial_jb^{\epsilon,i},\nabla\varphi,
\partial_jv^i,\partial_jb^i,\omega_{vh},\omega_{bh}]\hat{\omega}_{bh}
\nonumber\\
&+f_v^8[\omega_{vh}^\epsilon,\omega_{bh}^\epsilon,\nabla\varphi^\epsilon,\partial_jv^{\epsilon,i},\partial_jb^{\epsilon,i},\nabla\varphi,
\partial_jv^i,\partial_jb^i,\omega_{vh},\omega_{bh}]\partial_j\hat{v}^i
\nonumber\\
&+f_b^8[\omega_{vh}^\epsilon,\omega_{bh}^\epsilon,\nabla\varphi^\epsilon,\partial_jv^{\epsilon,i},\partial_jb^{\epsilon,i},\nabla\varphi,
\partial_jv^i,\partial_jb^i,\omega_{vh},\omega_{bh}]\partial_j\hat{b}^i
\nonumber\\
&+f_v^9[\omega_{vh}^\epsilon,\omega_{bh}^\epsilon,\nabla\varphi^\epsilon,\partial_jv^{\epsilon,i},\partial_jb^{\epsilon,i},\nabla\varphi,
\partial_jv^i,\partial_jb^i,\omega_{vh},\omega_{bh}]\nabla\hat{\varphi},
\end{align}
and
\begin{align}
&F_b^0[\nabla\varphi^\epsilon](\omega_{vh}^\epsilon,\omega_{bh}^\epsilon,\partial_jv^{\epsilon,i},\partial_jb^{\epsilon,i})
-F_b^0[\nabla\varphi](\omega_{vh},\omega_{bh},\partial_jv^i,\partial_jb^i)
\nonumber\\
=&-f_b^7[\omega_{vh}^\epsilon,\omega_{bh}^\epsilon,\nabla\varphi^\epsilon,\partial_jv^{\epsilon,i},\partial_jb^{\epsilon,i},\nabla\varphi,
\partial_jv^i,\partial_jb^i,\omega_{vh},\omega_{bh}]\hat{\omega}_{vh}
\nonumber\\
&-f_v^7[\omega_{vh}^\epsilon,\omega_{bh}^\epsilon,\nabla\varphi^\epsilon,\partial_jv^{\epsilon,i},\partial_jb^{\epsilon,i},\nabla\varphi,
\partial_jv^i,\partial_jb^i,\omega_{vh},\omega_{bh}]\hat{\omega}_{bh}
\nonumber\\
&-f_b^8[\omega_{vh}^\epsilon,\omega_{bh}^\epsilon,\nabla\varphi^\epsilon,\partial_jv^{\epsilon,i},\partial_jb^{\epsilon,i},\nabla\varphi,
\partial_jv^i,\partial_jb^i,\omega_{vh},\omega_{bh}]\partial_j\hat{v}^i
\nonumber\\
&-f_v^8[\omega_{vh}^\epsilon,\omega_{bh}^\epsilon,\nabla\varphi^\epsilon,\partial_jv^{\epsilon,i},\partial_jb^{\epsilon,i},\nabla\varphi,
\partial_jv^i,\partial_jb^i,\omega_{vh},\omega_{bh}]\partial_j\hat{b}^i
\nonumber\\
&+f_v^9[\omega_{vh}^\epsilon,\omega_{bh}^\epsilon,\nabla\varphi^\epsilon,\partial_jv^{\epsilon,i},\partial_jb^{\epsilon,i},\nabla\varphi,
\partial_jv^i,\partial_jb^i,\omega_{vh},\omega_{bh}]\nabla\hat{\varphi},
\end{align}
where the coefficients $f_{v}^7[\cdots]$, $f_{v}^8[\cdots]$,
$f_{v}^9[\cdots]$,$f_{b}^7[\cdots]$, $f_{b}^8[\cdots]$ and
$f_{b}^9[\cdots]$ are uniformly bounded with respect
to $\epsilon$. For simplicity, we denote
$f_{v}^i[\omega_{vh}^\epsilon,\omega_{bh}^\epsilon,\nabla\varphi^\epsilon,\partial_jv^{\epsilon,i},\partial_jb^{\epsilon,i},\nabla\varphi,
\partial_jv^i,\partial_jb^i,\omega_{vh},\omega_{bh}]$
and $f_{b}^i[\omega_{vh}^\epsilon,\omega_{bh}^\epsilon,\nabla\varphi^\epsilon,\partial_jv^{\epsilon,i},\partial_jb^{\epsilon,i},\nabla\varphi,
\partial_jv^i,\partial_jb^i,\omega_{vh},\omega_{bh}]$
 as $f_{v}^i$ and $f_{b}^i$, $i=7,8,9$.
 Then we obtain the following system of $\hat{\omega}_{vh}$ and $\hat{\omega}_{bh}$:
\begin{equation}\label{Vorticity Layer1}
\left\{
\begin{aligned}
&\partial_t^{\varphi^\epsilon}\hat{\omega}_{vh}-\epsilon\Delta^{\varphi^\epsilon}\hat{\omega}_{vh}
+v^\epsilon\cdot\nabla^{\varphi^\epsilon}\hat{\omega}_{vh}
-b^\epsilon\cdot\nabla^{\varphi^\epsilon}\hat{\omega}_{bh}
-f_v^7\hat{\omega}_{vh}-f_b^7\hat{\omega}_{bh}
\\
&\quad=f_v^8\partial_j\hat{v}^i+f_b^8\partial_j\hat{b}^i
+f_v^9(\nabla\hat{\varphi})
+\epsilon\Delta^{\varphi^\epsilon}\omega_{vh}
+\partial_z^\varphi\omega_{vh}\partial_t^{\varphi^\epsilon}\hat{\eta}
\\
&\quad
+\partial_z^\varphi\omega_{vh}v\cdot\nabla^{\varphi^\epsilon}\hat{\eta}
-\hat{v}\cdot\nabla^\varphi\omega_{vh}
-\partial_z^\varphi\omega_{bh}b\cdot\nabla^{\varphi^\epsilon}\hat{\eta}
+\hat{b}\cdot\nabla^\varphi\omega_{bh},
\\
&\partial_t^{\varphi^\epsilon}\hat{\omega}_{bh}-\epsilon\Delta^{\varphi^\epsilon}\hat{\omega}_{bh}
+v^\epsilon\cdot\nabla^{\varphi^\epsilon}\hat{\omega}_{bh}
-b^\epsilon\cdot\nabla^{\varphi^\epsilon}\hat{\omega}_{vh}
+f_b^7\hat{\omega}_{vh}+f_v^7\hat{\omega}_{bh}
\\
&\quad=-f_b^8\partial_j\hat{v}^i-f_v^8\partial_j\hat{b}^i
+f_b^9(\nabla\hat{\varphi})
+\epsilon\Delta^{\varphi^\epsilon}\omega_{bh}
+\partial_z^\varphi\omega_{bh}\partial_t^{\varphi^\epsilon}\hat{\eta}
\\
&\quad
+\partial_z^\varphi\omega_{bh}v\cdot\nabla^{\varphi^\epsilon}\hat{\eta}
-\hat{v}\cdot\nabla^\varphi\omega_{bh}
-\partial_z^\varphi\omega_{vh}b\cdot\nabla^{\varphi^\epsilon}\hat{\eta}
+\hat{b}\cdot\nabla^\varphi\omega_{vh},
\\
&\hat{\omega}_{vh}|_{z=0}=F^{1,2}_v[\nabla\varphi^\epsilon](\partial_jv^{\epsilon,i})
-\omega_{vh}|_{z=0}:=\hat{\omega}_{vh}^b,
\\
&\hat{\omega}_{bh}|_{z=0}=F^{1,2}_b[\nabla\varphi](\partial_jb^i)
-\omega_{bh}|_{z=0}:=\hat{\omega}_{bh}^b,
\\
&(\hat{\omega}_{vh}|_{t=0},\hat{\omega}_{bh}|_{t=0})=(\hat{\omega}_{vh,0},\hat{\omega}_{bh,0}).
\end{aligned}
\right.
\end{equation}

Similarly to Lee \cite{Lee17}, we eliminate the convection term and loretz force by using
the following two Lagrangian maps
$Y_i(i=1,2)$:
\begin{align}
Y_1:\Omega\to R^3, \partial_tY_1(t,x)=(u-b)(t,Y_1(t,x)), Y_1(0,x)=x,\\
Y_2:\Omega\to R^3, \partial_tY_2(t,x)=(u+b)(t,Y_2(t,x)), Y_2(0,x)=x.
\end{align}
Note that images $Y_1(t,\Omega)$ and $Y_2(t,\Omega)$ are defined only by boundary values of vector
fields $(u\pm b)^b$.
If we write boundary graphs as $h_1$ and $h_2$, due to $b=0$ on the boundary, then
\begin{align}
h_{1,2}(t):=h_{1,2}(0)+\int_0^t(u\pm b)^b\cdot N=h_{1,2}(0)+\int_0^tu^b\cdot N=h(t).
\end{align}

Define the Jacobian of the change of variables $J^i(t,x)=|\mathrm{det}\nabla Y_i(t,x)|$,
$a_0=|J_0^1(x)|^{\frac{1}{2}}$, $b_0=|J_0^2(x)|^{\frac{1}{2}}$,
and the matrix $(a_{i,j})=|J^1_0|^{\frac{1}{2}}P^{-1}$,
where the matrix P satisfies $P_{i,j}=\partial_iY_1\cdot\partial_jY_1$.
Similarly, we can define $(b_{i,j})=|J_0^2|^{\frac{1}{2}}P^{-1}$,
where the matrix P satisfies $P_{i,j}=\partial_iY_2\cdot\partial_jY_2$.

Define $W_\pm =e^{-\gamma t}(\hat{\omega}_{vh}\pm\hat{\omega}_{bh})(t,\Phi^{-1}\circ Y_i)$,
then $W_\pm$ satisfy the equations:
\begin{align}
&a_0\partial_tW_+-\epsilon\partial_i(a_{i,j}\partial_jW_+)+\gamma a_0W_+-(f^7_v-f^7_b)W_-
=\epsilon e^{-\gamma t}((f_v^8-f_b^8)(\partial_j\hat{v}^i-\partial_j\hat{b}^i)
\nonumber\\[3pt]
&+(f_v^9+f_b^9)(\nabla\hat{\varphi})
+\epsilon\Delta^{\varphi^\epsilon}(\omega_{vh}+\omega_{bh})
+\partial_z^\varphi(\omega_{vh}+\omega_{bh})\partial_t^{\varphi^\epsilon}\hat{\eta}
+\partial_z^\varphi(\omega_{vh}+\omega_{bh})v\cdot\nabla^{\varphi^\epsilon}\hat{\eta}
\nonumber\\[3pt]
&-\hat{v}\cdot\nabla^\varphi(\omega_{vh}+\omega_{bh})
-\partial_z^\varphi(\omega_{vh}+\omega_{bh})b\cdot\nabla^{\varphi^\epsilon}\hat{\eta}
+\hat{b}\cdot\nabla^\varphi(\omega_{vh}+\omega_{bh})):=I_{v2},
\\[3pt]
&b_0\partial_tW_--\epsilon\partial_i(b_{i,j}\partial_jW_-)+\gamma b_0W_--(f^7_v+f^7_b))W_+
=\epsilon e^{-\gamma t}((f_v^8+f_b^8)(\partial_j\hat{v}^i+\partial_j\hat{b}^i)
\nonumber\\[3pt]
&+(f_v^9-f_b^9)(\nabla\hat{\varphi})
+\epsilon\Delta^{\varphi^\epsilon}(\omega_{vh}-\omega_{bh})
+\partial_z^\varphi(\omega_{vh}-\omega_{bh})\partial_t^{\varphi^\epsilon}\hat{\eta}
+\partial_z^\varphi(\omega_{vh}-\omega_{bh})v\cdot\nabla^{\varphi^\epsilon}\hat{\eta}
\nonumber\\[3pt]
&-\hat{v}\cdot\nabla^\varphi(\omega_{vh}-\omega_{bh})
-\partial_z^\varphi(\omega_{vh}-\omega_{bh})b\cdot\nabla^{\varphi^\epsilon}\hat{\eta}
+\hat{b}\cdot\nabla^\varphi(\omega_{vh}-\omega_{bh})):=I_{b2},
\end{align}
where we have used two different transforms $\Phi\circ Y_1$ and $\Phi\circ Y_2$, and we have
$\|I_{v2}\|_{L^\infty}\to 0$, $\|I_{b2}\|_{L^\infty}\to 0$ as $\epsilon\to 0$.

Since $a_0>0$, $b_0>0$ and $f^7_v\pm f^7_b$ are bounded, we choose suitably large $\gamma>0$ such that
\begin{equation}
\left\{
\begin{aligned}
&\gamma a_0-(f^7_v+f^7_b)>0,\  \gamma a_0-(f^7_v-f^7_b)>0,\\
&\gamma b_0-(f^7_v+f^7_b)>0,\  \gamma b_0-(f^7_v-f^7_b)>0,
\end{aligned}
\right.
\end{equation}
then $\gamma a_0W_+-(f^7_v-f^7_b)W_-$ and $\gamma b_0W_--(f^7_v+f^7_b))W_+$ are the damping term
of the coupled system.
Since the matrix $(a_{i,j})$ and $(b_{i,j})$ are definitely positive,
$-\epsilon\partial_i(a_{i,j}\partial_jW_+)$ and $-\epsilon\partial_i(b_{i,j}\partial_jW_-)$
are the diffusion term.

\subsection{$L^\infty$ Estimate of Strong Vorticity Layer}
In this subsection, if the initial vorticity layer
of velocity or magnetic field is strong,
we can prove that the vorticity layer for free boundary problems is strong.

The following theorem shows the existence of the strong vorticity layer for the free boundary MHD
equations \eqref{MHDF}, which arises from the strong initial vorticity layer of velocity or magnetic field.
\begin{theorem}\label{Theorem3.1}
Assume $\omega_v^\epsilon$ and $\omega_b^\epsilon$ are the vorticity of v ad b of MHD equations.
If the initial data of MHD equations satisfies $\lim\limits_{\epsilon\to0}(\nabla^{\varphi^\epsilon}\times v_0^\epsilon)-\nabla^\varphi\times\lim\limits_{\epsilon\to0}v_0^\epsilon\neq0$
or $\lim\limits_{\epsilon\to0}(\nabla^{\varphi^\epsilon}\times b_0^\epsilon)-\nabla^\varphi\times\lim\limits_{\epsilon\to0}b_0^\epsilon\neq0$
in the initial set $\mathcal{A}_0$, the ideal MHD boundary data satisfies
$\Pi S^\varphi v\nn|_{z=0}=0$ and $\Pi S^\varphi b\nn|_{z=0}=0$ in $(0,T]$,
then $\lim\limits_{\epsilon\to0}\|\omega^\epsilon_v-\omega_v\|_{L^\infty(X(\mathcal{A}_0))\times(0,T])}\neq 0$
or $\lim\limits_{\epsilon\to0}\|\omega^\epsilon_b-\omega_b\|_{L^\infty(X(\mathcal{A}_0))\times(0,T])}\neq 0$.
\end{theorem}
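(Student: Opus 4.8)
The plan is to transfer the nondegeneracy of the initial vorticity discrepancy through the coupled parabolic system for $W_{\pm}$ derived in Section 3.1, and then to undo the change of variables to obtain a nonvanishing $L^{\infty}$ limit for $\hat{\omega}_v$ or $\hat{\omega}_b$ along the Lagrangian tube $\mathcal{X}(\mathcal{A}_0)$. First I would record the hypothesis: by \eqref{1.30} (or rather the vorticity analogue, using $\lim_{\e\to0}\|\partial_z(\eta^{\e}-\eta)\|_{L^\infty}=0$ so that $\nabla^{\varphi^{\e}}\times f^{\e}-\nabla^{\varphi}\times f$ differs from $\partial_z(\cdot)$-type quantities only by terms tending to $0$), the assumption $\lim_{\e\to0}(\nabla^{\varphi^{\e}}\times v_0^{\e})-\nabla^{\varphi}\times\lim_{\e\to0}v_0^{\e}\neq 0$ on $\mathcal{A}_0$ is equivalent to $\lim_{\e\to0}\|\hat{\omega}_{v0}\|_{L^\infty(\mathcal{A}_0)}\neq 0$, and similarly for $b$. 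Pushing forward through $\Phi^{-1}\circ Y_i$ and using that $W_{\pm}|_{t=0}=(\hat{\omega}_{vh,0}\pm\hat{\omega}_{bh,0})(\Phi^{-1}\circ Y_i(0,\cdot))$ with $Y_i(0,\cdot)=\mathrm{id}$ and $a_0,b_0$ bounded above and below, this gives $\lim_{\e\to0}\|W_+|_{t=0}\|_{L^\infty(\mathcal{A}_0)}\neq 0$ or $\lim_{\e\to0}\|W_-|_{t=0}\|_{L^\infty(\mathcal{A}_0)}\neq 0$.

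Next I would propagate this forward in time. The system \eqref{1.8} for $(W_+,W_-)$ is a coupled uniformly parabolic system with diffusion coefficient of size $\e$, a damping matrix made coercive by the choice of $\gamma$, off-diagonal zeroth-order coupling $(f^7_v\mp f^7_b)$, and right-hand sides $I_v,I_b$ (the quantities $I_{v2},I_{b2}$ in Section 3.1) satisfying $\|I_{v2}\|_{L^\infty}\to 0$ and $\|I_{b2}\|_{L^\infty}\to 0$ as $\e\to 0$. Since a maximum principle does not apply directly to the coupled system, I would use Duhamel's principle: write $W_{\pm}(t)=e^{-t\mathcal{L}_{\pm,\e}}W_{\pm}(0)+\int_0^t e^{-(t-s)\mathcal{L}_{\pm,\e}}\big(\text{coupling}+I\big)(s)\,ds$, where $\mathcal{L}_{\pm,\e}=-\tfrac{\e}{a_0}\partial_i(a_{i,j}\partial_j\cdot)+\gamma\cdot$ (resp. with $b_0,b_{i,j}$) generates a contraction-type semigroup on $L^\infty$. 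As $\e\to 0$ the diffusion semigroup $e^{-t\mathcal{L}_{\pm,\e}}$ converges (on compact time intervals, pointwise in $L^\infty$) to the pure damping-plus-transport flow — here we exploit that the Lagrangian maps $Y_i$ have already absorbed the convection, so the limiting operator is just multiplication by a bounded factor that stays bounded away from zero on $(0,T]$ — while the Duhamel integral involving $I_{v2},I_{b2}$ tends to $0$. The remaining coupling terms $(f^7_v\mp f^7_b)W_{\mp}$ form a linear bounded perturbation and can be handled by a Gronwall/fixed-point argument on the pair $(W_+,W_-)$, which shows that the map $W_{\pm}(0)\mapsto W_{\pm}(t)$ converges, as $\e\to0$, to a bijective bounded linear flow on $L^\infty(\mathcal{A}_0)$ with bounded inverse. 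Consequently $\lim_{\e\to0}\|W_{\pm}\|_{L^\infty(\mathcal{X}(\mathcal{A}_0)\times(0,T])}\neq 0$ for at least one sign, hence at least one of $\lim_{\e\to0}\|\hat{\omega}_{vh}\|_{L^\infty(\mathcal{X}(\mathcal{A}_0)\times(0,T])}\neq0$, $\lim_{\e\to0}\|\hat{\omega}_{bh}\|_{L^\infty(\mathcal{X}(\mathcal{A}_0)\times(0,T])}\neq0$ holds, since $\hat{\omega}_{vh}=\tfrac12(W_++W_-)$ and $\hat{\omega}_{bh}=\tfrac12(W_+-W_-)$ up to the bounded factors $e^{\gamma t}$ and the Jacobian-type weights. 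Finally, to pass from the horizontal components $\hat{\omega}_{vh},\hat{\omega}_{bh}$ to the full vorticities $\omega_v^{\e}-\omega_v$, $\omega_b^{\e}-\omega_b$, I would invoke Lemma \ref{Lemma2.1}-type relations: the third (normal) component of the vorticity is expressible through tangential derivatives of $v,b$ and of $h$, which converge, so a strong layer cannot be hidden there — the discrepancy necessarily sits in the horizontal part, giving the stated conclusion.

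The main obstacle I expect is the precise justification that the $\e$-dependent diffusion semigroups $e^{-t\mathcal{L}_{\pm,\e}}$ converge in the right topology without creating or destroying a boundary layer inside $\mathcal{A}_0$: the set $\mathcal{A}_0=\{-\sqrt{\e}\le z<0\}$ itself shrinks with $\e$, so one must be careful that the $L^\infty$ norm on this thin strip is the right object and that the heat kernel on scale $\sqrt{\e t}$ does not smear $W_{\pm}$ to zero there. This is exactly where the hypothesis $\Pi S^{\varphi}v\,\nn|_{z=0}=\Pi S^{\varphi}b\,\nn|_{z=0}=0$ on $(0,T]$ enters: it forces the boundary data $\hat{\omega}_{vh}^b,\hat{\omega}_{bh}^b$ (equivalently $W_{\pm}|_{z=0}$) to stay small uniformly in $\e$ — precisely, $\|\hat{\omega}_{vh}|_{z=0}\|_{L^\infty}\to0$ since $F^{1,2}[\nabla\varphi^{\e}](\partial_j v^{\e,i})\to\omega_{vh}|_{z=0}$ by Proposition \ref{Proposition1.1} and the pointwise convergence of the tangential traces — so that there is no competing boundary source, and the diffusion on scale $\sqrt{\e}$ over the strip of width $\sqrt{\e}$ is an $O(1)$ smoothing that preserves, rather than annihilates, a nonzero limit. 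I would handle this by a barrier/comparison argument on each scalar equation for the majorant $|W_{\pm}|$, using the coercive damping to dominate the small force and coupling, and an explicit lower bound via a subsolution near the point where $W_{\pm}|_{t=0}$ is bounded away from zero.
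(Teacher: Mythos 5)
Your proposal follows essentially the same route as the paper: pass to the Lagrangian variables $W_{\pm}$, invoke Duhamel's principle on the coupled $\e$-scaled parabolic system (the paper decomposes $W_{\pm}$ into initial-data, force, and boundary-data pieces, but this is equivalent to treating the three sources within a single Duhamel formula as you do), show that the forcing $I_{v2},I_{b2}$ and the boundary trace vanish in the limit — the latter precisely because $\Pi\mathcal S^\varphi v\nn|_{z=0}=\Pi\mathcal S^\varphi b\nn|_{z=0}=0$ forces $\hat\omega^b_{vh},\hat\omega^b_{bh}\to0$ — and absorb the off-diagonal coupling $(f^7_v\mp f^7_b)W_{\mp}$ by a linear Gronwall-type bound so that the weighted sum $C_v\|W_+\|+C_b\|W_-\|$ is bounded below by the nonvanishing initial magnitudes. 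Your concern about the $\sqrt\e$-width of $\mathcal A_0$ interacting with the reflection kernel on scale $\sqrt{\e t}$ is a fair one that the paper treats only implicitly, and your suggested barrier/subsolution alternative for that step is reasonable, but the overall argument is the same as the paper's.
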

\begin{proof}
Since $\Pi S^\varphi v\nn|_{z=0}=0,\Pi S^\varphi b\nn|_{z=0}=0$ in $(0,T]$,
$|\omega^\epsilon_{v0}-\omega_{v0}|_{\infty}\to0$ and $|\omega^\epsilon_{b0}-\omega_{b0}|_{\infty}\to0$
as $\epsilon\to0$. Then there exist a set $\mathcal{A}_0\cap \{x|z<0\}\neq0$
such that $\lim\limits_{\epsilon\to0}(\nabla^{\varphi^\epsilon}\times v^\epsilon)-\nabla^\varphi\times\lim\limits_{\epsilon\to0}v^\epsilon\neq0$
or
$\lim\limits_{\epsilon\to0}(\nabla^{\varphi^\epsilon}\times B^\epsilon)-\nabla^\varphi\times\lim\limits_{\epsilon\to0}b^\epsilon\neq0$
in the initial set
$\mathcal{A}_0$.

We investigate the following equations:
\begin{equation}
\left\{
\begin{aligned}
&a_0\partial_tW_+-\epsilon\partial_i(a_{ij}\partial_jW_+)+\gamma a_0W_+-(f^7_v-f^7_b)W_-
=I_{v2},
\\
&b_0\partial_tW_--\epsilon\partial_i(b_{ij}\partial_jW_-)+\gamma b_0W_--(f^7_v+f^7_b)W_+
=I_{b2},
\\
&W_+|_{z=0}=\hat{\omega}^b_{vh}+\hat{\omega}^b_{bh}
=\omega^\epsilon_{v0}-\omega_{v0}+\omega^\epsilon_{b0}-\omega_{b0}\to0,
\\
& W_-|_{z=0}=\hat{\omega}^b_{vh}-\hat{\omega}^b_{bh}
=\omega^\epsilon_{v0}-\omega_{v0}-\omega^\epsilon_{b0}+\omega_{b0}\to0,\\
&W_+|_{t=0}=\hat{\omega}_{vh,0}+\hat{\omega}_{bh,0}
=\omega^\epsilon_{vh0}-\omega_{vh0}+\omega^\epsilon_{bh0}-\omega_{bh0},\\
& W_-|_{t=0}=\hat{\omega}_{vh,0}-\hat{\omega}_{bh,0}
=\omega^\epsilon_{vh0}-\omega_{vh0}-\omega^\epsilon_{bh0}+\omega_{bh0}.
\end{aligned}
\right.
\end{equation}

We decompose $W_{\pm}=W_{\pm}^{ini}+W_{\pm}^{fo}+W_{\pm}^{bdy}$,
such that $W_{\pm}^{ini}$ satisfy the nonhomogeneous equations with force term:
\begin{equation}\label{ini}
\left\{
\begin{aligned}
&a_0\partial_tW^{ini}_+-\epsilon\partial_i(a_{ij}\partial_jW^{ini}_+)+\gamma a_0W^{ini}_+-(f^7_v-f^7_b)W^{ini}_-
=0,
\\
&b_0\partial_tW^{ini}_--\epsilon\partial_i(b_{ij}\partial_jW^{ini}_-)+\gamma b_0W^{ini}_--(f^7_v+f^7_b)W^{ini}_+
=0,
\\
&W_+|_{z=0}=0, W_-|_{z=0}=0,\\
&W_+|_{t=0}=\hat{\omega}_{vh,0}+\hat{\omega}_{bh,0}, W_-|_{t=0}=\hat{\omega}_{vh,0}-\hat{\omega}_{bh,0}.
\end{aligned}
\right.
\end{equation}
$W_{\pm}^{fo}$ satisfy the following equations:
\begin{equation}\label{fo}
\left\{
\begin{aligned}
&a_0\partial_tW^{fo}_+-\epsilon\partial_i(a_{ij}\partial_jW^{fo}_+)
+\gamma a_0W^{fo}_+-(f^7_v-f^7_b)W^{fo}_-
=I_{v2},
\\
&b_0\partial_tW^{fo}_--\epsilon\partial_i(b_{ij}\partial_jW^{fo}_-)
+\gamma b_0W^{fo}_--(f^7_v+f^7_b)W^{fo}_+
=I_{b2},
\\
&W^{fo}_+|_{z=0}=0, W^{fo}_-|_{z=0}=0,\\
&W^{fo}_+|_{t=0}=0, W^{fo}_-|_{t=0}=0.
\end{aligned}
\right.
\end{equation}
and $W_{\pm}^{bdy}$ satisfy the homogeneous equations:
\begin{equation}\label{bdy}
\left\{
\begin{aligned}
&a_0\partial_tW^{bdy}_+-\epsilon\partial_i(a_{ij}\partial_jW^{bdy}_+)+\gamma a_0W^{bdy}_+-(f^7_v-f^7_b)W^{bdy}_-
=0,
\\
&b_0\partial_tW^{bdy}_--\epsilon\partial_i(b_{ij}\partial_jW^{bdy}_-)+\gamma b_0W^{bdy}_--(f^7_v+f^7_b)W^{bdy}_+
=0,
\\
&W^{bdy}_+|_{z=0}=\hat{\omega}^b_{vh}+\hat{\omega}^b_{bh},
W^{bdy}_-|_{z=0}=\hat{\omega}^b_{vh}-\hat{\omega}^b_{bh},
\\
&W^{bdy}_+|_{t=0}=0, W^{bdy}_-|_{t=0}=0.
\end{aligned}
\right.
\end{equation}
For \eqref{ini},
we denote the variable by $\tilde{W}_+^{ini}=W_+^{ini}e^{\gamma t}$,
$\tilde{W}_-^{ini}=W_-^{ini} e^{\gamma t}$ and consider the following equations:
\begin{equation}\label{ini1}
\left\{\begin{aligned}
&\partial_t \tilde{W}^{ini}_+ - \frac{\e}{a_0}
 \cdot a_{ij} \partial_{ij} \tilde{W}^{ini}_+
 = \sqrt{\e}\,\mathcal{I}_{v3}
 +\frac{1}{a_0}(f^7_v-f^7_b)\tilde{W}_-^{ini},
 \\
&\partial_t \tilde{W}^{ini}_- - \frac{\e}{b_0}
 \cdot a_{ij} \partial_{ij} \tilde{W}^{ini}_-
 = \sqrt{\e}\,\mathcal{I}_{b3}
 +\frac{1}{b_0}(f^7_v+f^7_b)\tilde{W}_+^{ini},
 \\
&\tilde{W}_+|_{z=0}=0,\tilde{W}_-|_{z=0} = 0,
 \\
&W_+|_{t=0}=\hat{\omega}_{vh,0}+\hat{\omega}_{bh,0}, W_-|_{t=0}=\hat{\omega}_{vh,0}-\hat{\omega}_{bh,0},
\end{aligned}\right.
\end{equation}
where
\begin{equation}
\begin{aligned}
&\mathcal{I}_{v3}
= \frac{\sqrt{\e}}{a_0}\sum\limits_{i,j =1}^3\partial_i a_{ij}
\cdot \partial_j \tilde{W}^{ini}_+,
\\
&\mathcal{I}_{b3}
= \frac{\sqrt{\e}}{b_0}\sum\limits_{i,j =1}^3\partial_i b_{ij} \cdot \partial_j
\tilde{W}^{ini}_-.
\end{aligned}
\end{equation}
Note that $\|\mathcal{I}_{v3}\|_{L^{\infty}} <+\infty$ and $\|\mathcal{I}_{b3}\|_{L^{\infty}} <+\infty$,
since $\mathcal{I}_{v3}$ and $\mathcal{I}_{v3}$ contains normal differential operator $\partial_z$ of order at most one.

By definition, \eqref{ini1} is uniformly parabolic,
which has fundamental solution satisfying the parabolic scaling.
Let $\textsf{H}_v(\frac{x}{\sqrt{\e t}})$,
 $\textsf{H}_b(\frac{x}{\sqrt{\e t}})$ be the fundamental solution
 of the following homogeneous parabolic equation in $\mathbb{R}^3$,
 respectively,
\begin{equation}
\begin{aligned}
\partial_t f - \frac{\e}{a_0} a_{ij} e^{\gamma t}  \cdot \partial_{ij} f =0,
\\
\partial_t f - \frac{\e}{b_0} b_{ij} e^{\gamma t}  \cdot \partial_{ij} f =0.
\end{aligned}
\end{equation}
Then the equations \eqref{ini1} has the explicit formula
by using Duhamel's principle
\begin{equation}\label{D3}
\begin{aligned}
&\tilde{W}_+^{ini}(t,x)=\int_{\mathbb{R}^3_{-}}(\hat{\omega}_{v,0}
+\hat{\omega}_{b,0})(y) (\textsf{H}_v(\frac{x-y}{\sqrt{\e t}})
- \textsf{H}_v(\frac{x+y}{\sqrt{\e t}})) \,\mathrm{d}y
\\
&\quad + \int_0^t\int_{\mathbb{R}^3_{-}}
(\sqrt{\e}\,\mathcal{I}_{v3}+\frac{1}{a_0}(f^7_v-f^7_b)\tilde{W}_-^{ini})(t-s,y)
(\textsf{H}_v(\frac{x-y}{\sqrt{\e s}}) - \textsf{H}_v(\frac{x+y}{\sqrt{\e s}})) \,\mathrm{d}y
\mathrm{d}s,
\\
&\tilde{W}_-^{ini}(t,x)=\int_{\mathbb{R}^3_{-}}(\hat{\omega}_{vh,0}-\hat{\omega}_{bh,0})(y)
(\textsf{H}_b(\frac{x-y}{\sqrt{\e t}})- \textsf{H}_b(\frac{x+y}{\sqrt{\e t}})) \,\mathrm{d}y
\\
&\quad+ \int_0^t\int_{\mathbb{R}^3_{-}}
(\sqrt{\e}\,\mathcal{I}_{b3}+\frac{1}{b_0}(f^7_v+f^7_b)\tilde{W}_+^{ini})(t- s,y)
(\textsf{H}_b(\frac{x-y}{\sqrt{\e s}}) - \textsf{H}_b(\frac{x+y}{\sqrt{\e s}})) \,\mathrm{d}y
\mathrm{d}s.
\end{aligned}
\end{equation}
As $\e\rto 0$,
we have
\begin{align*}
&\int_{\mathbb{R}^3_{-}}(\hat{\omega}_{v,0}
+\hat{\omega}_{b,0})(y) (\textsf{H}_v(\frac{x-y}{\sqrt{\e t}})
- \textsf{H}_v(\frac{x+y}{\sqrt{\e t}})) \,\mathrm{d}y
\rto \hat{\omega}_{v,0}+\hat{\omega}_{b,0}+\sqrt{\e} O(1)
\rto \hat{\omega}_{v,0}+\hat{\omega}_{b,0},
\\
&\int_{\mathbb{R}^3_{-}}(\hat{\omega}_{v,0}-\hat{\omega}_{b,0})(y)
(\textsf{H}_b(\frac{x-y}{\sqrt{\e t}})
- \textsf{H}_b(\frac{x+y}{\sqrt{\e t}})) \,\mathrm{d}y
\rto \hat{\omega}_{vh,0}-\hat{\omega}_{b,0}
+\sqrt{\e} O(1) \rto \hat{\omega}_{v,0}-\hat{\omega}_{b,0},
\end{align*}
and
\begin{align*}
&\int_0^t\int_{\mathbb{R}^3_{-}}
\sqrt{\e}\,\mathcal{I}_{v3}(t-s,y)
(\textsf{H}_v(\frac{x-y}{\sqrt{\e s}}) - \textsf{H}_v(\frac{x+y}{\sqrt{\e s}})) \,\mathrm{d}y
\mathrm{d}s\rto0,
\\
&\int_0^t\int_{\mathbb{R}^3_{-}}
\sqrt{\e}\,\mathcal{I}_{b3}(t- s,y)
(\textsf{H}_b(\frac{x-y}{\sqrt{\e s}}) - \textsf{H}_b(\frac{x+y}{\sqrt{\e s}})) \,\mathrm{d}y
\mathrm{d}s\rto0.
\end{align*}
In addition
\begin{equation}
\begin{array}{ll}
\lim\limits_{\e\rto 0}\|\int_0^t\int_{\mathbb{R}^3_{-}}
\frac{1}{a_0}(f^7_v-f^7_b)\tilde{W}_-^{ini}(t-s,y)
(\textsf{H}_v(\frac{x-y}{\sqrt{\e s}})
- \textsf{H}_v(\frac{x+y}{\sqrt{\e s}})) \,\mathrm{d}y
\mathrm{d}s\|_{L^\infty},
\\[8pt]

\leq \lim\limits_{\e\rto 0}\|\int_0^t\int_{\mathbb{R}^3_{-}}
\frac{1}{a_0}(f^7_v-f^7_b)(t-s,y)
(\textsf{H}_v(\frac{x-y}{\sqrt{\e s}})
- \textsf{H}_v(\frac{x+y}{\sqrt{\e s}})) \,\mathrm{d}y
\mathrm{d}s\|_{L^\infty}\|\tilde{W}_-^{ini}\|_{L^\infty},
\\[8pt]

\leq \|\int_0^t\frac{1}{a_0}(f^7_v-f^7_b)(s,x)
\, \mathrm{d}s\|_{L^\infty}\|\tilde{W}_-^{ini}\|_{L^\infty},
\\[8pt]

\lim\limits_{\e\rto 0}\|\int_0^t\int_{\mathbb{R}^3_{-}}
\frac{1}{b_0}(f^7_v+f^7_b)\tilde{W}_+^{ini}(t- s,y)
(\textsf{H}_b(\frac{x-y}{\sqrt{\e s}})
- \textsf{H}_b(\frac{x+y}{\sqrt{\e s}}))
\,\mathrm{d}y\mathrm{d}s\|_{L^\infty}
\\[8pt]

\leq \lim\limits_{\e\rto 0}\|\int_0^t\int_{\mathbb{R}^3_{-}}
(\frac{1}{b_0}(f^7_v+f^7_b)(t- s,y)
(\textsf{H}_b(\frac{x-y}{\sqrt{\e s}})
- \textsf{H}_b(\frac{x+y}{\sqrt{\e s}}))
\,\mathrm{d}y\mathrm{d}s\|_{L^\infty}\|\tilde{W}_+^{ini}\|_{L^\infty}
\\[8pt]

\leq \|\int_0^t(\frac{1}{b_0}(f^7_v+f^7_b)(s,x)
\, \mathrm{d}s\|_{L^\infty}\|\tilde{W}_+^{ini}\|_{L^\infty},
\end{array}
\end{equation}
where $\|\int_0^t\frac{1}{a_0}(f^7_v-f^7_b)(s,x)\, \mathrm{d}s\|_{L^\infty}$,
$\|\int_0^t(\frac{1}{b_0}(f^7_v+f^7_b)(s,x)\, \mathrm{d}s\|_{L^\infty}$
are bounded.
Applying the $L^\infty$ to \eqref{D3},
we have
\begin{equation}
\begin{array}{ll}
\lim\limits_{\e\rto 0}\|\tilde{W}_+^{ini}(t,x)\|_{L^\infty}
=\|\int_{\mathbb{R}^3_{-}}(\hat{\omega}_{v,0}
+\hat{\omega}_{b,0})(y) (\textsf{H}_v(\frac{x-y}{\sqrt{\e t}})
- \textsf{H}_v(\frac{x+y}{\sqrt{\e t}})) \,\mathrm{d}y
\\[8pt]

 + \int_0^t\int_{\mathbb{R}^3_{-}}
(\sqrt{\e}\,\mathcal{I}_{v3}+\frac{1}{a_0}(f^7_v-f^7_b)\tilde{W}_-^{ini})(t-s,y)
(\textsf{H}_v(\frac{x-y}{\sqrt{\e s}}) - \textsf{H}_v(\frac{x+y}{\sqrt{\e s}})) \,\mathrm{d}y
\mathrm{d}s\|_{L^\infty}
\\[8pt]

\geq \|\hat{\omega}_{v,0}+\hat{\omega}_{b,0}\|_{L^\infty}
-\|\int_0^t\frac{1}{a_0}(f^7_v-f^7_b)(s,x)\, \mathrm{d}s\|_{L^\infty}
\|\tilde{W}_-^{ini}(t,x)\|_{L^\infty},
\\[8pt]

\lim\limits_{\e\rto 0}\|\tilde{W}_-^{ini}(t,x)\|_{L^\infty}
=\|\int_{\mathbb{R}^3_{-}}(\hat{\omega}_{vh,0}-\hat{\omega}_{bh,0})(y)
(\textsf{H}_b(\frac{x-y}{\sqrt{\e t}})- \textsf{H}_b(\frac{x+y}{\sqrt{\e t}})) \,\mathrm{d}y
\\[8pt]

+\int_0^t\int_{\mathbb{R}^3_{-}}
(\sqrt{\e}\,\mathcal{I}_{b3}+\frac{1}{b_0}(f^7_v+f^7_b)\tilde{W}_+^{ini})(t- s,y)
(\textsf{H}_b(\frac{x-y}{\sqrt{\e s}}) - \textsf{H}_b(\frac{x+y}{\sqrt{\e s}})) \,\mathrm{d}y
\mathrm{d}s\|_{L^\infty}
\\[8pt]

\geq \|\hat{\omega}_{v,0}-\hat{\omega}_{b,0}\|_{L^\infty}
-\|\int_0^t(\frac{1}{b_0}(f^7_v+f^7_b)(s,x)\, \mathrm{d}s\|_{L^\infty}
\|\tilde{W}_+^{ini}(t,x)\|_{L^\infty}.
\end{array}
\end{equation}
Hence, we have
\begin{align}
\lim\limits_{\e\rto 0}(C_v\|\tilde{W}_+^{ini}(t,x)\|_{L^\infty}+C_b\|\tilde{W}_-^{ini}(t,x)\|_{L^\infty})
\geq \|\hat{\omega}_{v,0}+\hat{\omega}_{b,0}\|_{L^\infty}
+\|\hat{\omega}_{v,0}-\hat{\omega}_{b,0}\|_{L^\infty},
\end{align}
where $C_v=1+\|\int_0^t(\frac{1}{b_0}(f^7_v+f^7_b)(s,x)\, \mathrm{d}s\|_{L^\infty}>0$
and $C_b=1+\|\int_0^t(\frac{1}{a_0}(f^7_v-f^7_b)(s,x)\, \mathrm{d}s\|_{L^\infty}>0$.
By the definition of $\tilde{W}_\pm^{ini}$, we know that
$\lim\limits_{\e\rto 0} \tilde{W}_\pm^{ini}(t,x)$ and
$\lim\limits_{\e\rto 0} \hat{\omega}_{vh,0}\pm\hat{\omega}_{bh,0}(x)$ have the same support.
It follows from
$\lim\limits_{\e\rto 0}\|\hat{\omega}_{vh}+\hat{\omega}_{bh}\|_{L^{\infty}(\mathcal{X}(\mathcal{A}_0)\times (0,T])} \neq 0$
or
$\lim\limits_{\e\rto 0}\|\hat{\omega}_{vh}-\hat{\omega}_{bh}\|_{L^{\infty}
(\mathcal{X}(\mathcal{A}_0)\times (0,T])} \neq 0$
that
\begin{align}
\lim\limits_{\e\rto 0}(C_v\|\tilde{W}_+^{ini}(t,x)\|_{L^\infty}+C_b\|\tilde{W}_-^{ini}(t,x)\|_{L^\infty})>0
\end{align}
hold in $\mathcal{X}(\mathcal{A}_0)\times (0,T]$.

We can deal with \eqref{fo} in the similar manner to get
$$\lim\limits_{\e\rto 0}(\|\tilde{W}_+^{ini}(t,x)\|_{L^\infty}+\|\tilde{W}_-^{ini}(t,x)\|_{L^\infty})
\geq\|\hat{\omega}_{v,0}+\hat{\omega}_{b,0}\|_{L^\infty}
+\|\hat{\omega}_{v,0}-\hat{\omega}_{b,0}\|_{L^\infty},$$
since the force term $\|I_{v2}\|_{L^\infty}\rto0$, $\|I_{b2}\|_{L^\infty}\rto0$,
when $\epsilon \to0$.

Therefore, since $\|\hat{\omega}_{v,0}+\hat{\omega}_{b,0}\|_{L^\infty}\to 0$ and
$\|\hat{\omega}_{v,0}-\hat{\omega}_{b,0}\|_{L^\infty}\to 0$, one has
$$\lim\limits_{\e\rto 0}(\|\tilde{W}_+^{ini}(t,x)\|_{L^\infty}+\|\tilde{W}_-^{ini}(t,x)\|_{L^\infty})
\geq0$$
in $\mathcal{X}(\mathcal{A}_0)\times (0,T]$.

For \eqref{bdy}, we define
\begin{align*}
&\phi_+=W^{bdy}_+-(F^{1,2}_v[\nabla\varphi^\epsilon](\partial_jv^{\epsilon,i})
-F^{1,2}_v[\nabla\varphi](\partial_jv^i)\\
&\qquad+F^{1,2}_b[\nabla\varphi^\epsilon](\partial_jb^{\epsilon,i})
-F^{1,2}_b[\nabla\varphi](\partial_jb^i)):=W^{bdy}_+-\Psi_+,\\
&\phi_-=W^{bdy}_--(F^{1,2}_v[\nabla\varphi^\epsilon](\partial_jv^{\epsilon,i})
-F^{1,2}_v[\nabla\varphi](\partial_jv^i)\\
&\qquad-F^{1,2}_b[\nabla\varphi^\epsilon](\partial_jb^{\epsilon,i})
+F^{1,2}_b[\nabla\varphi](\partial_jb^i)):=W^{bdy}_--\Psi_-,
\end{align*}
then $\phi_{\pm}$ satisfy the following equations:
\begin{equation} \label{bdys3}
\left\{
\begin{aligned}
&a_0\partial_t\phi_+-\epsilon\partial_i(a_{ij}\partial_j\phi_+)+\gamma a_0\phi_+-(f^7_v-f^7_b)\phi_-
\\
&\quad=-a_0\partial_t\Psi_+
+\epsilon\partial_i[a_{ij}\partial_j\Psi_+]
-\gamma a_0\Psi_++(f^7_v-f^7_b))\Psi_-,
\\
&b_0\partial_t\phi_--\epsilon\partial_i(b_{ij}\partial_j\phi_-)+\gamma b_0\phi_--(f^7_v+f^7_b)\phi_+
\\
&\quad=-b_0\partial_t\Psi_-+\epsilon\partial_i[b_{i,j}\partial_j\Psi_-]-\gamma b_0\Psi_-+(f^7_v+f^7_b))\Psi_+,
\\
&\phi_+|_{z=0}=0,\phi_-|_{z=0}=0,
\\
&\phi_+|_{t=0}=-\Psi_+|_{t=0},\phi_-|_{t=0}=-\Psi_-|_{t=0}.
\end{aligned}
\right.
\end{equation}
In fact, $\Psi_+|_{t=0}\rto0$ and $\Psi_-|_{t=0}\rto0$ when $\epsilon \to0$.
Dealing with \eqref{bdys3} as above, we have
$$\lim\limits_{\e\rto 0}(\|\tilde{W}_+^{ini}(t,x)\|_{L^\infty}+\|\tilde{W}_-^{ini}(t,x)\|_{L^\infty})
\geq0.$$

By solving equations \eqref{ini}, \eqref{fo} and \eqref{bdy},
we have $\lim\limits_{\e\rto 0}(\|\tilde{W}_+^{ini}(t,x)\|_{L^\infty}+\|\tilde{W}_-^{ini}(t,x)\|_{L^\infty})>0$.
By the definition of $\tilde{W}_+^{ini}$ and $\tilde{W}_-^{ini}$,
we have
\begin{equation}
\lim\limits_{\e\rto 0}(\|W_+^{ini}(t,x)\|_{L^\infty}+\|W_-^{ini}(t,x)\|_{L^\infty})>0.
\end{equation}
Thus, $\|W_+^{ini}(t,x)\|_{L^\infty}>0$
or $\|W_-^{ini}(t,x)\|_{L^\infty}>0$.
Theorem \ref{Theorem3.1} is proved.
\end{proof}

\begin{remark}
It is easy to show that if the initial MHD data satisfies $\lim\limits_{\epsilon\to0}(\nabla^{\varphi^\epsilon}\times v_0^\epsilon)-\nabla^\varphi\times\lim\limits_{\epsilon\to0}v_0^\epsilon\neq0$
and $\lim\limits_{\epsilon\to0}(\nabla^{\varphi^\epsilon}\times b_0^\epsilon)-\nabla^\varphi\times\lim\limits_{\epsilon\to0}b_0^\epsilon\neq0$.
Then the strong initial layer implies the following strong boundary layers:
\begin{equation}
\begin{aligned}
&\lim\limits_{\e\rto 0}\|\partial_z v^{\e} -\partial_z v\|_{L^{\infty}(\mathcal{X}(\mathcal{A}_0)\times (0,T])} \neq 0,\\
&\lim\limits_{\e\rto 0}\|\partial_z b^{\e} -\partial_z b\|_{L^{\infty}(\mathcal{X}(\mathcal{A}_0)\times (0,T])} \neq 0, \\
&\lim\limits_{\e\rto 0}\|\mathcal{S}v^{\e} -\mathcal{S}v\|_{L^{\infty}(\mathcal{X}(\mathcal{A}_0)\times (0,T])} \neq 0,\\
&\lim\limits_{\e\rto 0}\|\mathcal{S}b^{\e} -\mathcal{S}b\|_{L^{\infty}(\mathcal{X}(\mathcal{A}_0)\times (0,T])} \neq 0,\\
&\lim\limits_{\e\rto 0}\|\nabla q^{\e} - \nabla q\|_{L^{\infty}(\mathcal{X}(\mathcal{A}_0)\times (0,T])} \neq 0.
\end{aligned}
\end{equation}
\end{remark}

\section{The Discrepancy from the Boundary Values of Vorticities}
In this section, we discuss the existence of strong vorticity layer
under the assumption that the boundary value of ideal MHD equations satisfies
$\Pi\mathcal{S}^{\varphi} v\nn|_{z=0} \neq 0$ or
$\Pi\mathcal{S}^{\varphi} b\nn|_{z=0} \neq 0$ in $(0,T]$.
It shows that the discrepancy between boundary value of MHD vorticity
and boundary value of ideal MHD vorticity is also one of sufficient
conditions for the existence strong vorticity layer
for the free boundary MHD equations \eqref{MHDF}.

\subsection{Analysis the Discrepancy of Vorticities}
In this subsection,
we consider the case if the cross product of the tangential projection on the free boundary of
the ideal MHD strain tensor for the vorlocity or the magnetic field
with the normal vector does not vanishes on the free surface,
then there is a discrepancy between MHD vorticity and ideal MHD vorticity.

The following lemma shows the existence of the strong vorticity layer for
the free boundary problems for MHD equations \eqref{MHDF} arise from the discrepancy.

\begin{lemma}\label{lemma4.1}
Assume $\omega_v^{\e}, \omega_v^{\e}$ are the vorticity of $v^{\e},b^{\e}$, respectively,
$\NN^{\e}$ is the normal vector of MHD equations\eqref{MHDF};
and $\omega_v,\omega_b$ are the vorticity of v, b, respectively,
$\NN$ is the normal vector of ideal MHD equations\eqref{IMHDF},
$\omega_{v}^{\e,b}$, $\omega_{b}^{\e,b}$, $\omega^b_{v}$ and $\omega^b_{b}$ are boundary values of $\omega_v^{\e}$, $\omega_b^{\e}$, $\omega_v$ and $\omega_b$ respectively.
If $\Pi\mathcal{S}^{\varphi} v\nn|_{z=0} \neq 0$
or $\Pi\mathcal{S}^{\varphi} b\nn|_{z=0} \neq 0$ in $(0,T]$,
then $\lim\limits_{\e\rto 0}|\omega_v^{\e,b}-\omega_v^b|
_{L^{\infty}(\mathbb{R}^2\times (0,T])} \neq 0$
or $\lim\limits_{\e\rto 0}|\omega_b^{\e,b}-\omega_b^b|
_{L^{\infty}(\mathbb{R}^2\times (0,T])} \neq 0$.
\end{lemma}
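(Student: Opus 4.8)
The plan is to show that the boundary discrepancy $\omega_{vh}^{\e,b}-\omega_{vh}^{b}$ (and the analogue for $b$) converges, as $\e\rto 0$, to an invertible linear image of $\Pi\mathcal{S}^{\varphi}v\nn|_{z=0}$ (resp. $\Pi\mathcal{S}^{\varphi}b\nn|_{z=0}$), so that it cannot vanish in $L^{\infty}$ as soon as that strain tensor does not. First I would collect the algebraic facts of Section 2. On the one hand, the compatibility condition $\Pi\mathcal{S}^{\varphi}v^{\e}\nn|_{z=0}=0$ gives $\omega_{vh}^{\e}|_{z=0}=F^{1,2}[\nabla\varphi^{\e}](\partial_j v^{\e,i})$, while $\omega_{bh}^{\e}|_{z=0}=0$ because $b^{\e}|_{z=0}=0$ forces $F^{1,2}[\nabla\varphi^{\e}](\partial_j b^{\e,i})=0$. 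On the other hand, by its very construction $F^{1,2}[\nabla\varphi](\partial_j v^{i})$ is the value that $\omega_{vh}|_{z=0}$ would take if one had $\Pi\mathcal{S}^{\varphi}v\nn|_{z=0}=0$: it is produced by inserting into the vorticity formulas \eqref{2.2} the expressions $\partial_z v^{1}=f^{5}[\nabla\varphi](\partial_j v^{i})$, $\partial_z v^{2}=f^{6}[\nabla\varphi](\partial_j v^{i})$ derived from $\mathcal{S}^{\varphi}v\nn\times\nn=0$ and the nondegeneracy of the matrix $M$ in \eqref{M}.

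Next, for the ideal MHD solution I would make two linear relations explicit, with the tangential data held fixed on $\{z=0\}$. From the definition of the vorticity together with the divergence-free condition, \eqref{2.8} reads $\omega_{vh}|_{z=0}=\widetilde A[\nabla\varphi]\,(\partial_z v^{1},\partial_z v^{2})^{\top}+(\text{tangential terms})$, where $\widetilde A$ is built from the two rows of the coefficient matrix in \eqref{2.8} and is nonsingular once $|\nabla h|_{\infty}$ is small. Moreover, prescribing $\Pi\mathcal{S}^{\varphi}v\nn|_{z=0}$ is equivalent to prescribing $\mathcal{S}^{\varphi}v\nn\times\nn$, which by the computation producing $M$ amounts to $M[\nabla\varphi]\,(\partial_z v^{1},\partial_z v^{2})^{\top}=(\text{tangential terms})+\mathcal{T}\big(\Pi\mathcal{S}^{\varphi}v\nn|_{z=0}\big)$ with $\mathcal{T}$ an invertible linear map. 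Subtracting the instance $\Pi\mathcal{S}^{\varphi}v\nn|_{z=0}=0$ and composing $\widetilde A$ with the inverse of the $M$-system yields an invertible linear map $\mathcal{L}_v$ with coefficients rational in $\nabla\varphi$ such that
\begin{equation*}
\omega_{vh}|_{z=0}-F^{1,2}[\nabla\varphi](\partial_j v^{i})=\mathcal{L}_v\big(\Pi\mathcal{S}^{\varphi}v\nn|_{z=0}\big)\qquad\text{on }\mathbb{R}^{2}\times(0,T],
\end{equation*}
and, since $b|_{z=0}=0$ annihilates the tangential terms, the analogous identity $\omega_{bh}|_{z=0}=\mathcal{L}_b\big(\Pi\mathcal{S}^{\varphi}b\nn|_{z=0}\big)$ with $\mathcal{L}_b$ invertible.

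Then I would pass to the limit. By the uniform regularity of Proposition \ref{Proposition1.1} together with the convergence of the tangential traces (so that $\partial_j v^{\e,i}|_{z=0}\rto\partial_j v^{i}|_{z=0}$ for $j=1,2$ and $\nabla\varphi^{\e}\rto\nabla\varphi$), one has $F^{1,2}[\nabla\varphi^{\e}](\partial_j v^{\e,i})|_{z=0}\rto F^{1,2}[\nabla\varphi](\partial_j v^{i})|_{z=0}$, hence
\begin{equation*}
\hat{\omega}_{vh}|_{z=0}=F^{1,2}[\nabla\varphi^{\e}](\partial_j v^{\e,i})-\omega_{vh}|_{z=0}\ \longrightarrow\ -\mathcal{L}_v\big(\Pi\mathcal{S}^{\varphi}v\nn|_{z=0}\big),
\end{equation*}
and likewise $\hat{\omega}_{bh}|_{z=0}\rto-\mathcal{L}_b\big(\Pi\mathcal{S}^{\varphi}b\nn|_{z=0}\big)$. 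If $\Pi\mathcal{S}^{\varphi}v\nn|_{z=0}\not\equiv 0$ in $(0,T]$ (a set of positive measure, by continuity), then invertibility of $\mathcal{L}_v$ gives $\lim_{\e\rto 0}|\omega_v^{\e,b}-\omega_v^{b}|_{L^{\infty}(\mathbb{R}^{2}\times(0,T])}\geq\lim_{\e\rto 0}|\omega_{vh}^{\e,b}-\omega_{vh}^{b}|_{L^{\infty}}=|\mathcal{L}_v(\Pi\mathcal{S}^{\varphi}v\nn)|_{L^{\infty}}>0$; the case $\Pi\mathcal{S}^{\varphi}b\nn|_{z=0}\not\equiv 0$ is identical with $\mathcal{L}_b$, which proves the lemma.

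The main obstacle is the step producing $\mathcal{T}$: verifying that, with the tangential data frozen on $\{z=0\}$, the value of $\Pi\mathcal{S}^{\varphi}v\nn|_{z=0}$ determines $(\partial_z v^{1},\partial_z v^{2})$ through an invertible linear correspondence. This rests on the strict diagonal dominance of $M$ under the smallness of $|\nabla h|_{\infty}$, and one must track carefully that $\mathcal{S}^{\varphi}v\nn\times\nn$ is exactly the missing inhomogeneity of the $M$-system and that its normal component drops out cleanly. A secondary, more technical point is justifying the $L^{\infty}$ trace convergence $\partial_j v^{\e,i}|_{z=0}\rto\partial_j v^{i}|_{z=0}$, which one gets from the uniform $L^{\infty}$-type bounds in Proposition \ref{Proposition1.1} together with interpolation and compactness.
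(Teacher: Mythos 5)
Your proposal matches the paper's proof in structure and substance: both read off $\omega_{vh}^{\e,b}=F^{1,2}[\nabla\varphi^{\e}](\partial_j v^{\e,i})$ from the dynamical boundary condition, both express $\omega_{vh}^{b}-F^{1,2}[\nabla\varphi](\partial_j v^i)$ as a nondegenerate linear image of $\Pi\mathcal{S}^{\varphi}v\nn|_{z=0}$ (equivalently of $\Theta_v=\mathcal{S}^{\varphi}v\nn\times\nn$) via the matrix $\textsf{M}$ in \eqref{M} and the system \eqref{2.8}, and both conclude using the strong $L^{\infty}$ convergence of the tangential traces. Your packaging of the algebra as invertibility of $\mathcal{L}_v=\widetilde{A}\circ M^{-1}\circ\mathcal{T}$ on the tangent plane is logically equivalent to the paper's contradiction step (if the $\varsigma$-combinations vanish then $\mathcal{S}^{\varphi}v\nn\times\nn=0$), with both resting on the smallness of $|\nabla h|_{\infty}$ to make the relevant $2\times 2$ matrices nonsingular.
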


\begin{proof}
We denote $\textsf{S}_{vn} =\Pi \mathcal{S}^{\varphi} v \nn$
and $\textsf{S}_{bn} =\Pi \mathcal{S}^{\varphi} b \nn$.
Since $\textsf{S}_{vn}|_{z=0} \neq 0$ and $\textsf{S}_{bn}|_{z=0} \neq 0$,
due to the fact that
$\mathcal{S}^{\varphi}v\nn = (\mathcal{S}^{\varphi}v \nn\cdot\nn)\nn+\Pi\mathcal{S}^{\varphi} v\nn$
and $\mathcal{S}^{\varphi}b\nn = (\mathcal{S}^{\varphi}b \nn\cdot\nn)\nn + \Pi\mathcal{S}^{\varphi} b\nn$
on the free boundary,
then $\mathcal{S}^{\varphi}v \nn$ and $\mathcal{S}^{\varphi}b \nn$
are not parallel to $\nn$ on the free boundary, namely,
\begin{equation}
\begin{array}{ll}
&\mathcal{S}^{\varphi}v \nn\times\nn = (\mathcal{S}^{\varphi}v \nn\cdot\nn)\nn \times\nn + \Pi\mathcal{S}^{\varphi} v\nn \times\nn
= \Pi\mathcal{S}^{\varphi} v\nn \times\nn \neq 0,
\\
&\mathcal{S}^{\varphi}b \nn\times\nn = (\mathcal{S}^{\varphi}b \nn\cdot\nn)\nn \times\nn + \Pi\mathcal{S}^{\varphi} b\nn \times\nn
= \Pi\mathcal{S}^{\varphi} b\nn \times\nn \neq 0.
\end{array}
\end{equation}
There exist $\Theta_v^1,\Theta_v^2,\Theta_v^3, \Theta_b^1,\Theta_b^2,\Theta_b^3$,
such that $\Pi\mathcal{S}^{\varphi} v\nn \times\nn := (\Theta_v^1,\Theta_v^2,\Theta_v^3)^{\top}$
and $\Pi\mathcal{S}^{\varphi} v\nn \times\nn := (\Theta_b^1,\Theta_b^2,\Theta_b^3)^{\top}$,
which are nonzero vectors.

By $\mathcal{S}^{\varphi}v \nn\times\nn = (\Theta_v^1,\Theta_v^2,\Theta_v^3)^{\top}$,
one has
\begin{equation}\label{Sect4_Vorticity_Discrepancy_2}
\left\{\begin{array}{ll}
n^3[n^1\partial_1^{\varphi} v^1 + \frac{n^2}{2}(\partial_1^{\varphi} v^2 + \partial_2^{\varphi} v^1)
+ \frac{n^3}{2}(\partial_1^{\varphi} v^3 + \partial_z^{\varphi} v^1)] \\[5pt]\quad

= n^1[\frac{n^1}{2}(\partial_1^{\varphi} v^3 + \partial_z^{\varphi} v^1) + \frac{n^2}{2}(\partial_2^{\varphi} v^3 + \partial_z^{\varphi} v^2)
- n^3\partial_1^{\varphi} v^1 - n^3\partial_2^{\varphi} v^2] -\Theta_v^2,

\\[10pt]

n^3[\frac{n^1}{2}(\partial_1^{\varphi} v^2 + \partial_2^{\varphi} v^1) + n^2\partial_2^{\varphi} v^2
+ \frac{n^3}{2}(\partial_2^{\varphi} v^3 + \partial_z^{\varphi} v^2)] \\[5pt]\quad

= n^2[\frac{n^1}{2}(\partial_1^{\varphi} v^3 + \partial_z^{\varphi} v^1) + \frac{n^2}{2}(\partial_2^{\varphi} v^3 + \partial_z^{\varphi} v^2)
- n^3\partial_1^{\varphi} v^1 - n^3\partial_2^{\varphi} v^2] +\Theta_v^1,

\\[10pt]

n^2[n^1\partial_1^{\varphi} v^1 + \frac{n^2}{2}(\partial_1^{\varphi} v^2 + \partial_2^{\varphi} v^1)
+ \frac{n^3}{2}(\partial_1^{\varphi} v^3 + \partial_z^{\varphi} v^1)] \\[5pt]\quad

= n^1[\frac{n^1}{2}(\partial_1^{\varphi} v^2 + \partial_2^{\varphi} v^1) + n^2\partial_2^{\varphi} v^2
+ \frac{n^3}{2}(\partial_2^{\varphi} v^3 + \partial_z^{\varphi} v^2)] + \Theta_v^3.
\end{array}\right.
\end{equation}

Then $\partial_z v^1$ and $\partial_z v^2$ satisfy
\begin{equation}
\begin{array}{ll}
\big[(n^1)^2 + \frac{(n^3)^2}{2} + \frac{1}{2}\frac{(n^1)^4}{(n^3)^2}
-\frac{1}{2}\frac{(n^2)^4}{(n^3)^2}\big]\partial_z v^1
+\big[n^1n^2 + \frac{(n^1)^3n^2}{(n^3)^2}
+ \frac{n^1(n^2)^3}{(n^3)^2} \big]\partial_z v^2
\\[10pt]

= -[\frac{(n^3)^2}{2} - \frac{(n^1)^2}{2}-\frac{(n^2)^2}{2}]\big[\partial_z\varphi\partial_1 v^3
- \partial_1\varphi[- \partial_z\varphi(\partial_1 v^1 + \partial_2 v^2)] \big]
\\[5pt]\quad

+ (\frac{n^1(n^2)^2}{n^3} - 2n^1n^3)(\partial_z\varphi\partial_1 v^1)
- (n^1n^3 + \frac{n^1(n^2)^2}{n^3})(\partial_z\varphi\partial_2 v^2)
\\[5pt]\quad

+ [\frac{(n^2)^2}{n^3}\frac{n^2}{2}- \frac{n^1n^2}{n^3}\frac{n^1}{2}
-\frac{n^2n^3}{2}](\partial_z\varphi\partial_1 v^2 + \partial_z\varphi\partial_2 v^1)
-\partial_z\varphi\Theta_v^2
- \frac{n^2}{n^3}\partial_z\varphi\Theta_v^3,
\end{array}
\end{equation}

\begin{equation}
\begin{array}{ll}
\big[n^1n^2 + \frac{(n^1)^3n^2}{(n^3)^2}
+ \frac{n^1(n^2)^3}{(n^3)^2} \big]\partial_z v^1
+ \big[(n^2)^2 + \frac{1}{2}(n^3)^2
+ \frac{(n^2)^4}{2(n^3)^2}
- \frac{(n^1)^4}{2(n^3)^2}\big]\partial_z v^2
\\[10pt]

= -[\frac{(n^3)^2}{2} - \frac{(n^2)^2}{2}
- \frac{(n^1)^2}{2}]\big[\partial_z\varphi\partial_2 v^3
- {\partial_z\varphi}[-\partial_z\varphi(\partial_1 v^1 + \partial_2 v^2)]\big]
\\[5pt]\quad

- (n^2n^3 + \frac{(n^1)^2n^2}{n^3})(\partial_z\varphi\partial_1 v^1)
+ (\frac{(n^1)^2n^2}{n^3} -2n^2n^3)(\partial_z\varphi\partial_2 v^2)
\\[5pt]\quad

+ (\frac{(n^1)^2}{n^3}\frac{n^1}{2} -\frac{n^1n^3}{2} - \frac{n^1n^2}{n^3}\frac{n^2}{2})
(\partial_z\varphi\partial_1 v^2 + \partial_z\varphi\partial_2 v^1)
+\partial_z\varphi\Theta_v^1 + \frac{n^1}{n^3}\partial_z\varphi\Theta_v^3.
\end{array}
\end{equation}

Similarly, for $b$, one has
\begin{equation*}
\begin{array}{ll}
\big[(n^1)^2 + \frac{(n^3)^2}{2} + \frac{1}{2}\frac{(n^1)^4}{(n^3)^2} - \frac{1}{2}\frac{(n^2)^4}{(n^3)^2}
\big]\partial_z b^1
+ \big[n^1n^2 + \frac{(n^1)^3n^2}{(n^3)^2} + \frac{n^1(n^2)^3}{(n^3)^2} \big]\partial_z b^2
\\[10pt]

= -[\frac{(n^3)^2}{2} - \frac{(n^1)^2}{2} - \frac{(n^2)^2}{2}]\big[\partial_z\varphi\partial_1 b^3
- \partial_1\varphi[- \partial_z\varphi(\partial_1 b^1 + \partial_2 b^2)] \big]
\\[5pt]\quad

+ (\frac{n^1(n^2)^2}{n^3} - 2n^1n^3)(\partial_z\varphi\partial_1 b^1)
- (n^1n^3 + \frac{n^1(n^2)^2}{n^3})(\partial_z\varphi\partial_2 b^2)
\\[5pt]\quad

+ [\frac{(n^2)^2}{n^3}\frac{n^2}{2}- \frac{n^1n^2}{n^3}\frac{n^1}{2} -\frac{n^2n^3}{2}]
(\partial_z\varphi\partial_1 b^2 + \partial_z\varphi\partial_2 b^1)-\partial_z\varphi\Theta_b^2 - \frac{n^2}{n^3}\partial_z\varphi\Theta_b^3,
\end{array}
\end{equation*}

\begin{equation*}
\begin{array}{ll}
\big[n^1n^2 + \frac{(n^1)^3n^2}{(n^3)^2} + \frac{n^1(n^2)^3}{(n^3)^2} \big]\partial_z b^1
+ \big[(n^2)^2 + \frac{1}{2}(n^3)^2 + \frac{(n^2)^4}{2(n^3)^2}
- \frac{(n^1)^4}{2(n^3)^2}\big]\partial_z b^2
\\[10pt]

= -[\frac{(n^3)^2}{2} - \frac{(n^2)^2}{2}
- \frac{(n^1)^2}{2}]\big[\partial_z\varphi\partial_2 b^3
- {\partial_z\varphi}[- \partial_z\varphi(\partial_1 b^1 + \partial_2 b^2)]\big]
\\[5pt]\quad

- (n^2n^3 + \frac{(n^1)^2n^2}{n^3})(\partial_z\varphi\partial_1 b^1)
+ (\frac{(n^1)^2n^2}{n^3} -2n^2n^3)(\partial_z\varphi\partial_2 b^2)
\\[5pt]\quad

+ (\frac{(n^1)^2}{n^3}\frac{n^1}{2} -\frac{n^1n^3}{2}
- \frac{n^1n^2}{n^3}\frac{n^2}{2})(\partial_z\varphi\partial_1 b^2
+ \partial_z\varphi\partial_2 b^1)+\partial_z\varphi\Theta_b^1
+ \frac{n^1}{n^3}\partial_z\varphi\Theta_b^3.
\end{array}
\end{equation*}

We assume $|\nabla h|_{\infty}$ is suitably small such that
the coefficient matrices of
$(\partial_z v^1, \partial_z v^2)^{\top}$
and $(\partial_z b^1, \partial_z b^2)^{\top}$
are non-degenerate to solve
\begin{equation}
\left\{\begin{array}{ll}
\partial_z v^1 = f^5[\nabla\varphi](\partial_j v^i)
-\textsf{M}^{11}(\partial_z\varphi\Theta_v^2 + \frac{n^2}{n^3}\partial_z\varphi\Theta_v^3)
+\textsf{M}^{12}(\partial_z\varphi\Theta_v^1 + \frac{n^1}{n^3}\partial_z\varphi\Theta_v^3),
\\[8pt]

\partial_z v^2 = f^6[\nabla\varphi](\partial_j v^i)
-\textsf{M}^{21}(\partial_z\varphi\Theta_v^2 + \frac{n^2}{n^3}\partial_z\varphi\Theta_v^3)
+\textsf{M}^{22}(\partial_z\varphi\Theta_v^1 + \frac{n^1}{n^3}\partial_z\varphi\Theta_v^3),
\end{array}\right.
\end{equation}
and
\begin{equation}
\left\{\begin{array}{ll}
\partial_z b^1 = f^5[\nabla\varphi](\partial_j b^i)
-\textsf{M}^{11}(\partial_z\varphi\Theta_b^2 + \frac{n^2}{n^3}\partial_z\varphi\Theta_b^3)
+\textsf{M}^{12}(\partial_z\varphi\Theta_b^1 + \frac{n^1}{n^3}\partial_z\varphi\Theta_b^3),
\\[8pt]

\partial_z b^2 = f^6[\nabla\varphi](\partial_j b^i)
-\textsf{M}^{21}(\partial_z\varphi\Theta_b^2 + \frac{n^2}{n^3}\partial_z\varphi\Theta_b^3)
+\textsf{M}^{22}(\partial_z\varphi\Theta_b^1 + \frac{n^1}{n^3}\partial_z\varphi\Theta_b^3),
\\
\end{array}\right.
\end{equation}
where $j=1,2,\ i=1,2,3$ and the matrix $\textsf{M} = (\textsf{M}_{ij})$ is defined in \eqref{M},
$(\textsf{M}^{ij}) = (\textsf{M}_{ij})^{-1}$.

It follows that the boundary values of $\omega_{vh} = (\omega_v^1,\omega_v^2)$ and $\omega_{bh} = (\omega_b^1,\omega_b^2)$ are as follows
\begin{equation}
\begin{array}{ll}
\omega_v^1 = - \frac{\partial_1\varphi\partial_2\varphi}{\partial_z\varphi}\partial_z v^1
- \frac{1 + (\partial_2\varphi)^2}{\partial_z\varphi}\partial_z v^2
+ \partial_2 v^3 + \partial_2\varphi(\partial_1 v^1 + \partial_2 v^2)
\\[8pt]\hspace{0.47cm}

:= \textsf{F}^1 [\nabla\varphi](\partial_j v^i) + \varsigma_1\Theta_v^1 + \varsigma_2\Theta_v^2 + \varsigma_3\Theta_v^3, \\[9pt]

\omega_v^2 = \frac{1+(\partial_1\varphi)^2}{\partial_z\varphi}\partial_z v^1
+ \frac{\partial_1\varphi\partial_2\varphi}{\partial_z\varphi}\partial_z v^2
- \partial_1 v^3 - \partial_1\varphi(\partial_1 v^1 + \partial_2 v^2) \\[8pt]\hspace{0.47cm}

:= \textsf{F}^2 [\nabla\varphi](\partial_j v^i) + \varsigma_4\Theta^1 + \varsigma_5\Theta^2 + \varsigma_6\Theta^3,
\end{array}
\end{equation}
and
\begin{equation}
\begin{array}{ll}
\omega_b^1 = - \frac{\partial_1\varphi\partial_2\varphi}{\partial_z\varphi}\partial_z b^1
- \frac{1 + (\partial_2\varphi)^2}{\partial_z\varphi}\partial_z b^2
+ \partial_2 b^3 + \partial_2\varphi(\partial_1 b^1 + \partial_2 b^2) \\[8pt]\hspace{0.47cm}

:= \textsf{F}^1 [\nabla\varphi](\partial_j b^i) + \varsigma_1\Theta_b^1 + \varsigma_2\Theta_b^2 + \varsigma_3\Theta_b^3, \\[8pt]\hspace{0.47cm}

:= \varsigma_1\Theta_b^1 + \varsigma_2\Theta_b^2 + \varsigma_3\Theta_b^3, \\[9pt]

\omega_b^2 = \frac{1+(\partial_1\varphi)^2}{\partial_z\varphi}\partial_z b^1
+ \frac{\partial_1\varphi\partial_2\varphi}{\partial_z\varphi}\partial_z b^2
- \partial_1 b^3 - \partial_1\varphi(\partial_1 b^1 + \partial_2 b^2) \\[8pt]\hspace{0.47cm}

:= \textsf{F}^2 [\nabla\varphi](\partial_j b^i) + \varsigma_4\Theta_b^1 + \varsigma_5\Theta_b^2 + \varsigma_6\Theta_b^3\\[8pt]\hspace{0.47cm}

:= \varsigma_4\Theta_b^1 + \varsigma_5\Theta_b^2 + \varsigma_6\Theta_b^3,
\end{array}
\end{equation}
where the coefficients $\varsigma_i$ satisfy
\begin{equation}\label{Sect4_Vorticity_Discrepancy_6}
\begin{array}{ll}
\varsigma_1 =\partial_z\varphi[\partial_1\varphi\partial_2\varphi\textsf{M}^{12}
+ (1+(\partial_2\varphi)^2)\textsf{M}^{22}], \\[8pt]
\varsigma_2 =\partial_1\varphi\partial_2\varphi\textsf{M}^{11} + (1 + (\partial_2\varphi)^2)\textsf{M}^{21}, \\[8pt]

\varsigma_3 =\big[(1+(\partial_1\varphi)^2)
\big(-\textsf{M}^{11}\frac{n^2}{n^3} +\textsf{M}^{12}\frac{n^1}{n^3}\partial_z\varphi\big) \\[7pt]\hspace{0.85cm}
+ \partial_1\varphi\partial_2\varphi
\big(-\textsf{M}^{21}\frac{n^2}{n^3}
+\textsf{M}^{22}\frac{n^1}{n^3}\partial_z\varphi\big)\big], \\[9pt]

\varsigma_4 = \partial_z\varphi[(1+(\partial_1\varphi)^2)\textsf{M}^{12} + \partial_1\varphi\partial_2\varphi\textsf{M}^{22}], \\[8pt]
\varsigma_5 = -(1+(\partial_1\varphi)^2)\textsf{M}^{11} - \partial_1\varphi\partial_2\varphi \textsf{M}^{21}, \\[8pt]

\varsigma_6 = (1+(\partial_1\varphi)^2)
\big(-\textsf{M}^{11}\frac{n^2}{n^3}
+\textsf{M}^{12}\frac{n^1}{n^3}\partial_z\varphi\big) \\[7pt]\hspace{0.85cm}
+ \partial_1\varphi\partial_2\varphi
\big(-\textsf{M}^{21}\frac{n^2}{n^3}
+\textsf{M}^{22}\frac{n^1}{n^3}\partial_z\varphi\big).
\end{array}
\end{equation}

If $|\varsigma_1\Theta_v^1 + \varsigma_2\Theta_v^2 + \varsigma_3\Theta_v^3|_{\infty}
= |\varsigma_4\Theta_v^1 + \varsigma_5\Theta_v^2 + \varsigma_6\Theta_v^3|_{\infty} =0$ and $|\varsigma_1\Theta_v^1 + \varsigma_2\Theta_v^2 + \varsigma_3\Theta_v^3|_{\infty}
= |\varsigma_4\Theta_v^1 + \varsigma_5\Theta_v^2 + \varsigma_6\Theta_v^3|_{\infty}=0$,
then
\begin{equation}
\left\{\begin{array}{ll}
\partial_z v^1 = f^5[\nabla\varphi](\partial_j v^i),\ j=1,2,\ i=1,2,3,\\[8pt]

\partial_z v^2 = f^6[\nabla\varphi](\partial_j v^i),\ j=1,2,\ i=1,2,3,
\end{array}\right.
\end{equation}
and
\begin{equation}
\left\{\begin{array}{ll}
\partial_z b^1 = f^5[\nabla\varphi](\partial_j b^i)=0,\ j=1,2,\ i=1,2,3,\\[8pt]

\partial_z b^2 = f^6[\nabla\varphi](\partial_j b^i)=0,\ j=1,2,\ i=1,2,3.
\end{array}\right.
\end{equation}
These imply $\mathcal{S}^{\varphi}v \nn\times\nn =0$
and $\mathcal{S}^{\varphi}b \nn\times\nn =0$, which contradicts our assumption.
Therefore, one of $|\varsigma_1\Theta_v^1 + \varsigma_2\Theta_v^2 + \varsigma_3\Theta_v^3|_{\infty}$,
$|\varsigma_4\Theta_v^1 + \varsigma_5\Theta_v^2 + \varsigma_6\Theta_v^3|_{\infty}$,
$|\varsigma_1\Theta_b^1 + \varsigma_2\Theta_b^2 + \varsigma_3\Theta_b^3|_{\infty}$
and $|\varsigma_4\Theta_v^1 + \varsigma_5\Theta_v^2 + \varsigma_6\Theta_v^3|_{\infty}$
must be nonzero.
Without lose of generality,
we assume $|\varsigma_1\Theta_v^1+\varsigma_2\Theta_v^2+\varsigma_3\Theta_v^3|_{\infty}\neq 0$
or $|\varsigma_1\Theta_b^1 + \varsigma_2\Theta_b^2+\varsigma_3\Theta_b^3|_{\infty}\neq 0$.

As $\e\to 0$, we have the following strong convergence
\begin{equation}
\begin{array}{ll}
|\textsf{F}^1 [\nabla\varphi^{\e}](\partial_j v^{\e,i})-\textsf{F}^1
[\nabla\varphi](\partial_j v^i)|_{L^{\infty}}\to 0,
\\[8pt]

|\textsf{F}^1 [\nabla\varphi^{\e}](\partial_j b^{\e,i})-\textsf{F}^1
[\nabla\varphi](\partial_j b^i)|_{L^{\infty}}\to 0,
\end{array}
\end{equation}
due to enough uniform regularities in co-normal Sobolev space of MHD solutions and its tangential derivatives by \cite{Lee17}.
Thus, if $\e$ is sufficiently small,
$|\textsf{F}^1 [\nabla\varphi^{\e}](\partial_j v^{\e,i}) - \textsf{F}^1 [\nabla\varphi](\partial_j v^i)|_{\infty}
\leq \frac{1}{2}|\varsigma_1\Theta_v^1 + \varsigma_2\Theta_v^2 + \varsigma_3\Theta_v^3|_{\infty}$
and
$|\textsf{F}^1 [\nabla\varphi^{\e}](\partial_j b^{\e,i}) - \textsf{F}^1 [\nabla\varphi](\partial_j b^i)|_{\infty}=0$.
One has
\begin{equation}
\begin{array}{ll}
|\omega_v^{\e,1} -\omega_v^1|_{\infty} \geq |\varsigma_1\Theta_v^1 + \varsigma_2\Theta_v^2 + \varsigma_3\Theta_v^3|_{\infty}
- \big|\textsf{F}^1 [\nabla\varphi^{\e}](\partial_j v^{\e,i}) - \textsf{F}^1 [\nabla\varphi](\partial_j v^i)\big|_{\infty} \\[8pt]\hspace{2cm}

\geq \frac{1}{2}|\varsigma_1\Theta_v^1 + \varsigma_2\Theta_v^2 + \varsigma_3\Theta_v^3|_{\infty},
\\[8pt]

|\omega_b^{\e,1} -\omega_b^1|_{\infty} \geq |\varsigma_1\Theta_b^1 + \varsigma_2\Theta_b^2 + \varsigma_3\Theta_b^3|_{\infty}
- \big|\textsf{F}^1 [\nabla\varphi^{\e}](\partial_j b^{\e,i}) - \textsf{F}^1 [\nabla\varphi](\partial_j b^i)\big|_{\infty} \\[8pt]\hspace{2cm}

\geq \frac{1}{2}|\varsigma_1\Theta_b^1 + \varsigma_2\Theta_b^2 + \varsigma_3\Theta_b^3|_{\infty}.
\end{array}
\end{equation}
Then
\begin{equation}
\begin{array}{ll}
|\omega_{vh}^{\e,b} -\omega_{vh}^b|_{L^{\infty}(\mathbb{R}^2\times (0,T])}
\geq \max\{ |\omega_v^{\e,1} -\omega_v^1|_{\infty} ,\, |\omega_v^{\e,2} -\omega_v^2|_{\infty}\}
\\[8pt]

\geq \frac{1}{2}\max\{|\varsigma_1\Theta_v^1 + \varsigma_2\Theta_v^2 + \varsigma_3\Theta_v^3|_{\infty}, \,
|\varsigma_4\Theta_v^1 + \varsigma_5\Theta_v^2 + \varsigma_6\Theta_v^3|_{\infty}\} >0,
\\[8pt]

|\omega_{bh}^{\e,b} -\omega_{bh}^b|_{L^{\infty}(\mathbb{R}^2\times (0,T])}
\geq \max\{ |\omega_b^{\e,1} -\omega_b^1|_{\infty} ,\, |\omega_b^{\e,2} -\omega_b^2|_{\infty}\}
\\[8pt]

\geq \max\{|\varsigma_1\Theta_b^1 + \varsigma_2\Theta_b^2 + \varsigma_3\Theta_b^3|_{\infty}, \,
|\varsigma_4\Theta_b^1 + \varsigma_5\Theta_b^2 + \varsigma_6\Theta_b^3|_{\infty}\} >0.
\end{array}
\end{equation}
Lemma \ref{lemma4.1} is proved.
\end{proof}

\begin{remark}
In the process of establishing the convergence rate estimate,
we only give the case either $\Pi\mathcal{S}^{\varphi} v\nn|_{z=0} \neq 0$
and $\Pi\mathcal{S}^{\varphi} b\nn|_{z=0} \neq 0$
or the case
$\Pi\mathcal{S}^{\varphi} v\nn|_{z=0} = 0$
and $\Pi\mathcal{S}^{\varphi} b\nn|_{z=0}= 0$ in $(0,T]$.
Similarly, one can obtain the convergence estimate for other cases,
one is $\Pi\mathcal{S}^{\varphi} v\nn|_{z=0} \neq 0$, $\Pi\mathcal{S}^{\varphi} b\nn|_{z=0} = 0$,
the other is $\Pi\mathcal{S}^{\varphi} v\nn|_{z=0} = 0$, $\Pi\mathcal{S}^{\varphi} b\nn|_{z=0} \neq 0$.
\end{remark}

\subsection{Existence of Strong Vorticity Layer}
In this subsection, we use the $L^\infty$ estimate to prove the
existence of strong vorticity layer
when the ideal MHD boundary value satisfies
$\Pi\mathcal{S}^{\varphi} v\nn|_{z=0} \neq 0$
or $\Pi\mathcal{S}^{\varphi} b\nn|_{z=0} \neq 0$ in $(0,T]$.

As the initial vorticity layer is weak,
the following theorem shows that it exists strong vorticity layer
due to the discrepancy between boundary value of MHD vorticity and boundary value of
ideal MHD vorticity.
\begin{theorem}\label{Theorem4.1}
If the ideal MHD solution satisfies $\Pi\mathcal{S}^{\varphi} v\nn|_{z=0} \neq 0$
or $\Pi\mathcal{S}^{\varphi} b\nn|_{z=0} \neq 0$ in $(0,T]$,
and $\Pi\mathcal{S}^{\varphi} v\nn|_{z=0},\Pi\mathcal{S}^{\varphi} b\nn|_{z=0} \in H^4(\mathbb{R}^2\times[0,T])$,
the initial data satisfies $\lim\limits_{\e\rto 0}(\nabla^{\varphi^{\e}}\times v_0^{\e})
- \nabla^{\varphi}\times\lim\limits_{\e\rto 0} v_0^{\e} = 0$
and $\lim\limits_{\e\rto 0}(\nabla^{\varphi^{\e}}\times b_0^{\e})
- \nabla^{\varphi}\times\lim\limits_{\e\rto 0} b_0^{\e} = 0$,
then $\lim\limits_{\e\rto 0}\|\omega_v^{\e} -\omega_v\|_{L^{\infty}(\mathbb{R}^2\times [0, O(\sqrt{\e}))\times (0,T])} \neq 0$ or $\lim\limits_{\e\rto 0}\|\omega_b^{\e} -\omega_b\|_{L^{\infty}(\mathbb{R}^2\times [0, O(\sqrt{\e}))\times (0,T])} \neq 0$.
\end{theorem}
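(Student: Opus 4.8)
The plan is to reuse the Lagrangian framework of Section~3: via the maps $Y_1,Y_2$ the tangential differences $\hat\omega_{vh}=\omega_v^\epsilon-\omega_v$ and $\hat\omega_{bh}=\omega_b^\epsilon-\omega_b$ are encoded in $W_\pm=e^{-\gamma t}(\hat\omega_{vh}\pm\hat\omega_{bh})(t,\Phi^{-1}\circ Y_i)$, which solve a coupled uniformly parabolic system with damping $\gamma a_0W_+-(f_v^7-f_b^7)W_-$, $\gamma b_0W_--(f_v^7+f_b^7)W_+$ ($\gamma$ chosen so the damping matrix is positive, as in Section~3), force terms with $\|I_{v2}\|_{L^\infty},\|I_{b2}\|_{L^\infty}\to0$, and boundary data $\hat\omega_{vh}^b\pm\hat\omega_{bh}^b$. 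I would decompose $W_\pm=W_\pm^{ini}+W_\pm^{fo}+W_\pm^{bdy}$ exactly as in \eqref{ini}, \eqref{fo}, \eqref{bdy}. First I dispose of the first two pieces: since $\lim_{\epsilon\to0}(\nabla^{\varphi^\epsilon}\times v_0^\epsilon)-\nabla^\varphi\times\lim_{\epsilon\to0}v_0^\epsilon=0$ and likewise for $b$, we have $\|\hat\omega_{vh}|_{t=0}\|_{L^\infty},\|\hat\omega_{bh}|_{t=0}\|_{L^\infty}\to0$, so the Duhamel representation used for \eqref{ini1} gives $\|W_\pm^{ini}\|_{L^\infty}\to0$; the same Duhamel argument with $\|I_{v2}\|_{L^\infty},\|I_{b2}\|_{L^\infty}\to0$ gives $\|W_\pm^{fo}\|_{L^\infty}\to0$. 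Thus the discrepancy is asymptotically carried entirely by $W_\pm^{bdy}$, which solves the homogeneous coupled system with zero initial data and the above boundary data.

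Next I would invoke Lemma~\ref{lemma4.1}: under $\Pi\mathcal S^\varphi v\nn|_{z=0}\neq0$ or $\Pi\mathcal S^\varphi b\nn|_{z=0}\neq0$ in $(0,T]$, the boundary discrepancy does not vanish, so $\liminf_{\epsilon\to0}|\hat\omega_{vh}^b|_{L^\infty(\mathbb{R}^2\times(0,T])}>0$ or $\liminf_{\epsilon\to0}|\hat\omega_{bh}^b|_{L^\infty(\mathbb{R}^2\times(0,T])}>0$. Since $W_+^{bdy}|_{z=0}=\hat\omega_{vh}^b+\hat\omega_{bh}^b$ and $W_-^{bdy}|_{z=0}=\hat\omega_{vh}^b-\hat\omega_{bh}^b$, the identities $\hat\omega_{vh}^b=\tfrac12((W_+^{bdy}+W_-^{bdy})|_{z=0})$, $\hat\omega_{bh}^b=\tfrac12((W_+^{bdy}-W_-^{bdy})|_{z=0})$ give $\big|W_+^{bdy}|_{z=0}\big|_{L^\infty}+\big|W_-^{bdy}|_{z=0}\big|_{L^\infty}\geq 2\max\{|\hat\omega_{vh}^b|_{L^\infty},|\hat\omega_{bh}^b|_{L^\infty}\}$, so at least one boundary trace of $W_\pm^{bdy}$ has $L^\infty$-norm bounded below as $\epsilon\to0$. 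The hypothesis $\Pi\mathcal S^\varphi v\nn|_{z=0},\Pi\mathcal S^\varphi b\nn|_{z=0}\in H^4(\mathbb{R}^2\times[0,T])$ enters here and below: through the algebraic relations \eqref{Sect4_Vorticity_Discrepancy_6} it forces the boundary data of $W_\pm^{bdy}$ into $H^4$, which is the regularity needed to run the boundary-layer analysis with a controlled remainder.

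The core step is the scaling analysis of $W_\pm^{bdy}$, carried out as announced in the introduction by passing to a symbolic ODE via Fourier transform in $y$ and Laplace transform in $t$. Freezing the uniformly bounded, uniformly positive coefficients $a_0,b_0,(a_{ij}),(b_{ij})$ and treating their variation together with the off-diagonal damping couplings $f_v^7\mp f_b^7$ as a perturbation absorbed by Duhamel, the model symbol equation for $\widehat{W}_\pm^{bdy}(z)$ reads $\epsilon\,\widehat{W}''=\big(\text{Re-positive symbol in }|\xi|^2,i\tau,\gamma\big)\widehat{W}$, whose unique bounded solution in the half space is $\widehat{W}_\pm^{bdy}(z)=\widehat{g}_\pm\,\exp\!\big(z\,\sqrt{(\cdots)/\epsilon}\big)$; after the rescaling $z=\sqrt\epsilon\,\zeta$ this converges to a fixed boundary-layer profile $\Gamma_\pm(\zeta,\cdot)$ with $\Gamma_\pm(0,\cdot)=g_\pm$ and $\partial_\zeta\Gamma_\pm$ bounded, the inverse transforms being legitimized by the $H^4$ bound. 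Hence there are $\delta>0$ and a point $(t^\ast,y^\ast)$ with $|g_\pm(t^\ast,y^\ast)|$ bounded below such that $|W_\pm^{bdy}(t^\ast,y^\ast,z)-g_\pm(t^\ast,y^\ast)|=O(\delta)+o(1)$ for $|z|\leq\delta\sqrt\epsilon$, so $\|W_\pm^{bdy}\|_{L^\infty(\mathbb{R}^2\times[0,O(\sqrt\epsilon))\times(0,T])}$ stays bounded below. Combining with the first two steps, at least one of $\hat\omega_{vh},\hat\omega_{bh}$ stays bounded below in an $O(\sqrt\epsilon)$-thin layer adjacent to $\{z=0\}$ once we undo $\Phi^{-1}\circ Y_i$ (bi-Lipschitz uniformly in $\epsilon$, boundary preserving since $b|_{z=0}=0$ and $h_{1,2}=h$) and the bounded factor $e^{-\gamma t}$. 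As $\hat\omega_{vh}$, $\hat\omega_{bh}$ are tangential components of $\omega_v^\epsilon-\omega_v$, $\omega_b^\epsilon-\omega_b$, this yields $\lim_{\epsilon\to0}\|\omega_v^\epsilon-\omega_v\|_{L^\infty(\mathbb{R}^2\times[0,O(\sqrt\epsilon))\times(0,T])}\neq0$ or $\lim_{\epsilon\to0}\|\omega_b^\epsilon-\omega_b\|_{L^\infty(\mathbb{R}^2\times[0,O(\sqrt\epsilon))\times(0,T])}\neq0$.

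The main obstacle is the scaling step for the \emph{coupled}, \emph{variable-coefficient} parabolic system: one must check that coefficient variation and the off-diagonal damping $f_v^7\mp f_b^7$ neither contaminate the $\exp(z\sqrt{(\cdots)/\epsilon})$ profile nor create a cancellation between $W_+^{bdy}$ and $W_-^{bdy}$ at the boundary, and that the short-time/corner regime near $t=0$ (where the layer thins to width $\sqrt{\epsilon t}$) is harmless — which it is, since the $L^\infty$ over $(0,T]$ can be realized at the fixed time $t^\ast\leq T$ at which the boundary discrepancy persists, where the layer has width comparable to $\sqrt\epsilon$. The uniform positivity of $a_0,b_0$, the sign of $\gamma$, and the $H^4$ control from \eqref{Sect4_Vorticity_Discrepancy_6} are what let the perturbative terms be swept into the $o(1)$ error in $L^\infty$ via Duhamel's principle.
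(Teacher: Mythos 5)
Your proposal follows the same strategy as the paper: reuse the Lagrangian/damping framework of Section~3 to recast the tangential-vorticity discrepancy as $W_{\pm}$, split off the pieces driven by initial data and by forcing (which vanish because those sources vanish), invoke Lemma~\ref{lemma4.1} for the nonvanishing boundary trace, and then pass to a symbolic ODE in $z$ with an $\exp\{\cdot\,z/\sqrt{\e}\}$ factor whose scaling determines where the layer persists. The only differences are cosmetic: you keep the three-piece decomposition $W_{\pm}^{ini}+W_{\pm}^{fo}+W_{\pm}^{bdy}$ while the paper merges the first two into a single forced problem, and you phrase the scaling as a rescaled profile $\Gamma_{\pm}(\zeta,\cdot)$ at $z=\sqrt{\e}\zeta$ whereas the paper simply evaluates $\exp\{\frac{-A_{v1}+\sqrt{A_{v1}^2-4A_{v0}}}{2}\frac{z}{\sqrt{\e}}\}$ at $z=-\e^{\frac{1}{2}\pm\delta_z}$. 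One small imprecision worth noting: you describe the off-diagonal coupling $f^7_v\mp f^7_b$ as swept into an $o(1)$ error, but the paper's solution formula \eqref{bdys} carries it as an $O(1)$ Duhamel integral in the other component; the paper closes the argument not by showing that contribution is small but by combining the two estimates into $A_{v4}\|\tilde W^{bdy}_+\|_{L^{\infty}}+A_{b4}\|\tilde W^{bdy}_-\|_{L^{\infty}}\ge\|\exp\{\cdots\}\tilde W^{bdy}_+|_{z=0}\|_{L^{\infty}}+\|\exp\{\cdots\}\tilde W^{bdy}_-|_{z=0}\|_{L^{\infty}}$ with $A_{v4},A_{b4}$ only bounded, then reading off that at least one of $W^{bdy}_{\pm}$ (hence at least one of $\hat\omega_{vh},\hat\omega_{bh}$) stays bounded below. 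Your final conclusion is correct, but the mechanism is a linear combination, not perturbative absorption.
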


\begin{proof}
First, we consider the equations $(\ref{Vorticity Layer1})$ with small initial data:
\begin{equation}\label{4.16}
\left\{
\begin{aligned}
&a_0\partial_tW_+-\epsilon\partial_i(a_{i,j}\partial_jW_+)+\gamma a_0W_+-(f^7_v-f^7_b)W_-
=I_{v2},
\\
&b_0\partial_tW_--\epsilon\partial_i(b_{i,j}\partial_jW_-)+\gamma b_0W_--(f^7_v+f^7_b)W_+
=I_{b2},
\\
&W_+|_{z=0}=e^{-\gamma t}(\hat{\omega}^b_{vh}+\hat{\omega}^b_{bh})
=e^{-\gamma t}(\omega^\epsilon_{v0}-\omega_{v0}+\omega^\epsilon_{b0}-\omega_{b0})\nrightarrow0,
\\
& W_-|_{z=0}=e^{-\gamma t}(\hat{\omega}^b_{vh}-\hat{\omega}^b_{bh})
=e^{-\gamma t}(\omega^\epsilon_{v0}-\omega_{v0}-\omega^\epsilon_{b0}+\omega_{b0})\nrightarrow0,\\
&W_+|_{t=0}=\hat{\omega}_{vh,0}+\hat{\omega}_{bh,0}
=\omega^\epsilon_{vh0}-\omega_{vh0}+\omega^\epsilon_{bh0}-\omega_{bh0}\to0,\\
& W_-|_{t=0}=\hat{\omega}_{vh,0}-\hat{\omega}_{bh,0}
=\omega^\epsilon_{vh0}-\omega_{vh0}-\omega^\epsilon_{bh0}+\omega_{bh0}\to0.
\end{aligned}
\right.
\end{equation}

We decompose $W_{\pm} = W_{\pm}^{bdy} + W_{\pm}^{fo}$,
such that $W_{\pm}^{fo}$ satisfy the nonhomogeneous equations:
\begin{equation}\label{fo1}
\left\{
\begin{aligned}
&a_0\partial_tW^{fo}_+-\epsilon\partial_i(a_{i,j}\partial_jW^{fo}_+)
+\gamma a_0W^{fo}_+-(f^7_v-f^7_b)W^{fo}_-
=I_{v2},
\\
&b_0\partial_tW^{fo}_--\epsilon\partial_i(b_{i,j}\partial_jW^{fo}_-)
+\gamma b_0W^{fo}_--(f^7_v+f^7_b)W^{fo}_+
=I_{b2},
\\
&W^{fo}_+|_{z=0}=0, W^{f0}_-|_{z=0}=0,\\
&W^{fo}_+|_{t=0}=\hat{\omega}_{vh,0}+\hat{\omega}_{bh,0}\to0,\\
&W^{fo}_-|_{t=0}=\hat{\omega}_{vh,0}-\hat{\omega}_{bh,0}\to0,
\end{aligned}
\right.
\end{equation}
and $W_{\pm}^{bdy}$ satisfy the following equations:
\begin{equation}\label{bdy1}
\left\{
\begin{aligned}
&a_0\partial_tW^{bdy}_+-\epsilon\partial_i(a_{i,j}\partial_jW^{bdy}_+)
+\gamma a_0W^{bdy}_+-(f^7_v-f^7_b)W^{bdy}_-
=0,
\\
&b_0\partial_tW^{bdy}_--\epsilon\partial_i(b_{i,j}\partial_jW^{bdy}_-)
+\gamma b_0W^{bdy}_--(f^7_v+f^7_b)W^{bdy}_+
=0,
\\
&W^{bdy}_+|_{z=0}=e^{-\gamma t}(\hat{\omega}^b_{vh}+\hat{\omega}^b_{bh})\nrightarrow0,\\
&W^{bdy}_-|_{z=0}=e^{-\gamma t}(\hat{\omega}^b_{vh}-\hat{\omega}^b_{bh})\nrightarrow0,
\\
&W^{bdy}_+|_{t=0}=0, W^{bdy}_-|_{t=0}=0.
\end{aligned}
\right.
\end{equation}

For the diffusion equation \eqref{fo1} with coupled damping term
and homogeneous Dirichlet boundary condition,
the estimate of
$\|W_{\pm}^{f\!o}\|_{L^{\infty}}$
is similar to that of \eqref{ini}.
It follows that
$$\|W_{+}^{f\!o}\|_{L^{\infty}}+\|W_{-}^{f\!o}\|_{L^{\infty}}\geq 0.$$

The homogeneous equations $(\ref{bdy1})$ is a heat equations with coupled damping and nonzero boundary value.
It is different from the case of \eqref{bdy},
since $W^{bdy}_+|_{z=0}\nrightarrow0$ or $W^{bdy}_-|_{z=0}\nrightarrow0$.
By using symbolic analysis,
see \cite{Masmoudi17} for more details, we rewrite $(\ref{bdy1})$ in the following form:
\begin{equation}\label{bdy2}
\left\{\begin{aligned}
&\e \partial_{zz} W_+^{bdy} + \e(\frac{\partial_z a_{33}}{a_{33}}+ \sum\limits_{j=1,2}\frac{\partial_j a_{j3}}{a_{33}})\partial_z W_+^{bdy}
+ 2\e \sum\limits_{j=1,2}\frac{a_{j3}}{a_{33}}\partial_{jz} W_+^{bdy}
\\
&\quad+ \e \sum\limits_{j=1,2}\frac{\partial_z a_{j3}}{a_{33}}\partial_j W_+^{bdy} + \e \sum\limits_{j=1,2}\frac{\partial_i a_{ij}}{a_{33}}\partial_j W_+^{bdy}
+ \e \sum\limits_{j=1,2}\frac{a_{ij}}{a_{33}}\partial_{ij} W_+^{bdy}
\\
&\quad- \frac{a_0}{a_{33}}\partial_t W_+^{bdy}
- \frac{1}{a_{33}}\big(\gamma a_0 W_+^{bdy} - (f^7_v-f^7_b) W_-^{bdy}\big) =0,
\\
&\e \partial_{zz} W_-^{bdy} + \e(\frac{\partial_z b_{33}}{b_{33}}
+ \sum\limits_{j=1,2}\frac{\partial_j b_{j3}}{b_{33}})\partial_z W_-^{bdy}
+ 2\e \sum\limits_{j=1,2}\frac{a_{j3}}{b_{33}}\partial_{jz} W_-^{bdy}
\\
&\quad+ \e \sum\limits_{j=1,2}\frac{\partial_z b_{j3}}{b_{33}}\partial_j W_-^{bdy}
+ \e \sum\limits_{j=1,2}\frac{\partial_i b_{ij}}{b_{33}}\partial_j W_-^{bdy}
+ \e \sum\limits_{j=1,2}\frac{b_{ij}}{b_{33}}\partial_{ij} W_-^{bdy}
\\
&\quad- \frac{b_0}{b_{33}}\partial_t W_-^{bdy}
- \frac{1}{a_{33}}\big(\gamma b_0W_-^{bdy} - (f^7_v+f^7_b) W_+^{bdy}\big) =0,
\\
&W^{bdy}_+|_{z=0}=e^{-\gamma t}(\hat{\omega}^b_{vh}+\hat{\omega}^b_{bh})\nrightarrow0,
\\
&W^{bdy}_-|_{z=0}=e^{-\gamma t}(\hat{\omega}^b_{vh}-\hat{\omega}^b_{bh})\nrightarrow0,
\\
&W^{bdy}_+|_{t=0}=0, W^{bdy}_-|_{t=0}=0.
\end{aligned}\right.
\end{equation}

We define the following Fourier transformation
with respect to $(t,y)\in\mathbb{R}_{+}\times\mathbb{R}^2$,
\begin{equation*}
\begin{array}{ll}
\mathcal{F}[W](\tau,\xi,z) = \int_0^{+\infty}\int_{\mathbb{R}^3_{-}}
e^{-i \tau t - i\xi\cdot y} W(t,y,z) \,\mathrm{d}t\mathrm{d}y.
\end{array}
\end{equation*}

Taking $z$ as a parameter,
denote that $\tilde{W}_\pm^{bdy}=\mathcal{F}[e^{-\gamma t}(\hat{\omega}^{bdy}_{vh}\pm\hat{\omega}^{bdy}_{bh})]$,
then the symbolic version of $(\ref{bdy2})$ becomes
\begin{equation}\label{4.20}
\left\{\begin{aligned}
&\e \partial_{zz} \tilde{W}_+^{bdy} + A_{v1} \sqrt{\e} \partial_z \tilde{W}_+^{bdy}
+ A_{v3} \tilde{W}_+^{bdy}= A_{v2}\tilde{W}_-^{bdy},
\\
&\e \partial_{zz} \tilde{W}_-^{bdy} + A_{b1} \sqrt{\e} \partial_z \tilde{W}_-^{bdy}
+ A_{b3} \tilde{W}_-^{bdy}= A_{b2} \tilde{W}_+^{bdy},
\\
&\tilde{W}_+^{bdy}|_{z=0} = \mathcal{F}[e^{-\gamma t}(\hat{\omega}^b_{vh}+\hat{\omega}^b_{bh})]
\nrightarrow 0,
\\
&\tilde{W}_-^{bdy}|_{z=0} = \mathcal{F}[e^{-\gamma t}(\hat{\omega}^b_{vh}-\hat{\omega}^b_{bh})]
\nrightarrow 0,
\\
&\tilde{W}_+^{bdy}|_{t=0} = 0, \tilde{W}_-^{bdy}|_{t=0} = 0.
\end{aligned}\right.
\end{equation}
where the Fourier multipliers are as follows:
\begin{equation}
\begin{aligned}
&A_{v1} = \sqrt{\e}(\frac{\partial_z a_{33}}{a_{33}}+ \sum\limits_{j=1,2}\frac{\partial_j a_{j3}}{a_{33}}
+ 2i \sum\limits_{j=1,2}\frac{a_{j3}}{a_{33}} \xi^j),
\\
&A_{v0} = i\e \sum\limits_{j=1,2}\frac{\partial_z a_{j3}}{a_{33}}\xi^j + i\e \sum\limits_{j=1,2}\frac{\partial_i a_{ij}}{a_{33}}\xi^j
- \e \sum\limits_{j=1,2}\frac{a_{ij}}{a_{33}}\xi^i\xi^j
- i\tau \frac{a_0}{a_{33}}
- \frac{1}{a_{33}}\gamma a_0,
\\
&A_{v2} =\frac{1}{a_{33}}(f^7_v-f^7_b),
\end{aligned}
\end{equation}
\begin{equation}
\begin{aligned}
&A_{b1} = \sqrt{\e}(\frac{\partial_z b_{33}}{b_{33}}+ \sum\limits_{j=1,2}\frac{\partial_j b_{j3}}{b_{33}}
+ 2i \sum\limits_{j=1,2}\frac{b_{j3}}{b_{33}} \xi^j),
\\
&A_{b0} = i\e \sum\limits_{j=1,2}\frac{\partial_z b_{j3}}{b_{33}}\xi^j + i\e \sum\limits_{j=1,2}\frac{\partial_i b_{ij}}{b_{33}}\xi^j
- \e \sum\limits_{j=1,2}\frac{b_{ij}}{b_{33}}\xi^i\xi^j
- i\tau \frac{b_0}{b_{33}}
- \frac{1}{b_{33}}\gamma b_0,
\\
&A_{b2} =\frac{1}{b_{33}}(f^7_v+f^7_b),
\end{aligned}
\end{equation}
Due to $|a_0| + |a_{ij}| + \sqrt{\e}|\partial_z a_{ij}| \leq C$,
$|b_0| + |b_{ij}| + \sqrt{\e}|\partial_z b_{ij}| \leq C$, for some $C>0$,
we can neglect the dependence in $t, y$ in the coefficient of \eqref{bdy2}.
As $\e\rto 0$, one has
\begin{equation*}
\begin{aligned}
&A_{v1} \rto \sqrt{\e}\frac{\partial_z a_{33}}{a_{33}},
A_{b1} \rto \sqrt{\e}\frac{\partial_z b_{33}}{b_{33}},
\\
&- A_{v0} \rto \frac{1}{a_{33}}\gamma a_0
+ i\tau \frac{a_0}{a_{33}} - i\e \sum\limits_{j=1,2}\frac{\partial_z a_{j3}}{a_{33}}\xi^j,
\\
&- A_{b0} \rto \frac{1}{a_{33}}\gamma b_0
+ i\tau \frac{b_0}{b_{33}} - i\e \sum\limits_{j=1,2}\frac{\partial_z b_{j3}}{a_{33}}\xi^j,
\\
&A_{v2} \rto \frac{1}{a_{33}}(f^7_v-f^7_b),
A_{b2} \rto \frac{1}{b_{33}}(f^7_v+f^7_b).
\end{aligned}
\end{equation*}
When $\e>0$ is sufficiently small, we can treat the values of
$A_{v1}$, $A_{b1}$, $A_{v3}$ and $A_{b3}$ as their limits.

It follows from \eqref{4.20} that
\begin{equation}\label{bdyz}
\begin{aligned}
&\e \partial_{zz} \tilde{W}_+^{bdy} + A_{v1} \sqrt{\e} \partial_z \tilde{W}_+^{bdy}
+ A_{v0} \tilde{W}_+^{bdy}= A_{v2} \tilde{W}_-^{bdy},
\\[6pt]
&\e \partial_{zz} \tilde{W}_-^{bdy} + A_{b1} \sqrt{\e} \partial_z \tilde{W}_-^{bdy}
+ A_{b0} \tilde{W}_-^{bdy}= A_{b2} \tilde{W}_+^{bdy},
\end{aligned}
\end{equation}
which is a coupled ODE system.
The solution of \eqref{bdyz} is that
\begin{equation}\label{bdys}
\left\{\begin{array}{ll}
\tilde{W}_+^{bdy} = \exp\{\frac{- A_{v1} + \sqrt{A_{v1}^2
-4 A_{v0}}}{2} \frac{z}{\sqrt{\e}}\}\tilde{W}_+^{bdy}|_{z=0}
+\int_0^z\Psi_v(z,s)A_{v2} \tilde{W}_-^{bdy}\mathrm{d}s,
\\[12pt]

\tilde{W}_-^{bdy} = \exp\{\frac{- A_{b1} + \sqrt{A_{b1}^2 -4 A_{b0}}}{2}
\frac{z}{\sqrt{\e}}\}\tilde{W}_-^{bdy}|_{z=0}
+\int_0^z\Psi_b(z,s)A_{b2} \tilde{W}_+^{bdy}\mathrm{d}s,
\end{array}\right.
\end{equation}
where $\Psi_v(z,s)$, $\Psi_b(z,s)$ depends on
the solutions of homogeneous equations of \eqref{bdyz}.

When $\e$ is sufficiently small, one of the complex numbers, at least,
$\sqrt{A_{v1}^2 -4 A_{v0}}$ and $\sqrt{A_{b1}^2 -4 A_{b0}}$
always has positive real part since $\Re (A_{v1}^2 -4 A_{v0})>0$,
$\Re (A_{b1}^2 -4 A_{b0})>0$
where $\Re$ represents the real part. Then we choose this one.
It follows that $|\Re \sqrt{A_{v1}^2 -4 A_{v0}}| >| - A_{v1}|$,
$|\Re \sqrt{A_{b1}^2 -4 A_{b0}}| >| - A_{b1}|$
and $\big\|\frac{- A_{v1} + \sqrt{A_{v1}^2 -4 A_{v0}}}{2}\big\|_{L^{\infty}}<+\infty$,
$\big\|\frac{- A_{b1} + \sqrt{A_{b1}^2 -4 A_{b0}}}{2}\big\|_{L^{\infty}}<+\infty$.
Then, one has $\|\int_0^z\Psi_v(z,s)A_{v2}\mathrm{d}s\|_{L^\infty}\leq+\infty$,
and $\|\int_0^z\Psi_b(z,s)A_{b2}\mathrm{d}s\|_{L^\infty}\leq+\infty$.

If $z =O(\e^{\frac{1}{2} +\delta_z})$, where $\delta_z \geq0$,
for example, we simply assume
$z = - \e^{\frac{1}{2} +\delta_z}$,
as $\e\rto 0$,
$|\exp\{\frac{- A_{v1} + \sqrt{A_{v1}^2-4 A_{v0}}}{2} \frac{z}{\sqrt{\e}}\}|\rto c_1$,
$|\exp\{\frac{- A_{b1} + \sqrt{A_{b1}^2-4 A_{b0}}}{2} \frac{z}{\sqrt{\e}}\}|\rto c_2$,
 where $c_i>0, i=1, 2$.
Hence,
$\|\exp\{\frac{- A_{v1} + \sqrt{A_{v1}^2
-4 A_{v0}}}{2} \frac{z}{\sqrt{\e}}\}\tilde{W}_+^{bdy}|_{z=0}\|_{L^\infty} \nrightarrow 0$,
$\|\exp\{\frac{- A_{b1} + \sqrt{A_{b1}^2
-4 A_{b0}}}{2} \frac{z}{\sqrt{\e}}\}\tilde{W}_+^{bdy}|_{z=0}\|_{L^\infty} \nrightarrow 0$.

If $z =O(\e^{\frac{1}{2} -\delta_z})$, where $\delta_z <0$,
for example, we simply assume
$z = - \e^{\frac{1}{2} -\delta_z}$,
as $\e\rto 0$,
$|\exp\{\frac{- A_{v1} + \sqrt{A_{v1}^2-4 A_{v0}}}{2} \frac{z}{\sqrt{\e}}\}|\rto 0$,
$|\exp\{\frac{- A_{b1} + \sqrt{A_{b1}^2-4 A_{b0}}}{2} \frac{z}{\sqrt{\e}}\}|\rto 0$.
Therefore, we obtain the results that
$\|\exp\{\frac{- A_{v1} + \sqrt{A_{v1}^2
-4 A_{v0}}}{2} \frac{z}{\sqrt{\e}}\}\tilde{W}_+^{bdy}|_{z=0}\|_{L^\infty} \rto 0$,
$\|\exp\{\frac{- A_{b1} + \sqrt{A_{b1}^2
-4 A_{b0}}}{2} \frac{z}{\sqrt{\e}}\}\tilde{W}_+^{bdy}|_{z=0}\|_{L^\infty} \rto 0$.

Applying $L^\infty$ to \eqref{bdys}, then
\begin{equation}
\begin{array}{ll}
\|\tilde{W}_+^{bdy}\|_{L^\infty} = \|\exp\{\frac{- A_{v1} + \sqrt{A_{v1}^2
-4 A_{v0}}}{2} \frac{z}{\sqrt{\e}}\}\tilde{W}_+^{bdy}|_{z=0}
+\int_0^z\Psi_v(z,s)A_{v2} \tilde{W}_-^{bdy}\mathrm{d}s\|_{L^\infty}
\\[12pt]

\geq \|\exp\{\frac{- A_{v1} + \sqrt{A_{v1}^2
-4 A_{v0}}}{2} \frac{z}{\sqrt{\e}}\}\tilde{W}_+^{bdy}|_{z=0}\|_{L^\infty}
-\|\int_0^z\Psi_v(z,s)A_{v2} \mathrm{d}s\|_{L^\infty}
\|\tilde{W}_-^{bdy}\|_{L^\infty},
\\[12pt]

\|\tilde{W}_-^{bdy}\|_{L^\infty} = \|\exp\{\frac{- A_{b1} + \sqrt{A_{b1}^2 -4 A_{b0}}}{2}
\frac{z}{\sqrt{\e}}\}\tilde{W}_-^{bdy}|_{z=0}
+\int_0^z\Psi_b(z,s)A_{b2} \tilde{W}_+^{bdy}\mathrm{d}s\|_{L^\infty}
\\[12pt]

\geq \|\exp\{\frac{- A_{b1} + \sqrt{A_{b1}^2 -4 A_{b0}}}{2}
\frac{z}{\sqrt{\e}}\}\tilde{W}_-^{bdy}|_{z=0}\|_{L^\infty}
-\|\int_0^z\Psi_b(z,s)A_{b2} \mathrm{d}s\|_{L^\infty}
\|\tilde{W}_+^{bdy}\|_{L^\infty}.
\end{array}
\end{equation}
Therefore,
\begin{equation}
\begin{aligned}
A_{v4}\|\tilde{W}_+^{bdy}\|_{L^\infty}
+A_{b4}\|\tilde{W}_-^{bdy}\|_{L^\infty}
\geq& \|\exp\big\{\frac{- A_{v1} + \sqrt{A_{v1}^2
-4 A_{v0}}}{2} \frac{z}{\sqrt{\e}}\big\}\tilde{W}_+^{bdy}|_{z=0}\|_{L^\infty}
\\
&+ \|\exp\big\{\frac{- A_{b1} + \sqrt{A_{b1}^2 -4 A_{b0}}}{2}
\frac{z}{\sqrt{\e}}\big\}\tilde{W}_-^{bdy}|_{z=0}\|_{L^\infty},
\end{aligned}
\end{equation}
where $A_{v4}=1+\|\int_0^z\Psi_b(z,s)A_{b2} \mathrm{d}s\|_{L^\infty}$,
$A_{v4}=1+\|\int_0^z\Psi_v(z,s)A_{v2} \mathrm{d}s\|_{L^\infty}$.

Therefore, we have $\|W_+^{bdy}\|_{L^\infty}+\|W_-^{bdy}\|_{L^\infty}>0$,
which implies that $\lim\limits_{\e\rto 0}\|W_+^{bdy}\|_{L^{\infty}} \neq 0$
or $\lim\limits_{\e\rto 0}\|W_-^{bdy}\|_{L^{\infty}} \neq 0$ holds
in some set located in the interior.
This completes the proof of Theorem \ref{Theorem4.1}.
\end{proof}

\section{The Convergence Rates Estimates for $\sigma=0$}
In this section,
we investigate the asymptotic behavior of the solutions to the
free boundary problems for MHD systems \eqref{MHDF} with $\sigma=0$ as $\e\to0$.
Denote by $\hat{v} =v^{\e} -v,\hat{b} =b^{\e} -b,\hat{q} =q^{\e} -q, \hat{h} =h^{\e} -h$,
and denote the $i-$th components of $f^{\e}$ and $f$ by $f^{\e,i}$ and $f^i$ respectively.

\subsection{Estimates for the Tangential Derivatives}
In order to estimate the tangential derivatives of $\hat{v}$ and $\hat{b}$,
that is, to bound $\|\partial_t^{\ell} \hat{v}\|_{L^2}$, $\|\partial_t^{\ell} \hat{b}\|_{L^2}$,
$\sqrt{\e}\|\nabla \partial_t^{\ell}\mathcal{Z}^{\alpha}\hat{v}\|_{L^2}$,
$\sqrt{\e}\|\nabla \partial_t^{\ell}\mathcal{Z}^{\alpha}\hat{b}\|_{L^2}$,
$\sqrt{\e}\|\mathcal{S}^{\varphi}\partial_t^{\ell}\mathcal{Z}^{\alpha} \hat{v}\|_{L^2}$
and
$\sqrt{\e}\|\mathcal{S}^{\varphi}\partial_t^{\ell}\mathcal{Z}^{\alpha} \hat{b}\|_{L^2}$, we need the following two preliminary lemmas of
$\hat{h}$ by using the kinetical boundary condition.

\begin{lemma}[\cite{Wu16}]\label{Lemma5.1}
Assume $0\leq k\leq m-2$, $0\leq\ell\leq k-1$. We have
\begin{equation}\label{Sect5_Height_Estimates_Lemma_Eq}
\begin{array}{ll}
|\partial_t^{\ell}\hat{h}|_{L^2}^2
\lem |\hat{h}_0|_{X^{k-1}}^2
+ \int_0^t |\hat{h}|_{X^{k-1,1}}^2 + \|\hat{v}\|_{X^{k-1,1}}^2 \,\mathrm{d}t
+ \|\partial_z\hat{v}\|_{L^4([0,T],X^{k-1})}^2.
\end{array}
\end{equation}
\end{lemma}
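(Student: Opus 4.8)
The plan is to derive the estimate for $\hat h$ directly from the kinematic boundary condition in \eqref{difference1}, namely $\partial_t\hat h + v_y\cdot\nabla_y\hat h = \hat v\cdot\NN^{\e}$ on $\{z=0\}$, exactly mimicking the strategy of Lemma \ref{Lemma2.3} (Wu \cite{Wu16}) but now for the difference quantities. First I would apply $\partial_t^{\ell}$ to this transport equation for $0\le\ell\le k-1$, producing
\begin{equation*}
\partial_t(\partial_t^{\ell}\hat h) + v_y\cdot\nabla_y(\partial_t^{\ell}\hat h)
= \partial_t^{\ell}(\hat v\cdot\NN^{\e}) - [\partial_t^{\ell}, v_y\cdot\nabla_y]\hat h,
\end{equation*}
then take the $L^2(\mathbb{R}^2)$ inner product with $\partial_t^{\ell}\hat h$. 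The convection term $\int v_y\cdot\nabla_y(\partial_t^{\ell}\hat h)\,\partial_t^{\ell}\hat h\,\mathrm{d}y$ is handled by integration by parts, losing only a factor $\|\nabla_y v_y\|_{L^\infty}$ which is controlled by the uniform bounds in Proposition \ref{Proposition1.1}; this gives $\frac{\mathrm d}{\mathrm dt}|\partial_t^{\ell}\hat h|_{L^2}^2 \lesssim |\partial_t^{\ell}\hat h|_{L^2}^2 + |\partial_t^{\ell}(\hat v\cdot\NN^\e)|_{L^2}^2 + |[\partial_t^\ell, v_y\cdot\nabla_y]\hat h|_{L^2}^2$.

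The two forcing-type terms are then bounded as follows. For the commutator $[\partial_t^{\ell}, v_y\cdot\nabla_y]\hat h$, distributing the derivatives gives a sum of terms of the form $\partial_t^{\ell_1}v_y\cdot\nabla_y\partial_t^{\ell_2}\hat h$ with $\ell_1\ge1$, $\ell_1+\ell_2=\ell\le k-1$; using the uniform regularity of $v$ and the definition of $X^{k-1,1}$ these are controlled by $|\hat h|_{X^{k-1,1}}^2$ (plus lower-order, i.e.\ b.t.). For $\partial_t^{\ell}(\hat v\cdot\NN^{\e})$, I would expand $\NN^{\e}=(-\nabla_y h^{\e},1)^{\top}$ and write $\NN^{\e} = \NN + (-\nabla_y\hat h, 0)^{\top}$, so that $\partial_t^\ell(\hat v\cdot\NN^\e)$ splits into a piece involving traces of $\partial_t^{\ell'}\hat v$ on $\{z=0\}$ and a piece involving $\partial_t^{\ell'}\nabla_y\hat h$ times traces of $\hat v$. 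The trace terms are estimated by the trace inequality \eqref{1.48}: $|\partial_t^{\ell'}\hat v|_{z=0}|_{L^2(\mathbb{R}^2)}^2 \lesssim \|\partial_z\partial_t^{\ell'}\hat v\|_{X^{k-1}_{tan}}\|\partial_t^{\ell'}\hat v\|_{X^{k-1}_{tan}} \lesssim \|\hat v\|_{X^{k-1,1}}^2 + \|\partial_z\hat v\|_{X^{k-1}}^2$, and then $\|\partial_z\hat v\|_{X^{k-1}}$ is converted to the $L^4([0,T],X^{k-1})$-norm after integrating in time and applying Hölder on $[0,t]$ (as in Lemma \ref{Lemma2.6}), which is precisely where the term $\|\partial_z\hat v\|_{L^4([0,T],X^{k-1})}^2$ in \eqref{Sect5_Height_Estimates_Lemma_Eq} comes from. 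The $\partial_t^{\ell'}\nabla_y\hat h$ piece contributes $|\hat h|_{X^{k-1,1}}^2$.

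Finally I would integrate in $t$ from $0$ to $t$ and apply Grönwall's inequality to absorb the $|\partial_t^{\ell}\hat h|_{L^2}^2$ term on the right, yielding
\begin{equation*}
|\partial_t^{\ell}\hat h|_{L^2}^2 \lesssim |\hat h_0|_{X^{k-1}}^2 + \int_0^t |\hat h|_{X^{k-1,1}}^2 + \|\hat v\|_{X^{k-1,1}}^2\,\mathrm dt + \|\partial_z\hat v\|_{L^4([0,T],X^{k-1})}^2,
\end{equation*}
which is \eqref{Sect5_Height_Estimates_Lemma_Eq}; summing over $\ell\le k-1$ and recalling that $\partial_t^\ell$ together with the tangential spatial derivatives defining $X^{k-1}$ only costs additional tangential derivatives (all of which commute with $\partial_t$ and are controlled the same way) completes the proof. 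I expect the main obstacle to be bookkeeping the regularity budget so that all terms genuinely land in the spaces on the right-hand side — in particular, ensuring the trace terms only ever require $\partial_z\hat v$ at order $k-1$ (not $k$, which would be uncontrollable since we cannot bound $\partial_t^k$ of the pressure difference), and carefully tracking which quantities are "bounded terms" (b.t.) versus genuine contributions; the structure of \eqref{difference1} and the uniform bounds from Proposition \ref{Proposition1.1} make each individual estimate routine, but the accounting must be done with care, and one must also verify that the smallness of $|\nabla h|_\infty$ (hence control of $\NN^\e$, $\NN$ and their differences) is inherited from the a priori estimates.
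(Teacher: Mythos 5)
Your proposal is correct and follows the only natural route: the paper states Lemma \ref{Lemma5.1} by citation to Wu \cite{Wu16} without reproducing a proof, and the argument you sketch — applying $\partial_t^\ell$ to the transport equation $\partial_t\hat h + v_y\cdot\nabla_y\hat h = \hat v\cdot\NN^\e$ from \eqref{difference1}, pairing with $\partial_t^\ell\hat h$ in $L^2(\mathbb{R}^2)$, estimating the trace of $\hat v$ via \eqref{1.48}, converting the $\|\partial_z\hat v\|_{X^{k-1}}$ contribution to $\|\partial_z\hat v\|_{L^4([0,T],X^{k-1})}$ by H\"older in time, and closing with Gr\"onwall — is precisely the difference-equation analogue of the cited Lemma \ref{Lemma2.3}, which is what the paper intends. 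Your bookkeeping (splitting $\NN^\e=\NN+(-\nabla_y\hat h,0)^\top$ so that the quadratic-in-hats term is handled via the uniform $L^\infty$ a priori bounds rather than via uncontrolled $L^\infty$ bounds on $\hat v$) is the right way to close it.
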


\begin{lemma}[\cite{Wu16}]\label{Lemma5.2}
Assume $0\leq k\leq m-2$, $0\leq\ell\leq k-1$, $\ell+|\alpha| \leq k$. We have
\begin{equation}\label{Sect5_Height_Viscous_Estimates_Lemma_Eq}
\begin{array}{ll}
\e|\hat{h}|_{X^{k-1,\frac{3}{2}}}^2 \leq \e|\hat{h}_0|_{X^{k-1,\frac{3}{2}}}^2 + \int_0^t|\hat{h}|_{X^{k-1,1}}^2
+ \e\|\nabla \hat{v}\|_{X^{k-1,1}}^2 \,\mathrm{d}t.
\end{array}
\end{equation}
\end{lemma}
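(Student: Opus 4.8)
\noindent
The starting point is the kinematic boundary condition for $\hat h=h^{\e}-h$, i.e.\ the fourth equation of \eqref{difference1},
\[
\partial_t\hat h+v_y\cdot\nabla_y\hat h=\hat v\cdot\NN^{\e}\qquad\text{on }\{z=0\},
\]
a linear transport equation on $\mathbb{R}^2$ with coefficient $v_y=v(t,\cdot,0)$ (the trace of the ideal velocity, uniformly bounded in the relevant norms by Proposition \ref{Proposition1.1}) and source $\hat v\cdot\NN^{\e}$, where $\NN^{\e}=(-\nabla_y h^{\e},1)^{\top}$. The plan is to run an $L^2$ energy estimate for this transport equation in the $\tfrac12$-derivative lifted tangential norm, carefully tracking the factor $\e$.

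First I would fix $l\le k-1$ and a multi-index $\alpha$ with $l+|\alpha|\le k$, apply $\partial_t^l\partial_y^{\alpha}$ to the equation, and test against $\Lambda_y\,\partial_t^l\partial_y^{\alpha}\hat h$, where $\Lambda_y=(-\Delta_y)^{1/2}$. Since $\Lambda_y$ commutes with $\partial_t$ and is self-adjoint, the time term yields $\tfrac12\frac{d}{dt}|\partial_t^l\partial_y^{\alpha}\hat h|_{H^{1/2}(\mathbb{R}^2)}^2$; multiplying by $\e$ and summing over the admissible pairs $(l,\alpha)$ produces $\e\frac{d}{dt}|\hat h|_{X^{k-1,3/2}}^2$, up to equivalence of norms. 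It then remains to bound the transport contribution and the source contribution.

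For the transport term, I would commute $\Lambda_y^{1/2}\partial_t^l\partial_y^{\alpha}$ through $v_y\cdot\nabla_y$: the principal piece $v_y\cdot\nabla_y(\Lambda_y^{1/2}\partial_t^l\partial_y^{\alpha}\hat h)$ is integrated by parts in $y$ and bounded by $\|\nabla_y v_y\|_{L^{\infty}}|\partial_t^l\partial_y^{\alpha}\hat h|_{H^{1/2}}^2$, while the commutators $[\Lambda_y^{1/2}\partial_t^l\partial_y^{\alpha},v_y\cdot\nabla_y]\hat h$ are handled by Kato-Ponce type estimates together with the commutator bound of Lemma \ref{Lemma1.1}, applied on the boundary. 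Using the uniform bounds on the traces of $v$ from Proposition \ref{Proposition1.1}, these terms are controlled by $\e\,|\hat h|_{X^{k-1,3/2}}^2+|\hat h|_{X^{k-1,1}}^2$: the top-order part retains the (unavoidable) factor $\e$ and will be absorbed by Gronwall, whereas the genuinely lower-order commutator/product terms, carrying a spare power $\e\le 1$, collapse into $|\hat h|_{X^{k-1,1}}^2$. For the source term, Cauchy-Schwarz and Young give $\e\,\langle\Lambda_y^{1/2}\partial_t^l\partial_y^{\alpha}(\hat v\cdot\NN^{\e}),\Lambda_y^{1/2}\partial_t^l\partial_y^{\alpha}\hat h\rangle\le \e\,|\partial_t^l\partial_y^{\alpha}(\hat v\cdot\NN^{\e})|_{H^{1/2}(\mathbb{R}^2)}^2+\e\,|\partial_t^l\partial_y^{\alpha}\hat h|_{H^{1/2}(\mathbb{R}^2)}^2$; the second summand feeds the Gronwall argument, and since $\NN^{\e}$ is uniformly bounded in $L^{\infty}\cap H^{s_0}$ ($s_0$ large) by Proposition \ref{Proposition1.1}, the product rule in $H^{1/2}$ reduces the first summand to $\e\,|\partial_t^l\partial_y^{\alpha}\hat v|_{z=0}|_{H^{1/2}(\mathbb{R}^2)}^2$ plus lower-order terms. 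The trace inequality \eqref{1.48} with $s=\tfrac12$ then bounds this by $\e\,\|\nabla\partial_t^l\partial_y^{\alpha}\hat v\|_{L^2(\mathbb{R}^3_-)}\|\partial_t^l\partial_y^{\alpha}\hat v\|_{H^1_{tan}}\lem \e\,\|\nabla\hat v\|_{X^{k-1,1}}^2+(\text{lower order})$, which is precisely the viscous-controlled quantity appearing in \eqref{Sect5_Height_Viscous_Estimates_Lemma_Eq}.

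Collecting these bounds, integrating in $t$, and using Gronwall's inequality to remove the term $\int_0^t\e\,|\hat h|_{X^{k-1,3/2}}^2\,\mathrm{d}t$ then gives \eqref{Sect5_Height_Viscous_Estimates_Lemma_Eq}. The only delicate point is the $\e$-bookkeeping: one must check that it is exactly the viscous term $\e\|\nabla\hat v\|_{X^{k-1,1}}^2$ that pays for the extra half tangential derivative gained on $\hat h$ (this is forced by the trace estimate \eqref{1.48}), while every remaining commutator and product term comes with a surplus factor $\e\le 1$ and hence is dominated by $|\hat h|_{X^{k-1,1}}^2$. This is the same mechanism used for Lemma \ref{Lemma2.4} and Lemma \ref{Lemma5.1}, transplanted to the MHD difference system \eqref{difference1}; the magnetic field plays no role here, since $\hat h$ only sees the velocity trace through the kinematic boundary condition.
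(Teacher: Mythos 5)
The paper does not prove this lemma; like the parallel Lemma~\ref{Lemma2.4}, it is imported from \cite{Wu16} without an in-paper argument, so there is no internal proof to compare against. Your sketch is nevertheless the natural and, I believe, correct route: (i) apply $\partial_t^{\ell}\partial_y^{\alpha}$ to the kinematic boundary condition $\partial_t\hat h+v_y\cdot\nabla_y\hat h=\hat v\cdot\NN^{\e}$, (ii) test against $\Lambda_y\partial_t^{\ell}\partial_y^{\alpha}\hat h$ with $\Lambda_y=(-\Delta_y)^{1/2}$ to produce $\tfrac12\tfrac{d}{dt}|\partial_t^{\ell}\partial_y^{\alpha}\hat h|_{\dot H^{1/2}}^2$, (iii) absorb the top-order transport piece by integration by parts and Gronwall, (iv) bound the source by the trace estimate~\eqref{1.48} with $s=\tfrac12,\ s_1=0,\ s_2=1$, which is exactly what converts $\e|\partial_t^{\ell}\partial_y^{\alpha}\hat v|_{z=0}|_{H^{1/2}}^2$ into $\e\|\nabla\hat v\|_{X^{k-1,1}}\|\hat v\|_{X^{k-1,1}}$. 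You also correctly identified that the factor of $\e$ on the left must be paid for by the viscous energy rather than disappearing into Gronwall, and that the magnetic field plays no role.

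Two small points you should tighten. First, the trace estimate produces not only $\|\nabla\hat v\|_{X^{k-1,1}}$ but also the zero-order factor $\|\partial_t^{\ell}\partial_y^{\alpha}\hat v\|_{L^2}$, i.e.\ $\|\hat v\|_{X^{k-1,1}}$, which does not appear explicitly on the right-hand side of \eqref{Sect5_Height_Viscous_Estimates_Lemma_Eq}. In the setting of Lemma~\ref{Lemma5.3}, where this inequality is actually used, both $\|\hat v\|_{X^{k-1,1}}^2$ and $\e\|\nabla\hat v\|_{X^{k-1,1}}^2$ are present under $\int_0^t$ on the right of \eqref{5.4}, so the conclusion is unharmed; but as a stand-alone lemma, your derivation gives a slightly weaker (i.e.\ larger right-hand-side) statement than what is quoted, and this should be flagged rather than passed over. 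Second, the ``product rule in $H^{1/2}$'' step, in which derivatives land on $\NN^{\e}=(-\nabla_y h^{\e},1)$, needs a word about why $\hat v\cdot\partial_t^{\ell}\partial_y^{\alpha}\NN^{\e}$ is harmless: for $\ell+|\alpha|=k\le m-2$ this involves $k+1$ tangential derivatives of $h^{\e}$ in $H^{1/2}$, which is controlled only after invoking the uniform bound $\e^{1/2}|h^{\e}|_{X^{m-1,3/2}}^2\le C$ from Proposition~\ref{Proposition1.1}, so the extra power of $\e$ in front is essential there and not merely a ``spare factor'' as you phrase it. With these two clarifications, the argument is sound.
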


We show that $\partial_z \hat{v}^3$, $\partial_z \hat{b}^3$
 can be estimated by $\partial_z \hat{v}_h$,$\partial_z \hat{b}_h$, that is
\begin{align*}
&\|\partial_z \hat{v}^3\|_{X^s} \lem \|\hat{v}_h\|_{X^{s,1}} + \|\partial_z \hat{v}_h\|_{X^s}
+ |\hat{h}|_{X^{s,\frac{1}{2}}}, \\
&\|\partial_z \hat{b}^3\|_{X^s} \lem \|\hat{b}_h\|_{X^{s,1}} + \|\partial_z \hat{b}_h\|_{X^s}
+ |\hat{b}|_{X^{s,\frac{1}{2}}}.
\end{align*}
In fact, we have from the divergence free condition that
\begin{equation}
\left\{\begin{aligned}
\partial_z \hat{v}^3 =& -\partial_z\varphi^{\e}(\partial_1 \hat{v}^1 + \partial_2 \hat{v}^2)
- \partial_z\hat{\varphi}(\partial_1 v^1 + \partial_2 v^2)
\\
&+\partial_1\varphi^{\e}\partial_z \hat{v}^1 +\partial_1\hat{\varphi}\partial_z v^1
+ \partial_2\varphi^{\e}\partial_z \hat{v}^2 +\partial_2\hat{\varphi}\partial_z v^2,
\\
\partial_z \hat{b}^3 =& -\partial_z\varphi^{\e}(\partial_1 \hat{b}^1 + \partial_2 \hat{b}^2)
- \partial_z\hat{\varphi}(\partial_1 b^1 + \partial_2 b^2)
\\
&+\partial_1\varphi^{\e}\partial_z \hat{b}^1 +\partial_1\hat{\varphi}\partial_z b^1
+ \partial_2\varphi^{\e}\partial_z \hat{b}^2 +\partial_2\hat{\varphi}\partial_z b^2.
\end{aligned}\right.
\end{equation}

Next we show the estimates for the tangential derivatives, $\partial_t^{\ell}\mathcal{Z}^{\alpha}\hat{v}$, $\partial_t^{\ell}\mathcal{Z}^{\alpha}\hat{b}$
 and $\partial_t^{\ell}\mathcal{Z}^{\alpha}\hat{h}$.
\begin{lemma}\label{Lemma5.3}
Assume $0\leq k\leq m-2$,
$\hat{v},\hat{b},\hat{h}$ satisfy system \eqref{difference1}.
Then we can estimate  $\partial_t^{\ell}\mathcal{Z}^{\alpha}\hat{v}$,
$\partial_t^{\ell}\mathcal{Z}^{\alpha}\hat{b}$
 and $\partial_t^{\ell}\mathcal{Z}^{\alpha}\hat{h}$ as follows
\begin{equation}\label{5.4}
\begin{aligned}
&\|\hat{v}\|_{X^{k-1,1}}^2 +\|\hat{b}\|_{X^{k-1,1}}^2 + |\hat{h}|_{X^{k-1,1}}^2
+ \e|\hat{h}|_{X^{k-1,\frac{3}{2}}}^2\\
&+ \e\int_0^t\|\nabla\hat{v}\|_{X^{k-1,1}}^2
+ \e\int_0^t\|\nabla\hat{b}\|_{X^{k-1,1}}^2 \,\mathrm{d}t
\\
\lem &\|\hat{v}_0\|_{X^{k-1,1}}^2 +\|\hat{b}_0\|_{X^{k-1,1}}^2 + |\hat{h}_0|_{X^{k-1,1}}^2 + \e|\hat{h}_0|_{X^{k-1,\frac{3}{2}}}^2
+\|\partial_z \hat{v}\|_{L^4([0,T],X^{k-1})}^2
\\
&+\|\partial_z \hat{b}\|_{L^4([0,T],X^{k-1})}^2
+ |\partial_t^k\hat{h}|_{L^4([0,T],L^2)}^2
+ \|\nabla\hat{q}\|_{L^4([0,T],X^{k-1})}^2 + O(\e).
\end{aligned}
\end{equation}
\end{lemma}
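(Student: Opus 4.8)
\textbf{Proof strategy for Lemma \ref{Lemma5.3}.}
The plan is to mimic the proof of Lemma \ref{Lemma2.5}, replacing $(v,b,q,h)$ by the difference variables $(\hat v,\hat b,\hat q,\hat h)$ and using the Alinhac good unknowns $\hat V^{l,\alpha},\hat B^{l,\alpha},\hat Q^{l,\alpha}$ introduced in \eqref{Alinhac's good unknown}. First I would apply $\partial_t^l\mathcal Z^\alpha$ to the difference system \eqref{difference1}; by the same commutator bookkeeping as in \eqref{AlinhacE} (Lemma \ref{Lemma1.1} controls the b.t. terms), the good unknowns satisfy a viscous system of the form
\begin{align*}
&\partial_t^{\varphi^\e}\hat V^{l,\alpha}-2\e\nabla^{\varphi^\e}\!\cdot S^{\varphi^\e}\hat V^{l,\alpha}
+v^\e\!\cdot\nabla^{\varphi^\e}\hat V^{l,\alpha}-b^\e\!\cdot\nabla^{\varphi^\e}\hat B^{l,\alpha}
+\nabla^{\varphi^\e}\hat Q^{l,\alpha}=\mathcal R_v,\\
&\partial_t^{\varphi^\e}\hat B^{l,\alpha}-2\e\nabla^{\varphi^\e}\!\cdot S^{\varphi^\e}\hat B^{l,\alpha}
+v^\e\!\cdot\nabla^{\varphi^\e}\hat B^{l,\alpha}-b^\e\!\cdot\nabla^{\varphi^\e}\hat V^{l,\alpha}=\mathcal R_b,
\end{align*}
with $\nabla^{\varphi^\e}\!\cdot\hat V^{l,\alpha}$, $\nabla^{\varphi^\e}\!\cdot\hat B^{l,\alpha}$ equal to bounded terms, and with boundary conditions obtained from the dynamical condition $(\hat q-g\hat h)\NN^\e-2\e S^{\varphi^\e}\hat v\,\NN^\e=2\e S^{\varphi^\e}v\,\NN^\e$ and $\hat b|_{z=0}=0$. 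The forcing $\mathcal R_v,\mathcal R_b$ collects the terms $\e\Delta^{\varphi^\e}v$, $\e\Delta^{\varphi^\e}b$ (which are $O(\sqrt\e)$ in $L^2$ after one integration, hence produce the $O(\e)$ on the right), the geometric terms $\partial_z^\varphi q\,\nabla^{\varphi^\e}\hat\eta$, $\partial_z^\varphi v\,\partial_t^{\varphi^\e}\hat\eta$, etc., and the genuinely nonlinear interaction terms $-\hat v\cdot\nabla^\varphi v+\hat b\cdot\nabla^\varphi b$, $-\hat v\cdot\nabla^\varphi b+\hat b\cdot\nabla^\varphi v$, all of which are controlled by $\|\hat v\|_{X^{k-1,1}}+\|\hat b\|_{X^{k-1,1}}+|\hat h|_{X^{k-1,1}}$ times the a~priori bounds from Proposition \ref{Proposition1.1}.

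The core is the $L^2$ energy estimate, split as in Lemma \ref{Lemma2.5}. \emph{Step one ($|\alpha|\geq1$, $1\leq l+|\alpha|\leq k$):} test the $\hat V$-equation against $\hat V^{l,\alpha}$ and the $\hat B$-equation against $\hat B^{l,\alpha}$, add, and use the crucial cancellation $\int b^\e\cdot\nabla^{\varphi^\e}\hat B^{l,\alpha}\cdot\hat V^{l,\alpha}+\int b^\e\cdot\nabla^{\varphi^\e}\hat V^{l,\alpha}\cdot\hat B^{l,\alpha}=0$ (valid by $\nabla^{\varphi^\e}\!\cdot b^\e=0$ and $b^\e|_{z=0}=0$, which is exactly why the Alinhac unknowns for $\hat v$ and $\hat b$ are defined with the \emph{same} shift $\partial_z^\varphi v$). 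The pressure term $\int \hat Q^{l,\alpha}\nabla^{\varphi^\e}\!\cdot\hat V^{l,\alpha}$ is a bounded term since the divergence is b.t.; the boundary term from $\hat Q^{l,\alpha}$ and $S^{\varphi^\e}\hat V^{l,\alpha}$ reduces, via the dynamical condition, to $-\tfrac12\tfrac{d}{dt}\int_{z=0}(g-\partial_z^{\varphi^\e}q^\e)|\partial_t^l\mathcal Z^\alpha\hat h|^2$ plus a term $\e|\hat h|_{X^{k,1/2}}$-type contribution and the $2\e S^{\varphi^\e}v\,\NN^\e$ source. Here the Taylor sign condition $g-\partial_z^{\varphi^\e}q^\e|_{z=0}\geq c_0>0$ makes the boundary energy positive-definite; the $\e|\partial_t^l\mathcal Z^\alpha\hat h|^2_{1/2}$ piece is absorbed by Lemma \ref{Lemma5.2}. \emph{Step two ($|\alpha|=0$, $0\leq l\leq k-1$):} since $\|\partial_t^l\hat q\|_{L^2}$ is uncontrolled for infinite depth, I do not use $\hat Q^{l,0}$, the divergence-free relation, or the dynamical condition; instead I test against $\hat V^{l,0},\hat B^{l,0}$ directly, keeping $-\int\partial_t^l\nabla^{\varphi^\e}\hat q\cdot\hat V^{l,0}$ as a forcing bounded by $\|\partial_t^l\nabla\hat q\|_{L^2}^2$, and bound the boundary terms $\e\int_{z=0}S^{\varphi^\e}\hat V^{l,0}\NN^\e\cdot\hat V^{l,0}$ by the trace inequality \eqref{1.48} followed by $\e\|\nabla\hat v\|_{X^{k-1,1}}^2$ and $\e|\hat h|_{X^{0,1/2}}^2$, all absorbable. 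Then Lemma \ref{Lemma5.1} converts $\|\hat V^{l,0}\|_{L^2}$ into $\|\partial_t^l\hat v\|_{L^2}$ modulo the listed $L^4$-in-time quantities, and the elliptic estimate for $\nabla\hat q$ (from $\Delta^{\varphi^\e}\hat q=-\partial_j^{\varphi^\e}v^i\partial_i^{\varphi^\e}\hat v^j-\cdots$ with Dirichlet data $\hat q|_{z=0}=g\hat h+2\e S^{\varphi^\e}v\,\nn\cdot\nn+2\e S^{\varphi^\e}\hat v\,\nn\cdot\nn$) bounds $\|\nabla\hat q\|_{X^{k-1}}$ by $\|\hat v\|_{X^{k-1,1}}+\|\hat b\|_{X^{k-1,1}}+\|\partial_z\hat v\|_{X^{k-1}}+\|\partial_z\hat b\|_{X^{k-1}}+|\hat h|_{X^{k-1,3/2}}+\e(\cdots)+O(\sqrt\e)$.

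Finally I would use Korn's inequality to pass from $\sqrt\e\|S^{\varphi^\e}\cdot\|_{L^2}$ to $\sqrt\e\|\nabla\cdot\|_{L^2}$, sum over $l$ and $\alpha$, invoke Lemma \ref{Lemma5.1} and Lemma \ref{Lemma5.2} to close the $\hat h$-contributions, apply Gronwall in $t$, and collect the $\e\Delta^{\varphi^\e}v$, $\e\Delta^{\varphi^\e}b$ and $2\e S^{\varphi^\e}v\,\NN^\e$ contributions into the final $O(\e)$. The main obstacle is the same one that distinguishes this paper from \cite{Lee17}: there is no bound on $\|\partial_t^l\hat q\|_{L^2}$ at infinite depth, so the $|\alpha|=0$ case must be handled entirely without the pressure unknown and without the dynamical boundary condition, relying instead on the gradient estimate \eqref{2.25} (for the difference) and on the normal-derivative-by-vorticity mechanism of Lemma \ref{Lemma2.1}; one must check carefully that every boundary term produced in Step two is either $O(\e)$ or absorbable into $\e\int_0^t\|\nabla\hat v\|_{X^{k-1,1}}^2$, and that the terms $\e\Delta^{\varphi^\e}v,\e\Delta^{\varphi^\e}b$ appearing in $\mathcal R_v,\mathcal R_b$ really do contribute only $O(\e)$ after using the uniform $X^{m-1,1}$ bounds for $v,b$ from Proposition \ref{Proposition1.1}.
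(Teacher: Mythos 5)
Your proposal follows the paper's own proof essentially line by line: the same Alinhac good unknowns $\hat V^{l,\alpha},\hat B^{l,\alpha},\hat Q^{l,\alpha}$ with the $b^\e$-transport cancellation, the same two-step split between $|\alpha|\geq 1$ (where the dynamical boundary condition and the Taylor sign coefficient give a positive boundary energy absorbed via Lemma~\ref{Lemma5.2}) and $|\alpha|=0$ (where one abandons $\hat Q^{l,0}$ and the dynamical condition, keeps $\partial_t^\ell\nabla^{\varphi^\e}\hat q$ as forcing, and closes with Lemma~\ref{Lemma5.1}), followed by Gronwall. The only substantive detail you skip is the explicit treatment of the boundary integral $2\e\int_{z=0}\mathcal S^{\varphi^\e}\partial_t^\ell\mathcal Z^\alpha\hat b\,\NN^\e\cdot\hat B^{\ell,\alpha}\,\mathrm{d}y$, which is nonzero despite $\hat b|_{z=0}=0$ because $\hat B^{\ell,\alpha}|_{z=0}=-\partial_z^\varphi v\,\partial_t^\ell\mathcal Z^\alpha\hat\eta$ does not vanish; the paper bounds it in its display \eqref{bdyb}, and this (together with the cosmetic choice of $q$ rather than $q^\e$ in the Taylor coefficient) is the only divergence from the written proof.
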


\begin{proof}
Applying the operator $\partial_t^{\ell}\mathcal{Z}^{\alpha}$ to the equations \eqref{difference1}
and using the following formulas
\begin{equation*}
\right.
\end{equation}

It follows from $(\ref{5.21})$ that
\begin{equation}
\begin{aligned}
&\nabla\cdot(\textsf{E}^{\e}\nabla q^{\e}) - \nabla\cdot(\textsf{E}\nabla q)
= \nabla\cdot(\textsf{E}^{\e}\nabla \hat{q}) + \nabla\cdot((\textsf{E}^{\e} -\textsf{E}) \nabla q)
\\
=& - \partial_z \varphi^{\e}\nabla^{\varphi^{\e}}\cdot (v^{\e} \cdot\nabla^{\varphi^{\e}} v^{\e}
-b^{\e} \cdot\nabla^{\varphi^{\e}} b^{\e})
+ \partial_z \varphi \nabla^{\varphi}\cdot (v \cdot\nabla^{\varphi} v-b \cdot\nabla^{\varphi} b)
\\
= &-\nabla\cdot \big[\textsf{P}^{\e} (v^{\e} \cdot\nabla^{\varphi^{\e}} v^{\e}
-b^{\e} \cdot\nabla^{\varphi^{\e}} b^{\e})\big]
+ \nabla\cdot \big[\textsf{P} (v \cdot\nabla^{\varphi} v-b \cdot\nabla^{\varphi} b)\big]
 \\
=& -\nabla\cdot \big[\textsf{P}^{\e} (v^{\e} \cdot\nabla^{\varphi^{\e}} v^{\e} - v \cdot\nabla^{\varphi} v
-b^{\e} \cdot\nabla^{\varphi^{\e}} b^{\e} + b \cdot\nabla^{\varphi} b)\big]
\\
&- \nabla\cdot \big[(\textsf{P}^{\e} - \textsf{P}) (v \cdot\nabla^{\varphi} v
-b \cdot\nabla^{\varphi} b)\big]
\\
=& -\nabla\cdot \big[\textsf{P}^{\e} (v^{\e} \cdot\nabla^{\varphi^{\e}} \hat{v} - v^{\e}\cdot\nabla^{\varphi^{\e}}\hat{\varphi}\partial_z^{\varphi} v
+ \hat{v}\cdot\nabla^{\varphi} v-b^{\e} \cdot\nabla^{\varphi^{\e}} \hat{b}
\\
&
+ b^{\e}\cdot\nabla^{\varphi^{\e}}\hat{\varphi}\partial_z^{\varphi} b
-\hat{b}\cdot\nabla^{\varphi} b)\big]
- \nabla\cdot\big[(\textsf{P}^{\e}-\textsf{P}) (v\cdot\nabla^{\varphi}v-b\cdot\nabla^{\varphi} b)\big].
\end{aligned}
\end{equation}

Hence,
$\hat{q}$ satisfies the following elliptic equation:
\begin{equation}
\left\{\begin{aligned}
&\nabla\cdot(\textsf{E}^{\e}\nabla \hat{q})
= -\nabla\cdot((\textsf{E}^{\e} -\textsf{E}) \nabla q) - \nabla\cdot [(\textsf{P}^{\e} - \textsf{P}) (v \cdot\nabla^{\varphi} v-b \cdot\nabla^{\varphi} b)]
\\
&\hspace{2.2cm}
-\nabla\cdot [\textsf{P}^{\e} (v^{\e} \cdot\nabla^{\varphi^{\e}} \hat{v} - v^{\e}\cdot\nabla^{\varphi^{\e}}\hat{\varphi}\partial_z^{\varphi} v
+ \hat{v}\cdot\nabla^{\varphi} v
\\
&\hspace{2.2cm}
-b^{\e} \cdot\nabla^{\varphi^{\e}} \hat{b} + b^{\e}\cdot\nabla^{\varphi^{\e}}\hat{\varphi}\partial_z^{\varphi} b
- \hat{b}\cdot\nabla^{\varphi} b)],
\\
&
q|_{z=0} = g\hat{h} + 2\e\mathcal{S}^{\varphi^{\e}} v^{\e} \nn^{\e}\cdot\nn^{\e}.
\end{aligned}\right.
\end{equation}

Since the matrix $\textsf{E}^{\e}$ is definitely positive,
we know that $\hat{q}$ satisfies the following estimate:
\begin{equation}\label{5.24}
\begin{aligned}
\|\nabla \hat{q}\|_{X^s}
\lem& \|(\textsf{E}^{\e} -\textsf{E}) \nabla q \|_{X^s}
+ \|(\textsf{P}^{\e} - \textsf{P}) (v \cdot\nabla^{\varphi} v)\|_{X^s}
\\
&
+ \|\textsf{P}^{\e} (v^{\e} \cdot\nabla^{\varphi^{\e}} \hat{v} - v^{\e}\cdot\nabla^{\varphi^{\e}}\hat{\varphi}\partial_z^{\varphi} v
+ \hat{v}\cdot\nabla^{\varphi} v-b^{\e} \cdot\nabla^{\varphi^{\e}} \hat{b}
\\
&
+ b^{\e}\cdot\nabla^{\varphi^{\e}}\hat{\varphi}\partial_z^{\varphi} b
- \hat{b}\cdot\nabla^{\varphi} b)\|_{X^s}+ |g\hat{h} + 2\e\mathcal{S}^{\varphi^{\e}} v^{\e} \nn^{\e}\cdot\nn^{\e}|_{X^{s,\frac{1}{2}}}
\\
\lem& \|\hat{v}\|_{X^{s,1}} +\|\hat{b}\|_{X^{s,1}} + \|\partial_z\hat{v}\|_{X^s}+ \|\partial_z\hat{b}\|_{X^s} + |\hat{h}|_{X^{s,\frac{1}{2}}}
+ O(\e),
\end{aligned}
\end{equation}
where $\big|\mathcal{S}^{\varphi^{\e}} v^{\e}|_{z=0}\big|_{X^{s,\frac{1}{2}}}
\lem \|\partial_z\partial_j v^{\e}\|_{X^s}^{\frac{1}{2}}\|\partial_j v^{\e}\|_{X^{s+1}}^{\frac{1}{2}} <+\infty$.

Lemma $\ref{5.4}$ is proved.
\end{proof}

\subsection{Estimates for the Normal Derivatives when $\Pi\mathcal{S}^{\varphi} v\nn|_{z=0} \neq 0$
and $\Pi\mathcal{S}^{\varphi} b\nn|_{z=0} \neq 0$}

In this subsection,
we give the estimates for $\partial_z \hat{v}$
and $\partial_z \hat{b}$ on the right hand side of \eqref{5.4}.
The following lemma concerns the estimates of $\|\partial_z\hat{v}\|_{L^4([0,T],X^{k-1})}^2$
and $\|\partial_z\hat{b}\|_{L^4([0,T],X^{k-1})}^2$ by studying the equations of
 $\hat{\omega}_{vh}$, $\hat{\omega}_{bh}$.
\begin{lemma}\label{Lemma5.5}
Let $k\leq m-2$. If the boundary data of the ideal MHD satisfy $\Pi\mathcal{S}^{\varphi} v \nn|_{z=0} \neq 0$
and $\Pi\mathcal{S}^{\varphi} b \nn|_{z=0} \neq 0$,
then we have
\begin{equation}\label{5.25}
\begin{array}{ll}
\|\partial_z\hat{v}_h\|_{L^4([0,T],X^{k-1})}^2
+\|\partial_z\hat{b}_h\|_{L^4([0,T],X^{k-1})}^2
+ \|\hat{\omega}_{vh}\|_{L^4([0,T],X^{k-1})}^2
+ \|\hat{\omega}_{bh}\|_{L^4([0,T],X^{k-1})}^2
\\[6pt]
\lem \big\|\hat{\omega}_{v0}\big\|_{X^{k-1}}^2+\big\|\hat{\omega}_{b0}\big\|_{X^{k-1}}^2
+ \int_0^T\|\hat{v}\|_{X^{k-1,1}}^2+\|\hat{b}\|_{X^{k-1,1}}^2 \,\mathrm{d}t
\\[6pt]\quad

+ \int_0^T|\hat{h}|_{X^{k-1,1}}^2 \,\mathrm{d}t
+ \|\partial_t^k\hat{h}\|_{L^4([0,T],L^2)}^2
+ O(\sqrt{\e}).
\end{array}
\end{equation}
\end{lemma}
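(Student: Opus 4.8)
The plan is to adapt the strategy of Lemma \ref{Lemma2.6} to the vorticity difference system \eqref{1.25}. First I would rewrite \eqref{1.25} in the $\varphi^{\e}$–coordinates in the transport/divergence form analogous to \eqref{2.30}, so that $\hat\omega_{vh},\hat\omega_{bh}$ solve a coupled convection–diffusion system of the shape $\partial_t\hat\omega_{vh}-\e\Delta^{\varphi^{\e}}\hat\omega_{vh}+v^{\e}_y\cdot\nabla_y\hat\omega_{vh}+V_z\partial_z\hat\omega_{vh}-b^{\e}_y\cdot\nabla_y\hat\omega_{bh}-B_z\partial_z\hat\omega_{bh}=\mathcal{I}_v$ (and symmetrically for $\hat\omega_{bh}$), where $\mathcal{I}_v$ collects three types of terms: the nonlinear difference $F^{0,\e}_v-F^0_v$, which by the $f^7,f^8,f^9$–decomposition introduced in Section 4.1 splits into a bounded-coefficient part $f^7_v\hat\omega_{vh}+f^7_b\hat\omega_{bh}$ plus $f^8_v\partial_j\hat v^i+f^8_b\partial_j\hat b^i+f^9_v\nabla\hat\varphi$; the viscous error $\e\Delta^{\varphi^{\e}}\omega_{vh}$, which is $O(\e)$ in $X^{k-1}$; and the $\hat\eta$–, $\hat v$–, $\hat b$–transport remainders, controlled by $|\hat h|_{X^{k-1,1}}$, $\|\hat v\|_{X^{k-1,1}}$, $\|\hat b\|_{X^{k-1,1}}$ together with the a priori bounds of Proposition \ref{Proposition1.1}.

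Second, following the splitting of Lemma \ref{Lemma2.6}, I would decompose $\hat\omega_{vh}=\hat\omega_{vh}^{nh}+\hat\omega_{vh}^{h}$ and $\hat\omega_{bh}=\hat\omega_{bh}^{nh}+\hat\omega_{bh}^{h}$, where the nonhomogeneous parts carry the initial data $\hat\omega_{v0},\hat\omega_{b0}$ and the force terms subject to homogeneous Dirichlet data, while the homogeneous parts carry the boundary data $\hat\omega_{vh}^{b},\hat\omega_{bh}^{b}$ from \eqref{Vorticity Layer1} with zero initial data and zero force. For the nonhomogeneous parts I would apply $\partial_t^{\ell}\mathcal{Z}^{\alpha}$ with $\ell+|\alpha|\le k-1$, take the $L^2$ inner product, use that the coupling terms $b^{\e}\cdot\nabla^{\varphi^{\e}}$ cancel between the two equations when summed, estimate the commutators $[\partial_t^{\ell}\mathcal{Z}^{\alpha},V_z\partial_z]$, $[\partial_t^{\ell}\mathcal{Z}^{\alpha},\nabla^{\varphi^{\e}}]$ exactly as in the steps leading to \eqref{2.34} via Lemma \ref{Lemma1.1} and the anisotropic embedding \eqref{1.47}, and close by Gronwall's inequality; then, squaring in $X^{k-1}$ and integrating in $t$ as in the proof of Lemma \ref{Lemma2.6}, one obtains the $L^4([0,T],X^{k-1})$ bound for $\hat\omega_{vh}^{nh},\hat\omega_{bh}^{nh}$ in terms of $\|\hat\omega_{v0}\|_{X^{k-1}}^2+\|\hat\omega_{b0}\|_{X^{k-1}}^2+\int_0^T(\|\hat v\|_{X^{k-1,1}}^2+\|\hat b\|_{X^{k-1,1}}^2+|\hat h|_{X^{k-1,1}}^2)\,\mathrm{d}t+O(\e)$, the quantities $\partial_z\hat v^3,\partial_z\hat b^3$ arising in the force being eliminated through the divergence-free identities $\nabla^{\varphi^{\e}}\cdot\hat v=\partial_z^{\varphi}v\cdot\nabla^{\varphi^{\e}}\hat\eta$.

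Third, for the homogeneous parts I would invoke the parabolic smoothing estimate for heat equations with coupled damping already used in \eqref{2.36} (Lee \cite{Lee17}, Theorem~10.5), which gives $\|\partial_t^{\ell}\mathcal{Z}^{\alpha}\hat\omega_{vh}^{h}\pm\partial_t^{\ell}\mathcal{Z}^{\alpha}\hat\omega_{bh}^{h}\|_{H^{1/4}([0,T],L^2)}^2\lesssim\sqrt{\e}\int_0^T\big(|\partial_t^{\ell}\mathcal{Z}^{\alpha}\hat\omega_{vh}^{b}|_{L^2(\mathbb{R}^2)}^2+|\partial_t^{\ell}\mathcal{Z}^{\alpha}\hat\omega_{bh}^{b}|_{L^2(\mathbb{R}^2)}^2\big)\,\mathrm{d}t$; since $\hat\omega_{vh}^{b}=F^{1,2}_v[\nabla\varphi^{\e}](\partial_jv^{\e,i})-\omega_{vh}|_{z=0}$ and $\hat\omega_{bh}^{b}=F^{1,2}_b[\nabla\varphi](\partial_jb^i)-\omega_{bh}|_{z=0}$ are uniformly bounded in $X^{k-1}_{tan}$ by the trace estimate \eqref{1.48}, Proposition \ref{Proposition1.1} and the regularity of the ideal solution (using $k\le m-2$), summing over $\ell+|\alpha|\le k-1$ shows the homogeneous parts contribute only $O(\sqrt{\e})$. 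Adding the $nh$ and $h$ estimates and using the difference analogue of Lemma \ref{Lemma2.1} (the difference versions of \eqref{2.10} and \eqref{2.11}, together with $\hat\varphi$ being controlled by $\hat h$ via the harmonic extension) to express $\partial_z\hat v_h,\partial_z\hat b_h$ through $\hat\omega_{vh},\hat\omega_{bh}$, tangential derivatives of $\hat v,\hat b$ and $\hat h$, then gives \eqref{5.25}.

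The main obstacle will be the treatment of the nonlinear force difference $F^{0,\e}_v-F^0_v$ and $F^{0,\e}_b-F^0_b$: one must verify that, under the hypothesis $\Pi\mathcal{S}^{\varphi}v\nn|_{z=0}\neq0$, $\Pi\mathcal{S}^{\varphi}b\nn|_{z=0}\neq0$, the $f^7$–part genuinely has bounded coefficients (so that it is absorbable by the damping on the left-hand side) while the $f^8,f^9$–parts are estimable by the tangential quantities already carried by the bootstrap, and to control $\partial_z\hat v^3,\partial_z\hat b^3$ purely through the divergence-free relations without creating a new normal-derivative loss. A secondary point is the bookkeeping of $O(\sqrt{\e})$ versus $O(\e)$: the viscous error $\e\Delta^{\varphi^{\e}}\omega_{vh}$ and the explicit $\e$–terms in $\mathcal{I}_{v2},\mathcal{I}_{b2}$ are $O(\e)$, whereas the boundary contribution of the homogeneous part is only $O(\sqrt{\e})$, which is precisely why $O(\sqrt{\e})$ appears on the right-hand side of \eqref{5.25}.
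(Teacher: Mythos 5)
Your proposal is essentially correct and follows the same structure as the paper's proof: the same decomposition of $\hat\omega_{vh},\hat\omega_{bh}$ into nonhomogeneous parts (carrying the initial data and force, with zero Dirichlet data) and homogeneous parts (carrying the boundary data), the same $\partial_t^{\ell}\mathcal{Z}^{\alpha}$ energy estimate with Gronwall and time-integration yielding the $L^4([0,T],X^{k-1})$ bound for the nonhomogeneous piece, the same $H^{1/4}([0,T],L^2)$ parabolic-trace estimate from \cite{Lee17} for the homogeneous piece, and the difference analogue of Lemma~\ref{Lemma2.1} to convert back to $\partial_z\hat v_h,\partial_z\hat b_h$. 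One small clarification of roles: the boundedness of the $f^7,f^8,f^9$ coefficients and the absorbability of those terms by Gronwall is unconditional, coming from Proposition~\ref{Proposition1.1} and the regularity of the ideal solution, and does not depend on the hypothesis $\Pi\mathcal{S}^{\varphi}v\nn|_{z=0}\neq0,\ \Pi\mathcal{S}^{\varphi}b\nn|_{z=0}\neq0$; that hypothesis enters only in (\ref{5.33}), where it forces the boundary trace $\hat\omega_{vh}|_{z=0},\hat\omega_{bh}|_{z=0}$ to contain the $O(1)$ contribution $\varsigma_i\Theta^i$ from Lemma~\ref{lemma4.1}, so that the $\sqrt\e$ factor from the $H^{1/4}$ estimate cannot be improved and $O(\sqrt\e)$ is sharp --- as opposed to the case $\Pi\mathcal{S}^{\varphi}v\nn|_{z=0}=0$ treated in Lemma~\ref{Lemma5.7}.
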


\begin{proof}
Assume $\ell+|\alpha|\leq k-1$. We study the equations $(\ref{1.25})$ by decomposing $\hat{\omega}_{vh} = \hat{\omega}_{vh}^{nh} + \hat{\omega}_{vh}^{h}$,
$\hat{\omega}_{bh} = \hat{\omega}_{bh}^{nh} + \hat{\omega}_{bh}^{h}$ such that
$\hat{\omega}_{vh}^{nh}$, $\hat{\omega}_{bh}^{nh}$ satisfy the following nonhomogeneous equations:
\begin{equation}\label{5.26}
\left\{\begin{array}{ll}
\partial_t^{\varphi^{\e}} \hat{\omega}_{vh}^{nh}
- \e\Delta^{\varphi^{\e}}\hat{\omega}_{vh}^{nh}
+ v^{\e} \cdot\nabla^{\varphi^{\e}} \hat{\omega}_{vh}^{nh}
- b^{\e} \cdot\nabla^{\varphi^{\e}} \hat{\omega}_{bh}^{nh}
 \\[6pt]\quad
= \vec{\textsf{F}}_v^0[\nabla\varphi^{\e}](\omega_{vh},\omega_{bh},\partial_jv^i,\partial_jb^i)
- \vec{\textsf{F}}_v^0[\nabla\varphi](\omega_{vh},\omega_{bh},\partial_jv^i,\partial_jb^i)
+ \e\Delta^{\varphi^{\e}}\omega_{vh}
\\[6pt]\qquad

+ \partial_z^{\varphi}\omega_{vh} \partial_t^{\varphi^{\e}} \hat{\eta}
+ \partial_z^{\varphi} \omega_{vh}\, v^{\e}\cdot \nabla^{\varphi^{\e}}\hat{\eta}
- \hat{v}\cdot\nabla^{\varphi} \omega_{vh}
- \partial_z^{\varphi} \omega_{bh}\, b^{\e}\cdot \nabla^{\varphi^{\e}}\hat{\eta}
+ \hat{b}\cdot\nabla^{\varphi} \omega_{bh} ,
\\[7pt]

\partial_t^{\varphi^{\e}} \hat{\omega}_{bh}^{nh}
- \e\Delta^{\varphi^{\e}}\hat{\omega}_{bh}^{nh}
+ v^{\e} \cdot\nabla^{\varphi^{\e}} \hat{\omega}_{bh}^{nh}
- b^{\e} \cdot\nabla^{\varphi^{\e}} \hat{\omega}_{vh}^{nh}
 \\[6pt]\quad
= \vec{\textsf{F}}_b^0[\nabla\varphi^{\e}](\omega_{vh},\omega_{bh},\partial_jv^i,\partial_jb^i)
- \vec{\textsf{F}}_b^0[\nabla\varphi](\omega_{vh},\omega_{bh},\partial_jv^i,\partial_jb^i)
+ \e\Delta^{\varphi^{\e}}\omega_{bh}
\\[6pt]\qquad

+ \partial_z^{\varphi}\omega_{bh} \partial_t^{\varphi^{\e}} \hat{\eta}
+ \partial_z^{\varphi} \omega_{bh}\, v^{\e}\cdot \nabla^{\varphi^{\e}}\hat{\eta}
- \hat{v}\cdot\nabla^{\varphi} \omega_{bh}
- \partial_z^{\varphi} \omega_{vh}\, b^{\e}\cdot \nabla^{\varphi^{\e}}\hat{\eta}
+ \hat{b}\cdot\nabla^{\varphi} \omega_{vh} ,
\\[7pt]

\hat{\omega}_{vh}^{nh}|_{z=0} =0, \hat{\omega}_{bh}^{nh}|_{z=0} =0,
\\[7pt]

\hat{\omega}_{vh}^{nh}|_{t=0} = (\hat{\omega}_{v0}^1, \hat{\omega}_{v0}^2)^{\top},
\hat{\omega}_{bh}^{nh}|_{t=0} = (\hat{\omega}_{b0}^1, \hat{\omega}_{b0}^2)^{\top},
\end{array}\right.
\end{equation}

and $\hat{\omega}_v^{h}$, $\hat{\omega}_b^{h}$ satisfy the following homogeneous equations:
\begin{equation}\label{5.27}
\left\{\begin{array}{ll}
\partial_t^{\varphi^{\e}} \hat{\omega}_{vh}^{h}
- \e\Delta^{\varphi^{\e}}\hat{\omega}_{vh}^{h}
+ v^{\e} \cdot\nabla^{\varphi^{\e}} \hat{\omega}_{vh}^{h}
- b^{\e} \cdot\nabla^{\varphi^{\e}} \hat{\omega}_{bh}^{h}
= 0,
\\[7pt]

\partial_t^{\varphi^{\e}} \hat{\omega}_{bh}^{h}
- \e\Delta^{\varphi^{\e}}\hat{\omega}_{bh}^{h}
+ v^{\e} \cdot\nabla^{\varphi^{\e}} \hat{\omega}_{bh}^{h}
- b^{\e} \cdot\nabla^{\varphi^{\e}} \hat{\omega}_{vh}^{h}
= 0,
\\[7pt]

\hat{\omega}^h_{vh}|_{z=0}=F^{1,2}[\nabla\varphi^{\epsilon,i}](\partial_jv^{\epsilon,i})
-\omega^h_{vh}|_{z=0},
\\[7pt]

\hat{\omega}^h_{bh}|_{z=0}=F^{1,2}[\nabla\varphi^{\epsilon,i}](\partial_jb^{\epsilon,i})
-\omega^h_{bh}|_{z=0},
\\[7pt]

(\hat{\omega}^h_{vh}|_{t=0},\hat{\omega}^h_{bh}|_{t=0})=0.
\end{array}\right.
\end{equation}
Denote by $\partial_t^{\varphi^{\e}} + v^{\e} \cdot\nabla^{\varphi^{\e}}
= \partial_t + v_y^{\e}\cdot\nabla_y + V_z^{\e}\partial_z$,
$b^{\e} \cdot\nabla^{\varphi^{\e}}=b_y^{\e}\cdot\nabla_y + B_z^{\e}\partial_z$,
where $V_z^{\e}=\frac{1}{\partial_z\varphi^{\e}}(V^{\e}\cdot\NN^{\e}-\partial_t\eta^{\e})$
and $B_z^{\e}=\frac{1}{\partial_z\varphi^{\e}}(B^{\e}\cdot\NN^{\e})$,
then \eqref{5.26} is equivalent to the following equations:
\begin{equation}\label{5.28}
\left\{\begin{array}{ll}
\partial_t \hat{\omega}_{vh}^{nh}
- \e\Delta^{\varphi^{\e}}\hat{\omega}_{vh}^{nh}
 + v_y^{\e}\cdot\nabla_y\hat{\omega}_{vh}^{nh}
+ V_z^{\e}\partial_z \hat{\omega}_{vh}^{nh}-b_y^{\e}\cdot\nabla_y\hat{\omega}_{bh}^{nh}
- B_z^{\e}\partial_z \hat{\omega}_{bh}^{nh}
 \\[8pt]\quad

= f_v^7\hat{\omega}_{vh}+f_b^7\hat{\omega}_{bh}
+ f_v^8\partial_j\hat{v}^i+ f_b^8\partial_j\hat{b}^i
+ f_v^9\nabla\hat{\varphi}
+ \e\Delta^{\varphi^{\e}}\omega_h
+ \partial_z^{\varphi}\omega_h \partial_t^{\varphi^{\e}} \hat{\eta}
\\[7pt]\qquad

+ \partial_z^{\varphi} \omega_h\, v^{\e}\cdot \nabla^{\varphi^{\e}}\hat{\eta}
- \hat{v}\cdot\nabla^{\varphi} \omega_h
- \partial_z^{\varphi} \omega_{bh}\, b^{\e}\cdot \nabla^{\varphi^{\e}}\hat{\eta}
+ \hat{b}\cdot\nabla^{\varphi} \omega_{bh} := \mathcal{I}_{v6},
\\[9pt]

\partial_t^{\varphi^{\e}} \hat{\omega}_{bh}^{nh}
- \e\Delta^{\varphi^{\e}}\hat{\omega}_{bh}^{nh}
+ v_y^{\e}\cdot\nabla_y\hat{\omega}_{bh}^{nh}
+ V_z^{\e}\partial_z \hat{\omega}_{bh}^{nh}
-b_y^{\e}\cdot\nabla_y\hat{\omega}_{vh}^{nh}
- B_z^{\e}\partial_z \hat{\omega}_{vh}^{nh}
 \\[6pt]\quad

= -f_b^7\hat{\omega}_{vh} -f_v^7\hat{\omega}_{bh}
- f_b^8\partial_j\hat{v}^i-f_v^8\partial_j\hat{b}^i
+ f_b^9\nabla\hat{\varphi}
+ \e\Delta^{\varphi^{\e}}\omega_{bh}
+ \partial_z^{\varphi}\omega_{bh}\partial_t^{\varphi^{\e}} \hat{\eta}
\\[6pt]\qquad

+ \partial_z^{\varphi} \omega_{bh}\, v^{\e}\cdot \nabla^{\varphi^{\e}}\hat{\eta}
- \hat{v}\cdot\nabla^{\varphi} \omega_{bh}
- \partial_z^{\varphi} \omega_{vh}\, b^{\e}\cdot \nabla^{\varphi^{\e}}\hat{\eta}
+ \hat{b}\cdot\nabla^{\varphi} \omega_{vh}:= \mathcal{I}_{b6},\\[7pt]

\hat{\omega}_{vh}^{nh}|_{z=0} =0, \hat{\omega}_{bh}^{nh}|_{z=0} =0,\\[7pt]

\hat{\omega}_{vh}^{nh}|_{t=0} = (\hat{\omega}_{v0}^1, \hat{\omega}_{v0}^2)^{\top},
\hat{\omega}_{bh}^{nh}|_{t=0} = (\hat{\omega}_{b0}^1, \hat{\omega}_{b0}^2)^{\top},
\end{array}\right.
\end{equation}

Applying $\partial_t^{\ell}\mathcal{Z}^{\alpha}$ to \eqref{5.28}, we get
\begin{equation}\label{5.29}
\left\{\begin{array}{ll}
\partial_t \partial_t^{\ell}\mathcal{Z}^{\alpha}\hat{\omega}_{vh}^{nh}
 - \e\Delta^{\varphi^{\e}}\partial_t^{\ell}\mathcal{Z}^{\alpha}\hat{\omega}_{vh}^{nh}
 + v_y^{\e}\cdot\nabla_y \partial_t^{\ell}\mathcal{Z}^{\alpha}\hat{\omega}_{vh}^{nh}
\\[8pt]\qquad

+ V_z^{\e}\partial_z \partial_t^{\ell}\mathcal{Z}^{\alpha}\hat{\omega}_{vh}^{nh}
- b_y^{\e}\cdot\nabla_y \partial_t^{\ell}\mathcal{Z}^{\alpha}\hat{\omega}_{bh}^{nh}
- B_z^{\e}\partial_z \partial_t^{\ell}\mathcal{Z}^{\alpha}\hat{\omega}_{bh}^{nh}
\\[8pt]\quad

= \partial_t^{\ell}\mathcal{Z}^{\alpha} \mathcal{I}_{v6}
-[\partial_t^{\ell}\mathcal{Z}^{\alpha}, v_y^{\e}\cdot\nabla_y]\hat{\omega}_{vh}^{nh}
-[\partial_t^{\ell}\mathcal{Z}^{\alpha}, V_z\partial_z]\hat{\omega}_{vh}^{nh}
+[\partial_t^{\ell}\mathcal{Z}^{\alpha}, b_y^{\e}\cdot\nabla_y]\hat{\omega}_{bh}^{nh}
\\[8pt]\qquad

+[\partial_t^{\ell}\mathcal{Z}^{\alpha}, B_z\partial_z]\hat{\omega}_{bh}^{nh}
+ \e\nabla^{\varphi^{\e}}\cdot [\partial_t^{\ell}\mathcal{Z}^{\alpha}, \nabla^{\varphi}]\hat{\omega}_{vh}^{nhom}
+ \e[\partial_t^{\ell}\mathcal{Z}^{\alpha}, \nabla^{\varphi}\cdot]\nabla^{\varphi^{\e}}\hat{\omega}_{vh}^{nhom},
\\[9pt]

\partial_t \partial_t^{\ell}\mathcal{Z}^{\alpha}\hat{\omega}_{bh}^{nh}
 - \e\Delta^{\varphi^{\e}}\partial_t^{\ell}\mathcal{Z}^{\alpha}\hat{\omega}_{bh}^{nh}
 + v_y^{\e}\cdot\nabla_y \partial_t^{\ell}\mathcal{Z}^{\alpha}\hat{\omega}_{bh}^{nh}
 \\[8pt]\qquad

+ V_z^{\e}\partial_z \partial_t^{\ell}\mathcal{Z}^{\alpha}\hat{\omega}_{bh}^{nh}
- b_y^{\e}\cdot\nabla_y \partial_t^{\ell}\mathcal{Z}^{\alpha}\hat{\omega}_{vh}^{nh}
- B_z^{\e}\partial_z \partial_t^{\ell}\mathcal{Z}^{\alpha}\hat{\omega}_{vh}^{nh}
 \\[8pt]\quad

= \partial_t^{\ell}\mathcal{Z}^{\alpha} \mathcal{I}_{b6}
-[\partial_t^{\ell}\mathcal{Z}^{\alpha}, v_y^{\e}\cdot\nabla_y]\hat{\omega}_{bh}^{nh}
-[\partial_t^{\ell}\mathcal{Z}^{\alpha}, V_z\partial_z]\hat{\omega}_{bh}^{nh}
+[\partial_t^{\ell}\mathcal{Z}^{\alpha}, b_y^{\e}\cdot\nabla_y]\hat{\omega}_{vh}^{nh}
 \\[8pt]\qquad

+[\partial_t^{\ell}\mathcal{Z}^{\alpha}, B_z\partial_z]\hat{\omega}_{vh}^{nh}
+ \e\nabla^{\varphi^{\e}}\cdot [\partial_t^{\ell}\mathcal{Z}^{\alpha}, \nabla^{\varphi}]\hat{\omega}_{bh}^{nhom}
+ \e[\partial_t^{\ell}\mathcal{Z}^{\alpha}, \nabla^{\varphi}\cdot]\nabla^{\varphi^{\e}}\hat{\omega}_{bh}^{nhom},
\\[9pt]

\partial_t^{\ell}\mathcal{Z}^{\alpha}\hat{\omega}_{vh}^{nh}|_{z=0} =0,
\partial_t^{\ell}\mathcal{Z}^{\alpha}\hat{\omega}_{bh}^{nh}|_{z=0} =0,
\\[7pt]

\partial_t^{\ell}\mathcal{Z}^{\alpha}\hat{\omega}_{vh}^{nh}|_{t=0}
= (\partial_t^{\ell}\mathcal{Z}^{\alpha}\hat{\omega}_{v0}^1, \partial_t^{\ell}\mathcal{Z}^{\alpha}\hat{\omega}_{v0}^2)^{\top},
\partial_t^{\ell}\mathcal{Z}^{\alpha}\hat{\omega}_{bh}^{nh}|_{t=0}
= (\partial_t^{\ell}\mathcal{Z}^{\alpha}\hat{\omega}_{b0}^1, \partial_t^{\ell}\mathcal{Z}^{\alpha}\hat{\omega}_{b0}^2)^{\top},
\end{array}\right.
\end{equation}

To give the $L^2$ estimate of $\partial_t^{\ell}\mathcal{Z}^{\alpha}\hat{\omega}_{vh}^{nhom}$
and $\partial_t^{\ell}\mathcal{Z}^{\alpha}\hat{\omega}_{bh}^{nhom}$,
one has
\begin{equation*}
\begin{array}{ll}
\frac{\mathrm{d}}{\mathrm{d}t} (\|\partial_t^{\ell}\mathcal{Z}^{\alpha}\hat{\omega}_{vh}^{nh}\|_{L^2}^2
+\|\partial_t^{\ell}\mathcal{Z}^{\alpha}\hat{\omega}_{bh}^{nh}\|_{L^2}^2)
+ 2\e \|\nabla^{\varphi^{\e}}\partial_t^{\ell}\mathcal{Z}^{\alpha}\hat{\omega}_{vh}^{nh}\|_{L^2}^2
+ 2\e \|\nabla^{\varphi^{\e}}\partial_t^{\ell}\mathcal{Z}^{\alpha}\hat{\omega}_{bh}^{nh}\|_{L^2}^2
\\[10pt]

\lem \|\partial_t^{\ell}\mathcal{Z}^{\alpha}\hat{\omega}_{vh}^{nh}\|_{L^2}^2
+\|\partial_t^{\ell}\mathcal{Z}^{\alpha}\hat{\omega}_{bh}^{nh}\|_{L^2}^2
+ \e \int_{\mathbb{R}^3_{-}}\nabla^{\varphi^{\e}}\cdot [\partial_t^{\ell}\mathcal{Z}^{\alpha}, \nabla^{\varphi}]\hat{\omega}_{vh}^{nh}
\partial_t^{\ell}\mathcal{Z}^{\alpha}\hat{\omega}_{vh}^{nh} \,\mathrm{d}\mathcal{V}_t
\\[10pt]\quad

+ \|\partial_t^{\ell}\mathcal{Z}^{\alpha} \mathcal{I}_{v6}\|_{L^2}^2
+ \|\partial_t^{\ell}\mathcal{Z}^{\alpha} \mathcal{I}_{b6}\|_{L^2}^2
+ \e \int_{\mathbb{R}^3_{-}}\nabla^{\varphi^{\e}}\cdot [\partial_t^{\ell}\mathcal{Z}^{\alpha}, \nabla^{\varphi}]\hat{\omega}_{bh}^{nh}
\partial_t^{\ell}\mathcal{Z}^{\alpha}\hat{\omega}_{bh}^{nh} \,\mathrm{d}\mathcal{V}_t
\\[10pt]\quad

+ \|[\partial_t^{\ell}\mathcal{Z}^{\alpha}, V_z\partial_z]\hat{\omega}_{vh}^{nh}\|_{L^2}^2
+ \e \int_{\mathbb{R}^3_{-}}[\partial_t^{\ell}\mathcal{Z}^{\alpha}, \nabla^{\varphi}\cdot]\nabla^{\varphi^{\e}}\hat{\omega}_{vh}^{nhom}
\partial_t^{\ell}\mathcal{Z}^{\alpha}\hat{\omega}_{vh}^{nhom} \,\mathrm{d}\mathcal{V}_t
\\[10pt]\quad

+ \|[\partial_t^{\ell}\mathcal{Z}^{\alpha}, V_z\partial_z]\hat{\omega}_{bh}^{nh}\|_{L^2}^2
+ \e \int_{\mathbb{R}^3_{-}}[\partial_t^{\ell}\mathcal{Z}^{\alpha}, \nabla^{\varphi}\cdot]\nabla^{\varphi^{\e}}\hat{\omega}_{bh}^{nhom}
\partial_t^{\ell}\mathcal{Z}^{\alpha}\hat{\omega}_{bh}^{nhom} \,\mathrm{d}\mathcal{V}_t
\\[10pt]\quad

+ \|[\partial_t^{\ell}\mathcal{Z}^{\alpha}, B_z\partial_z]\hat{\omega}_{vh}^{nh}\|_{L^2}^2
+ \|[\partial_t^{\ell}\mathcal{Z}^{\alpha}, B_z\partial_z]\hat{\omega}_{bh}^{nh}\|_{L^2}^2
\end{array}
\end{equation*}
then
\begin{equation}\label{5.30}
\begin{array}{ll}
\frac{\mathrm{d}}{\mathrm{d}t} (\|\partial_t^{\ell}\mathcal{Z}^{\alpha}\hat{\omega}_{vh}^{nh}\|_{L^2}^2
+\|\partial_t^{\ell}\mathcal{Z}^{\alpha}\hat{\omega}_{bh}^{nh}\|_{L^2}^2)
+ 2\e \|\nabla^{\varphi^{\e}}\partial_t^{\ell}\mathcal{Z}^{\alpha}\hat{\omega}_{vh}^{nh}\|_{L^2}^2
+ 2\e \|\nabla^{\varphi^{\e}}\partial_t^{\ell}\mathcal{Z}^{\alpha}\hat{\omega}_{bh}^{nh}\|_{L^2}^2
\\[10pt]
\lem \|\partial_t^{\ell}\mathcal{Z}^{\alpha}\hat{\omega}_{vh}^{nh}\|_{L^2}^2
+\|\partial_t^{\ell}\mathcal{Z}^{\alpha}\hat{\omega}_{bh}^{nh}\|_{L^2}^2
+ \|\hat{\omega}_{vh}\|_{X^{k-1}}^2+ \|\hat{\omega}_{bh}\|_{X^{k-1}}^2
+ \|\hat{v}\|_{X^{k-1,1}}^2 + \|\hat{b}\|_{X^{k-1,1}}^2
  \\[10pt]\quad

+ \|\nabla\hat{\eta}\|_{X^{k-1}}^2
+ \|\partial_t^k\hat{\eta}\|_{L^2}^2
+ \sum\limits_{\ell_1+|\alpha_1|>0}(
\|\frac{1-z}{z}\partial_t^{\ell}\mathcal{Z}^{\alpha} V_z \cdot \partial_t^{\ell}\mathcal{Z}^{\alpha} \frac{z}{1-z}\partial_z\hat{\omega}_{vh}^{nh}\|_{L^2}^2
 \\[10pt]\quad

+\|\frac{1-z}{z}\partial_t^{\ell}\mathcal{Z}^{\alpha} V_z \cdot \partial_t^{\ell}\mathcal{Z}^{\alpha} \frac{z}{1-z}\partial_z\hat{\omega}_{bh}^{nh}\|_{L^2}^2+
\|\frac{1-z}{z}\partial_t^{\ell}\mathcal{Z}^{\alpha} B_z \cdot \partial_t^{\ell}\mathcal{Z}^{\alpha} \frac{z}{1-z}\partial_z\hat{\omega}_{vh}^{nh}\|_{L^2}^2
\\[10pt]\quad

+\|\frac{1-z}{z}\partial_t^{\ell}\mathcal{Z}^{\alpha} B_z \cdot \partial_t^{\ell}\mathcal{Z}^{\alpha} \frac{z}{1-z}\partial_z\hat{\omega}_{bh}^{nh}\|_{L^2}^2)
- \e \int_{\mathbb{R}^3_{-}}[\partial_t^{\ell}\mathcal{Z}^{\alpha}, \NN\partial_z^{\varphi}]\hat{\omega}_{vh}^{nh}
\cdot \nabla^{\varphi^{\e}}\partial_t^{\ell}\mathcal{Z}^{\alpha}\hat{\omega}_{vh}^{nh} \,\mathrm{d}\mathcal{V}_t
\\[10pt]\quad

+ \e \int_{\mathbb{R}^3_{-}} \sum\limits_{\ell_1+|\alpha_1|>0}\big[(\partial_z^{\varphi})^{-1}
\partial_t^{\ell_1}\mathcal{Z}^{\alpha_1}(\frac{\NN}{\partial_z\varphi})
\partial_t^{\ell_2}\mathcal{Z}^{\alpha_2}\partial_z\big]\cdot
\nabla^{\varphi^{\e}}\hat{\omega}_{vh}^{nh} \,
\partial_z^{\varphi}\partial_t^{\ell}\mathcal{Z}^{\alpha}\hat{\omega}_{vh}^{nh} \,\mathrm{d}\mathcal{V}_t,

\\[10pt]\quad
+ \e \int\limits_{\mathbb{R}^3_{-}} \sum\limits_{\ell_1+|\alpha_1|>0}\big[(\partial_z^{\varphi})^{-1}
\partial_t^{\ell_1}\mathcal{Z}^{\alpha_1}(\frac{\NN}{\partial_z\varphi})
\partial_t^{\ell_2}\mathcal{Z}^{\alpha_2}\partial_z\big]\cdot
\nabla^{\varphi^{\e}}\hat{\omega}_{bh}^{nh} \,
\partial_z^{\varphi}\partial_t^{\ell}\mathcal{Z}^{\alpha}\hat{\omega}_{bh}^{nh} \,\mathrm{d}\mathcal{V}_t,
\\[10pt]\quad

- \e \int_{\mathbb{R}^3_{-}}[\partial_t^{\ell}\mathcal{Z}^{\alpha}, \NN\partial_z^{\varphi}]\hat{\omega}_{bh}^{nh}
\cdot \nabla^{\varphi^{\e}}\partial_t^{\ell}\mathcal{Z}^{\alpha}\hat{\omega}_{bh}^{nh} \,\mathrm{d}\mathcal{V}_t .
\end{array}
\end{equation}
Here the notation $(\partial_z^{\varphi})^{-1}$ satisfying $(\partial_z^{\varphi})^{-1}(\partial_z^{\varphi}) =1$.

Integrating $(\ref{5.30})$ in time,
applying the Gronwall's inequality,
we get
\begin{equation}\label{5.31}
\begin{array}{ll}
\|\hat{\omega}_{vh}^{nh}\|_{X^{k-1}}^2+\|\hat{\omega}_{bh}^{nh}\|_{X^{k-1}}^2
+ 2\e \int_0^t\|\nabla\hat{\omega}_{vh}^{nh}\|_{X^{k-1}}^2\,\mathrm{d}t
+ 2\e \int_0^t\|\nabla\hat{\omega}_{bh}^{nh}\|_{X^{k-1}}^2 \,\mathrm{d}t
\\[10pt]

\leq \|\hat{\omega}_{v0,h}\|_{X^{k-1}}^2 +\|\hat{\omega}_{b0,h}\|_{X^{k-1}}^2 + \int_0^t\|\hat{\omega}_{vh}\|_{X^{k-1}}^2+\|\hat{\omega}_{bh}\|_{X^{k-1}}^2\,\mathrm{d}t
\\[10pt]\quad

+ \|\hat{h}\|_{X^{k-1,1}}^2\,\mathrm{d}t + \|\partial_t^k\hat{h}\|_{L^2}^2\,\mathrm{d}t + O(\e).
\end{array}
\end{equation}

Therefore, one has
\begin{equation}\label{5.32}
\begin{array}{ll}
\|\hat{\omega}_{vh}^{nh}\|_{L^4([0,T],X^{k-1})}^2+\|\hat{\omega}_{bh}^{nh}\|_{L^4([0,T],X^{k-1})}^2
\lem \sqrt{T}\big\|\hat{\omega}_{v0,h}\big\|_{X^{k-1}}^2
+ \sqrt{T}\big\|\hat{\omega}_{b0,h}\big\|_{X^{k-1}}^2
\\[10pt]\quad

+ T \|\hat{\omega}_{vh}\|_{L^4([0,T],X^{k-1})}^2
+ T \|\hat{\omega}_{bh}\|_{L^4([0,T],X^{k-1})}^2
+ \sqrt{T}\int_0^T\|\hat{v}\|_{X^{k-1,1}}^2 \,\mathrm{d}t
+ \sqrt{T}\int_0^T\|\hat{b}\|_{X^{k-1,1}}^2 \,\mathrm{d}t
\\[10pt]\quad

+\sqrt{T}\int_0^T|\hat{h}|_{X^{k-1,1}}^2 \,\mathrm{d}t
+ \sqrt{T}|\partial_t^k \hat{h}|_{L^4([0,T],L^2)}^2 + O(\e).
\end{array}
\end{equation}

For the homogeneous equations \eqref{5.27},
 we have
\begin{equation}\label{5.33}
\begin{array}{ll}
\|\partial_t^{\ell}\mathcal{Z}^{\alpha}\hat{\omega}_{vh}^{h}\|_{L^4([0,T],L^2(\mathbb{R}^3_{-}))}^2
+\|\partial_t^{\ell}\mathcal{Z}^{\alpha}\hat{\omega}_{bh}^{h}\|_{L^4([0,T],L^2(\mathbb{R}^3_{-}))}^2
\\[10pt]

\lem \|\partial_t^{\ell}\mathcal{Z}^{\alpha}\hat{\omega}_{vh}^{h}\|_{H^{\frac{1}{4}}([0,T],L^2(\mathbb{R}^3_{-}))}^2
+\|\partial_t^{\ell}\mathcal{Z}^{\alpha}\hat{\omega}_{bh}^{h}\|_{H^{\frac{1}{4}}([0,T],L^2(\mathbb{R}^3_{-}))}^2 \\[10pt]
\lem \sqrt{\e}\int_0^T\big|\hat{\omega}_{vh}^{h}|_{z=0}\big|_{X^{k-1}(\mathbb{R}^2)}^2 \,\mathrm{d}t
+\sqrt{\e}\int_0^T\big|\hat{\omega}_{bh}^{h}|_{z=0}\big|_{X^{k-1}(\mathbb{R}^2)}^2 \,\mathrm{d}t
\\[10pt]

\lem +\sqrt{\e}\int_0^T\big|\varsigma_1\Theta_v^1 + \varsigma_2\Theta_v^2
+\varsigma_3\Theta_v^3\big|_{X^{k-1}(\mathbb{R}^2)}^2 \,\mathrm{d}t
+\sqrt{\e}\int_0^T\big|\varsigma_1\Theta_b^1 + \varsigma_2\Theta_b^2
+\varsigma_3\Theta_b^3\big|_{X^{k-1}(\mathbb{R}^2)}^2 \,\mathrm{d}t
\\[8pt]\quad

+ \sqrt{\e}\int_0^T\big|\varsigma_4\Theta_v^4 + \varsigma_5\Theta_v^5
+ \varsigma_6\Theta_v^6\big|_{X^{k-1}(\mathbb{R}^2)}^2 \,\mathrm{d}t
+ \sqrt{\e}\int_0^T\big|\varsigma_4\Theta_b^4 + \varsigma_5\Theta_b^5
+ \varsigma_6\Theta_b^6\big|_{X^{k-1}(\mathbb{R}^2)}^2 \,\mathrm{d}t
\\[8pt]\quad

\sqrt{\e}\int_0^T\big|\textsf{F}_v^{1,2}[\nabla\varphi^{\e}](\partial_j v^{\e,i},\partial_j b^{\e,i}) -\textsf{F}_v^{1,2}[\nabla\varphi](\partial_j v^i,\partial_j b^i)
\big|_{X^{k-1}(\mathbb{R}^2)}^2 \,\mathrm{d}t
\\[8pt]\quad

+\sqrt{\e}\int_0^T\big|\textsf{F}_b^{1,2}[\nabla\varphi^{\e}](\partial_j v^{\e,i},\partial_j b^i) -\textsf{F}_b^{1,2}[\nabla\varphi](\partial_j v^i,\partial_j b^i)
\big|_{X^{k-1}(\mathbb{R}^2)}^2 \,\mathrm{d}t
\lem O(\sqrt{\e}),
\end{array}
\end{equation}
where $\varsigma_i$, $\Theta_v^i$ and $\Theta_b^i$
are defined in the proof of Lemma \ref{lemma4.1}.

By $(\ref{5.32})$ and $(\ref{5.33})$,
we have
\begin{equation}\label{5.34}
\begin{array}{ll}
\|\hat{\omega}_{vh}\|_{L^4([0,T],X^{k-1})}^2+\|\hat{\omega}_{bh}\|_{L^4([0,T],X^{k-1})}^2
\\[9pt]
\lem \|\hat{\omega}_{vh}^{nh}\|_{L^4([0,T],X^{k-1})}^2
+ \|\hat{\omega}_{vh}^{h}\|_{L^4([0,T],X^{k-1})}^2
+ \|\hat{\omega}_{bh}^{nh}\|_{L^4([0,T],X^{k-1})}^2
+ \|\hat{\omega}_{bh}^{h}\|_{L^4([0,T],X^{k-1})}^2  \\[9pt]

\lem \big\|\hat{\omega}_{v0,h}\big\|_{X^{k-1}}^2
+\|\hat{\omega}_{b0,h}\big\|_{X^{k-1}}^2
+ \int_0^T\|\hat{v}\|_{X^{k-1,1}}^2
+\|\hat{b}\|_{X^{k-1,1}}^2 \,\mathrm{d}t
\\[9pt]\quad

+ |\partial_t^k\hat{h}|_{L^4([0,T],L^2)}^2
+ \int_0^T|\hat{h}|_{X^{k-1,1}}^2 \,\mathrm{d}t
+ O(\sqrt{\e}).
\end{array}
\end{equation}
Lemma $\ref{5.5}$ is proved.
\end{proof}

\begin{remark}\label{remark5.1}
If $\Pi\mathcal{S}^{\varphi} v\nn|_{z=0} = 0$, $\Pi\mathcal{S}^{\varphi} v\nn|_{z=0} = 0$
 then $\Theta_v^i =0$, $\Theta_b^i =0$ where $i=1,\cdots,6$,
and then the estimate $(\ref{5.33})$ is reduced into the following estimate:
\begin{equation}
\begin{array}{ll}
\|\partial_t^{\ell}\mathcal{Z}^{\alpha}\hat{\omega}_{vh}^{h}\|_{L^4([0,T],L^2(\mathbb{R}^3_{-}))}^2
+\|\partial_t^{\ell}\mathcal{Z}^{\alpha}\hat{\omega}_{bh}^{h}\|_{L^4([0,T],L^2(\mathbb{R}^3_{-}))}^2
\\[9pt]

\lem \sqrt{\e}\int_0^T\big|\textsf{F}_v^{1,2}[\nabla\varphi^{\e}](\partial_j v^{\e,i}) -\textsf{F}_v^{1,2}[\nabla\varphi](\partial_j v^i)
\big|_{X^{k-1}(\mathbb{R}^2)}^2 \,\mathrm{d}t
\\[9pt]\quad

+\sqrt{\e}\int_0^T\big|\textsf{F}_b^{1,2}[\nabla\varphi^{\e}](\partial_j b^{\e,i}) -\textsf{F}_b^{1,2}[\nabla\varphi](\partial_j b^i)
\big|_{X^{k-1}(\mathbb{R}^2)}^2 \,\mathrm{d}t
\\[9pt]
\lem O(\sqrt{\e}).
\end{array}
\end{equation}
We can not improve the convergence rates of $\|\omega_v\|_{L^4([0,T],X^{k-1})}^2$ and
$\|\omega_b\|_{L^4([0,T],X^{k-1})}^2$,
since we do not have the convergence rates of $|\partial_j v^{\e,i} - \partial_j v^i|_{X^{k-1}(\mathbb{R}^2)}$,
$|\partial_j b^{\e,i} - \partial_j b^i|_{X^{k-1}(\mathbb{R}^2)}$.
However, we can improve the convergence rates of $\|\omega_v\|_{L^4([0,T],X^{k-2})}^2$,
see subsection $5.4$.
\end{remark}

Note that when $\Pi\mathcal{S}^{\varphi}v\nn|_{z=0} \neq 0$,
$\Pi\mathcal{S}^{\varphi}b\nn|_{z=0} \neq 0$,
then, not only
$\big|\nabla^{\varphi^{\e}} \times \partial_t^{\ell}\mathcal{Z}^{\alpha}(v^{\e} -v)|_{z=0}\big|_{L^2} \neq 0$, $\big|\nabla^{\varphi^{\e}} \times \partial_t^{\ell}\mathcal{Z}^{\alpha}(b^{\e} -b)|_{z=0}\big|_{L^2} \neq 0$,
 but also
$\big|\NN^{\e}\times(\nabla^{\varphi^{\e}} \times \partial_t^{\ell}\mathcal{Z}^{\alpha}(v^{\e} -v))|_{z=0}\big|_{L^2} \neq 0$,
$\big|\NN^{\e}\times(\nabla^{\varphi^{\e}} \times \partial_t^{\ell}\mathcal{Z}^{\alpha}(b^{\e} -b))|_{z=0}\big|_{L^2} \neq 0$.

Next, we prove the following lemma to estimates for $\|\partial_z \hat{v}\|_{L^{\infty}([0,T],X^{m-4})}$,
$\|\partial_z \hat{b}\|_{L^{\infty}([0,T],X^{m-4})}$,
$\|\hat{\omega}_v\|_{L^{\infty}([0,T],X^{m-4})}$
 and $\|\hat{\omega}_b\|_{L^{\infty}([0,T],X^{m-4})}$.
\begin{lemma}\label{Lemma5.6}
Assume $0\leq k\leq m-2$,
$\hat{\omega}_{vh} =\omega_{vh}^{\e} -\omega_{vh}$,
$\hat{\omega}_{bh} =\omega_{bh}^{\e} -\omega_{bh}$,
$\partial_z\hat{v} =\partial_z v^{\e} -\partial_z v$
and $\partial_z\hat{b} =\partial_z v^{\e} -\partial_z b$. We have, for
$\hat{\omega}_{vh}$, $\hat{\omega}_{bh}$, $\partial_z\hat{v}$
and $\partial_z\hat{b}$, that
\begin{equation}\label{5.36}
\begin{array}{ll}
\|\hat{\omega}_v\|_{X^{k-2}}^2 +\|\hat{\omega}_b\|_{X^{k-2}}^2
+ \|\partial_z\hat{v}\|_{X^{k-2}}^2
+ \|\partial_z\hat{b}\|_{X^{k-2}}^2
\\[9pt]

\lem \|\hat{\omega}_{v0}\|_{X^{k-2}}^2+ \|\hat{\omega}_{b0}\|_{X^{k-2}}^2
+ \int_0^t\|\hat{v}\|_{X^{k-2}} +\|\hat{b}\|_{X^{k-2}}
+\|\partial_z \hat{v}\|_{X^{k-2}}
 \\[9pt]\quad

+\|\partial_z \hat{b}\|_{X^{k-2}}
+ \|\nabla\hat{q} \|_{X^{k-2}} + \|\hat{h}\|_{X^{k-1}}\,\mathrm{d}t + O(\e).
\end{array}
\end{equation}
\end{lemma}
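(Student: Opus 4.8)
\emph{Proof strategy.} The plan is to mimic the argument of Lemma \ref{Lemma2.7}, now applied to the difference system \eqref{difference1}, while keeping track of the orders in $\e$. First I would record the reductions that turn this into an estimate on the curls. Since $\nabla^{\varphi^\e}\cdot\hat v = \partial_z^{\varphi} v\cdot\nabla^{\varphi^\e}\hat\eta$ and $\nabla^{\varphi^\e}\cdot\hat b = \partial_z^{\varphi} b\cdot\nabla^{\varphi^\e}\hat\eta$, one has
\[
\Delta^{\varphi^\e}\hat v = \nabla^{\varphi^\e}(\partial_z^{\varphi}v\cdot\nabla^{\varphi^\e}\hat\eta) - \nabla^{\varphi^\e}\times(\nabla^{\varphi^\e}\times\hat v),
\]
and likewise for $\hat b$; moreover, as in \eqref{Sect2_NormalDer_Estimate_2_Formula}, for $\ell+|\alpha|\le k-2$ the quantities $\partial_t^\ell\mathcal Z^\alpha\hat\omega_v$ and $\nabla^{\varphi^\e}\times\partial_t^\ell\mathcal Z^\alpha\hat v$ (resp. for $b$) differ only by terms bounded by $\|\partial_z\hat v\|_{X^{k-3}}+|\hat h|_{X^{k-2,\frac12}}$. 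Finally, solving the linear system built from the components of $\nabla^{\varphi^\e}\times\hat v$ together with the divergence identity — exactly as in \eqref{2.10}--\eqref{2.11}, but carrying along the $\hat\eta$-corrections and using the same coefficient matrix, which is uniformly invertible for $|\nabla h|_\infty$ small — expresses $\partial_z\hat v^1,\partial_z\hat v^2$ (then $\partial_z\hat v^3$ from \eqref{divergence free condition}, and analogously for $\hat b$) in terms of $\hat\omega_{vh},\hat\omega_{bh}$, tangential derivatives of $\hat v,\hat b$, $|\hat h|$ and an $O(\e)$ remainder. Hence it suffices to bound $\|\nabla^{\varphi^\e}\times\partial_t^\ell\mathcal Z^\alpha\hat v\|_{L^2}$ and $\|\nabla^{\varphi^\e}\times\partial_t^\ell\mathcal Z^\alpha\hat b\|_{L^2}$ for $\ell+|\alpha|\le k-2$.

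Next I would run the curl--curl energy estimate. Write \eqref{difference1} in the form $\partial_t^{\varphi^\e}\hat v + \e(\nabla^{\varphi^\e}\times)^2\hat v + v^\e\cdot\nabla^{\varphi^\e}\hat v - b^\e\cdot\nabla^{\varphi^\e}\hat b + \nabla^{\varphi^\e}\hat q = \e\Delta^{\varphi^\e}v + \mathcal R_v$, where $\mathcal R_v$ collects the $\hat\varphi$/$\hat\eta$-mismatch terms, the $\hat v,\hat b$-transport corrections, $\partial_z^{\varphi}q\nabla^{\varphi^\e}\hat\eta$ and $2\e\nabla^{\varphi^\e}(\nabla^{\varphi^\e}\cdot\hat v)$, and the analogous equation for $\hat b$. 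Apply $\partial_t^\ell\mathcal Z^\alpha$, pair the $\hat v$-equation with $\nabla^{\varphi^\e}\times(\nabla^{\varphi^\e}\times\partial_t^\ell\mathcal Z^\alpha\hat v)$ and the $\hat b$-equation with $\nabla^{\varphi^\e}\times(\nabla^{\varphi^\e}\times\partial_t^\ell\mathcal Z^\alpha\hat b)$, integrate over $\mathbb R^3_-$ against $\mathrm d\mathcal V_t$, and add. Using $[\partial_t^{\varphi^\e},\nabla^{\varphi^\e}]=0$, the transport terms produce $\tfrac12\tfrac{\mathrm d}{\mathrm dt}\int_{\mathbb R^3_-}(|\nabla^{\varphi^\e}\times\partial_t^\ell\mathcal Z^\alpha\hat v|^2+|\nabla^{\varphi^\e}\times\partial_t^\ell\mathcal Z^\alpha\hat b|^2)\,\mathrm d\mathcal V_t$ plus boundary terms; the $\e$-diffusion term yields the coercive $\e\|(\nabla^{\varphi^\e}\times)^2\partial_t^\ell\mathcal Z^\alpha\hat v\|_{L^2}^2+\e\|(\nabla^{\varphi^\e}\times)^2\partial_t^\ell\mathcal Z^\alpha\hat b\|_{L^2}^2$; the Lorentz coupling cancels, as in Lemma \ref{Lemma2.7}, because $\int b^\e\cdot\nabla^{\varphi^\e}(\nabla^{\varphi^\e}\times\partial_t^\ell\mathcal Z^\alpha\hat b)\cdot(\nabla^{\varphi^\e}\times\partial_t^\ell\mathcal Z^\alpha\hat v)$ plus its $(\hat v\leftrightarrow\hat b)$ counterpart vanishes by $\nabla^{\varphi^\e}\cdot b^\e=0$ and $b^\e|_{z=0}=0$; and the pressure disappears from the interior since $\nabla^{\varphi^\e}\times\nabla^{\varphi^\e}\partial_t^\ell\mathcal Z^\alpha\hat q=0$, leaving only a boundary term controlled by $|\nabla^{\varphi^\e}\partial_t^\ell\mathcal Z^\alpha\hat q|_{z=0}|_{-\frac12}\lesssim\|\nabla\hat q\|_{X^{k-2}}+O(\e)$ via $\hat q|_{z=0}=g\hat h+2\e\mathcal S^{\varphi^\e}v^\e\nn^\e\cdot\nn^\e$.

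Then I would dispose of the remaining terms. The commutators $[\partial_t^\ell\mathcal Z^\alpha,\nabla^{\varphi^\e}\times]$, $[\partial_t^\ell\mathcal Z^\alpha,v_y^\e\cdot\nabla_y+V_z^\e\partial_z]$, $[\partial_t^\ell\mathcal Z^\alpha,b_y^\e\cdot\nabla_y+B_z^\e\partial_z]$ and $[\partial_t^\ell\mathcal Z^\alpha,\NN^\e\partial_z^{\varphi}(\cdot)]$ are treated by Lemma \ref{Lemma1.1} together with the Hardy/anisotropic splitting of \eqref{Hady} and \eqref{Sect1_HardyIneq}: writing the dangerous normal factors with the weights $\tfrac{1-z}{z}$ and $\tfrac{z}{1-z}$, each such term is bounded by the right side of \eqref{5.36}, namely by $\|\hat\omega_v\|_{X^{k-2}}+\|\hat\omega_b\|_{X^{k-2}}+\|\partial_z\hat v\|_{X^{k-2}}+\|\partial_z\hat b\|_{X^{k-2}}+\|\hat v\|_{X^{k-2}}+\|\hat b\|_{X^{k-2}}+\|\nabla\hat q\|_{X^{k-2}}+\|\hat h\|_{X^{k-1}}$. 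The boundary integrals generated by the curl--curl test functions (of the type $\int_{z=0}\mathcal I\cdot(\NN^\e\times\nabla^{\varphi^\e}\times\partial_t^\ell\mathcal Z^\alpha\hat v)$ and $\int_{z=0}(\partial_t^{\varphi^\e}\partial_t^\ell\mathcal Z^\alpha\hat v+\cdots)\cdot(\NN^\e\times\nabla^{\varphi^\e}\times\partial_t^\ell\mathcal Z^\alpha\hat v)$) are handled by the trace estimate \eqref{1.48}: the $H^{\pm\frac12}$ norms of $\NN^\e\times(\nabla^{\varphi^\e}\times\partial_t^\ell\mathcal Z^\alpha\hat v)|_{z=0}$ are uniformly bounded — this quantity does \emph{not} tend to $0$, in accordance with Lemma \ref{lemma4.1} — while the companion traces of $\hat v,\hat b,\hat q,\hat h$ are absorbed into the right side of \eqref{5.36}. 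The genuinely viscous sources ($\e\Delta^{\varphi^\e}v$, $\e\Delta^{\varphi^\e}\omega_{vh}$, their $b$-counterparts, and the $\e I^{\hat v}_{1,1}$-type commutators) are paired so that one factor of $\e$ is consumed in absorbing $\e\|(\nabla^{\varphi^\e}\times)^2\partial_t^\ell\mathcal Z^\alpha\hat v\|^2$ and $\e\|\nabla^{\varphi^\e}\partial_t^\ell\mathcal Z^\alpha\hat\omega_v\|^2$ into the dissipation, the leftover $\e$ multiplying quantities that are uniformly integrable in time by Proposition \ref{Proposition1.1} (the bounds on $\e\int_0^T\|\nabla v\|_{X^{m-1,1}}^2$, $\e\int_0^T\|\nabla\partial_z v\|_{X^{m-2}}^2$, $\e\int_0^T\|\nabla\omega_v\|_{X^{m-2}}^2$ and their $b$-analogues) and by the regularity of the ideal solution; these give $O(\e)$. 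Integrating in time, applying Gronwall (the only self-referential terms, $\|\hat\omega_v\|_{X^{k-2}}^2$ and $\|\hat\omega_b\|_{X^{k-2}}^2$, sit under $\int_0^t$), and then feeding in the representation of $\partial_z\hat v,\partial_z\hat b$ from the first step, yields \eqref{5.36}.

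\emph{Main obstacle.} The delicate point is the bookkeeping of powers of $\e$: one must check that every viscous source — in particular $\e\Delta^{\varphi^\e}v$ after $\le k-2$ tangential/time derivatives, whose $L^2$ norm is \emph{not} controlled by the uniform-in-$\e$ co-normal norms of Section~2 but only by $\e$-weighted time-integrated ones — is, after the Cauchy--Schwarz splitting against the coercive dissipation, multiplied by a net full power of $\e$, so that the error is genuinely $O(\e)$; and, at the same time, that the non-vanishing boundary curls $\NN^\e\times(\nabla^{\varphi^\e}\times\partial_t^\ell\mathcal Z^\alpha\hat v)|_{z=0}$ only ever meet traces of $\hat v,\hat b,\hat h,\hat q$ (which the right side of \eqref{5.36} controls) and never each other.
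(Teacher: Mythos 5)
Your proposal is correct and follows essentially the same route as the paper's proof of Lemma \ref{Lemma5.6}: you pair the difference equations, after applying $\partial_t^{\ell}\mathcal Z^{\alpha}$ for $\ell+|\alpha|\le k-2$, against $\nabla^{\varphi^{\e}}\times(\nabla^{\varphi^{\e}}\times\partial_t^{\ell}\mathcal Z^{\alpha}\hat v)$ and its $\hat b$-counterpart, use $\nabla^{\varphi^{\e}}\times\nabla^{\varphi^{\e}}\hat q=0$ so the pressure only enters through a boundary pairing controlled by $|\nabla^{\varphi^{\e}}\partial_t^{\ell}\mathcal Z^{\alpha}\hat q|_{z=0}|_{-\frac12}$, cancel the Lorentz coupling by $\nabla^{\varphi^{\e}}\cdot b^{\e}=0$ and $b^{\e}|_{z=0}=0$, treat commutators via the Hardy/anisotropic splitting, absorb the dissipation, and then Gronwall before converting curls back to $\hat\omega_v,\hat\omega_b$ and $\partial_z\hat v,\partial_z\hat b$. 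The only cosmetic difference is that the paper runs the zeroth-order $L^2$ estimate as a separate preliminary step, which you fold into the general one; that is not a gap.
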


\begin{proof}
We rewrite \eqref{difference1} as
\begin{equation}\label{5.37}
\begin{array}{ll}
\partial_t^{\varphi^{\e}}\hat{v}-\partial_z^{\varphi} v \partial_t^{\varphi^{\e}}\hat{\eta}
+ v^{\e} \cdot\nabla^{\varphi^{\e}} \hat{v} - v^{\e}\cdot \nabla^{\varphi^{\e}}\hat{\eta}\, \partial_z^{\varphi} v + \hat{v}\cdot\nabla^{\varphi} v
- b^{\e} \cdot\nabla^{\varphi^{\e}} \hat{b}
\\[7pt]\quad

+ b^{\e}\cdot \nabla^{\varphi^{\e}}\hat{\eta}\, \partial_z^{\varphi} b
- \hat{b}\cdot\nabla^{\varphi} b
+ \nabla^{\varphi^{\e}} \hat{q} - \partial_z^{\varphi} q\nabla^{\varphi^{\e}}\hat{\eta}
= -\e\nabla^{\varphi^{\e}}\times \hat{\omega}_v -\e\nabla^{\varphi^{\e}}\times \omega_v,
\\[9pt]

\partial_t^{\varphi^{\e}}\hat{b}-\partial_z^{\varphi} b \partial_t^{\varphi^{\e}}\hat{\eta}
+ v^{\e} \cdot\nabla^{\varphi^{\e}} \hat{b} - v^{\e}\cdot \nabla^{\varphi^{\e}}\hat{\eta}\, \partial_z^{\varphi} b + \hat{v}\cdot\nabla^{\varphi} b
- b^{\e} \cdot\nabla^{\varphi^{\e}} \hat{v}
\\[7pt]\quad

+ b^{\e}\cdot \nabla^{\varphi^{\e}}\hat{\eta}\, \partial_z^{\varphi} v
- \hat{b}\cdot\nabla^{\varphi} v
= -\e\nabla^{\varphi^{\e}}\times \hat{\omega}_b -\e\nabla^{\varphi^{\e}}\times \omega_b.
\end{array}
\end{equation}

First, we give $L^2$ estimates of $\hat{\omega}_v$, $\hat{\omega}_b$.
 Multiplying \eqref{5.37}
with $\nabla^{\varphi^{\e}}\times(\nabla^{\varphi^{\e}}\times\hat{v})$,
$\nabla^{\varphi^{\e}}\times(\nabla^{\varphi^{\e}}\times\hat{b})$, respectively,
integrating in $\mathbb{R}^3_{-}$, using the integration by parts formula,
we get
\begin{equation}\label{5.38}
\begin{array}{ll}
\int_{\mathbb{R}^3_{-}}
\nabla^{\varphi^{\e}}\times\big(\partial_t^{\varphi^{\e}}\hat{v}
+ v^{\e} \cdot\nabla^{\varphi^{\e}} \hat{v}- b^{\e} \cdot\nabla^{\varphi^{\e}} \hat{b}
 + \nabla^{\varphi^{\e}} \hat{q}\big) \cdot\hat{\omega}_v\,\mathrm{d}\mathcal{V}_t
+ \e\int_{\mathbb{R}^3_{-}} |\nabla^{\varphi^{\e}}\times(\nabla^{\varphi^{\e}}\times\hat{v})|^2 \,\mathrm{d}\mathcal{V}_t\\[8pt]\quad

+\int_{\mathbb{R}^3_{-}}
\nabla^{\varphi^{\e}}\times\big(\partial_t^{\varphi^{\e}}\hat{b}
+ v^{\e} \cdot\nabla^{\varphi^{\e}} \hat{b}- b^{\e} \cdot\nabla^{\varphi^{\e}} \hat{b}
 \big)\cdot\hat{\omega}_b \,\mathrm{d}\mathcal{V}_t
+ \e\int\limits_{\mathbb{R}^3_{-}} |\nabla^{\varphi^{\e}}\times(\nabla^{\varphi^{\e}}\times\hat{b})|^2 \,\mathrm{d}\mathcal{V}_t\\[8pt]\quad

= \int_{\mathbb{R}^3_{-}}
\nabla^{\varphi^{\e}}\times\big(\partial_z^{\varphi} v \partial_t^{\varphi^{\e}}\hat{\eta}
+ v^{\e}\cdot \nabla^{\varphi^{\e}}\hat{\eta}\, \partial_z^{\varphi} v
-b^{\e}\cdot \nabla^{\varphi^{\e}}\hat{\eta}\, \partial_z^{\varphi} b
- \hat{v}\cdot\nabla^{\varphi} v+ \hat{b}\cdot\nabla^{\varphi}b
+ \partial_z^{\varphi} q\nabla^{\varphi^{\e}}\hat{\eta}\big)\cdot \hat{\omega}_v
\,\mathrm{d}\mathcal{V}_t \\[8pt]\qquad

- \int_{z=0}
\big(\partial_t\hat{v} + v^{\e}_y \cdot\nabla_y \hat{v} + \hat{v}\cdot\nabla^{\varphi} v
-\partial_z^{\varphi} v \partial_t\hat{\eta}
- v^{\e}_y\cdot \nabla_y \hat{\eta}\, \partial_z^{\varphi} v
+ \nabla^{\varphi^{\e}} \hat{q}
- b_y^{\e} \cdot\nabla^{\varphi^{\e}} \hat{b}_y
- \hat{b}\cdot\nabla^{\varphi} b

 \\[11pt]\qquad
+ b^{\e}\cdot \nabla^{\varphi^{\e}}\hat{\eta}\, \partial_z^{\varphi} b
- \partial_z^{\varphi} q\nabla^{\varphi^{\e}}\hat{\eta}
\big)\cdot \NN^{\e}\times (\nabla^{\varphi^{\e}}\times\hat{v}) \,\mathrm{d}y
- \e\int_{\mathbb{R}^3_{-}}
\nabla^{\varphi^{\e}}\times \omega_v \cdot \nabla^{\varphi^{\e}}\times (\nabla^{\varphi^{\e}}\times\hat{v}) \,\mathrm{d}\mathcal{V}_t
\\[8pt]\qquad

+\int_{\mathbb{R}^3_{-}}
\nabla^{\varphi^{\e}}\times\big(\partial_z^{\varphi} b \partial_t^{\varphi^{\e}}\hat{\eta}
+ v^{\e}\cdot \nabla^{\varphi^{\e}}\hat{\eta}\, \partial_z^{\varphi} b
-b^{\e}\cdot \nabla^{\varphi^{\e}}\hat{\eta}\, \partial_z^{\varphi} v
- \hat{v}\cdot\nabla^{\varphi} b+ \hat{b}\cdot\nabla^{\varphi}v
)\cdot \hat{\omega}_b \,\mathrm{d}\mathcal{V}_t \\[8pt]\qquad

- \int_{z=0}
\big(\partial_t\hat{b} + v^{\e}_y \cdot\nabla_y \hat{b} + \hat{v}\cdot\nabla^{\varphi} b
-\partial_z^{\varphi} b \partial_t\hat{\eta}
- v^{\e}_y\cdot \nabla_y \hat{\eta}\, \partial_z^{\varphi} b
- b_y^{\e} \cdot\nabla^{\varphi^{\e}} \hat{v}_y
- \hat{b}\cdot\nabla^{\varphi} v

 \\[11pt]\qquad
+ b^{\e}\cdot \nabla^{\varphi^{\e}}\hat{\eta}\, \partial_z^{\varphi} v
\big)\cdot \NN^{\e}\times (\nabla^{\varphi^{\e}}\times\hat{b}) \,\mathrm{d}y
- \e\int_{\mathbb{R}^3_{-}}
\nabla^{\varphi^{\e}}\times \omega_b \cdot \nabla^{\varphi^{\e}}\times (\nabla^{\varphi^{\e}}\times\hat{v}) \,\mathrm{d}\mathcal{V}_t :=J,
\end{array}
\end{equation}
where
\begin{equation}
\begin{array}{ll}
|J|\lem
\|\hat{\omega}_v\|_{L^2}^2 +\|\hat{\omega}_b\|_{L^2}^2
+ |\hat{h}|_{X^{1,\frac{1}{2}}}^2
+ \frac{\e}{2}\int_{\mathbb{R}^3_{-}} |\nabla^{\varphi^{\e}}\times\hat{\omega}_v|^2 +|\nabla^{\varphi^{\e}}\times\hat{\omega}_b|^2 \,\mathrm{d}\mathcal{V}_t + O(\e)\\[8pt]
\qquad

+ \|\hat{v}\|_{X^1}^2
+ \big|\NN^{\e}\times (\nabla^{\varphi^{\e}}\times\hat{v})|_{z=0}\big|_{L^2} \big(\big|\hat{v}|_{z=0}\big|_{X_{tan}^1}+\big|\hat{b}|_{z=0}\big|_{X_{tan}^1}
+ \big|\hat{h}|_{z=0}\big|_{X^1} \big)
\\[8pt]\qquad

+\|\hat{b}\|_{X^1}^2
+ \big|\NN^{\e}\times (\nabla^{\varphi^{\e}}\times\hat{b})|_{z=0}\big|_{L^2} \big(\big|\hat{v}|_{z=0}\big|_{X_{tan}^1}+\big|\hat{b}|_{z=0}\big|_{X_{tan}^1}
+ \big|\hat{h}|_{z=0}\big|_{X^1} \big)
\\[8pt]\qquad

+\|\partial_z \hat{v}\|_{L^2}^2+\|\partial_z \hat{b}\|_{L^2}^2
+ \big|\NN^{\e}\times (\nabla^{\varphi^{\e}}\times\hat{v})|_{z=0}\big|_{\frac{1}{2}} \big|\nabla \hat{q}|_{z=0}\big|_{-\frac{1}{2}}.
\end{array}
\end{equation}

Since $\nabla^{\varphi^{\e}}\times \nabla^{\varphi^{\e}} \hat{q} =0$,
$\nabla^{\varphi^{\e}}\times \omega_v$ and
$\nabla^{\varphi^{\e}}\times \omega_b$ is bounded,
$\big|\NN^{\e}\times (\nabla^{\varphi^{\e}}\times\hat{v})|_{z=0}\big|_{L^2}\neq 0$
and $\big|\NN^{\e}\times (\nabla^{\varphi^{\e}}\times\hat{b})|_{z=0}\big|_{L^2}\neq 0$, we have
\begin{equation}\label{5.40}
\begin{array}{ll}
\|\hat{\omega}_v\|_{L^2}+\|\hat{\omega}_b\|_{L^2}
+ \e\int_0^t \|\nabla\hat{\omega}_v\|^2
+ \e\int_0^t \|\nabla\hat{\omega}_b\|^2 \,\mathrm{d}t
\\[8pt]

\lem \|\hat{\omega}_{v0}\|_{L^2}^2 +\|\hat{\omega}_{b0}\|_{L^2}^2 +
 \int_0^t |\hat{h}|_{X^{1,1}}^2 + \|\hat{v}\|_{X^1}^2
 + \|\hat{b}\|_{X^1}^2 + \|\partial_z \hat{v}\|_{L^2}^2
\\[8pt]\quad

+ \|\partial_z \hat{b}\|_{L^2}^2
+ \big|\hat{v}|_{z=0}\big|_{X_{tan}^1}+ \big|\hat{b}|_{z=0}\big|_{X_{tan}^1}
 + \big|\hat{h}|_{z=0}\big|_{X^1}
+ \big|\nabla \hat{q}|_{z=0}\big|_{-\frac{1}{2}} \,\mathrm{d}t  + O(\e) \\[9pt]

\lem \|\hat{\omega}_{v0}\|_{L^2}^2 +\|\hat{\omega}_{b0}\|_{L^2}^2
+ \int_0^t |\hat{h}|_{X^{1,1}}^2 + \|\hat{v}\|_{X^1}^2+ \|\hat{b}\|_{X^1}^2
+ \|\partial_z \hat{v}\|_{L^2}^2+ \|\partial_z \hat{b}\|_{L^2}^2
\\[8pt]\quad

+ \|\hat{v}\|_{X^1}+ \|\hat{b}\|_{X^1}
+ \|\partial_z\hat{v}\|_{X_{tan}^1} + \|\partial_z\hat{b}\|_{X_{tan}^1}
+ \|\hat{h}\|_{X^1} + \|\nabla \hat{q}\|_{L^2} \,\mathrm{d}t  + O(\e).
\end{array}
\end{equation}

When $\ell+|\alpha|\leq k-2$, we estimate the quantity
$\nabla^{\varphi^{\e}}\times \partial_t^{\ell}\mathcal{Z}^{\alpha}\hat{v}$
and $\nabla^{\varphi^{\e}}\times \partial_t^{\ell}\mathcal{Z}^{\alpha}\hat{b}$.

Applying $\partial_t^{\ell}\mathcal{Z}^{\alpha}$ to the equations \eqref{difference1},
one has
\begin{equation}\label{5.41}
\begin{array}{ll}
\partial_t^{\varphi^{\e}} \partial_t^{\ell}\mathcal{Z}^{\alpha}\hat{v}
+ v^{\e} \cdot\nabla^{\varphi^{\e}} \partial_t^{\ell}\mathcal{Z}^{\alpha}\hat{v}
-b^{\e} \cdot\nabla^{\varphi^{\e}} \partial_t^{\ell}\mathcal{Z}^{\alpha}\hat{b}
+ \nabla^{\varphi^{\e}} \partial_t^{\ell}\mathcal{Z}^{\alpha}\hat{q}
+ \e\nabla^{\varphi^\e}\times \nabla^{\varphi^\e} \partial_t^{\ell}\mathcal{Z}^{\alpha} \hat{v}
\\[6pt]\quad
= \e \, \mathcal{I}_{v7,1} + \mathcal{I}_{v7,2},
\\[8pt]
\partial_t^{\varphi^{\e}} \partial_t^{\ell}\mathcal{Z}^{\alpha}\hat{b}
+ v^{\e} \cdot\nabla^{\varphi^{\e}} \partial_t^{\ell}\mathcal{Z}^{\alpha}\hat{b}
-b^{\e} \cdot\nabla^{\varphi^{\e}} \partial_t^{\ell}\mathcal{Z}^{\alpha}\hat{v}
+ \e\nabla^{\varphi^\e}\times \nabla^{\varphi^\e} \partial_t^{\ell}\mathcal{Z}^{\alpha} \hat{b}
\\[6pt]\quad
= \e \, \mathcal{I}_{b7,1} + \mathcal{I}_{b7,2},
\end{array}
\end{equation}
where
\begin{equation}\label{5.42}
\begin{array}{ll}
\mathcal{I}_{v7,1} = -[\partial_t^{\ell}\mathcal{Z}^{\alpha}, \nabla^{\varphi^\e}\times]\nabla^{\varphi^\e}\times\hat{v}
- \nabla^{\varphi^\e}\times[\partial_t^{\ell}\mathcal{Z}^{\alpha}, \nabla^{\varphi^\e}\times] \hat{v}
+ \partial_t^{\ell}\mathcal{Z}^{\alpha}\triangle^{\varphi^{\e}}v,
\\[9pt]

\mathcal{I}_{b7,1} = -[\partial_t^{\ell}\mathcal{Z}^{\alpha}, \nabla^{\varphi^\e}\times]\nabla^{\varphi^\e}\times\hat{b}
- \nabla^{\varphi^\e}\times[\partial_t^{\ell}\mathcal{Z}^{\alpha}, \nabla^{\varphi^\e}\times] \hat{b}
+ \partial_t^{\ell}\mathcal{Z}^{\alpha}\triangle^{\varphi^{\e}}b,
\end{array}
\end{equation}
\begin{equation}
\begin{array}{ll}
\mathcal{I}_{v7,2} := \partial_z^{\varphi} v (\partial_t + v^{\e}_y\cdot \nabla_y + V_z^{\e}\partial_z
-b^{\e}_y\cdot \nabla_y + B_z^{\e}\partial_z)
\partial_t^{\ell}\mathcal{Z}^{\alpha}\hat{\varphi}
- \partial_t^{\ell}\mathcal{Z}^{\alpha}\hat{v}\cdot\nabla^{\varphi} v
\\[8pt]\quad

+\partial_t^{\ell}\mathcal{Z}^{\alpha}\hat{b}\cdot\nabla^{\varphi} b
+ \partial_z^{\varphi} q\nabla^{\varphi^{\e}}\partial_t^{\ell}\mathcal{Z}^{\alpha}\hat{\varphi}
- [\partial_t^{\ell}\mathcal{Z}^{\alpha},\partial_t +v^{\e}\partial_y + V_z^{\e}\partial_z)]\hat{v}
\\[8pt]\quad

+ [\partial_t^{\ell}\mathcal{Z}^{\alpha}, b^{\e}\partial_y + B_z^{\e}\partial_z]\hat{b}
+ [\partial_t^{\ell}\mathcal{Z}^{\alpha}, \partial_z^{\varphi} v (\partial_t + v^{\e}_y\cdot \nabla_y + V_z^{\e}\partial_z)]\hat{\varphi}
\\[8pt]\quad

+ [\partial_t^{\ell}\mathcal{Z}^{\alpha}, \partial_z^{\varphi} b (b^{\e}_y\cdot \nabla_y + B_z^{\e}\partial_z]\hat{\varphi}
- [\partial_t^{\ell}\mathcal{Z}^{\alpha}, \nabla^{\varphi} v\cdot]\hat{v}
\\[8pt]\quad

- [\partial_t^{\ell}\mathcal{Z}^{\alpha}, \nabla^{\varphi} b\cdot]\hat{b}
- [\partial_t^{\ell}\mathcal{Z}^{\alpha},\nabla^{\varphi^{\e}}] \hat{q}
+ [\partial_t^{\ell}\mathcal{Z}^{\alpha},\partial_z^{\varphi} q\nabla^{\varphi^{\e}}]\hat{\varphi},
\end{array}
\end{equation}
and
\begin{equation}
\begin{array}{ll}
\mathcal{I}_{b7,2} := \partial_z^{\varphi} b (\partial_t + v^{\e}_y\cdot \nabla_y + V_z^{\e}\partial_z
-b^{\e}_y\cdot \nabla_y + B_z^{\e}\partial_z)
\partial_t^{\ell}\mathcal{Z}^{\alpha}\hat{\varphi}
- \partial_t^{\ell}\mathcal{Z}^{\alpha}\hat{v}\cdot\nabla^{\varphi} b
\\[8pt]\quad

+\partial_t^{\ell}\mathcal{Z}^{\alpha}\hat{b}\cdot\nabla^{\varphi} v
- [\partial_t^{\ell}\mathcal{Z}^{\alpha},\partial_t +v^{\e}\partial_y + V_z^{\e}\partial_z)]\hat{b}
\\[8pt]\quad

+ [\partial_t^{\ell}\mathcal{Z}^{\alpha}, b^{\e}\partial_y + B_z^{\e}\partial_z]\hat{v}
+ [\partial_t^{\ell}\mathcal{Z}^{\alpha}, \partial_z^{\varphi} v (\partial_t + v^{\e}_y\cdot \nabla_y + V_z^{\e}\partial_z)]\hat{\varphi}
\\[8pt]\quad

+ [\partial_t^{\ell}\mathcal{Z}^{\alpha}, \partial_z^{\varphi} v (b^{\e}_y\cdot \nabla_y + B_z^{\e}\partial_z]\hat{\varphi}
- [\partial_t^{\ell}\mathcal{Z}^{\alpha}, \nabla^{\varphi} b\cdot]\hat{v}
- [\partial_t^{\ell}\mathcal{Z}^{\alpha}, \nabla^{\varphi} b\cdot]\hat{v}.
\end{array}
\end{equation}

Note that $\nabla^{\varphi^{\e}}\times \nabla^{\varphi^{\e}} \partial_t^{\ell}\mathcal{Z}^{\alpha}\hat{q} =0$
and $(\partial_t^{\varphi^{\e}} + v^{\e} \cdot\nabla^{\varphi^{\e}})|_{z=0} = (\partial_t + v_y^{\e}\cdot\nabla_y)$.
 Multiplying $(\ref{5.41})$ with
$\nabla^{\varphi^{\e}}\times (\nabla^{\varphi^{\e}}\times\partial_t^{\ell}\mathcal{Z}^{\alpha}\hat{v})$
and $\nabla^{\varphi^{\e}}\times (\nabla^{\varphi^{\e}}\times\partial_t^{\ell}\mathcal{Z}^{\alpha}\hat{b})$,
respectively, integrating in $\mathbb{R}^3_{-}$,
we get
\begin{equation}\label{5.45}
\begin{array}{ll}
\frac{1}{2}\frac{\mathrm{d}}{\mathrm{d}t} \int_{\mathbb{R}^3_{-}}
|\nabla^{\varphi^{\e}}\times\partial_t^{\ell}\mathcal{Z}^{\alpha}\hat{v}|^2
+|\nabla^{\varphi^{\e}}\times\partial_t^{\ell}\mathcal{Z}^{\alpha}\hat{b}|^2
 \,\mathrm{d}\mathcal{V}_t^{\e}
\\[10pt]\quad

+ \e\int_{\mathbb{R}^3_{-}}|\nabla^{\varphi^\e}\times \nabla^{\varphi^\e} \partial_t^{\ell}\mathcal{Z}^{\alpha} \hat{v}|^2
+|\nabla^{\varphi^\e}\times \nabla^{\varphi^\e} \partial_t^{\ell}\mathcal{Z}^{\alpha} \hat{b}|^2
\,\mathrm{d}\mathcal{V}_t^{\e}
\\[10pt]

= - \int_{z=0}(\partial_t + v_y^{\e}\cdot\nabla_y)
\partial_t^{\ell}\mathcal{Z}^{\alpha}\hat{v} \cdot
\NN^{\e}\times (\nabla^{\varphi^{\e}}\times
\partial_t^{\ell}\mathcal{Z}^{\alpha}\hat{v}) \,\mathrm{d}y
+ \int_{z=0}\mathcal{I}_{v7,2} \cdot \NN^{\e}\times (\nabla^{\varphi^{\e}}\times
\partial_t^{\ell}\mathcal{Z}^{\alpha}\hat{v}) \,\mathrm{d}y
\\[10pt]\quad

- \int_{z=0}(\partial_t + v_y^{\e}\cdot\nabla_y) \partial_t^{\ell}\mathcal{Z}^{\alpha}\hat{b} \cdot
\NN^{\e}\times (\nabla^{\varphi^{\e}}\times
\partial_t^{\ell}\mathcal{Z}^{\alpha}\hat{b}) \,\mathrm{d}y
+ \int_{z=0}\mathcal{I}_{b7,2} \cdot \NN^{\e}\times (\nabla^{\varphi^{\e}}\times
\partial_t^{\ell}\mathcal{Z}^{\alpha}\hat{b}) \,\mathrm{d}y
\\[10pt]\quad

- \int_{\mathbb{R}^3_{-}} [(\sum\limits_{i=1}^3 \nabla^{\varphi^{\e}}
v^{\e,i} \cdot\partial_i^{\varphi^{\e}}) \times \partial_t^{\ell}\mathcal{Z}^{\alpha}\hat{v}
-(\sum_{i=1}^3 \nabla^{\varphi^{\e}}
b^{\e,i} \cdot\partial_i^{\varphi^{\e}}) \times \partial_t^{\ell}\mathcal{Z}^{\alpha}\hat{b}]
\cdot (\nabla^{\varphi^{\e}}\times
\partial_t^{\ell}\mathcal{Z}^{\alpha}\hat{v}) \,\mathrm{d}\mathcal{V}_t^{\e}
\\[10pt]\quad

- \int_{\mathbb{R}^3_{-}} [(\sum\limits_{i=1}^3 \nabla^{\varphi^{\e}}
v^{\e,i} \cdot\partial_i^{\varphi^{\e}}) \times \partial_t^{\ell}\mathcal{Z}^{\alpha}\hat{b}
-(\sum_{i=1}^3 \nabla^{\varphi^{\e}}
b^{\e,i} \cdot\partial_i^{\varphi^{\e}}) \times \partial_t^{\ell}\mathcal{Z}^{\alpha}\hat{v}]
\cdot (\nabla^{\varphi^{\e}}\times
\partial_t^{\ell}\mathcal{Z}^{\alpha}\hat{b}) \,\mathrm{d}\mathcal{V}_t^{\e}
\\[10pt]\quad

+ \e\int_{\mathbb{R}^3_{-}}\mathcal{I}_{v7,1} \cdot \nabla^{\varphi^{\e}}\times (\nabla^{\varphi^{\e}}\times
\partial_t^{\ell}\mathcal{Z}^{\alpha}\hat{v}) \,\mathrm{d}\mathcal{V}_t^{\e}
+ \e\int_{\mathbb{R}^3_{-}}\mathcal{I}_{b7,1} \cdot \nabla^{\varphi^{\e}}\times (\nabla^{\varphi^{\e}}\times
\partial_t^{\ell}\mathcal{Z}^{\alpha}\hat{b}) \,\mathrm{d}\mathcal{V}_t^{\e}
\\[11pt]\quad

+ \int_{\mathbb{R}^3_{-}} \nabla^{\varphi^{\e}}\times \mathcal{I}_{v7,2} \cdot (\nabla^{\varphi^{\e}}\times
\partial_t^{\ell}\mathcal{Z}^{\alpha}\hat{v}) \,\mathrm{d}\mathcal{V}_t^{\e}
+ \int_{\mathbb{R}^3_{-}} \nabla^{\varphi^{\e}}\times \mathcal{I}_{b7,2} \cdot (\nabla^{\varphi^{\e}}\times
\partial_t^{\ell}\mathcal{Z}^{\alpha}\hat{b}) \,\mathrm{d}\mathcal{V}_t^{\e}
\\[10pt]\quad

- \int_{z=0}\nabla^{\varphi^{\e}} \partial_t^{\ell}\mathcal{Z}^{\alpha}\hat{q} \cdot
\NN^{\e}\times (\nabla^{\varphi^{\e}}\times
\partial_t^{\ell}\mathcal{Z}^{\alpha}\hat{v}) \,\mathrm{d}y:=G.
\end{array}
\end{equation}
For $G$, one has
\begin{equation}
\begin{array}{ll}
G\lem \|\nabla^{\varphi^{\e}}\times \partial_t^{\ell}\mathcal{Z}^{\alpha}\hat{v}\|_{L^2}^2
+\|\nabla^{\varphi^{\e}}\times \partial_t^{\ell}\mathcal{Z}^{\alpha}\hat{b}\|_{L^2}^2
+ |\NN^{\e}\times (\nabla^{\varphi^{\e}}\times \partial_t^{\ell}\mathcal{Z}^{\alpha}\hat{v})|_{\frac{1}{2}}
\big|\nabla^{\varphi^{\e}} \partial_t^{\ell}\mathcal{Z}^{\alpha}\hat{q}|_{z=0} \big|_{-\frac{1}{2}} \\[10pt]\quad

+ |\NN^{\e}\times (\nabla^{\varphi^{\e}}\times \partial_t^{\ell}\mathcal{Z}^{\alpha}\hat{v})|_{L^2}
\big(\big|\mathcal{I}_{v7,2}|_{z=0}\big|_{L^2}
+ \big|\partial_t^{\ell}\mathcal{Z}^{\alpha}\hat{v}|_{z=0} \big|_{X_{tan}^1}
+ \big|\partial_t^{\ell}\mathcal{Z}^{\alpha}\hat{b}|_{z=0} \big|_{X_{tan}^1} \big)
 \\[10pt]\quad

 + |\NN^{\e}\times (\nabla^{\varphi^{\e}}\times \partial_t^{\ell}\mathcal{Z}^{\alpha}\hat{b})|_{L^2}
\big(\big|\mathcal{I}_{b7,2}|_{z=0}\big|_{L^2}
+ \big|\partial_t^{\ell}\mathcal{Z}^{\alpha}\hat{v}|_{z=0} \big|_{X_{tan}^1}
+ \big|\partial_t^{\ell}\mathcal{Z}^{\alpha}\hat{b}|_{z=0} \big|_{X_{tan}^1} \big)
 \\[10pt]\quad

+ \|\partial_t^{\ell}\mathcal{Z}^{\alpha}\hat{v}\|_{X^1}^2
+ \|\partial_z \partial_t^{\ell}\mathcal{Z}^{\alpha}\hat{v}\|_{L^2}^2
+ \e\|\mathcal{I}_{v7,1}\|_{L^2}^2
+ \|\nabla^{\varphi^{\e}}\times \mathcal{I}_{v7,2}\|_{L^2}^2
 \\[10pt]\quad

+ \|\partial_t^{\ell}\mathcal{Z}^{\alpha}\hat{b}\|_{X^1}^2
+ \|\partial_z \partial_t^{\ell}\mathcal{Z}^{\alpha}\hat{b}\|_{L^2}^2
+ \e\|\mathcal{I}_{b7,1}\|_{L^2}^2
+ \|\nabla^{\varphi^{\e}}\times \mathcal{I}_{b7,2}\|_{L^2}^2.
\end{array}
\end{equation}
It follows that
\begin{equation*}
\begin{array}{ll}
\big|\mathcal{I}_{v7,2}|_{z=0}\big|_{L^2}+\big|\mathcal{I}_{b7,2}|_{z=0}\big|_{L^2}
 \lem |\hat{h}|_{X^{k-1}} + \big|\hat{v}|_{z=0}\big|_{X^{k-2}}
 + \big|\hat{b}|_{z=0}\big|_{X^{k-2}}
+ \big|\nabla\hat{q}|_{z=0}\big|_{X^{k-3}}
 \\[6pt]\hspace{1.85cm}

\lem |\hat{h}|_{X^{k-1}} + \|\hat{v}\|_{X^{k-2}}+ \|\hat{b}\|_{X^{k-2}}
 + \|\partial_z\hat{v}\|_{X^{k-2}}
+ \|\nabla\hat{q}\|_{X^{k-2}},
\end{array}
\end{equation*}
and
\begin{equation}
\begin{array}{ll}
\e\|\mathcal{I}_{v7,1}\|_{L^2}^2+\e\|\mathcal{I}_{b7,1}\|_{L^2}^2\\[6pt]
 \lem \e\sum\limits_{\ell+|\alpha|\leq k-2} (\|\nabla^{\varphi^\e}\times \nabla^{\varphi^\e} \partial_t^{\ell}\mathcal{Z}^{\alpha} \hat{v}\|_{L^2}^2
 +\|\nabla^{\varphi^\e}\times \nabla^{\varphi^\e} \partial_t^{\ell}\mathcal{Z}^{\alpha} \hat{b}\|_{L^2}^2) + O(\e),
\\[13pt]

\|\nabla^{\varphi^{\e}}\times \mathcal{I}_{v7,2}\|_{L^2}^2
+\|\nabla^{\varphi^{\e}}\times \mathcal{I}_{b7,2}\|_{L^2}^2
\lem \|\hat{\eta}\|_{X^{k-1,1}}^2
+ \|\nabla^{\varphi^{\e}}\times \hat{v}\|_{X_{tan}^{k-2}}^2
+ \|\nabla^{\varphi^{\e}}\times \hat{b}\|_{X_{tan}^{k-2}}^2
\\[6pt]\quad

+ \|\nabla^{\varphi^{\e}}\times [\partial_t^{\ell}\mathcal{Z}^{\alpha},V_z^{\e}\partial_z]\hat{v}\|_{L^2}^2
+ \|\nabla^{\varphi^{\e}}\times [\partial_t^{\ell}\mathcal{Z}^{\alpha},V_z^{\e}\partial_z]\hat{b}\|_{L^2}^2
+ \|\nabla^{\varphi^{\e}}\times[\partial_t^{\ell}\mathcal{Z}^{\alpha},\NN^{\e}\partial_z^{\varphi^{\e}}] \hat{q}\|_{L^2}^2,
\end{array}
\end{equation}
where the estimates for the last two terms is similar to the arguments \eqref{Hady} by using Hardy's inequality and thus we omit the details.

Integrating \eqref{5.45} in time, applying the Gronwall's inequality,
one has
\begin{equation}\label{5.48}
\begin{array}{ll}
\|\nabla^{\varphi^{\e}}\times \partial_t^{\ell}\mathcal{Z}^{\alpha}\hat{v}\|_{L^2}^2
+ \|\nabla^{\varphi^{\e}}\times \partial_t^{\ell}\mathcal{Z}^{\alpha}\hat{b}\|_{L^2}^2
+ \e\int_{\mathbb{R}^3_{-}}|\nabla^{\varphi^\e}\times \nabla^{\varphi^\e} \partial_t^{\ell}\mathcal{Z}^{\alpha} \hat{v}|^2
+ |\nabla^{\varphi^\e}\times \nabla^{\varphi^\e} \partial_t^{\ell}\mathcal{Z}^{\alpha} \hat{b}|^2
\,\mathrm{d}\mathcal{V}_t^{\e}
\\[9pt]

\lem \big\|\nabla^{\varphi^{\e}}\times \partial_t^{\ell}\mathcal{Z}^{\alpha}\hat{v}|_{t=0}\big\|_{L^2}^2
+\big\|\nabla^{\varphi^{\e}}\times \partial_t^{\ell}\mathcal{Z}^{\alpha}\hat{b}|_{t=0}\big\|_{L^2}^2
\\[9pt]\quad

+ \int_0^t \|\hat{v}\|_{X^{k-2}}
+ \|\partial_z \hat{v}\|_{X^{k-2}}
+\|\hat{b}\|_{X^{k-2}}
+ \|\partial_z \hat{b}\|_{X^{k-2}}
+ \|\nabla\hat{q} \|_{X^{k-2}}
+ \|\hat{h}\|_{X^{k-1}} \,\mathrm{d}t
\\[9pt]\quad

+ \int_0^t \|\hat{v}\|_{X^{k-1}}^2 + \|\partial_z \hat{v}\|_{X^{k-1}}^2 +
 \|\hat{b}\|_{X^{k-1}}^2 + \|\partial_z \hat{b}\|_{X^{k-1}}^2 +
 \|\hat{h}\|_{X^{k-1,1}}^2 \,\mathrm{d}t
+ O(\e) \\[9pt]

\lem \big\|\partial_t^{\ell}\mathcal{Z}^{\alpha}\hat{\omega}_v|_{t=0}\big\|_{L^2}^2
+\big\|\partial_t^{\ell}\mathcal{Z}^{\alpha}\hat{\omega}_b|_{t=0}\big\|_{L^2}^2
+ \int_0^t\|\hat{v}\|_{X^{k-2}} + \|\partial_z \hat{v}\|_{X^{k-2}}
\\[9pt]\quad

+\|\hat{b}\|_{X^{k-2}} + \|\partial_z \hat{b}\|_{X^{k-2}}
+ \|\nabla\hat{q} \|_{X^{k-2}} + \|\hat{h}\|_{X^{k-1}}\,\mathrm{d}t + O(\e).
\end{array}
\end{equation}
Since $\hat{\omega}_v=\nabla^{\varphi^{\e}} \times \hat{v}
- \nabla^{\varphi^{\e}}\hat{\eta} \times \partial_z^{\varphi} v$ and
$\hat{\omega}_b = \nabla^{\varphi^{\e}} \times \hat{b}
- \nabla^{\varphi^{\e}}\hat{\eta} \times \partial_z^{\varphi} b$, we have
\begin{equation}
\begin{array}{ll}
\|\hat{\omega}_v\|_{X^{k-2}}^2+\|\hat{\omega}_b\|_{X^{k-2}}^2
+ \|\partial_z\hat{v}\|_{X^{k-2}}^2+ \|\partial_z\hat{b}\|_{X^{k-2}}^2
\lem \|\hat{\omega}_v|_{t=0}\|_{X^{k-2}}^2+\|\hat{\omega}_b|_{t=0}\|_{X^{k-2}}^2
\\[11pt]\quad

+ \int_0^t\|\hat{v}\|_{X^{k-2}} + \|\partial_z \hat{v}\|_{X^{k-2}}
+\|\hat{b}\|_{X^{k-2}} + \|\partial_z \hat{b}\|_{X^{k-2}}
+ \|\nabla\hat{q} \|_{X^{k-2}} + \|\hat{h}\|_{X^{k-1}}\,\mathrm{d}t + O(\e).
\end{array}
\end{equation}

Lemma $\ref{Lemma5.6}$ is proved.
\end{proof}

\subsection{Estimates for the Normal Derivatives when
$\Pi\mathcal{S}^{\varphi} v\nn|_{z=0} = 0$ and $\Pi\mathcal{S}^{\varphi} b\nn|_{z=0} = 0$}

In this subsection, we give the estimates for the normal derivatives if the ideal MHD equations satisfies
$\Pi\mathcal{S}^{\varphi} v\nn|_{z=0} = 0$
and $\Pi\mathcal{S}^{\varphi} v\nn|_{z=0} = 0$.

The following lemma shows the estimate of the normal derivatives.
\begin{lemma}\label{Lemma5.7}
Assume $k\leq m-2$, if $\Pi\mathcal{S}^{\varphi} v \nn|_{z=0} = 0$ and
$\Pi\mathcal{S}^{\varphi} b \nn|_{z=0} = 0$.
We have
\begin{equation}\label{5.51}
\begin{array}{ll}
\|\partial_z\hat{v}_h\|_{L^4([0,T],X^{k-2})}^2 +
\|\partial_z\hat{b}_h\|_{L^4([0,T],X^{k-2})}^2 +
 \|\hat{\omega}_{vh}\|_{L^4([0,T],X^{k-2})}^2+
 \|\hat{\omega}_{bh}\|_{L^4([0,T],X^{k-2})}^2
\\[6pt]
\lem \big\|\hat{\omega}_{v0}\big\|_{X^{k-2}}^2
+\big\|\hat{\omega}_{b0}\big\|_{X^{k-2}}^2
+ \int_0^T\|\hat{v}\|_{X^{k-1,1}}^2+\|\hat{b}\|_{X^{k-1,1}}^2 \,\mathrm{d}t
+ \int_0^T|\hat{h}|_{X^{k-2,1}}^2 \,\mathrm{d}t
 \\[8pt]\quad

+ \|\partial_t^{k-1}\hat{h}\|_{L^4([0,T],L^2)}^2
+ \sqrt{\e}\|\partial_z\hat{v}\|_{L^4([0,T],X^{k-1})}^2
+ \sqrt{\e}\|\partial_z\hat{b}\|_{L^4([0,T],X^{k-1})}^2
+ O(\e).
\end{array}
\end{equation}
\end{lemma}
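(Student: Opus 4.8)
The plan is to mirror the proof of Lemma~\ref{Lemma5.5}, but to run the whole argument one co-normal order lower, at the level $X^{k-2}$ rather than $X^{k-1}$. This is exactly what makes the improvement over $(\ref{5.25})$ possible (cf.\ Remark~\ref{remark5.1}): at order $X^{k-2}$ the tangential-derivative differences $\partial_j\hat{v}^i$ and $\partial_j\hat{b}^i$, $j=1,2$, that enter the boundary data of the homogeneous part \emph{do} inherit the convergence rates coming from Lemma~\ref{Lemma5.3}, whereas at order $X^{k-1}$ they do not. As before I would study the vorticity difference system $(\ref{1.25})$ via the splitting $\hat{\omega}_{vh}=\hat{\omega}_{vh}^{nh}+\hat{\omega}_{vh}^{h}$, $\hat{\omega}_{bh}=\hat{\omega}_{bh}^{nh}+\hat{\omega}_{bh}^{h}$ of $(\ref{5.26})$--$(\ref{5.27})$, where the nonhomogeneous parts carry the forcing with zero boundary value and the homogeneous parts carry the boundary discrepancy with zero initial value.

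For the nonhomogeneous part I would apply $\partial_t^{\ell}\mathcal{Z}^{\alpha}$ with $\ell+|\alpha|\leq k-2$ to $(\ref{5.28})$ and repeat the $L^2$ energy computation $(\ref{5.30})$, controlling the transport commutators $[\partial_t^{\ell}\mathcal{Z}^{\alpha},V_z^{\e}\partial_z]$, $[\partial_t^{\ell}\mathcal{Z}^{\alpha},B_z^{\e}\partial_z]$ and the $\e$-weighted commutators $\e\nabla^{\varphi^{\e}}\cdot[\partial_t^{\ell}\mathcal{Z}^{\alpha},\nabla^{\varphi}]$, $\e[\partial_t^{\ell}\mathcal{Z}^{\alpha},\nabla^{\varphi}\cdot]\nabla^{\varphi^{\e}}$ by the Hardy inequality $(\ref{Sect1_HardyIneq})$ and Lemma~\ref{Lemma1.1}, exactly as in $(\ref{5.30})$--$(\ref{5.31})$. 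Gronwall's inequality together with the $L^4$-in-time device (raising to the fourth power and integrating in $t$, as between $(\ref{5.31})$ and $(\ref{5.32})$) then bounds $\|\hat{\omega}_{vh}^{nh}\|_{L^4([0,T],X^{k-2})}^2+\|\hat{\omega}_{bh}^{nh}\|_{L^4([0,T],X^{k-2})}^2$ by $\|\hat{\omega}_{v0}\|_{X^{k-2}}^2+\|\hat{\omega}_{b0}\|_{X^{k-2}}^2+\int_0^T\|\hat{v}\|_{X^{k-1,1}}^2+\|\hat{b}\|_{X^{k-1,1}}^2+|\hat{h}|_{X^{k-2,1}}^2\,\mathrm{d}t+\|\partial_t^{k-1}\hat{h}\|_{L^4([0,T],L^2)}^2+O(\e)$, the highest time-derivative count dropping from $k$ to $k-1$ because we lost one co-normal order.

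For the homogeneous part I would invoke the $H^{1/4}([0,T],L^2)$ parabolic estimate for the coupled heat system with damping (as in $(\ref{2.36})$ and $(\ref{5.33})$, following \cite{Lee17}), which controls $\|\hat{\omega}_{vh}^{h}\|_{L^4([0,T],X^{k-2})}^2+\|\hat{\omega}_{bh}^{h}\|_{L^4([0,T],X^{k-2})}^2$ by $\sqrt{\e}$ times the time integral of the $X^{k-2}(\mathbb{R}^2)$-norm of the boundary data. The decisive simplification over Lemma~\ref{Lemma5.5} is that now $\Theta_v^i=\Theta_b^i=0$, so the boundary data of $\hat{\omega}^h_{vh}$ is just $\textsf{F}^{1,2}[\nabla\varphi^{\e}](\partial_j v^{\e,i})-\textsf{F}^{1,2}[\nabla\varphi](\partial_j v^i)$, and, by the vanishing magnetic boundary condition, that of $\hat{\omega}^h_{bh}$ is $\textsf{F}^{1,2}[\nabla\varphi^{\e}](\partial_j b^{\e,i})-\textsf{F}^{1,2}[\nabla\varphi](\partial_j b^i)=0$. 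Since $\textsf{F}^{1,2}$ is polynomial in $\partial_j v^i$ with coefficients that are fractions of $\nabla\varphi$, the trace estimate $(\ref{1.48})$ gives $|(\textsf{F}^{1,2}[\nabla\varphi^{\e}](\partial_j v^{\e,i})-\textsf{F}^{1,2}[\nabla\varphi](\partial_j v^i))|_{z=0}|_{X^{k-2}}^2\lem\|\partial_z\hat{v}\|_{X^{k-1}}\|\hat{v}\|_{X^{k-1}}+|\hat{h}|_{X^{k-1}}^2$, so that $\sqrt{\e}\int_0^T|\cdots|_{X^{k-2}}^2\,\mathrm{d}t\lem\sqrt{\e}\|\partial_z\hat{v}\|_{L^4([0,T],X^{k-1})}^2+\sqrt{\e}\|\partial_z\hat{b}\|_{L^4([0,T],X^{k-1})}^2+O(\e)$. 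Adding the two parts, rewriting $\partial_z\hat{v}_h$, $\partial_z\hat{b}_h$ in terms of $\hat{\omega}_{vh}$, $\hat{\omega}_{bh}$ and tangential derivatives through $(\ref{2.10})$--$(\ref{2.11})$, and handling $\partial_z\hat{v}^3$, $\partial_z\hat{b}^3$ through the divergence-free identities yields $(\ref{5.51})$.

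The main obstacle will be the homogeneous-part step. One must verify that at co-normal order $X^{k-2}$ the boundary discrepancy $\textsf{F}^{1,2}[\nabla\varphi^{\e}](\partial_j v^{\e,i})-\textsf{F}^{1,2}[\nabla\varphi](\partial_j v^i)$ can genuinely be traded, through $(\ref{1.48})$, for the interior norm of $\partial_z\hat{v}$ at order $X^{k-1}$ (together with $\hat{h}$), keeping careful track of how many tangential derivatives the coefficient fractions of $\nabla\varphi$ can absorb; this is precisely where the loss of one derivative and the appearance of $\sqrt{\e}\|\partial_z\hat{v}\|_{L^4([0,T],X^{k-1})}^2$ on the right-hand side are forced. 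A secondary subtlety is that the parabolic regularity must be applied to the \emph{coupled} system $(\ref{5.27})$ for $(\hat{\omega}^h_{vh},\hat{\omega}^h_{bh})$: one keeps the cross terms $b^{\e}\cdot\nabla^{\varphi^{\e}}$ on the right-hand side and absorbs them using the boundedness of $\|b^{\e}\|_{L^\infty}$ together with the damping structure already exploited in Section~3, so that the single-equation heat estimate of \cite{Lee17} applies componentwise.
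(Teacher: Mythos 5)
Your proposal follows essentially the same route as the paper: the same decomposition $\hat{\omega}_{vh}=\hat{\omega}_{vh}^{nh}+\hat{\omega}_{vh}^{h}$ and $\hat{\omega}_{bh}=\hat{\omega}_{bh}^{nh}+\hat{\omega}_{bh}^{h}$ from \eqref{5.26}--\eqref{5.27}, the same energy and $L^4$-in-time device for the nonhomogeneous part at co-normal order $k-2$ yielding precisely the paper's \eqref{5.52}, and the same $H^{1/4}$ parabolic trace argument for the homogeneous part in which the vanishing of $\Theta_v^i$, $\Theta_b^i$ reduces the boundary discrepancy to $\textsf{F}^{1,2}[\nabla\varphi^{\e}](\partial_j v^{\e,i})-\textsf{F}^{1,2}[\nabla\varphi](\partial_j v^i)$, which is then traded through the trace estimate \eqref{1.48} for $\sqrt{\e}\|\partial_z\hat{v}\|_{L^4([0,T],X^{k-1})}^2$ and $\int_0^T\|\hat{v}\|_{X^{k-1,1}}^2\,\mathrm{d}t$ exactly as in the paper's \eqref{5.53}. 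Your additional observation that $\hat{\omega}^h_{bh}|_{z=0}$ actually vanishes (since $b^{\e}=b=0$ on $z=0$ together with $\Theta_b^i=0$ force $\omega_{bh}|_{z=0}=F^{1,2}[\nabla\varphi^{\e}](\partial_j b^{\e,i})|_{z=0}=0$) is correct and slightly cleaner than the paper, which keeps the $b$-boundary term in \eqref{5.53} even though it contributes nothing.
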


\begin{proof}
If $\Pi\mathcal{S}^{\varphi} v\nn|_{z=0} = 0$
, $\Pi\mathcal{S}^{\varphi} b\nn|_{z=0} = 0$, then $\Theta_v^i =0$
 and $\Theta_b^i =0$ where $i=1,\cdots,6$.

Assume $\ell+|\alpha|\leq k-2$. We study the equations $(\ref{1.25})$
by decomposing $\hat{\omega}_{vh} = \hat{\omega}_{vh}^{nh} + \hat{\omega}_{vh}^{h}$,
$\hat{\omega}_{bh} = \hat{\omega}_{bh}^{nh} + \hat{\omega}_{bh}^{h}$, such that
$\hat{\omega}_{vh}^{nh}$, $\hat{\omega}_{bh}^{nh}$
satisfy the nonhomogeneous equations $(\ref{5.26})$
and $\hat{\omega}_{vh}^{h}$, $\hat{\omega}_{bh}^{h}$
satisfy the homogeneous equations $(\ref{5.27})$.

For $\hat{\omega}_{vh}^{nh}$ and $\hat{\omega}_{bh}^{nh}$, we get
\begin{equation}\label{5.52}
\begin{array}{ll}
\|\hat{\omega}_{vh}^{nh}\|_{L^4([0,T],X^{k-2})}^2
+\|\hat{\omega}_{bh}^{nh}\|_{L^4([0,T],X^{k-2})}^2
\\[9pt]

\lem \sqrt{T}\big\|\hat{\omega}_{v0,h}\big\|_{X^{k-2}}^2 +
 \big\|\hat{\omega}_{b0,h}\big\|_{X^{k-2}}^2 +
 T \|\hat{\omega}_{vh}\|_{L^4([0,T],X^{k-2})}^2
 + T \|\hat{\omega}_{bh}\|_{L^4([0,T],X^{k-2})}^2 \\[9pt]\quad

+ \sqrt{T}\int_0^T\|\hat{v}\|_{X^{k-2,1}}^2+\|\hat{b}\|_{X^{k-2,1}}^2 \,\mathrm{d}t
+ \sqrt{T}\int_0^T|\hat{h}|_{X^{k-2,1}}^2 \,\mathrm{d}t
+ \sqrt{T}|\partial_t^{k-1} \hat{h}|_{L^4([0,T],L^2)}^2 + O(\e).
\end{array}
\end{equation}

When $\ell+|\alpha|\leq k-2$, the estimate $(\ref{5.33})$ is reduced to
\begin{equation}\label{5.53}
\begin{array}{ll}
\|\partial_t^{\ell}\mathcal{Z}^{\alpha}\hat{\omega}_{vh}^{h}\|_{L^4([0,T],L^2(\mathbb{R}^3_{-}))}^2
+\|\partial_t^{\ell}\mathcal{Z}^{\alpha}\hat{\omega}_{bh}^{h}\|_{L^4([0,T],L^2(\mathbb{R}^3_{-}))}^2
 \\[9pt]

\lem \sqrt{\e}\int_0^T\big|\textsf{F}_v^{1,2}[\nabla\varphi^{\e}](\partial_j v^{\e,i},\partial_j b^{\e,i}) -\textsf{F}_v^{1,2}[\nabla\varphi](\partial_j v^i,\partial_j b^i)
\big|_{X^{k-2}(\mathbb{R}^2)}^2
\,\mathrm{d}t
\\[9pt]

+\sqrt{\e}\int_0^T\big|\textsf{F}_b^{1,2}[\nabla\varphi^{\e}](\partial_j b^{\e,i},\partial_j b^{\e,i}) -\textsf{F}_v^{1,2}[\nabla\varphi](\partial_j v^i,\partial_j b^i)
\big|_{X^{k-2}(\mathbb{R}^2)}^2
\,\mathrm{d}t
\\[9pt]

\lem \sqrt{\e}\int_0^T\big|\hat{v}|_{z=0}\big|_{X^{k-1}(\mathbb{R}^2)}^2
+\big|\hat{b}|_{z=0}\big|_{X^{k-1}(\mathbb{R}^2)}^2 \,\mathrm{d}t
+ \sqrt{\e}\int_0^T|\hat{h}|_{X^{k-2,1}(\mathbb{R}^2)}^2 \,\mathrm{d}t
\\[9pt]

\lem \sqrt{\e}\int_0^T\|\hat{v}\|_{X^{k-1,1}(\mathbb{R}^2)}^2
+\|\hat{b}\|_{X^{k-1,1}(\mathbb{R}^2)}^2 \,\mathrm{d}t
+ \sqrt{\e}\sqrt{T}\|\partial_z\hat{v}\|_{L^4([0,T],X^{k-1})}^2
\\[9pt]\quad

+ \sqrt{\e}\sqrt{T}\|\partial_z\hat{b}\|_{L^4([0,T],X^{k-1})}^2
+ \sqrt{\e}\int_0^T|\hat{h}|_{X^{k-2,1}(\mathbb{R}^2)}^2 \,\mathrm{d}t.
\end{array}
\end{equation}

By $(\ref{5.52})$ and $(\ref{5.53})$, we have
\begin{equation}
\begin{array}{ll}
\|\partial_z\hat{v}_{vh}\|_{L^4([0,T],X^{k-2})}^2 +
\|\partial_z\hat{v}_{bh}\|_{L^4([0,T],X^{k-2})}^2 +
 \|\hat{\omega}_{vh}\|_{L^4([0,T],X^{k-2})}^2+
 \|\hat{\omega}_{bh}\|_{L^4([0,T],X^{k-2})}^2
\\[10pt]

\lem \|\hat{\omega}_{vh}^{nh}\|_{L^4([0,T],X^{k-2})}^2
+ \|\hat{\omega}_{bh}^{nh}\|_{L^4([0,T],X^{k-2})}^2
+ \|\partial_t^{\ell}\mathcal{Z}^{\alpha}\hat{\omega}_{vh}^{h}\|_{L^4([0,T],L^2(\mathbb{R}^3_{-}))}^2
+ \|\partial_t^{\ell}\mathcal{Z}^{\alpha}\hat{\omega}_{bh}^{h}\|_{L^4([0,T],L^2(\mathbb{R}^3_{-}))}^2
 \\[9pt]

\lem \big\|\hat{\omega}_{v0}\big\|_{X^{k-2}}^2
+\big\|\hat{\omega}_{b0}\big\|_{X^{k-2}}^2
+ \int_0^T\|\hat{v}\|_{X^{k-1,1}}^2+\|\hat{b}\|_{X^{k-1,1}}^2 \,\mathrm{d}t
+ \int_0^T|\hat{h}|_{X^{k-2,1}}^2 \,\mathrm{d}t
\\[9pt]\quad

+ \|\partial_t^{k-1}\hat{h}\|_{L^4([0,T],L^2)}^2
+ \sqrt{\e}\|\partial_z\hat{v}\|_{L^4([0,T],X^{k-1})}^2
+ \sqrt{\e}\|\partial_z\hat{b}\|_{L^4([0,T],X^{k-1})}^2
+ O(\e).
\end{array}
\end{equation}

Lemma $\ref{Lemma5.7}$ is proved.
\end{proof}

\subsection{Proof of Theorem 1.2}
In this subsection, we prove Theorem \ref{Theorem1.2}.

{\rm \textbf{Proof of Theorem \ref{Theorem1.2}:}}
If $\Pi\mathcal{S}^{\varphi} v\nn|_{z=0}\neq 0$
and $\Pi\mathcal{S}^{\varphi} b\nn|_{z=0}\neq 0$,
we have the following convergence estimates by Lemmas $\ref{Lemma5.3}, \ref{Lemma5.4},
\ref{Lemma5.5}$
\begin{equation}\label{5.55}
\begin{array}{ll}
\|\hat{v}\|_{X^{k-1,1}}^2 +\|\hat{b}\|_{X^{k-1,1}}^2
+|\hat{h}|_{X^{k-1,1}}^2

\lem \|\hat{v}_0\|_{X^{k-1,1}}^2 +\|\hat{b}_0\|_{X^{k-1,1}}^2 +
 |\hat{h}_0|_{X^{k-1,1}}^2 + |\hat{\omega}_{v0}|_{X^{k-1}}^2
 \\[10pt]\quad
+ |\hat{\omega}_{b0}|_{X^{k-1}}^2+ \int_0^t\|\hat{v}\|_{X^{k-1,1}}^2
+ \int_0^t\|\hat{b}\|_{X^{k-1,1}}^2
+ \int_0^t\|\hat{h}\|_{X^{k-1,1}}^2 \,\mathrm{d}t + O(\sqrt{\e}).
\end{array}
\end{equation}

Applying the Gronwall's inequality to $(\ref{5.55})$, we get
\begin{equation}
\begin{array}{ll}
\|\hat{v}\|_{X^{k-1,1}}^2 +\|\hat{b}\|_{X^{k-1,1}}^2 + |\hat{h}|_{X^{k-1,1}}^2
\\[9pt]
\lem \|\hat{v}_0\|_{X^{k-1,1}}^2 +\|\hat{b}_0\|_{X^{k-1,1}}^2 +
 |\hat{h}_0|_{X^{k-1,1}}^2
+ \big\|\hat{\omega}_{v0}\big\|_{X^{k-1}}^2
+ \big\|\hat{\omega}_{b0}\big\|_{X^{k-1}}^2 + O(\sqrt{\e})
\\[7pt]

\lem O(\e^{\min\{\frac{1}{2}, 2\lambda^v, 2\lambda^b, 2\lambda^h, 2\lambda^{\omega_v}_1, 2\lambda^{\omega_b}_1\}}).
\end{array}
\end{equation}

By Lemma $\ref{Lemma5.5}$, we have
\begin{equation}
\begin{array}{ll}
\|\partial_z\hat{v}_h\|_{L^4([0,T],X^{k-1})}^2 +
\|\partial_z\hat{b}_h\|_{L^4([0,T],X^{k-1})}^2 +
 \|\hat{\omega}_{vh}\|_{L^4([0,T],X^{k-1})}^2+
 \|\hat{\omega}_{bh}\|_{L^4([0,T],X^{k-1})}^2
 \\[9pt]

\lem O(\e^{\min\{\frac{1}{2}, 2\lambda^v, 2\lambda^b, 2\lambda^h, 2\lambda^{\omega_v}_1, 2\lambda^{\omega_b}_1\}}).
\end{array}
\end{equation}

By Lemmas $\ref{Lemma5.4}, \ref{Lemma5.5}, \ref{Lemma5.6}$, we have
\begin{equation}\label{5.58}
\begin{array}{ll}
\|\hat{\omega}_v\|_{X^{k-2}}^2 +\|\hat{\omega}_b\|_{X^{k-2}}^2 +
 \|\partial_z\hat{v}\|_{X^{k-2}}^2+ \|\partial_z\hat{b}\|_{X^{k-2}}^2
\\[6pt]

\lem \|\hat{\omega}_{v0}\|_{X^{k-2}}^2+\|\hat{\omega}_{b0}\|_{X^{k-2}}^2
+ \int_0^t\|\hat{v}\|_{X^{k-2}} +\|\hat{b}\|_{X^{k-2}} +
 \|\partial_z \hat{v}\|_{X^{k-2}}

\\[6pt]\quad
+\|\partial_z \hat{b}\|_{X^{k-2}}
+ \|\nabla\hat{q} \|_{X^{k-2}} + \|\hat{h}\|_{X^{k-1}}\,\mathrm{d}t + O(\e)
\\[6pt]

\lem O(\e^{\min\{\frac{1}{4}, \lambda^v,  \lambda^b, \lambda^h, \lambda^{\omega_v}_1, \lambda^{\omega_b}_1\}}).
\end{array}
\end{equation}

If $\Pi\mathcal{S}^{\varphi} v\nn|_{z=0}= 0$ and $\Pi\mathcal{S}^{\varphi} b\nn|_{z=0}= 0$,
we have the convergence
estimates by Lemmas $\ref{Lemma5.7}$
\begin{equation}\label{5.59}
\begin{array}{ll}
\|\partial_z\hat{v}_h\|_{L^4([0,T],X^{k-2})}^2
+\|\partial_z\hat{b}_h\|_{L^4([0,T],X^{k-2})}^2
+ \|\hat{\omega}_{vh}\|_{L^4([0,T],X^{k-2})}^2
+ \|\hat{\omega}_{bh}\|_{L^4([0,T],X^{k-2})}^2
\\[9pt]
\lem \big\|\hat{\omega}_{v0}\big\|_{X^{k-2}}^2
+\big\|\hat{\omega}_{b0}\big\|_{X^{k-2}}^2
+ \int_0^T\|\hat{v}\|_{X^{k-1,1}}^2+\|\hat{b}\|_{X^{k-1,1}}^2 \,\mathrm{d}t
+ \int_0^T|\hat{h}|_{X^{k-2,1}}^2 \,\mathrm{d}t
\\[9pt]\quad
+ \sqrt{\e}O(\e^{\min\{\frac{1}{2}, 2\lambda^v, 2\lambda^b, 2\lambda^h, 2\lambda^{\omega_v}_1, 2\lambda^{\omega_b}_1\}}).
\end{array}
\end{equation}

On the other hand, one has the tangential estimates
\begin{equation}\label{5.60}
\begin{array}{ll}
\|\hat{v}\|_{X^{k-2,1}}^2 +\|\hat{b}\|_{X^{k-2,1}}^2
+ |\hat{h}|_{X^{k-2,1}}^2
\\[8pt]

\lem \|\hat{v}_0\|_{X^{k-2,1}}^2 +\|\hat{b}_0\|_{X^{k-2,1}}^2 + |\hat{h}_0|_{X^{k-2,1}}^2
+\|\partial_z \hat{v}\|_{L^4([0,T],X^{k-2})}^2
+\|\partial_z \hat{b}\|_{L^4([0,T],X^{k-2})}^2
\\[8pt]\quad

+ \int_0^t\|\hat{v}\|_{X^{k-2,1}}^2+ \int\limits_0^t\|\hat{b}\|_{X^{k-2,1}}^2
+ \int_0^t\|\hat{h}\|_{X^{k-2,1}}^2 \,\mathrm{d}t + O(\e).
\end{array}
\end{equation}

Coupling \eqref{5.59} with \eqref{5.60} and
applying the Gronwall's inequality, we get
\begin{equation}
\begin{array}{ll}
\|\hat{v}\|_{X^{k-2,1}}^2 +\|\hat{b}\|_{X^{k-2,1}}^2 + |\hat{h}|_{X^{k-2,1}}^2 \\[6pt]
\lem \big\|\hat{\omega}_{v0}\big\|_{X^{k-2}}^2+\big\|\hat{\omega}_{b0}\big\|_{X^{k-2}}^2
+ \|\hat{v}_0\|_{X^{k-2,1}}^2+ \|\hat{b}_0\|_{X^{k-2,1}}^2
 + |\hat{h}_0|_{X^{k-2,1}}^2
\\[8pt]\quad

 + \sqrt{\e}O(\e^{\min\{\frac{1}{2}, 2\lambda^v, 2\lambda^b, 2\lambda^h, 2\lambda^{\omega_v}_1,
2\lambda^{\omega_b}_1\}})
\\[8pt]

\lem O(\e^{\min\{1, 2\lambda^v,2\lambda^b, 2\lambda^h, 2\lambda^{\omega_v}_2, 2\lambda^{\omega_b}_2, 2\lambda^{\omega_v}_1+1,2\lambda^{\omega_b}_1+1\}})
\\[8pt]

= O(\e^{\min\{1, 2\lambda^v, 2\lambda^b, 2\lambda^h, 2\lambda^{\omega_v}_2, 2\lambda^{\omega_b}_2\}}).
\end{array}
\end{equation}

Similarly, we have
\begin{equation}
\begin{array}{ll}
\|\hat{\omega}_v\|_{X^{k-3}}^2 +\|\hat{\omega}_b\|_{X^{k-3}}^2 +
\|\partial_z\hat{v}\|_{X^{k-3}}^2+\|\partial_z\hat{b}\|_{X^{k-3}}^2
\\[8pt]

\lem O(\e^{\min\{\frac{1}{2}, \lambda^v,\lambda^b, \lambda^h, \lambda^{\omega_v}_2, \lambda^{\omega_b}_2\}}).
\end{array}
\end{equation}

Now, the estimates for $\NN^{\e}\cdot \partial_z^{\varphi^{\e}} v^{\e} - \NN \cdot \partial_z^{\varphi} v$ and
 $\NN^{\e}\cdot \partial_z^{\varphi^{\e}} b^{\e} - \NN \cdot \partial_z^{\varphi} b$ follow from the above estimates for tangential derivatives since
$\NN\cdot\partial_z^{\varphi} v = - (\partial_1 v^1 + \partial_2 v^2)$
and $\NN\cdot\partial_z^{\varphi} b = - (\partial_1 b^1 + \partial_2 b^2)$.

Similarly, the estimate for $\NN^{\e}\cdot {\omega}_v^{\e} -\NN\cdot \omega_v$ follows from the following equality
\begin{equation}
\begin{array}{ll}
\NN\cdot\omega_v
= -\partial_1\varphi(\partial_2 v^3 - \frac{\partial_2\varphi}{\partial_z\varphi}\partial_z v^3
- \frac{1}{\partial_z\varphi}\partial_z v^2)
-\partial_2\varphi(- \partial_1 v^3 + \frac{\partial_1\varphi}{\partial_z\varphi}\partial_z v^3
+ \frac{1}{\partial_z\varphi}\partial_z v^1)
\\[8pt]\hspace{1.3cm}
+ \partial_1 v^2 - \frac{\partial_1\varphi}{\partial_z\varphi}\partial_z v^2 - \partial_2 v^1 + \frac{\partial_2\varphi}{\partial_z\varphi}\partial_z v^1
\\[8pt]\hspace{0.9cm}

= -\partial_1\varphi\partial_2 v^3 +\partial_2\varphi\partial_1 v^3 + \partial_1 v^2 - \partial_2 v^1,
\end{array}
\end{equation}
and the estimate for $\NN^{\e}\cdot {\omega}_b^{\e} -\NN\cdot \omega_b$ follows from
$\NN\cdot\omega_b
= -\partial_1\varphi\partial_2 b^3 +\partial_2\varphi\partial_1 b^3 + \partial_1 b^2 - \partial_2 b^1$.

Theorem $\ref{Theorem1.2}$ is proved.

\section{Regularity Structure of MHD Solutions for Fixed $\sigma>0$}
In this section,
we establish the regularity with $\sigma>0$.
It should be pointed out that the estimates of normal derivatives are similar to $\sigma=0$ case.
Hence, we only focus on the estimates of the pressure and
the tangential derivatives.
For simplicity, we omit the superscript ${}^{\e}$ in this section.

\subsection{Estimates for the Pressure}
 We have the elliptic equation for the pressure with the Neumann boundary condition as follows
\begin{equation*}
\left\{

\end{equation}
are similar to the case of $\sigma=0$ since the proof is not required dynamical boundary conditions.

At last, putting Lemmas $\ref{Lemma6.1},
\ref{Lemma6.3}, \ref{Lemma6.4}$ and $(\ref{6.22})$ together,
one completes the proof of Proposition $\ref{Proposition1.2}$.

\section{Proof of Theorem 1.3}
In this section,
we establish the convergence rates estimates of the inviscid limits for $\sigma>0$ case.

Denote that $\hat{v} =v^{\e} -v, \hat{b} =v^{\e} -b, \hat{q} =q^{\e} -q, \hat{h} =h^{\e} -h$,
and denote the $i-$th components of $f^{\e}$ and $f$ by $f^{\e,i}$ and $f^i$.
We deduce that $\hat{v}$, $\hat{h}$, $\hat{q}$ satisfy the following equations
\begin{equation}\label{7.1}
\left\{\begin{aligned}
&\partial_t^{\varphi^{\e}}\hat{v}
+ v^{\e} \cdot\nabla^{\varphi^{\e}} \hat{v}
- b^{\e} \cdot\nabla^{\varphi^{\e}} \hat{b}
+ \nabla^{\varphi^{\e}} \hat{q}
- 2\e \nabla^{\varphi^{\e}} \cdot\mathcal{S}^{\varphi^{\e}} \hat{v}
= \partial_z^{\varphi} v \partial_t^{\varphi^{\e}}\hat{\eta}&
\\[3pt]
&\quad
+ v^{\e}\cdot \nabla^{\varphi^{\e}}\hat{\eta}\, \partial_z^{\varphi} v
- b^{\e}\cdot \nabla^{\varphi^{\e}}\hat{\eta}\, \partial_z^{\varphi} b
- \hat{v}\cdot\nabla^{\varphi} v
+ \hat{b}\cdot\nabla^{\varphi} b
+ \partial_z^{\varphi} q\nabla^{\varphi^{\e}}\hat{\eta}
+ \e \Delta^{\varphi^{\e}} v, & x\in\mathbb{R}^3_{-},
\\[3pt]
&\partial_t^{\varphi^{\e}}\hat{b}
+ v^{\e} \cdot\nabla^{\varphi^{\e}} \hat{b}
- b^{\e} \cdot\nabla^{\varphi^{\e}} \hat{v}
- 2\e \nabla^{\varphi^{\e}} \cdot\mathcal{S}^{\varphi^{\e}} \hat{b}
= \partial_z^{\varphi} b \partial_t^{\varphi^{\e}}\hat{\eta}&
\\[3pt]
&\quad
+ v^{\e}\cdot \nabla^{\varphi^{\e}}\hat{\eta}\, \partial_z^{\varphi} b
- b^{\e}\cdot \nabla^{\varphi^{\e}}\hat{\eta}\, \partial_z^{\varphi} v
- \hat{v}\cdot\nabla^{\varphi} b
+ \hat{b}\cdot\nabla^{\varphi} v
+ \e \Delta^{\varphi^{\e}} v,
&  x\in\mathbb{R}^3_{-},
\\[3pt]
&\nabla^{\varphi^{\e}}\cdot \hat{v}=\partial_z^{\varphi}v \cdot\nabla^{\varphi^{\e}}\hat{\eta},
\nabla^{\varphi^{\e}}\cdot \hat{b}=\partial_z^{\varphi}b \cdot\nabla^{\varphi^{\e}}\hat{\eta},
& x\in\mathbb{R}^3_{-},
\\[3pt]
&\partial_t \hat{h} + v_y\cdot \nabla \hat{h} = \hat{v}\cdot\NN^{\e},
&\{z=0\},
\\[3pt]
&\hat{q}\NN^{\e} -2\e \mathcal{S}^{\varphi^\e} \hat{v} \,\NN^\e
= g \hat{h} \NN^{\e} -\sigma \nabla_y \cdot \big( \mathfrak{M}_1\nabla_y\hat{h}
\\[3pt]
&\quad+ \mathfrak{M}_2 \nabla_y\hat{h}\cdot\nabla_y(h^{\e}+h) \nabla_y (h^{\e} +h)\big)\NN^{\e}
+ 2\e \mathcal{S}^{\varphi^\e}v\,\NN^\e,
&\{z=0\},
\\[3pt]
&\hat{b}=0,
&\{z=0\},
\\[3pt]
&(\hat{v},\hat{b},\hat{h})|_{t=0} = (v_0^\e -v_0,b_0^\e -b_0,h_0^\e -h_0),
\end{aligned}\right.
\end{equation}
where $\mathfrak{M}_1$ and $\mathfrak{M}_2$ are defined as
\begin{equation}\label{7.2}
\begin{array}{ll}
\mathfrak{M}_1=\frac{1}{2\sqrt{1+|\nabla_y h^{\e}|^2}}+ \frac{1}{2\sqrt{1+|\nabla_y h|^2}},
\\[10pt]

\mathfrak{M}_2=\frac{-1}
{2\sqrt{1+|\nabla_y h^{\e}|^2}\sqrt{1+|\nabla_y h|^2}(\sqrt{1+|\nabla_y h^{\e}|^2}
+\sqrt{1+|\nabla_y h|^2})}.
\end{array}
\end{equation}

The following lemma shows the estimate of $\nabla\hat{q} = \nabla q^{\e} -\nabla q$:
\begin{lemma}\label{Lemma7.1}
Assume $0\leq s\leq k-1,\, k \leq m-1$. We have
\begin{equation}\label{7.3}
\begin{aligned}
\|\nabla \hat{q}\|_{X^s}
\lem &\|\hat{v}\|_{X^{s,1}} +\|\hat{b}\|_{X^{s,1}} + \|\partial_z\hat{v}\|_{X^s}
 + \|\partial_z\hat{b}\|_{X^s}
\\[5pt]
& + \|\partial_t^{s+1}\hat{v}\|_{L^2} + |\partial_t^s\hat{h}\|_{X^{0,\frac{1}{2}}}
+ |\hat{h}|_{X^{s,\frac{3}{2}}} +O(\e).
\end{aligned}
\end{equation}
\end{lemma}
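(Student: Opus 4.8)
The plan is to derive an elliptic equation with Neumann boundary condition for $\hat{q}$ by subtracting the pressure equations for $q^{\e}$ and $q$, and then to apply standard elliptic estimates in the co-normal Sobolev spaces $X^s$. First I would recall that $q^{\e}$ satisfies $\Delta^{\varphi^{\e}} q^{\e} = -\partial_j^{\varphi^{\e}} v^{\e,i}\partial_i^{\varphi^{\e}} v^{\e,j} + \partial_j^{\varphi^{\e}} b^{\e,i}\partial_i^{\varphi^{\e}} b^{\e,j}$ with the Neumann condition $\nabla^{\varphi^{\e}} q^{\e}\cdot\NN^{\e}|_{z=0} = -\partial_t^{\varphi^{\e}} v^{\e}\cdot\NN^{\e} - v^{\e}\cdot\nabla^{\varphi^{\e}} v^{\e}\cdot\NN^{\e} + b^{\e}\cdot\nabla^{\varphi^{\e}} b^{\e}\cdot\NN^{\e} + \e\Delta^{\varphi^{\e}} v^{\e}\cdot\NN^{\e}$, obtained from the first equation of \eqref{MHDFS}, and similarly for $q$ from \eqref{IMHDFS} (without surface tension entering the interior equation, only the boundary condition). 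Using the matrices $\textsf{E}^{\e},\textsf{P}^{\e}$ from the proof of Lemma \ref{Lemma5.4}, I would rewrite $\nabla\cdot(\textsf{E}^{\e}\nabla\hat{q}) = -\nabla\cdot((\textsf{E}^{\e}-\textsf{E})\nabla q) - \nabla\cdot[(\textsf{P}^{\e}-\textsf{P})(v\cdot\nabla^{\varphi}v - b\cdot\nabla^{\varphi}b)] - \nabla\cdot[\textsf{P}^{\e}(\cdots)]$, where the last bracket collects the differences $v^{\e}\cdot\nabla^{\varphi^{\e}}\hat{v} - v^{\e}\cdot\nabla^{\varphi^{\e}}\hat{\varphi}\,\partial_z^{\varphi} v + \hat{v}\cdot\nabla^{\varphi}v$ and the analogous magnetic terms, exactly as in Lemma \ref{Lemma5.4}.

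The main new feature compared to Lemma \ref{Lemma5.4} is the Neumann boundary condition and the surface-tension contribution. Subtracting the two Neumann conditions, the boundary datum for $\hat{q}$ involves $-\partial_t^{\varphi^{\e}}\hat{v}\cdot\NN^{\e} - (v^{\e}\cdot\nabla^{\varphi^{\e}}v^{\e} - v\cdot\nabla^{\varphi}v)\cdot\NN^{\e} + (b^{\e}\cdot\nabla^{\varphi^{\e}}b^{\e} - b\cdot\nabla^{\varphi}b)\cdot\NN^{\e} + \e\Delta^{\varphi^{\e}}v^{\e}\cdot\NN^{\e}$ plus lower-order commutator terms coming from $\NN^{\e}-\NN$. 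Since $(\partial_t^{\varphi^{\e}} + v^{\e}\cdot\nabla^{\varphi^{\e}})|_{z=0} = \partial_t + v_y^{\e}\cdot\nabla_y$, the leading boundary term reduces to horizontal derivatives of $\hat{v}$, and I would estimate $|\nabla^{\varphi^{\e}}\hat{q}\cdot\NN^{\e}|_{z=0}|_{X^{s,-\frac12}}$ by a trace/interpolation argument (as in Lemma \ref{Lemma6.1}), picking up $\|\partial_t^{s+1}\hat{v}\|_{L^2}$ from the time derivative, $|\partial_t^s\hat{h}|_{X^{0,\frac12}}$ from $v_y\cdot\nabla_y\hat{h}$ via the kinematic condition, $\|\partial_z\hat{v}\|_{X^s}$, $\|\partial_z\hat{b}\|_{X^s}$, $\|\hat{v}\|_{X^{s,1}}$, $\|\hat{b}\|_{X^{s,1}}$ from the convective terms, and $O(\e)$ from $\e\Delta^{\varphi^{\e}}v^{\e}\cdot\NN^{\e}$ using the uniform bounds of Proposition \ref{Proposition1.2}. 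The surface-tension part of the Dirichlet trace does not appear here because we are using the Neumann formulation; the curvature difference $\mathfrak{M}_1,\mathfrak{M}_2$ terms only enter the $q^{\e}|_{z=0}$ vs.\ $q|_{z=0}$ relation, which we avoid by working with $\nabla^{\varphi^{\e}}q\cdot\NN^{\e}$ as the boundary datum, thereby trading surface-tension regularity for time regularity of $\hat{v}$.

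Then, since the coefficient matrix $\textsf{E}^{\e}$ is uniformly positive definite (under $\partial_z\varphi^{\e}\geq c_0$), I would apply the elliptic regularity estimate in $X^s$ to conclude
\begin{equation*}
\|\nabla\hat{q}\|_{X^s} \lesssim \|(\textsf{E}^{\e}-\textsf{E})\nabla q\|_{X^s} + \|(\textsf{P}^{\e}-\textsf{P})(v\cdot\nabla^{\varphi}v - b\cdot\nabla^{\varphi}b)\|_{X^s} + \|\textsf{P}^{\e}(\cdots)\|_{X^s} + |\nabla^{\varphi^{\e}}\hat{q}\cdot\NN^{\e}|_{z=0}|_{X^{s,-\frac12}},
\end{equation*}
and bound each term. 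The factors $\textsf{E}^{\e}-\textsf{E}$, $\textsf{P}^{\e}-\textsf{P}$ are controlled by $\|\nabla\hat{\varphi}\|_{X^s}\lesssim|\hat{h}|_{X^{s,\frac12}}$ times the uniform regularity of the ideal solution, giving a bound of the form $|\hat{h}|_{X^{s,\frac12}} + O(\e)$; the $\textsf{P}^{\e}(\cdots)$ term gives $\|\hat{v}\|_{X^{s,1}} + \|\hat{b}\|_{X^{s,1}} + \|\partial_z\hat{v}\|_{X^s} + \|\partial_z\hat{b}\|_{X^s} + O(\e)$ exactly as in Lemma \ref{Lemma5.4}. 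The main obstacle I anticipate is the boundary term: verifying carefully that, upon subtraction, no term requiring the (unavailable) bound $\|\partial_t^{s+1}\hat{q}\|$ or high-order surface-tension regularity of $\hat{h}$ survives, and that all surface-tension contributions can be absorbed into $|\hat{h}|_{X^{s,\frac32}}$ (one full derivative above the $X^{s,\frac12}$ level, which is affordable because $\sigma>0$ is fixed), while keeping the $\e$-dependent terms genuinely $O(\e)$ via the uniform estimates. Collecting all pieces yields \eqref{7.3}, completing the proof of Lemma \ref{Lemma7.1}.
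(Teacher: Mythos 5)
Your proposal is correct and takes essentially the same route as the paper: subtract the two Neumann elliptic problems for $q^{\e}$ and $q$ that come from the momentum equations and $\nabla^{\varphi}\cdot v=0$, rewrite the difference with the positive-definite matrices $\textsf{E}^{\e},\textsf{P}^{\e}$, and estimate the boundary term in $X^{s,-\frac12}$ using the trace identity $(\partial_t^{\varphi^{\e}}+v^{\e}\cdot\nabla^{\varphi^{\e}})|_{z=0}=\partial_t+v_y^{\e}\cdot\nabla_y$, which is precisely how the paper produces $\|\partial_t^{s+1}\hat{v}\|_{L^2}$, $|\partial_t^{s+1}\hat{h}|_{X^{0,\frac12}}$, and the $O(\e)$ term from $\e\Delta^{\varphi^{\e}}v^{\e}\cdot\NN^{\e}$.

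One small remark for clarity, not a gap: you hedge near the end about needing to "absorb surface-tension contributions into $|\hat{h}|_{X^{s,\frac32}}$." In fact no surface-tension term appears at all once one works with the Neumann boundary datum, as you yourself correctly note earlier: the curvature operator enters only the Dirichlet trace $q|_{z=0}=gh-\sigma\mathcal{M}+\cdots$, never the normal derivative $\nabla^{\varphi}q\cdot\NN|_{z=0}$ derived from the momentum equation. The term $|\hat{h}|_{X^{s,\frac32}}$ in \eqref{7.3} is not a surface-tension remnant; it arises from the change-of-variables quantities $\partial_z^{\varphi}q\,\nabla^{\varphi^{\e}}\hat{\eta}\cdot\NN^{\e}$, $\nabla^{\varphi}q\cdot\hat{\NN}$, and $(\partial_t\hat{\eta}+v_y^{\e}\cdot\nabla_y\hat{\eta})\,\partial_z^{\varphi}v\cdot\NN^{\e}$, controlled through $\|\nabla\hat{\eta}\|_{X^{s,1}}\lesssim |\hat{h}|_{X^{s,\frac32}}$ via the harmonic-extension gain. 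With that clarification, your argument matches the paper's proof.
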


\begin{proof}
The pressure $q^{\e}$ satisfies the elliptic equations $(\ref{1.40})$,
and the pressure for ideal MHD satisfies the following equations
\begin{equation}\label{7.4}
\left\{

\end{equation}

Summing $\ell$ and $\alpha$, using $(\ref{7.19})$, $(\ref{7.24})$ and Lemma $\ref{Lemma7.2}$,
we have $(\ref{7.10})$.

Lemma $\ref{Lemma7.3}$ is proved.
\end{proof}

In order to close our estimates of tangential derivatives, we need to bound
$\|\partial_t^k \hat{v}\|_{L^4([0,T],L^2)}^2$, $\|\partial_t^k \hat{b}\|_{L^4([0,T],L^2)}^2$
 and $\|\partial_t^k \hat{h}\|_{L^4([0,T],X^{0,1})}^2$.
Thus, we estimate $\partial_t^k \hat{v}$, $\partial_t^k \hat{b}$ and $\partial_t^k \hat{h}$.
\begin{lemma}\label{Lemma7.4}
Let $\hat{v} =v^{\e} -v, \hat{b} =v^{\e} -b, \hat{h} =h^{\e} -h$.
We have, for $\partial_t^k \hat{v}, \partial_t^k \hat{b}, \partial_t^k \hat{h}, \partial_t^{k+1}\hat{h}$, that
\begin{equation}\label{7.26}
\begin{array}{ll}
\|\partial_t^k \hat{v}\|_{L^4([0,T],L^2)}^2 +\|\partial_t^k \hat{b}\|_{L^4([0,T],L^2)}^2
+ |\partial_t^k \hat{h}|_{L^4([0,T],X^{0,1})}^2 + |\partial_t^{k+1}\nabla \hat{h}|_{L^4([0,T],L^2)}^2
\\[6pt]

\lem \|\partial_t^k \hat{v}_0\|_{L^2}^2 +\|\partial_t^k \hat{b}_0\|_{L^2}^2
+ g|\partial_t^k \hat{h}_0|_{L^2}^2 + \sigma|\partial_t^k\nabla \hat{h}_0|_{L^2}^2
+ \|\partial_z\partial_t^{k-1} \hat{v}_0\|_{L^2}^2
\\[6pt]\quad

+ \|\partial_z\partial_t^{k-1} \hat{b}_0\|_{L^2}^2
+ \|\partial_z \hat{v}\|_{L^4([0,T],X^{k-1})}^2
+ \|\partial_z \hat{b}\|_{L^4([0,T],X^{k-1})}^2 + O(\e).
\end{array}
\end{equation}
\end{lemma}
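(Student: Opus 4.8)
The plan is to follow the scheme used in the proof of Lemma \ref{Lemma6.4}, now applied to the difference system \eqref{7.1} (equivalently \eqref{7.11}) instead of to the viscous MHD system itself. Setting $\alpha=0$ and $\ell=k$, I would multiply the $\hat v$-equation by $\partial_t^k\hat v$ and the $\hat b$-equation by $\partial_t^k\hat b$, integrate over $\mathbb{R}^3_-$ with $\mathrm{d}\mathcal{V}_t^{\e}$, and add. The cross transport terms cancel, $\int_{\mathbb{R}^3_-} b^{\e}\cdot\nabla^{\varphi^{\e}}\partial_t^k\hat b\cdot\partial_t^k\hat v + b^{\e}\cdot\nabla^{\varphi^{\e}}\partial_t^k\hat v\cdot\partial_t^k\hat b\,\mathrm{d}\mathcal{V}_t^{\e}=0$ by $\nabla^{\varphi^{\e}}\cdot b^{\e}=0$ and $b^{\e}|_{z=0}=0$, while the $v^{\e}\cdot\nabla^{\varphi^{\e}}$ terms are absorbed after integration by parts. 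This leaves $\frac{1}{2}\frac{\mathrm{d}}{\mathrm{d}t}\int(|\partial_t^k\hat v|^2+|\partial_t^k\hat b|^2)+2\e\int(|\mathcal{S}^{\varphi^{\e}}\partial_t^k\hat v|^2+|\mathcal{S}^{\varphi^{\e}}\partial_t^k\hat b|^2)$ on the left, together with a boundary term and a pressure term.

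For the boundary contribution I would substitute the dynamical boundary condition from \eqref{7.11}, rewrite $\int_{z=0}(\partial_t^k\hat q\,\NN^{\e}-2\e\mathcal{S}^{\varphi^{\e}}\partial_t^k\hat v\,\NN^{\e})\cdot\partial_t^k\hat v$ using the kinematic identity $\partial_t\partial_t^k\hat h+v_y^{\e}\cdot\nabla_y\partial_t^k\hat h=\NN^{\e}\cdot\partial_t^k\hat v+\text{l.o.t.}$, and extract the perfect time derivatives $-\frac{g}{2}\frac{\mathrm{d}}{\mathrm{d}t}\int_{z=0}|\partial_t^k\hat h|^2$ and $-\frac{\sigma}{2}\frac{\mathrm{d}}{\mathrm{d}t}\int_{z=0}(\mathfrak{M}_1|\nabla_y\partial_t^k\hat h|^2+\mathfrak{M}_2|\nabla_y\partial_t^k\hat h\cdot\nabla_y(h^{\e}+h)|^2)$, plus lower-order terms and $O(\e)$ from the viscous stress $2\e\mathcal{S}^{\varphi^{\e}}v$ on $\{z=0\}$, exactly as in steps \eqref{7.14}--\eqref{7.16}. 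The coercivity \eqref{7.18}, i.e.\ $\int_{z=0}(\mathfrak{M}_1|\nabla_y\partial_t^k\hat h|^2+\mathfrak{M}_2|\nabla_y\partial_t^k\hat h\cdot\nabla_y(h^{\e}+h)|^2)\geq\int_{z=0}4|\mathfrak{M}_2|\,|\nabla_y\partial_t^k\hat h|^2$ with $4|\mathfrak{M}_2|\geq\delta_\sigma>0$, then keeps $\sigma|\partial_t^k\nabla_y\hat h|_{L^2}^2$ on the left, which — via $\partial_t\partial_t^k\nabla_y\hat h=\NN^{\e}\cdot\partial_t^k\nabla_y\hat v+\cdots$ — also yields the $|\partial_t^{k+1}\nabla\hat h|_{L^4([0,T],L^2)}$ bound.

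The genuinely delicate term is the pressure integral $-\int_{\mathbb{R}^3_-}\partial_t^k\hat q\,\nabla^{\varphi^{\e}}\cdot\partial_t^k\hat v\,\mathrm{d}\mathcal{V}_t^{\e}$. Since the fluid is infinitely deep, $\|\partial_t^k\hat q\|_{L^2}$ is not bounded; instead I would use the divergence relation $\nabla^{\varphi^{\e}}\cdot\partial_t^k\hat v=\partial_z^{\varphi}v\cdot\nabla^{\varphi^{\e}}\partial_t^k\hat\eta+\text{comm.}$ from \eqref{7.11} together with Hardy's inequality \eqref{Sect1_HardyIneq} to transfer a factor $\tfrac{1}{1-z}$ off $\partial_t^k\hat q$ onto the decaying factor $\partial_t^k\hat\eta$, reducing everything to $|\partial_t^k\hat q|_{z=0}|_{L^2}$ and $\|\partial_z\partial_t^k\hat q\|_{L^2}$, hence — via the dynamical boundary condition and Lemma \ref{Lemma7.1} — to $\|\partial_t^k\hat v\|_{L^2}$, $\|\partial_z\partial_t^{k-1}\hat v\|_{L^2}$, $|\partial_t^k\hat h|_{X^{0,1}}$ and $O(\e)$, mirroring the estimate \eqref{6.19} used for Lemma \ref{Lemma6.4}. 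This is the step I expect to be the main obstacle: the commutators from $\partial_t^k$ acting on $\nabla^{\varphi^{\e}}\cdot$ and $\mathcal{S}^{\varphi^{\e}}$, and the coupling of $\partial_t^k\hat q$ to $\partial_t^{k+1}\hat v$ through the time-differentiated Neumann condition, must be closed without circularity, which is achieved by absorbing them with the tangential estimate \eqref{7.10} and the a priori regularity of Proposition \ref{Proposition1.2}.

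Collecting these pieces produces a differential inequality for $\|\partial_t^k\hat v\|_{L^2}^2+\|\partial_t^k\hat b\|_{L^2}^2+g|\partial_t^k\hat h|_{L^2}^2+\tfrac{\sigma}{2}|\partial_t^k\nabla_y\hat h|_{L^2}^2$ controlled by itself plus $\|\partial_z\hat v\|_{X^{k-1}}^2+\|\partial_z\hat b\|_{X^{k-1}}^2+\|\partial_z\partial_t^{k-1}\hat v\|_{L^2}^2+\|\partial_z\partial_t^{k-1}\hat b\|_{L^2}^2+\text{b.t.}+O(\e)$. Integrating once in time gives the $L^2$-in-time bound (the analogue of \eqref{6.20}); then squaring, integrating once more in time, and applying Gronwall's inequality upgrades it to the $L^4([0,T],L^2)$ statement \eqref{7.26}, with $\|\partial_z\hat v\|_{L^4([0,T],X^{k-1})}^2$ and $\|\partial_z\hat b\|_{L^4([0,T],X^{k-1})}^2$ on the right (these are supplied later by the analogues of Lemmas \ref{Lemma5.5}/\ref{Lemma5.7}, whose proofs transfer verbatim to $\sigma>0$ since the vorticity equations and their boundary conditions are unchanged), and with the initial-data terms $\|\partial_z\partial_t^{k-1}\hat v_0\|_{L^2}$, $\|\partial_z\partial_t^{k-1}\hat b_0\|_{L^2}$, $g|\partial_t^k\hat h_0|_{L^2}$, $\sigma|\partial_t^k\nabla\hat h_0|_{L^2}$ entering as the required compatibility data.
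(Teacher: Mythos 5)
Your proposal matches the paper's proof of Lemma~\ref{Lemma7.4} essentially step for step: the $L^2$ energy identity for $\partial_t^k\hat v,\partial_t^k\hat b$ with cancellation of the cross-coupling terms, the extraction of the perfect time derivatives $-\frac{g}{2}\frac{d}{dt}|\partial_t^k\hat h|_{L^2}^2$ and $-\frac{\sigma}{2}\frac{d}{dt}\int(\mathfrak{M}_1|\nabla_y\partial_t^k\hat h|^2+\mathfrak{M}_2|\cdots|^2)$ plus the coercivity $4|\mathfrak{M}_2|\geq\delta_\sigma$, the Hardy-inequality treatment of $\int\partial_t^k\hat q\,\nabla^{\varphi^{\e}}\cdot\partial_t^k\hat v$ mirroring \eqref{6.19}, and the one integration in time followed by squaring, integrating again, and Gronwall to pass from an $L^\infty_tL^2_x$ inequality to the $L^4([0,T],L^2)$ bound. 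No gaps; the argument is the paper's.
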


\begin{proof}
It follows that
$(\partial_t^k \hat{v},\partial_t^k \hat{h},\partial_t^k \hat{q})$
satisfy the following equations:
\begin{equation}\label{7.27}
\left\{\begin{array}{ll}
\partial_t^{\varphi^{\e}} \partial_t^k\hat{v}
+ v^{\e} \cdot\nabla^{\varphi^{\e}} \partial_t^k\hat{v}
- b^{\e} \cdot\nabla^{\varphi^{\e}} \partial_t^k\hat{b}
+ \nabla^{\varphi^{\e}} \partial_t^k\hat{q}
- 2\e \nabla^{\varphi^{\e}}\cdot\mathcal{S}^{\varphi^{\e}} \partial_t^k\hat{v}
= \mathcal{I}_{v10}|_{\ell=k,|\alpha|=0},
\\[9pt]

\partial_t^{\varphi^{\e}} \partial_t^k\hat{b}
+ v^{\e} \cdot\nabla^{\varphi^{\e}} \partial_t^k\hat{b}
- b^{\e} \cdot\nabla^{\varphi^{\e}} \partial_t^k\hat{v}
- 2\e \nabla^{\varphi^{\e}}\cdot\mathcal{S}^{\varphi^{\e}} \partial_t^k\hat{b}
= \mathcal{I}_{b10}|_{\ell=k,|\alpha|=0},
\\[9pt]

\nabla^{\varphi^{\e}}\cdot \partial_t^k\hat{v}
= \partial_z^{\varphi}v \cdot\nabla^{\varphi^{\e}}\partial_t^k\hat{\eta}
-[\partial_t^k,\nabla^{\varphi^{\e}}\cdot] \hat{v}
+ [\partial_t^k,\partial_z^{\varphi}v \cdot\nabla^{\varphi^{\e}}]\hat{\eta},
\\[11pt]

\nabla^{\varphi^{\e}}\cdot \partial_t^k\hat{b}
= \partial_z^{\varphi}b \cdot\nabla^{\varphi^{\e}}\partial_t^k\hat{\eta}
-[\partial_t^k,\nabla^{\varphi^{\e}}\cdot] \hat{b}
+ [\partial_t^k,\partial_z^{\varphi}b \cdot\nabla^{\varphi^{\e}}]\hat{\eta},
\\[11pt]

\partial_t \partial_t^k\hat{h} + v_y^{\e}\cdot \nabla_y \partial_t^k\hat{h}
- \NN^{\e}\cdot \partial_t^k\hat{v}
 = - \hat{v}_y \cdot \nabla_y \partial_t^k h
- \partial_y \hat{h}\cdot \partial_t^k v_y
\\[5pt]\quad
 + [\partial_t^k, \hat{v},\NN^{\e}]
 - [\partial_t^k, v_y, \partial_y \hat{h}],
 \\[11pt]

\partial_t^k \hat{q}\NN^{\e}
-2\e \mathcal{S}^{\varphi^{\e}} \partial_t^k \hat{v}\,\NN^{\e}

- g\partial_t^k\hat{h}\NN^{\e}
+ \sigma \nabla_y \cdot \big( \mathfrak{M}_1\nabla_y \partial_t^k \hat{h} \big) \NN^{\e} \\[5pt]\quad
+ \sigma\nabla_y \cdot \big( \mathfrak{M}_2 \nabla_y \partial_t^k \hat{h}\cdot\nabla_y(h^{\e}+h) \nabla_y (h^{\e} +h)\big)\NN^{\e}
\\[5pt]\quad

= \mathcal{I}_{11,1}|_{\ell=k,|\alpha|=0} + \mathcal{I}_{11,2}|_{\ell=k,|\alpha|=0},
\\[11pt]

\partial_t^k \hat{b}=0,\\[11pt]

(\partial_t^k\hat{v},\partial_t^k\hat{b},\partial_t^k\hat{h})|_{t=0}
= (\partial_t^k v_0^\e -\partial_t^k v_0,
\partial_t^k b_0^\e -\partial_t^k b_0,
\partial_t^k h_0^\e -\partial_t^k h_0),
\end{array}\right.
\end{equation}

Multiplying $(\ref{7.27})$ by $\partial_t^k \hat{v}$,
$\partial_t^k \hat{b}$ respectively,
integrating over $\mathbb{R}^3_{-}$, we get
\begin{equation}\label{7.28}
\begin{array}{ll}
\frac{1}{2}\frac{\mathrm{d}}{\mathrm{d}t}\int_{\mathbb{R}^3_{-}} |\partial_t^k \hat{v}|^2
+|\partial_t^k \hat{b}|^2 \,\mathrm{d}\mathcal{V}_t^{\e}
- \int_{\mathbb{R}^3_{-}} \partial_t^k \hat{q} \, \nabla^{\varphi^{\e}}\cdot \partial_t^k \hat{v} \,\mathrm{d}\mathcal{V}_t^{\e}
+ 2\e\int_{\mathbb{R}^3_{-}} |\mathcal{S}^{\varphi^{\e}} \partial_t^k\hat{v}|^2 + |\mathcal{S}^{\varphi^{\e}} \partial_t^k\hat{b}|^2 \,\mathrm{d}\mathcal{V}_t^{\e}
 \\[14pt]

\leq \int_{z=0} (2\e \mathcal{S}^{\varphi}\partial_t^k \hat{v} \NN^{\e}
- \partial_t^k \hat{q}\NN^{\e})\cdot \partial_t^k \hat{v} \mathrm{d}y
+ \|\partial_z \hat{v}\|_{X^{k-1}}^2+ \|\partial_z \hat{b}\|_{X^{k-1}}^2
 \\[11pt]\quad

+ \|\nabla \hat{q}\|_{X^{k-1}}^2
+ |\hat{h}|_{X^{k-1,2}}^2 + |\partial_t^k \hat{h}|_{X^{0,1}}^2  + |\partial_t^{k+1} \hat{h}|_{L^2}^2
+ O(\e)
\\[11pt]

\leq - \frac{g}{2}\frac{\mathrm{d}}{\mathrm{d}t}
\int_{z=0} |\partial_t^k \hat{h}|^2 \mathrm{d}y

- \frac{\sigma}{2}\frac{\mathrm{d}}{\mathrm{d}t} \int_{z=0}
\big(\mathfrak{M}_1|\nabla_y\partial_t^k \hat{h}|^2
+ \mathfrak{M}_2 |\nabla_y \partial_t^{\ell} \hat{h}\cdot\nabla_y(h^{\e}+h)|^2 \big) \mathrm{d}y
\\[12pt]\quad

+ \|\partial_z \hat{v}\|_{X^{k-1}}^2+ \|\partial_z \hat{b}\|_{X^{k-1}}^2
 + \|\nabla \hat{q}\|_{X^{k-1}}^2
+ |\hat{h}|_{X^{k-1,2}}^2 + |\partial_t^k \hat{h}|_{X^{0,1}}^2
+ |\partial_t^{k+1} \hat{h}|_{L^2}^2 + O(\e).
\end{array}
\end{equation}

Similar to $(\ref{7.19})$, we have
\begin{equation}\label{7.29}
\begin{array}{ll}
\int_{z=0} \mathfrak{M}_1|\nabla_y \partial_t^k \hat{h}|^2
+ \mathfrak{M}_2 |\nabla_y \partial_t^k \hat{h}\cdot\nabla_y(h^{\e}+h)|^2\big]
 \,\mathrm{d}y
 \\[10pt]

\geq \int_{z=0} |\nabla_y \partial_t^k \hat{h}|^2
(\mathfrak{M}_1 - |\mathfrak{M}_2| |\nabla_y(h^{\e}+h)|^2) \,\mathrm{d}y
\geq \int_{z=0} 4|\mathfrak{M}_2| |\nabla_y \partial_t^k \hat{h}|^2 \,\mathrm{d}y.
\end{array}
\end{equation}
where $4|\mathfrak{H}_2| \geq \delta_{\sigma}>0$,
since $|\nabla_y h^{\e}|_{\infty}$ and $|\nabla_y h|_{\infty}$ are bounded.

We will integrate in time twice,
we get the $L^4([0,T],L^2)$ type estimate.
After the integration in time, we have
\begin{equation}\label{7.30}
\begin{array}{ll}
\|\partial_t^k \hat{v}\|_{L^2}^2 +\|\partial_t^k \hat{b}\|_{L^2}^2
+ g|\partial_t^k \hat{h}|_{L^2}^2 + \sigma|\partial_t^k\nabla_y \hat{h}|_{L^2}^2
+\e\int_0^t \|\nabla\partial_t^k \hat{v}\|_{L^2}^2 \,\mathrm{d}t
+\e\int_0^t \|\nabla\partial_t^k \hat{b}\|_{L^2}^2 \,\mathrm{d}t
\\[11pt]

\lem \|\partial_t^k \hat{v}_0\|_{L^2}^2 +\|\partial_t^k \hat{b}_0\|_{L^2}^2
+ g|\partial_t^k \hat{h}_0|_{L^2}^2 + \sigma|\partial_t^k\nabla_y \hat{h}_0|_{L^2}^2
+ \int_0^t\int_{\mathbb{R}^3_{-}} \partial_t^k \hat{q} \, \nabla^{\varphi^{\e}}\cdot \partial_t^k \hat{v} \,\mathrm{d}\mathcal{V}_t^{\e}\mathrm{d}t
\\[11pt]\quad

+ \int_0^t\|\partial_z \hat{v}\|_{X^{k-1}}^2 +\|\partial_z \hat{b}\|_{X^{k-1}}^2
+ \|\nabla \hat{q}\|_{X^{k-1}}^2
+ |\hat{h}|_{X^{k-1,2}}^2 + |\partial_t^k \hat{h}|_{X^{0,1}}^2
+ |\partial_t^{k+1} \hat{h}|_{L^2}^2\mathrm{d}t + O(\e)
\\[11pt]

\lem \|\partial_t^k \hat{v}_0\|_{L^2}^2 +\|\partial_t^k \hat{b}_0\|_{L^2}^2
+ g|\partial_t^k \hat{h}_0|_{L^2}^2 + \sigma|\partial_t^k\nabla \hat{h}_0|_{L^2}^2
+ \int_0^t\int_{\mathbb{R}^3_{-}} \partial_t^k \hat{q} \, \nabla^{\varphi^{\e}}\cdot \partial_t^k \hat{v} \,\mathrm{d}\mathcal{V}_t^{\e}\mathrm{d}t
\\[11pt]\quad

+ \|\partial_z \hat{v}\|_{L^4([0,T],X^{k-1})}^2+ \|\partial_z \hat{b}\|_{L^4([0,T],X^{k-1})}^2
 + |\partial_t^k \hat{h}|_{L^4([0,T],X^{0,1})}^2
+ |\partial_t^k \hat{v}|_{L^4([0,T],L^2)}^2 + O(\e). \hspace{1cm}
\end{array}
\end{equation}

We deal with the pressure term $\int_0^t\int_{\mathbb{R}^3_{-}} \partial_t^k \hat{q} \, \nabla^{\varphi^{\e}}\cdot \partial_t^k \hat{v} \,\mathrm{d}\mathcal{V}_t^{\e}\mathrm{d}t$ by using Hardy's inequality.  Denote
\begin{equation}\label{7.31}
\begin{array}{ll}
\mathcal{I}_{13} := \|\partial_z \partial_t^{k-1} \hat{q}\|_{L^2}^2 + \big|\partial_t^{k-1} \hat{q}|_{z=0}\big|_{L^2}^2
+ |\partial_t^k \hat{h}|_{X^{0,1}}^2
+ \|\partial_t^{k-1}\hat{v}\|_{L^2}^2 + \big|\partial_t^{k-1}\hat{v}|_{z=0}\big|_{L^2}^2
\\[11pt]\hspace{0.7cm}

\lem \|\partial_t^k \hat{v}\|_{L^2}^2 + \|\partial_z\partial_t^{k-1} \hat{v}\|_{L^2}^2
+ |\partial_t^k \hat{h}|_{X^{0,1}}^2 + O(\e),
\end{array}
\end{equation}
then it yields
\begin{equation}\label{7.32}
\begin{array}{ll}
\int_0^t\int\limits_{\mathbb{R}^3_{-}} \partial_t^k \hat{q} \, \nabla^{\varphi}\cdot \partial_t^k \hat{v}
\,\mathrm{d}\mathcal{V}_t^{\e}\mathrm{d}t
\lem \mathcal{I}_{13}|_{t=0} + \mathcal{I}_{13}
+ \int_0^T \mathcal{I}_{13} \,\mathrm{d}s
\\[11pt]

\lem \|\partial_t^k \hat{v}_0\|_{L^2}^2
+ \|\partial_z\partial_t^{k-1} \hat{v}_0\|_{L^2}^2
+ |\partial_t^k \hat{h}_0|_{X^{0,1}}^2
+ \|\partial_t^k \hat{v}\|_{L^2}^2
+ \|\partial_z\partial_t^{k-1} \hat{v}\|_{L^2}^2
\\[11pt]\quad

+ |\partial_t^k \hat{h}|_{X^{0,1}}^2
+ \|\partial_t^k \hat{v}\|_{L^4([0,T],L^2)}^2
+ \|\partial_z\partial_t^{k-1} \hat{v}\|_{L^4([0,T],L^2)}^2
+ |\partial_t^k \hat{h}|_{L^4([0,T],L^2)}^2.
\end{array}
\end{equation}

By $(\ref{7.30})$ and $(\ref{7.32})$, we get
\begin{equation}\label{7.33}
\begin{array}{ll}
\|\partial_t^k \hat{v}\|_{L^2}^2 +\|\partial_t^k \hat{b}\|_{L^2}^2
+ g|\partial_t^k \hat{h}|_{L^2}^2 + \sigma|\partial_t^k\nabla_y \hat{h}|_{L^2}^2
\\[11pt]

\lem \|\partial_t^k \hat{v}_0\|_{L^2}^2 +\|\partial_t^k \hat{b}_0\|_{L^2}^2
+ g|\partial_t^k \hat{h}_0|_{L^2}^2 + \sigma|\partial_t^k\nabla_y \hat{h}_0|_{L^2}^2
 + \|\partial_z\partial_t^{k-1} \hat{v}_0\|_{L^2}^2
+ \|\partial_z\partial_t^{k-1} \hat{b}_0\|_{L^2}^2
\\[11pt]\quad

+ \|\partial_t^k \hat{v}\|_{L^2}^2
+ \|\partial_t^k \hat{b}\|_{L^2}^2
+ \|\partial_z\partial_t^{k-1} \hat{v}\|_{L^2}^2
+ \|\partial_z\partial_t^{k-1} \hat{b}\|_{L^2}^2
+ |\partial_t^k \hat{h}|_{X^{0,1}}^2
+ \|\partial_t^k \hat{v}\|_{L^4([0,T],L^2)}^2
\\[11pt]\quad

+ \|\partial_t^k \hat{b}\|_{L^4([0,T],L^2)}^2
+ \|\partial_z \hat{v}\|_{L^4([0,T],X^{k-1})}^2
+ \|\partial_z \hat{b}\|_{L^4([0,T],X^{k-1})}^2
+ |\partial_t^k \hat{h}|_{L^4([0,T],X^{0,1})}^2 + O(\e).
\end{array}
\end{equation}

Squaring $(\ref{7.33})$ and integrating in time again,
applying the Gronwall's inequality,
we have
\begin{equation}\label{7.34}
\begin{array}{ll}
\|\partial_t^k \hat{v}\|_{L^4([0,T],L^2)}^2 +\|\partial_t^k \hat{b}\|_{L^4([0,T],L^2)}^2
 + g|\partial_t^k \hat{h}|_{L^4([0,T],L^2)}^2 + \sigma|\partial_t^k\nabla_y \hat{h}|_{L^4([0,T],L^2)}^2
\\[11pt]

\lem \|\partial_t^k \hat{v}_0\|_{L^2}^2 +\|\partial_t^k \hat{b}_0\|_{L^2}^2
 + g|\partial_t^k \hat{h}_0|_{L^2}^2 + \sigma|\partial_t^k\nabla_y \hat{h}_0|_{L^2}^2
+ \|\partial_z\partial_t^{k-1} \hat{v}_0\|_{L^2}^2
\\[11pt]\quad

+ \|\partial_z\partial_t^{k-1} \hat{b}_0\|_{L^2}^2
+ \|\partial_z \hat{v}\|_{L^4([0,T],X^{k-1})}^2
+ \|\partial_z \hat{b}\|_{L^4([0,T],X^{k-1})}^2
+ O(\e).
\end{array}
\end{equation}

Lemma $\ref{Lemma7.4}$ is proved.
\end{proof}

Based on Lemmas $\ref{Lemma7.2}$, $\ref{Lemma7.3}$ and $\ref{Lemma7.4}$,
the estimates of the tangential derivatives can be closed.
The estimates of the normal derivatives are the same as those for the case of $\sigma=0$.
Finally, it is standard to complete the proof of Theorem \ref{Theorem1.3}.

\baselineskip 0.23in

\end{document}